\chardef\bslash=`\\
\newtheorem{theorem}{Theorem}[section]
\newtheorem{lemma}[theorem]{Lemma}
\newtheorem{proposition}[theorem]{Proposition}
\newtheorem{corollary}[theorem]{Corollary}
\newtheorem*{conjecture}{Conjecture}
\numberwithin{equation}{section} 
\newtheorem{introthm}{Theorem}
\newif\iffinalrun
\newcommand{\need}[1]{}
\newcommand{\mar}[1]{}
\newcommand{\need}[1]{{\tiny *** #1}}
\newcommand{\mar}[1]{\marginpar{\color{red}\raggedright\tiny  #1}}\fi
\renewcommand\mathbb{\mathbf}
\newcommand{\Lie}{{\operatorname{Lie}\,}}
\newcommand{\rec}{{\operatorname{rec}}}
\newcommand{\wv}{{\widetilde{v}}}
\renewcommand{\ell}{l}
\def\PSL{\mathrm{PSL}}
\def\PGL{\mathrm{PGL}}
\def\Iw{\mathrm{Iw}}
\newcommand{\St}{\operatorname{St}}
\newcommand{\ad}{\operatorname{ad}}
\newcommand{\diag}{\operatorname{diag}}
\newcommand{\tr}{\operatorname{tr}}
\newcommand{\bA}{\ensuremath{\mathbf{A}}}
\newcommand{\bC}{\ensuremath{\mathbf{C}}}
\newcommand{\bF}{\ensuremath{\mathbf{F}}}
\newcommand{\bG}{\ensuremath{\mathbf{G}}}
\newcommand{\bP}{\ensuremath{\mathbf{P}}}
\newcommand{\bL}{\ensuremath{\mathbf{L}}}
\newcommand{\bQ}{\ensuremath{\mathbf{Q}}}
\newcommand{\bR}{\ensuremath{\mathbf{R}}}
\newcommand{\bT}{\ensuremath{\mathbf{T}}}
\newcommand{\T}{{\mathbb T}}
\newcommand{\bZ}{\ensuremath{\mathbf{Z}}}
\newcommand{\bbZ}{\ensuremath{\mathbf{Z}}}
\newcommand{\bbQ}{\ensuremath{\mathbf{Q}}}
\newcommand{\cA}{{\mathcal A}}
\newcommand{\cC}{{\mathcal C}}
\newcommand{\cE}{{\mathcal E}}
\newcommand{\cH}{{\mathcal H}}
\newcommand{\cJ}{{\mathcal J}}
\newcommand{\cO}{{\mathcal O}}
\newcommand{\cS}{{\mathcal S}}
\newcommand{\cV}{{\mathcal V}}
\newcommand{\cU}{{\mathcal{U}}}
\newcommand{\m}{\frakm}
\newcommand{\ffrm}{{\mathfrak m}}
\newcommand{\frakm}{\mathfrak{m}}
\newcommand{\frakp}{\mathfrak{p}}
\newcommand{\p}{\frakp}
\newcommand{\frakq}{\mathfrak{q}}
\newcommand{\q}{\frakq}
\DeclareMathOperator{\Fitt}{Fitt}
\DeclareMathOperator{\Aut}{Aut}
\DeclareMathOperator{\val}{val}
\DeclareMathOperator{\Ad}{Ad}
\DeclareMathOperator{\Ann}{Ann}
\DeclareMathOperator{\End}{End}
\DeclareMathOperator{\Ext}{Ext}
\DeclareMathOperator{\Fil}{Fil}
\DeclareMathOperator{\Gal}{Gal}
\newcommand{\GL}{\mathrm{GL}}
\DeclareMathOperator{\Hom}{Hom}
\DeclareMathOperator{\cInd}{c-Ind}
\DeclareMathOperator{\Ind}{Ind}
\DeclareMathOperator{\Mod}{Mod}
\DeclareMathOperator{\SL}{SL}
\DeclareMathOperator{\Sp}{Sp}
\DeclareMathOperator{\Sym}{Sym}
\newcommand{\Frob}{\mathrm{Frob}}
\newcommand{\Art}{{\operatorname{Art}}}
\newcommand{\Res}{\operatorname{Res}}
\newcommand{\doubleslash}{/\kern-0.2em{/}}
\title{A $p$-adic approach to the existence of level-raising congruences}
\author{Jack A. Thorne}
\begin{document}
	\begin{abstract}
    We construct level-raising congruences between $p$-ordinary automorphic representations, and apply this to the problem of symmetric power functoriality for Hilbert modular forms. In particular, we prove the existence of the $n^\text{th}$ symmetric power lift of a Hilbert modular eigenform of regular weight for each odd integer $n = 1, 3, \dots, 25$. 
    
    In a future work with James Newton, we will use these results to establish the existence of the $n^\text{th}$ symmetric power lift for all $n \geq 1$.
	\end{abstract}
	\maketitle

	\tableofcontents
	
	\section{Introduction}
	
	\textbf{Context.} Let $p$ be a prime number, and let $\iota : \overline{\bQ}_p \to \bC$ be an isomorphism. Let $F$ be a CM number field, and let $\pi$ be a regular algebraic automorphic representation of $\GL_n(\bA_F)$. Then (see \cite{Har16, Sch15}) there exists a continuous semisimple representation
	\[ r_\iota(\pi) : G_F \to \GL_n(\overline{\bQ}_p), \]
	determined up to isomorphism by the requirement that for almost every finite place $v \nmid p$ of $F$, we have the local-global compatibility relation
	\[ \mathrm{WD}(r_\iota(\pi)|_{G_{F_v}}) \cong \rec_{F_v}^T(\iota^{-1} \pi_v) \]
	(in other words, compatibility with the Tate-normalised local Langlands correspondence for $\GL_n(F_v)$). By a level-raising congruence, we mean the data of another regular algebraic automorphic representation $\pi'$ of $\GL_n(\bA_F)$ such that there is an isomorphism of residual representations 
	\begin{equation}\label{eqn_intro_iso_of_residual_representations} \overline{r_\iota(\pi)} \cong \overline{r_\iota(\pi')} 
	\end{equation}
	from $G_F$ to $\GL_n(\overline{\bF}_p)$ and a finite place $v$ of $F$ such that $\pi'_v$ is ``more ramified'' than $\pi_v$ (for example, such that $\pi_v$ is unramified but $\pi'_v$ is not, or such that the conductor ideal of $\pi'_v$ is strictly contained in the conductor ideal of $\pi_v$). The existence of an isomorphism (\ref{eqn_intro_iso_of_residual_representations}) is equivalent to the eigenvalues of the unramified Hecke operators on $\pi$ and $\pi'$, viewed as elements of $\overline{\bZ}_p$, being congruent modulo the maximal ideal of $\overline{\bZ}_p$, while for classical modular forms the conductor ideal corresponds to the level of the corresponding congruence subgroup of $\mathrm{SL}_2(\mathbb{Z})$. This explains the terminology `level-raising congruence', which goes back to Ribet's fundamental paper \cite{Rib84}. 
	
	\textbf{Results.} In this paper, we prove the existence of level-raising congruences for regular algebraic automorphic representations $\pi$ of $\GL_n(\bA_F)$ which are conjugate self-dual, and even `$\theta$-discrete', in the sense that they are of the form $\pi = \pi_1 \boxplus \dots \boxplus \pi_r$, where $\pi_1, \dots, \pi_r$ are conjugate self-dual, cuspidal (and therefore unitary) automorphic representations of lower rank general linear groups. This is a theme explored in our earlier paper \cite{Tho14}. Here we use similar techniques to prove the following theorem, a special case of Theorem \ref{thm_level_raising_for_GL(n)}:
\begin{introthm}\label{thm_intro_thm}
Fix a partition $n = n_1 + n_2$ and let $\pi_1, \pi_2$ be cuspidal, conjugate self-dual automorphic representations of $\GL_{n_1}(\bA_F)$, $\GL_{n_2}(\bA_F)$ such that $\pi = \pi_1 \boxplus \pi_2$ is regular algebraic and $\iota$-ordinary. Suppose that the following conditions are satisfied:
\begin{enumerate}
    \item $F$ contains an imaginary quadratic field $F_0$ in which $p$ splits, and if $w$ is a place of $F$ such that $\pi_w$ is ramified, then the prime number lying below $w$ splits in $F_0$. The extension $F / F^+$ (where $F^+$ denotes the maximal totally real subfield of $F$) is everywhere unramified. 
    \item The weights $\lambda_1 \in (\bbZ^{n_1})^{\Hom(F, \bC)}, \lambda_2 \in (\bbZ^{n_2})^{\Hom(F, \bC)}$ of $\pi_1, \pi_2$ are parallel, in the sense that if $\tau, \tau' : F \to \bC$ have the same restriction to $F_0$, then $\lambda_{1, \tau} = \lambda_{1, \tau'}$ and $\lambda_{2, \tau} = \lambda_{2, \tau'}$.
    \item There exists a prime-to-$p$ place $w_0$ of $F$ and unramified characters $\psi_1, \psi_2 : F_{w_0}^\times \to \bC^\times$ such that $\pi_{1, w_0} \cong \St_{n_1}(\psi_1)$, $\pi_{2, w_0} \cong \St_{n_2}(\psi_2)$, and the character $\iota^{-1}( \psi_1 \psi_2^{-1} | \cdot |^{-n/2} )$ takes values in $1 + \m_{\overline{\bZ}_p}$. Moreover, if $w_0$ lies above the prime number $q$, then $\pi_1, \pi_2$ are both unramified at each $q$-adic place $w \neq w_0, w_0^c$ of $F$.
        \item\label{ass_intro_generic} There exists a finite place $w_1 \nmid S$ of $F$, split over $F^+$, such that there is an isomorphism $\overline{r_\iota(\pi)}|_{G_{F_{w_1}}}^{ss} \cong \oplus_{i=1}^n \overline{\chi}_i$, where $\overline{\chi}_1, \dots, \overline{\chi}_n$ are unramified characters such that if $i \neq j$ then $\overline{\chi}_i / \overline{\chi}_j \neq \epsilon$. 
\end{enumerate}
Then there exists a regular algebraic, conjugate self-dual, cuspidal, $\iota$-ordinary automorphic representation $\Pi$ of $\GL_n(\bA_F)$ such that $\overline{r_\iota(\Pi)} \cong \overline{r_\iota(\pi)}$ and $\Pi_{w_0}$ is an unramified twist of $\St_n$. 
\end{introthm}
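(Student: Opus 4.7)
The plan is to descend the question to a definite unitary group over $F^+$ and then to prove an $R = T$-type identification of a suitable Galois deformation ring with a $p$-adic Hida Hecke algebra, from which a characteristic-zero point corresponding to the desired $\Pi$ can be extracted. In more detail: using conjugate self-duality together with the fact that $\pi_{w_0}$ is of Steinberg type and therefore descends at the single place $v_0 \mid w_0$, I would use Labesse-type descent to transfer $\pi$ to an automorphic representation $\sigma$ of a totally definite unitary group $G/F^+$ of rank $n$, split at all places where $\pi$ is ramified (this is possible thanks to hypothesis~(1), which arranges for all relevant places to split in $F/F^+$). The representation $\sigma$ is $\iota$-ordinary at $p$ and has prescribed local behaviour at $v_0$: the Langlands parameter is the reducible ``unipotent'' parameter $\mathrm{St}_{n_1}(\psi_1) \oplus \mathrm{St}_{n_2}(\psi_2)$, and the congruence hypothesis $\iota^{-1}(\psi_1\psi_2^{-1}|\cdot|^{-n/2}) \in 1 + \mathfrak{m}_{\overline{\bZ}_p}$ is precisely the condition for this parameter and the indecomposable Steinberg parameter $\mathrm{St}_n$ to agree after reduction mod $\mathfrak{m}_{\overline{\bZ}_p}$.

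On the Galois side I would define a global deformation problem $\cS$ for $\overline{r_\iota(\pi)}$ with the following local conditions: Steinberg (in the sense of a twist of the Langlands parameter of $\mathrm{St}_n$) at $w_0$, ordinary at places above $p$, unramified outside a carefully chosen finite set, and the natural types at other ramified places. Let $R_\cS$ be the associated universal deformation ring. On the automorphic side I would form the big ordinary Hida Hecke algebra $\bT^{\mathrm{ord}}$ acting on the space of $p$-adic ordinary automorphic forms on $G$ of tame level chosen to match the Steinberg condition at $v_0$, localized at the maximal ideal $\mathfrak{m}$ associated to $\overline{r_\iota(\pi)}$. Hypothesis~(4), which gives a split place $w_1$ with $\overline{r_\iota(\pi)}|_{G_{F_{w_1}}}$ sufficiently generic, supplies the enhanced input needed to run a Taylor--Wiles--Kisin patching argument in the $\iota$-ordinary setting (in the style of Geraghty, and of my earlier paper~\cite{Tho14}), and yields a surjection $R_\cS \twoheadrightarrow \bT^{\mathrm{ord}}_\mathfrak{m}$ together with a patched module that is balanced over a suitable auxiliary power series ring. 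The parallel-weight hypothesis (2) ensures a clean Hida-theoretic setup, while hypothesis (1) underpins the choice of Taylor--Wiles primes.

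It remains to exhibit a characteristic-zero point on $R_\cS$ lifting $\overline{r_\iota(\pi)}$; the existence follows once the patched module is shown to have positive dimension with support meeting the Steinberg component of the local deformation ring at $w_0$. The key point is to analyse the local deformation ring at $w_0$: the Steinberg locus and the locus of lifts interpolating $\pi$'s unipotent parameter share a common component after the level-raising congruence, so the patched module, known to be supported at the latter via $\pi$ itself, is forced to meet the former, yielding the required point. This point produces, via the $R = T$ identification and Labesse's base change, a regular algebraic, $\iota$-ordinary, conjugate self-dual automorphic representation $\Pi$ with $\overline{r_\iota(\Pi)} \cong \overline{r_\iota(\pi)}$ and $\Pi_{w_0}$ an unramified twist of $\mathrm{St}_n$; cuspidality of $\Pi$ is automatic, since an indecomposable Steinberg local component of rank $n$ is incompatible with any non-trivial isobaric decomposition. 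The main obstacle is the local analysis at $w_0$ coupled with the patching: one needs a precise understanding of the Steinberg and unipotent components of the local deformation ring, enough regularity of the patched module over the patching ring to guarantee that its support meets every component it is forced to meet, and enough control on the $p$-ordinary deformation ring to keep the arithmetic of the Hida family under control. This is the technical heart of the argument and is the reason the hypotheses (1)--(4) are formulated as they are.
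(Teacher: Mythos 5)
Your proposal diverges fundamentally from the paper's argument, and it contains two genuine gaps. First, the residual representation $\overline{r_\iota(\pi)} \cong \overline{r_\iota(\pi_1)} \oplus \overline{r_\iota(\pi_2)}$ is reducible by construction, so the standard Taylor--Wiles--Kisin patching machinery (which requires adequacy, hence in particular irreducibility, of the residual image) does not apply; you would need the much more delicate residually reducible technology, and nothing in your sketch addresses this. Relatedly, you have misassigned the role of hypothesis (4): the generic place $w_1$ is not there to supply Taylor--Wiles primes --- it is the input for the Caraiani--Scholze/Boyer torsion-vanishing theorems for the cohomology of a Harris--Taylor Shimura variety, which is where the paper actually uses it.

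Second, and more seriously, your final step --- that the patched module, being supported on the ``unipotent $(n_1,n_2)$'' locus of the local deformation ring at $w_0$ via $\pi$ itself, is ``forced to meet'' the Steinberg locus --- is precisely the content of the level-raising problem and cannot be asserted. These two loci are in general distinct irreducible components of the local lifting ring whose closures intersect; a module supported on one component is in no way forced to be supported on the other, and the residual situation at $w_0$ is maximally degenerate (in the applications $\overline{r_\iota(\pi)}|_{G_{F_{w_0}}}$ is even trivial), so no local argument rules out the bad component. Forcing support on every component is exactly what Ihara's lemma would provide, and the paper explicitly does \emph{not} prove Ihara's lemma. Instead it twists $\pi_1$ by a $p$-adic character $\psi$ (possibly of infinite order) so that $(r_\iota(\pi_1|\cdot|^{-n_2/2})\otimes\psi)\oplus r_\iota(\pi_2|\cdot|^{-n_1/2})$ satisfies the exact level-raising congruence at $w_0$ in characteristic zero, realises this twist in a space $\cH_\lambda$ of non-classical $p$-ordinary forms via Hida theory, and then derives a contradiction from the concentration in middle degree of the complex $R\Gamma_c(\Omega_{F_{w_0}}^{n-1,ca},-)\otimes^{\bL}\cH_\lambda$, computed via $q$-adic uniformization of the Shimura variety and Dat's results on the elliptic representations $\pi_\lambda$. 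That homological mechanism is the substitute for the component-hopping step your proposal leaves unproved.
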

	(The case where $\pi$ has the form $\pi = \pi_1 \boxplus \dots \boxplus \pi_r$ (and $r > 2$) may be treated by induction on $r$.) This strengthens \cite[Theorem 7.1]{Tho14}, the main improvements being that the following two hypotheses are no longer required:
	\begin{itemize}
		\item[(a)] We no longer require that $p$ is a banal characteristic for $\GL_n(F_{w_0})$, in the sense that $p$ is prime to order of $\GL_n(\cO_{F_{w_0}} / (\varpi_{w_0}))$. 
	    \item[(b)] We no longer require that the restriction  $\overline{r_\iota(\pi_i|\cdot|^{(n_i-n)/2)})}|_{G_{F_{w_0}}}$ of the residual representation to the decomposition group is as ramified as possible, in the sense that the image of a generator $t_{w_0}$ for the tame inertia group at $w_0$ has a single Jordan  block.
	\end{itemize}
	These improvements are essential for the applications to symmetric power functoriality outlined below. Indeed, the possibility of such applications was the motivation in \cite{Tho14}, but the results proved there were too weak to be of any use. 
	
	Before discussing these applications, we outline the main ideas behind the proof of Theorem \ref{thm_intro_thm}. The main principle is that if the situation modulo $p$ is too degenerate, we should try to lift to a characteristic 0 ($p$-adic) situation where these degeneracies disappear. Here we achieve this by considering a twisted representation
	\[ \rho_\psi = r_\iota(\pi_1|\cdot|^{-n_2/2}) \otimes \psi \oplus r_\iota(\pi_2|\cdot|^{-n_1/2}) \]
	for some conjugate self-dual $p$-adic character $\psi : G_F \to \overline{\bQ}_p^\times$ which is unramified at $w_0$, and such that $\rho_\psi|_{G_{F_{w_0}}}$ satisfies the necessary condition for level-raising, i.e.\ that the characteristic polynomial of $\rho_\psi(\Frob_{w_0})$ has the form $\prod_{i=1}^n(X - q_{w_0}^i \alpha)$ for some $\alpha \in \overline{\bQ}_p^\times$ (as it would if $\rho_\psi|_{G_{F_{w_0}}}$ was in fact associated, under the local Langlands correspondence, to an unramified twist of the Steinberg representation). One now sees why the hypotheses (a) and (b) above fall away -- characteristic 0 is certainly banal for $\GL_n(F_{w_0})$, and the condition on the Jordan blocks of $\rho_\psi(t_{w_0})$ holds because it holds for $r_\iota(\pi_1|\cdot|^{-n_2/2})|_{G_{F_{w_0}}}$ and $r_\iota(\pi_2|\cdot|^{-n_1/2})|_{G_{F_{w_0}}}$ (by the strong form of local-global compatibility at ${w_0}$, established in \cite{Tay07}).
	
	The representation $\rho_\psi$ is not associated to a classical automorphic representation of $\GL_n(\bA_F)$, but it can be seen as arising from a space of non-classical $p$-ordinary automorphic forms. Our hypotheses ensure that we can find such an automorphic form in a space $\cH_\lambda$ of $p$-ordinary weight $\lambda$ automorphic forms on the $\bA_{F^+}$-points of a definite unitary group $G$ over $F^+$ (for a suitable $p$-adic, non-classical weight $\lambda$). For us $\cH_\lambda$ is an admissible $\overline{\bbZ}_p[\GL_n(F_{w_0})]$-module, flat over $\overline{\bZ}_p$, such that the irreducible subquotients of $\cH_\lambda[1/p]$ are isomorphic to subquotients of the induced representation
	\begin{equation}\label{eqn_parabolic_induction} \Ind_{B_n(F_{w_0})}^{\GL_n(F_{w_0})} \overline{\bQ}_p = C^\infty( B_n(F_{w_0}) \backslash \GL_n(F_{w_0}), \overline{\bQ}_p) 
	\end{equation}
	from the Borel subgroup $B_n \subset \GL_n$. We are thus in the situation of \cite[\S 4]{Tho14}, which gives an abstract criterion on an admissible $k[\GL_n(F_{w_0})]$-module, where $k$ is a field of banal characteristic, to admit level-raising (concretely, to admit the Steinberg representation as a quotient). The key property of $\cH_\lambda$ that we require in \emph{loc. cit.} is that the homology groups 
	\[ H_i(\GL_n(F_{w_0})^0, \cH_\lambda[1/p]) \] (where $\GL_n(F_{w_0})^0 \subset \GL_n(F_{w_0})$ is the subgroup of matrices whose determinant lies in $\cO_{F_{w_0}}^\times$) are non-zero only in degree $i = n-1$. To prove this, it is enough to show that the homology groups 
	\[ H_i(\GL_n(F_{w_0})^0, \cH_\lambda \otimes_{\overline{\bZ}_p} \overline{\bF}_p) \]
	are concentrated in this degree. The difficulty faced in \cite{Tho14} is that these groups are hard to compute in non-banal characteristic. To get around this here, we introduce the complex $R \Gamma_c( \Omega_{F_{w_0}}^{n-1, ca}, \overline{\bZ}_p)$ studied in \cite{Dat06} -- the compactly supported \'etale cohomology of the Drinfeld upper half space $\Omega_{F_{w_0}}^{n-1}$. (Following \cite{Dat06}, we use the superscript $ca$ to denote base change to the completion of an algebraically closed extension.) It is a complex of smooth $\overline{\bZ}_p[\GL_n(F_{w_0})]$-modules, and according to \cite{Dat06} there is an isomorphism
	\[ R \Gamma_c( \Omega_{F_{w_0}}^{n-1, ca}, \overline{\bQ}_p) \cong \oplus_{i=0}^{n-1} \pi_{i, 1, \dots, 1}[i-2(n-1)] \]
	in the derived category of smooth $\overline{\bQ}_p[\GL_n(F_{w_0})]$-modules (and where the representations $\pi_\mu$ associated to an ordered partition $\mu$ of $n$ are certain subquotients of the representation (\ref{eqn_parabolic_induction}), whose definition we recall in \S \ref{sec_elliptic_representations}). In particular, this complex is split, in the sense that it is isomorphic (in the derived category) to the direct sum of its cohomology groups. (Such a splitting exists also when $\overline{\bQ}_p$ is replaced by a finite field of banal characteristic, but not in general, cf.\ \cite{Dat14}.)
	
	It is enough therefore for us to consider the derived tensor product
	\[ R \Gamma_c( \Omega_{F_{w_0}}^{n-1, ca}, \overline{\bQ}_p) \otimes^{\bL}_{\overline{\bQ}_p[\GL_n(F_{w_0})^0]} \cH_\lambda[1/p]. \]
	The cohomology of this complex will be concentrated in a single degree provided the same is true of the complex
	\begin{equation}\label{eqn_intro_tensor_product} R \Gamma_c (\Omega_{F_{w_0}}^{n-1, ca}, \overline{\bF}_p) \otimes^{\bL}_{\overline{\bF}_p[\GL_n(F_{w_0})^0]} (\cH_\lambda / \ffrm_{\overline{\bZ}_p} \cH_\lambda). 
	\end{equation}
	Now we use that $\cH_\lambda / \ffrm_{\overline{\bZ}_p} \cH_\lambda$ may be identified with a space of classical, $p$-ordinary automorphic forms on $G$ with coefficients in $\overline{\bF}_p$. Using results of Rapoport--Zink \cite{Rap96} on the $p$-adic uniformization of Shimura varieties, we identify the complex (\ref{eqn_intro_tensor_product}) with the $\overline{\bF}_p$-cohomology of a Shimura variety of Harris--Taylor type \cite{Har01}. The results of Boyer \cite{Boy19} and  Caraiani--Scholze \cite{Car17}  imply that this cohomology vanishes outside degree $n-1$ (after localisation a generic maximal ideal of the Hecke algebra). 	
	
	The reader familiar with existing level-raising arguments may wonder where ``Ihara's lemma'' (as formulated in e.g.\ \cite{Clo08}) appears in the argument. In \cite{Tho14}, we deduced Ihara's lemma (in restricted circumstances) from the concentration of certain cohomology groups in a single degree. By contrast, here we deduce our level-raising result directly from this concentration of cohomology groups; it does not seem that this concentration alone is strong enough to imply Ihara's lemma in the generality required to then be able to deduce Theorem \ref{thm_intro_thm} as a consequence.
		
	The appeal to the results of \cite{Boy19, Car17} is the reason that Theorem \ref{thm_intro_thm} contains the genericity hypothesis (\ref{ass_intro_generic}) on the residual representation $\overline{r_\iota(\pi)}$. In fact, we work throughout with the $p$-ordinary part of cohomology, and it seems likely to us that the $p$-ordinary part of the cohomology of the Shimura varieties considered here is already concentrated in a single degree before localisation at a generic maximal ideal of the prime-to-$p$ Hecke algebra. We don't know how to prove this, but such a result would make it possible to remove condition (\ref{ass_intro_generic}) from Theorem \ref{thm_intro_thm}, leading to a theorem which would be essentially optimal for the kinds of applications considered here.
	
	\textbf{Applications.} Here is our first main application.
	\begin{introthm}\label{introthm_unconditional_symmetric_powers} Let $F^+$ be a totally real number field, and let $\pi$ be a cuspidal automorphic representation of $\GL_2(\bA_{F^+})$ such that $\pi_\infty$ is (essentially) square-integrable and which is not automorphically induced from a quadratic extension of $F^+$. Then for each integer $n = 1, \dots, 9$, and for each odd integer $n = 11, \dots, 25$, the symmetric power $\Sym^n \pi$ exists, in the sense that there is a cuspidal automorphic representation $\Pi$ of $\GL_{n+1}(\bA_{F^+})$ such that for any place $v$ of $F^+$, we have
	\[ \rec_{F^+_v}(\Pi_v) \cong \Sym^n \circ \, \rec_{F^+_v}(\pi_v). \]
	\end{introthm}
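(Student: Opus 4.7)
The plan is to prove Theorem \ref{introthm_unconditional_symmetric_powers} by bootstrapping from the classical base cases $n \leq 4$ (Gelbart--Jacquet, Kim--Shahidi, Kim) using Theorem \ref{thm_intro_thm} as the inductive engine. The overall template is that of the Clozel--Thorne / Newton--Thorne strategy for symmetric power functoriality: target the Galois representation $\Sym^n r_\iota(\pi)$ directly, prove that it is automorphic via an ordinary modularity lifting theorem, and supply the residual automorphy input through Theorem A.

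First I would set the stage. Choose an auxiliary prime $p$ at which $\pi$ is $\iota$-ordinary and has sufficiently large residual image; the existence of such $p$ follows from density theorems combined with Dickson-style results on images of residual Galois representations of non-CM Hilbert modular forms. By Arthur--Clozel solvable base change and descent, it suffices to produce $\Sym^n \Pi$ as a cuspidal conjugate self-dual automorphic representation over a CM extension $F = F^+ F_0$ satisfying the splitting hypotheses of Theorem \ref{thm_intro_thm}, where $\Pi$ is the base change of $\pi$, further arranged by a preliminary solvable base change to be Steinberg at an auxiliary place $w_0$.

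Next I would iterate an inductive step. Given that $\Sym^k \Pi$ is automorphic for every $k < n$, choose a partition $n+1 = n_1 + n_2$ and cuspidal conjugate self-dual automorphic representations $\pi_1, \pi_2$ of $\GL_{n_1}(\bA_F), \GL_{n_2}(\bA_F)$ built from the smaller symmetric powers so that the semisimplified residual representation $\overline{r_\iota(\pi_1)} \oplus \overline{r_\iota(\pi_2)}$ is isomorphic to $\overline{\Sym^n r_\iota(\Pi)}$. This residual matching exploits the degeneracy of $\Sym^n$ in characteristic $p$, in particular the Steinberg tensor product decomposition expressing $\Sym^n \bar\rho_\pi$ in terms of Frobenius-twisted lower symmetric powers when $p \leq n$. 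The remaining hypotheses of Theorem A follow from the preliminary set-up: parallel weight and $\iota$-ordinarity are inherited from $\pi$, the Steinberg condition at $w_0$ descends to each $\pi_i$, and the genericity condition \eqref{ass_intro_generic} at an auxiliary split place $w_1$ is arranged by a Chebotarev argument from the large residual image of $\bar\rho_\pi$. Theorem A then produces a cuspidal, conjugate self-dual, $\iota$-ordinary automorphic representation $\Sigma_n$ of $\GL_{n+1}(\bA_F)$ with $\overline{r_\iota(\Sigma_n)} \cong \overline{\Sym^n r_\iota(\Pi)}$ and Steinberg component at $w_0$. An ordinary modularity lifting theorem in the style of Geraghty and Allen--Calegari--Caraiani--Gee--Helm--Le Hung--Newton--Scholze--Taylor--Thorne then upgrades the residual isomorphism into an honest isomorphism $r_\iota(\Sigma_n) \cong \Sym^n r_\iota(\Pi)$ (up to twist), and a final descent step delivers $\Sym^n \pi$ over $F^+$.

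The main obstacle will be the joint verification, for a single prime $p$, of the residual matching step together with the genericity condition of Theorem A and the big-image hypothesis required by the modularity lifting theorem: all three are controlled by the image of $\bar\rho_\pi$ and its behavior under symmetric power operations, and must be coordinated by a careful choice of $p$ and of auxiliary places using Dickson's classification and Chebotarev. The numerical bounds $n \leq 9$ overall and odd $n \leq 25$ reflect the combinatorial reach of the induction from the base cases $n \leq 4$: mixed-parity partitions propagate odd symmetric powers up to $n = 25$, but pushing the even range beyond $n = 8$ is blocked by the unavailability of $\Sym^{10}, \Sym^{12}, \dots$, which is deferred to the forthcoming joint work with Newton.
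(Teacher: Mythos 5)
Your proposal reproduces the paper's strategy in its essentials: the inductive engine is Theorem \ref{thm_intro_thm} applied to $\pi_1 \boxplus \pi_2$ where the partition comes from the characteristic-$p$ degeneration of $\Sym^{n}$ (the paper's Proposition \ref{prop_SP_LR}, proving $\mathrm{SP}_{p-r} + \mathrm{SP}_r + \mathrm{TP}_r \Rightarrow \mathrm{SP}_{p+r}$ for $n+1 = p+r$), followed by an ordinary automorphy lifting theorem (the paper's Theorem \ref{thm_modified_ALT}) and soluble descent, with the numerical range swept out by taking $p = 5, \dots, 23$. Two points, however, deserve more than the gloss you give them. First, the constituent coming from the Steinberg tensor product decomposition is not a ``smaller symmetric power'': it is ${}^{\varphi}\mathrm{Std} \otimes \Sym^{r-1}$, and making it automorphic is precisely the tensor product conjecture $\mathrm{TP}_r$, known only for $r \leq 3$; moreover, to realise the Frobenius twist as the reduction of an honest characteristic-zero automorphic Galois representation one must first run a Khare--Wintenberger argument producing a congruent form $\pi'$ with Hodge--Tate weights $\{0, p\}$. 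The bounds in the theorem therefore come from the interaction of $\mathrm{TP}_r$ ($r \leq 3$), the base cases, and the arithmetic constraint that $n+1 = p + r$ with $p \geq 5$ prime (this is why $\Sym^{10}$, for which $11 = p+r$ has no admissible decomposition, is the first missing power) --- not solely from ``the combinatorial reach of the base cases.''

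Second, and this is a genuine gap relative to the statement you are proving: Theorem \ref{introthm_unconditional_symmetric_powers} is asserted for every $\pi$ with $\pi_\infty$ essentially square-integrable, including those of mixed weight parity which admit no algebraic twist. For such $\pi$ there is no Galois representation over $F^+$ at all, so your ``final descent step'' cannot be the usual quadratic descent of a Galois representation. The paper devotes Section 8 to this: one constructs candidates $\Pi(F_i)$ over $F^+$ from infinitely many CM quadratic extensions $F_i/F^+$ via Arthur--Clozel, patches them together \`a la Blasius--Rogawski to eliminate the quadratic-twist ambiguity, and then carries out a separate archimedean computation (using central characters for odd $n$) to pin down $\rec_{F^+_v}(\Pi_v)$ at the infinite places, where a priori one only controls its restriction to $W_{F_w} = \bC^\times$. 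Without this your argument proves only the RAESDC case, i.e.\ Conjecture $\mathrm{SP}_{n+1}$ rather than $\mathrm{SP}'_{n+1}$.
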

	For a description of the importance of, and of previous progress on, the problem of symmetric power functoriality, we refer to the papers \cite{Clo14, New21}. In our previous work joint work with Clozel \cite{Clo14}, we outlined a programme to prove the existence of symmetric power functorial lifts of automorphic representations of $\GL_2(\bA_{F^+})$ of the type considered in Theorem \ref{introthm_unconditional_symmetric_powers}, relying on two conjectures: one on the existence of certain level-raising congruences, and the second on the existence of a version of  tensor product functoriality for the group $\GL_2 \times \GL_r$. The results we establish in this paper, although not identical in strength to the level-raising conjecture stated in \cite{Clo14}, are nevertheless sufficient to eliminate the need to assume this conjecture. Theorem \ref{introthm_unconditional_symmetric_powers} is then what one can prove unconditionally using the ideas in \cite{Clo14} by using known cases $r = 1, 2, 3$ of tensor product functoriality. We can also state a result conditional on the assumption of tensor product functoriality in all degrees: 
	\begin{introthm}\label{introthm_conditional_symmetric_powers}
	Let $F^+$ be a totally real number field. Assume the truth of \cite[Conjecture 3.2]{Clo14}. Then for any cuspidal automorphic representation $\pi$ of $\GL_2(\bA_{F^+})$  such that $\pi_\infty$ is (essentially) square-integrable and which is not automorphically induced from a quadratic extension of $F^+$, and for any integer $n \geq 1$, the symmetric power $\Sym^n \pi$ exists, in the sense that there is a cuspidal automorphic representation $\Pi$ of $\GL_{n+1}(\bA_{F^+})$ such that for any place $v$ of $F^+$, we have
	\[ \rec_{F^+_v}(\Pi_v) \cong \Sym^n \circ \,\rec_{F^+_v}(\pi_v). \]
	\end{introthm}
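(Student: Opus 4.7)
The plan is to execute the inductive programme of our earlier joint work with Clozel \cite{Clo14}. As explained in the paper's introduction, that programme reduces the construction of symmetric power lifts for Hilbert modular forms of the type considered here to two ingredients: a level-raising theorem for the kind of $\theta$-discrete automorphic representations considered in Theorem \ref{thm_intro_thm}, and a form of tensor product functoriality for $\GL_2 \times \GL_r$. The first ingredient is provided unconditionally by Theorem \ref{thm_level_raising_for_GL(n)} of the present paper; the second is precisely \cite[Conjecture 3.2]{Clo14}, whose truth is assumed in the statement. Theorem \ref{introthm_conditional_symmetric_powers} therefore follows by substituting Theorem \ref{thm_level_raising_for_GL(n)} for the level-raising conjecture of \cite{Clo14} and running the argument of that paper without change.

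In outline, the induction is on $n$. The base cases $n \leq 4$ are classical (Gelbart--Jacquet, Kim--Shahidi, Kim). For the inductive step, assuming $\Sym^m \pi$ is known to be automorphic for all $m < n$, one builds an auxiliary isobaric sum on a suitable CM extension $F/F^+$ out of lower symmetric powers (with appropriate twists) whose associated $p$-adic Galois representation is residually isomorphic to $\Sym^n \overline{r_\iota(\pi)}|_{G_F}$, and then invokes Theorem \ref{thm_level_raising_for_GL(n)} to produce a cuspidal $\iota$-ordinary automorphic representation $\Pi$ of $\GL_{n+1}(\bA_F)$ with the same residual Galois representation and with $\Pi_{w_0}$ an unramified twist of Steinberg. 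A standard $p$-adic automorphy lifting theorem promotes this residual congruence to an isomorphism of Galois representations, giving automorphy of $\Sym^n r_\iota(\pi)|_{G_F}$; a descent from $F$ back to $F^+$ then yields $\Sym^n \pi$. The role of Conjecture 3.2 in this scheme is to guarantee that the input isobaric sum is genuinely automorphic and that its local components can be matched to the hypotheses of Theorem \ref{thm_level_raising_for_GL(n)}.

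The main technical obstacle is to arrange the auxiliary data (the ordinary prime $p$, the CM extension $F$, the Steinberg place $w_0$, and the generic place $w_1$ required by hypothesis (\ref{ass_intro_generic})) simultaneously to satisfy all the hypotheses of Theorem \ref{thm_level_raising_for_GL(n)} and of the modularity lifting theorem used in the transfer. The removal of the banal-characteristic hypothesis (a) and the single-Jordan-block hypothesis (b) from \cite[Theorem 7.1]{Tho14} is what makes this arrangement possible for all $n \geq 1$: the tame inertia action on $\Sym^n \overline{r_\iota(\pi)}|_{G_{F_{w_0}}}$ is dictated by the structure of the symmetric power and typically possesses many Jordan blocks of small size, while the residue characteristic $p$ is pinned down by the ordinarity requirement, so neither (a) nor (b) can in general be arranged by hand. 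Theorem \ref{thm_level_raising_for_GL(n)} overcomes both obstructions, and the induction thus proceeds uniformly in $n$.
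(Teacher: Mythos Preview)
Your outline captures the spirit of the \cite{Clo14} programme, but it omits a genuine piece of content and is imprecise about the inductive structure in a way that matters.

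\textbf{Missing mixed-parity argument.} The statement of Theorem~\ref{introthm_conditional_symmetric_powers} concerns \emph{all} cuspidal $\pi$ with $\pi_\infty$ essentially square-integrable, not only those admitting a regular algebraic twist. The argument you sketch (level-raising $+$ automorphy lifting $+$ descent, following \cite{Clo14}) only establishes Conjecture $\mathrm{SP}_n$, i.e.\ the RAESDC case. The paper needs a separate section (\S 8) to pass from $\mathrm{SP}_n$ to the general statement $\mathrm{SP}'_n$: one fixes a CM quadratic extension $F/F^+$, chooses a Hecke character $\psi$ so that $\pi_F \otimes \psi^{-1}$ is RACSDC, applies $\mathrm{SP}_n$ over $F$, and then runs a patching argument over a carefully chosen infinite family of CM extensions $F_0, F_1, \dots$ of $F^+$ to descend to $F^+$, followed by an explicit computation of the archimedean $L$-parameters (the odd-$n$ case requires a central character comparison). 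None of this is in \cite{Clo14}; it extends \cite{Clo17}, and your proposal does not mention it.

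\textbf{Inductive structure.} The paper does not simply induct on $n$ assuming $\Sym^m \pi$ for all $m < n$. The implication actually proved (Theorem~\ref{thm_implication_of_SP_n}) is
\[
\mathrm{SP}_{p-r} + \mathrm{SP}_r + \mathrm{TP}_r \Rightarrow \mathrm{SP}_{p+r}
\]
for a prime $p \geq 5$ and $0 < r < p$; the full induction (Corollary~\ref{cor_unconditional_reg_alg_powers}) then invokes Bertrand's postulate to choose, for each $n > 5$, a prime $p$ with $n < p < 2n$ and write $n = p + r$. The role of $\mathrm{TP}_r$ is not merely to ``guarantee that the input isobaric sum is genuinely automorphic'': it is used to construct one of the two cuspidal pieces, namely the automorphic realisation of ${}^{\varphi} r_\iota(\pi') \otimes \Sym^{r-1} r_\iota(\pi)$, exploiting the mod-$p$ congruence $\Sym^{p+r-1} \equiv ({}^{\varphi}\mathrm{Std} \otimes \Sym^{r-1}) \oplus (\det^r \otimes \Sym^{p-r-1})$. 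Your sentence ``builds an auxiliary isobaric sum \dots\ out of lower symmetric powers'' obscures that a Frobenius twist and a genuine tensor product (not a symmetric power) are involved, and this is precisely where the assumed conjecture enters.
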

	After the first draft of this paper was completed, we further developed (together with James Newton) the strategy of \cite{Clo14} by proving enough cases of \cite[Conjecture 3.2]{Clo14} to get the conclusion of Theorem \ref{introthm_conditional_symmetric_powers} unconditionally. These results will appear elsewhere \cite{New22a}. We have chosen to leave the statements and proofs of Theorem \ref{introthm_unconditional_symmetric_powers} and Theorem \ref{introthm_conditional_symmetric_powers} in their current form.

	\textbf{Acknowledgements.} The author's work received funding from the European Research Council (ERC) under the European Union’s Horizon 2020 research and innovation programme (grant agreement No. 714405). We would like to thank Toby Gee and James Newton for useful comments on an earlier draft of this paper. 
		
	\textbf{Notation.}  If $F$ is a field of characteristic zero, we generally fix an algebraic closure $\overline{F} / F$ and write $G_F$ for the absolute Galois group of $F$ with respect to this choice. If $F$ is a number field, then we will also fix embeddings $\overline{F} \to \overline{F}_v$ extending the map $F\to F_v$ for each place $v$ of $F$; this choice determines a homomorphism $G_{F_v} \to G_F$. When $v$ is a finite place, we will write $\cO_{F_v} \subset F_v$ for the valuation ring, $\varpi_v \in \cO_{F_v}$ for a fixed choice of uniformizer, $\Frob_v \in G_{F_v}$ for a fixed choice of geometric Frobenius lift, $k(v) = \cO_{F_v} / (\varpi_v)$ for the residue field, and $q_v = \# k(v)$ for the cardinality of the residue field. 
	
	If $F$ is a CM field (totally imaginary extension of a totally real field $F^+$), then we write $c \in \Gal(F / F^+)$ for the non-trivial element. If $S$ is a finite set of finite places of $F^+$ which are unramified in $F$, then we write $F_S / F$ for the maximal subextension of $\overline{F}$ which is unramified over $F^+$, and $G_{F, S} = \Gal(F_S / F)$.

    If $p$ is a prime, then we call a \emph{coefficient field} a finite extension $E / \bbQ_p$ contained inside our fixed algebraic closure $\overline{\bbQ}_p$, and write $\cO$ for the valuation ring of $E$, $\varpi \in \cO$ for a fixed choice of uniformizer, and $k = \cO / (\varpi)$ for the residue field.

If $G$ is a locally profinite group and $U \subset G$ is an open compact subgroup, then we write $\cH(G, U)$ for the set of compactly supported, $U$-biinvariant functions $f : G \to \bZ$. It is a $\bbZ$-algebra, where convolution is defined using the left-invariant Haar measure normalized to give $U$ measure 1; see \cite[\S 2.2]{New16}. It is free as a $\bbZ$-module, with basis given by the characteristic functions $[UgU]$ of double cosets.

Let $K$ be a non-archimedean characteristic $0$ local field of normaliser $\varpi_K$ and residue field $k_K$, and let $| \cdot |_K : K^\times \to \bR_{>0}$ be the absolute value satisfying $| \varpi_K |_K = | k_K |^{-1}$. We write $W_K \subset G_K$ for the Weil group of $K$ and $I_K \subset W_K$ for the inertia subgroup. If $\rho : G_K \to \GL_n(\overline{\bbQ}_p)$ is a continuous 
representation (assumed to be de Rham if $p$ equals the residue characteristic 
of 
$K$), then we write $\mathrm{WD}(\rho) = (r, N)$ for the associated 
Weil--Deligne representation, and $\mathrm{WD}(\rho)^{F-ss}$ for its 
Frobenius semisimplification. We use the cohomological normalisation of 
class field theory: it is the isomorphism $\Art_K: K^\times \to W_K^{ab}$ which 
sends uniformizers to geometric Frobenius elements. The local Langlands correspondence $\rec_K$ is a bijection between the set of isomorphism classes of irreducible admissible $\bC[\GL_n(K)]$-modules and the set of isomorphism classes of Frobenius-semisimple Weil--Deligne representations $(r, N)$ of $W_K$ of rank $n$ over $\bC$. The Tate normalisation $\rec_{K}^T$ is defined by $\rec_K^T(\pi) = \rec_K(\pi | \cdot |^{(1-n)/2})$; it respects the action of $\Aut(\bC)$ on the set of isomorphism classes, so makes sense over any field $\Omega$ which is abstractly isomorphic to $\bC$ (such as $\overline{\bQ}_p$). 

If $P \subset \GL_n$ is a parabolic subgroup, $\Omega$ is a field, and $\pi$ is a smooth $\Omega[\GL_n(K)]$-module, then we write $\Ind_{P(K)}^{\GL_n(K)} \pi$ for the smooth induction: the set of locally constant functions $f : \GL_n(K) \to \pi$ such that for all $p \in P(K)$, $g \in \GL_n(K)$, $f(pg) = \pi(p) f(g)$. If $\Omega = \bC$ then we write $i_{P(K)}^{\GL_n(K)} \pi = \Ind_{P(K)}^{\GL_n(K)} (\pi \otimes \delta_P^{1/2})$ for the normalised induction, where $\delta_P(p) = | \det( \Ad(p) : \Lie N \to \Lie N ) |_K$. If $\sigma$ is a smooth $\bC[\GL_n(K)]$-module, then we write $r_P(\sigma) = (\sigma)_{N(K)} \otimes \delta_P^{-1/2}$ for the normalised restriction (twist of $N(K)$-coinvariants, where $N \subset P$ is the unipotent radical). 

By definition, the Steinberg representation $\St_n$ of $\GL_n(K)$ is the unique irreducible quotient of $\Ind_{B_n(K)}^{\GL_n(K)} \bC$, where $B_n \subset \GL_n$ is the upper-triangular Borel subgroup. It satisfies $\rec_K( \St_n ) = \operatorname{Sp}_n$, where $\operatorname{Sp}_n = (r, N)$ is the Weil--Deligne representation on $\bC^n = \oplus_{i=1}^n \bC e_i$ defined by $r(\Art_K(x))(e_i) = | x |^{(n+1-2i)/2} e_i$, $N e_i = e_{i-1}$ for each $i = 1, \dots, n$ (and where by convention $e_0 = 0$). 

Now let $K$ be a finite extension of $\bR$. We again write $W_K$ for the Weil group of $K$. In this case the local Langlands correspondence $\rec_K$ is a bijection between the set of isomorphism classes of infinitesimal equivalence classes of irreducible admissible representations of $\GL_n(K)$ over $\bC$ and the set of isomorphism classes of semisimple representations of $W_K$ of rank $n$ over $\bC$. We again define $\rec^T_K(\pi) = \rec_K(\pi| \cdot |^{(1-n)/2})$. These notions are reviewed in more detail in \cite[\S 2.1]{Clo14}.

If $F$ is a number field and $\pi$ is an automorphic representation of 
$\GL_n(\bA_F)$, we say that $\pi$ is regular algebraic if $\pi_\infty$ has the 
same infinitesimal character as an irreducible algebraic representation of 
$\Res_{F/\bQ}\GL_n$. Let $\bZ^n_+ \subset \bZ^n$ denote the set of tuples $(\lambda_1, \dots, \lambda_n)$ such that $\lambda_1 \geq \dots \geq \lambda_n$. We identify $\bZ^n_+$ with the set of characters of the diagonal maximal torus $T_n \subset \GL_n$ which are dominant with respect to the upper-triangular Borel subgroup $B_n \subset \GL_n$. If $\pi$ is regular algebraic, we say that it is of weight $\lambda = (\lambda_\tau)_{\tau \in \Hom(F, \bC)} \in (\bZ^n_+)^{\Hom(F, \bC)}$ if for each place $v | \infty$ of $F$, $\pi_v$ has the same infinitesimal character as the dual of the tensor product of the representations of $\GL_n(F_v)$ of highest weights $\lambda_\tau$ ($\tau \in \Hom(F_v, \bC)$). 

If $\chi : F^\times \backslash \bA_F^\times \to \bC^\times$ is a Hecke character which is regular algebraic (equivalently: algebraic), then for any isomorphism $\iota : \overline{\bQ}_p \to \bC$ there is a continuous character $r_\iota(\chi) : G_F \to \overline{\bQ}_p^\times$ which is de Rham at the places $v | p$ of $F$ and such that for each finite place $v$ of $F$, $\mathrm{WD}(r_{\chi, \iota}) \circ \Art_{F_v} = \iota^{-1} \chi|_{F_v^\times}$. Conversely, if $\chi' : G_F \to \overline{\bQ}_p^\times$ is a continuous character which is de Rham and unramified at all but finitely many places, then there exists an algebraic Hecke character $\chi : F^\times \backslash \bA_F^\times \to \bC^\times$ such that $r_\iota(\chi) = \chi'$. We write $\chi = \iota \chi'$.

Let $F^+$ be a totally real field. By definition, a RAESDC automorphic representation of $\GL_n(\bA_{F^+})$ is a pair $(\pi, \chi)$, where $\pi$ is a regular algebraic cuspidal automorphic representation of $\GL_n(\bA_{F^+})$ and $\chi : (F^+)^\times \backslash (\bA_{F^+})^\times \to \bC^\times$ is a continuous algebraic character such that $\chi_v(-1)$ is independent of the choice of place $v | \infty$ of $F^+$ and $\pi\cong \pi^\vee \otimes (\chi \circ \det)$. 

Now let $F$ be a CM field. By definition, a RAECSDC automorphic representation of $\GL_n(\bA_F)$ is a pair $(\pi, \chi)$, where $\pi$ is a regular algebraic cuspidal automorphic representation of $\GL_n(\bA_F)$ and $\chi : (F^+)^\times \backslash (\bA_{F^+})^\times \to \bC^\times$ is a continuous algebraic character such that $\chi_v(-1) = (-1)^n$ for each place $v | \infty$ of $F^+$ and $\pi \cong \pi^{c, \vee} \otimes (\chi \circ \mathbf{N}_{F / F^+} \circ \det)$. A RACSDC automorphic representation $\pi$ of $\GL_n(\bA_F)$ is a regular algebraic cuspidal automorphic representation $\pi$ such that $\pi \cong \pi^{c, \vee}$. 

Let $p$ be a prime number and let $\iota : \overline{\bQ}_p \to \bC$ be an isomorphism. Any RACSDC or RAECSDC (resp. RAESDC) automorphic representation $\pi$ has an associated Galois representation $r_\iota(\pi) : G_F \to \GL_n(\overline{\bQ}_p)$ (resp. $r_\iota(\pi) : G_{F^+} \to \GL_n(\overline{\bQ}_p)$) which satisfies $\mathrm{WD}(r_\iota(\pi)|_{G_{F_v}})^{F-ss} \cong \rec_{F_v}^T(\pi_v)$ for any place $v \nmid p \infty$ of $F$ (resp. similarly with $F$ replaced by $F^+$). This notation can be applied more generally to regular algebraic automorphic representations of $\GL_n(\bA_F)$ of the form $\pi = \pi_1 \boxplus \dots \boxplus \pi_r$, where each $\pi_i$ is cuspidal and conjugate self-dual. See \cite[Theorem 2.2]{Clo14} for more details.

	\section{Homology of smooth representations}
	
	Let $p$ be a prime, let $G$ be a topological group containing an open compact subgroup which is a pro-$p$ group containing only countably many open compact subgroups, and let $R$ be a $\bbZ[1/p]$-algebra. We write $\Mod_{sm}(R[G])$ for the category of smooth (left) $R[G]$-modules. Our main examples will be $G = \GL_n(F_v)$, where $F_v / \bbQ_p$ is a finite extension, $G = \Gamma$, where $\Gamma$ is a discrete group, and products of these. 
	
	If $X$ is a totally disconnected, locally compact space (in other words, $X$ is a Hausdorff topological space such that each point has a basis of open compact neighbourhoods), we write $\cC_c^\infty(X, R)$ for the set of locally constant and compactly supported functions $f : X \to R$. If $G$ acts on $X$ (and the action map $G \times X \to X$ is continuous), then $\cC_c^\infty(X, R) \in \Mod_{sm}(R[G])$. (In this paper we will consider only left actions on spaces and modules; thus an element $g \in G$ acts on $F \in \cC_c^\infty(X, R)$ by the formula $(g \cdot F)(h) = F(g^{-1} h)$.) If $f \in C_c^\infty(G, R)$ and $g \in G$ then we set $L_g(f)(h) = f(g^{-1} h)$ and $R_g f(h) = f ( hg )$. Thus $C_c^\infty(G, R)$ lies in $\Mod_{sm}(R[G \times G])$. When we wish to consider only one of these actions, we write $C_c^\infty(G_l, R)$ or $C_c^\infty(G_r, R) \in \Mod_{sm}(R[G])$ to indicate that we are considering only the action of $G$ either by left or by right translation. 
\begin{proposition}\label{prop_mod_G_enough_projectives}
The category $\Mod_{sm}(R[G])$ is abelian and has enough injectives and enough projectives. 
\end{proposition}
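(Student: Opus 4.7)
The plan is to prove the three assertions in turn using standard techniques from the theory of smooth representations.

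\emph{Abelian structure.} The subcategory $\Mod_{sm}(R[G])$ is closed under submodules and quotients inside the ambient category of all $R[G]$-modules: a submodule of a smooth module is smooth, and for a quotient $M/N$ the class $v + N$ is fixed by whichever open compact subgroup fixes $v$. Hence kernels and cokernels of morphisms of smooth modules, computed in the category of $R[G]$-modules, already lie in $\Mod_{sm}(R[G])$, and the inherited structure is abelian.

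\emph{Enough projectives.} Fix an open compact pro-$p$ subgroup $U_0 \subset G$ (which exists by hypothesis). For any open compact subgroup $U$ of $G$, the intersection $U \cap U_0$ is open, compact, and pro-$p$ (as a closed subgroup of $U_0$), so open compact pro-$p$ subgroups form a neighbourhood basis of the identity. For each such $U$ the functor $M \mapsto M^U$ is exact on $\Mod_{sm}(R[G])$: given a surjection $M \twoheadrightarrow M''$ and $v \in (M'')^U$, lift $v$ to $\tilde v \in M$; since $\tilde v$ is smooth, there is an open normal subgroup $V \trianglelefteq U$ fixing $\tilde v$, and $U/V$ is a finite $p$-group, so $[U:V]$ is invertible in $R$ and the averaging operator $[U:V]^{-1} \sum_{u \in U/V} u$ turns $\tilde v$ into a $U$-invariant lift of $v$. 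By Frobenius reciprocity for compact induction from an open subgroup, $\Hom_{R[G]}(\cInd_U^G R, M) \cong M^U$ functorially in $M$, so $\cInd_U^G R$ is a projective object of $\Mod_{sm}(R[G])$. For any smooth $M$ the canonical map
\[ \bigoplus_{(U,v)} \cInd_U^G R \longrightarrow M, \]
indexed by pairs $(U,v)$ with $U$ open compact pro-$p$ and $v \in M^U$, and sending the canonical generator of each summand to $v$, realises $M$ as a quotient of a projective object.

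\emph{Enough injectives.} The inclusion of $\Mod_{sm}(R[G])$ into the category of all $R[G]$-modules is exact and admits a right adjoint, the smoothing functor $N \mapsto \bigcup_U N^U$, where the union is over open compact subgroups of $G$. A right adjoint to an exact functor preserves injectives. Since the category of all $R[G]$-modules has enough injectives by standard module theory, any $M \in \Mod_{sm}(R[G])$ embeds into some injective $R[G]$-module $I$, and the smoothness of $M$ forces this embedding to factor through $\bigcup_U I^U$, which is an injective object of $\Mod_{sm}(R[G])$.

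The main substantive input is the exactness of $U$-invariants for open compact pro-$p$ subgroups $U$, which is precisely where the hypotheses that $G$ admits an open compact pro-$p$ subgroup and that $p$ is invertible in $R$ are both used; the remainder is abstract nonsense. The countability assumption on open compact subgroups does not seem to be needed for this proposition itself, but presumably plays a role elsewhere in the paper, for instance in the existence of countable projective or injective resolutions used in subsequent homological computations.
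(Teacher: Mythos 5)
Your proof is correct, and the core input is the same as the paper's: for $U\subset G$ open compact pro-$p$, the functor $M\mapsto M^U$ is exact because $[U:V]$ is a power of $p$ and hence invertible in $R$, so $\cInd_U^G R$ is projective by Frobenius reciprocity. Where you diverge is in how the generators and cogenerators are packaged. For projectives, you surject onto $M$ directly from $\bigoplus_{(U,v)}\cInd_U^G R$, which is the most economical route and, as you observe, does not use the countability hypothesis. The paper instead proves the stronger statement that the regular representation $\cC_c^\infty(G_r,R)$ is projective, by exhibiting it as a direct summand of a countable sum $\bigoplus_i \cC_c^\infty(K_i\backslash G,R)$ via the idempotents $e_{K_i}$ — this is exactly where the countability of open compact subgroups enters. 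That extra step is not wasted: it is what makes the proofs of Proposition \ref{prop_restriction_of_projective} (restriction to a closed subgroup preserves projectives) and of the acyclicity of $C_c^\infty(G_1,R)$ for the derived tensor product go through later, since those arguments decompose $\cC_c^\infty(G_r,R)$ or $\cC_c^\infty(K\backslash G,R)$ explicitly as $H$-modules. For injectives, your smoothing-functor argument and the paper's use of $\Ind_1^G N$ for $N$ an injective $R$-module are two packagings of the same adjunction (indeed $\Ind_1^G N$ is the smoothing of $\Hom_R(R[G],N)$), and both are fine.
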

\begin{proof}
It is a standard fact that $\Mod_{sm}(R[G])$ is abelian. To see that this category has enough injectives, we recall the statement of Frobenius reciprocity: if $H \leq G$ is a closed subgroup, then there is a smooth induction functor \[ \Ind_H^G : \Mod_{sm}(R[H]) \to \Mod_{sm}(R[G]),\] sending a smooth $H$-module $M$ to the set of functions $f : G \to M$ that satisfy $f(hg) = h f(g)$ for all $h \in H$, $g \in G$, and that are invariant under right translation by some open compact subgroup of $G$. The functor $\Ind_H^G$ has an exact left adjoint, namely restriction to $H$. In particular, if $M \in \Mod_{sm}(R[G])$, we choose an embedding $M \to N$ in an injective $R$-module. Then $\Ind_1^G N$ is an injective object of $\Mod_{sm}(R[G])$ and there is, by adjunction, an embedding $M \to \Ind_1^G N$.

We next show that the category has enough projectives. Let $G_r$ denote $G$, with $G$ acting on itself by right translation. We claim that $\cC_c^\infty(G_r, R)$ is a projective object of $\Mod_{sm}(R[G])$. To see this, fix a decreasing sequence $K_1 \geq K_2 \geq \dots$ of open compact pro-$p$ subgroups of $G$ with trivial intersection. Let 
\[ \int_{g \in G} f(g) \, d\mu : C_c^\infty(R) \to R \]
denote the $R$-linear map which, for any $g \in G$ and open compact subgroup $K \leq K_1$, sends the indicator function $\mathbf{1}_{gK}$ to $[K : K \cap K_1] [ K_1 : K \cap K_1]^{-1}$. (If $R = \bR$, then this is integration against a Haar measure -- see \cite{Vig96}.) We make $\cC_c^\infty(G, R)$ into an $R$-algebra by the formula
\[ (f_1 \cdot f_2)(g) = \int_{x \in G} f_1(x) f_2(x^{-1} g) \, dx. \]
The algebra $\cC_c^\infty(G, R)$ contains the idempotent elements $e_{K_i} = [K_1 : K_i]^{-1} \mathbf{1}_{K_i}$, which satisfy $e_{K_i} e_{K_j} = e_{K_i}$ if $i \leq j$. 

There are compact induction functors 
\[ \cInd_{K_i}^G : \Mod_{sm}(R[K_i]) \to \Mod_{sm}(R[G]), \]
sending a smooth $K_i$-module $M$ to the set of compactly supported and locally constant functions $f : G \to M$ satisfying $f(kg) = k f(g)$ for all $k \in K_i$, $g \in G$. This functor has an exact right adjoint, namely restriction to $K_i$. In particular, $\cC^\infty_c(K_i \backslash G, R) = \cInd_{K_i}^G R$ is projective, since the functor of passage to $K_i$-invariants is exact. 

To see that $\cC_c^\infty(G_r, R)$ is projective, we note that there is a surjective map $\oplus_{i=1}^\infty \cC^\infty_c(K_i \backslash G, R) \to \cC_c^\infty(G_r, R)$, $(f_i)_i \mapsto \sum_i f_i$. This map has a splitting given by the formula 
\[ f \mapsto (e_{K_1} f, e_{K_2} f - e_{K_1} f, e_{K_3} f - e_{K_2} f, \dots). \]
Thus $\cC_c^\infty(G_r, R)$ is a direct summand of a projective object, and is therefore itself projective. To see that our category has enough projectives, take $M \in \Mod_{sm}(R[G])$ and choose a free $R$-module $F$ and a surjection $p : F \to M$ of $R$-modules. Then the map 
\[ \cC_c^\infty(G_r, R) \otimes_R F \to M \]
\[ (f, x) \mapsto \int_{g \in G} f(g) \cdot p(x) \, dg \]
is a surjective morphism of smooth $R[G]$-modules. This shows that $\Mod_{sm}(R[G])$ has enough projectives. 
\end{proof}
\begin{proposition}\label{prop_restriction_of_projective}
Let $H$ be a closed subgroup of $G$. Then the functor $\Mod_{sm}(R[G]) \to \Mod_{sm}(R[H])$ given by restriction to $H$ preserves projectives. 
\end{proposition}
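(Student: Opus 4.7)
The plan is to use the explicit construction of projectives in $\Mod_{sm}(R[G])$ given in the proof of Proposition \ref{prop_mod_G_enough_projectives}. That proof exhibits every projective object as a direct summand of a module of the form $\cC^\infty_c(G_r, R) \otimes_R F$ with $F$ a free $R$-module, and further displays $\cC^\infty_c(G_r, R)$ itself as a direct summand of $\bigoplus_{i \geq 1} \cInd_{K_i}^G R$ for a decreasing sequence $K_1 \geq K_2 \geq \cdots$ of open compact pro-$p$ subgroups of $G$ with trivial intersection. Since restriction to $H$ is additive, commutes with $(-) \otimes_R F$, and since direct summands and arbitrary direct sums of projectives are projective, it suffices to show that each restricted module $\cInd_{K_i}^G R \big|_H$ is projective in $\Mod_{sm}(R[H])$.

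For this I would use a Mackey-style decomposition. The module $\cInd_{K_i}^G R$ is naturally identified with $\cC^\infty_c(K_i \backslash G, R)$, the $R$-linearisation of the discrete right $G$-set $K_i \backslash G$. The $H$-orbits on $K_i \backslash G$ are parametrised by the double cosets in $K_i \backslash G / H$, and the orbit of $K_i g$ is isomorphic, as a right $H$-set, to $(g^{-1} K_i g \cap H) \backslash H$, with stabilizer $g^{-1} K_i g \cap H$ at this point. This yields an $H$-equivariant decomposition
\[ \cInd_{K_i}^G R \big|_H \; \cong \; \bigoplus_{K_i g H \in K_i \backslash G / H} \cInd_{g^{-1} K_i g \cap H}^H R. \]
Each subgroup $g^{-1} K_i g \cap H$ is open compact in $H$ and pro-$p$, being a closed subgroup of the pro-$p$ group $g^{-1} K_i g$.

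The closed subgroup $H$ inherits the standing hypotheses of the section (an open compact pro-$p$ subgroup $U \leq G$ with countably many open subgroups intersects $H$ in a subgroup of the same type), so Proposition \ref{prop_mod_G_enough_projectives} applies to $\Mod_{sm}(R[H])$; in particular, the argument used there — that passage to invariants under an open compact pro-$p$ subgroup is exact on smooth $R$-modules when $R$ is a $\bZ[1/p]$-algebra — shows that each summand $\cInd_{g^{-1} K_i g \cap H}^H R$ is projective. Direct sums of projectives are projective, so $\cInd_{K_i}^G R \big|_H$ is projective, and the earlier reductions conclude the proof. I do not expect a serious obstacle; the only delicate point is the Mackey-style decomposition, but this presents no real difficulty because $K_i \backslash G$ is a discrete $G$-set and the whole analysis reduces to orbit-counting on this discrete set.
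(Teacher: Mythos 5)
Your proposal is correct and follows essentially the same route as the paper: reduce via the explicit projectives $\cC_c^\infty(G_r,R)$ and $\cInd_{K_i}^G R$ from Proposition \ref{prop_mod_G_enough_projectives}, then apply the double coset (Mackey) decomposition $\cInd_{K}^G R|_H \cong \oplus_{KgH} \cInd_{g^{-1}Kg\cap H}^H R$ and note that each $g^{-1}Kg\cap H$ is open, compact and pro-$p$ in $H$. The paper's proof is the same argument phrased in terms of $\cC_c^\infty(K\backslash G,R)$ and the $H$-invariant partition $G=\sqcup_j Kg_jH$.
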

\begin{proof}
The proof of Proposition \ref{prop_mod_G_enough_projectives} shows that any projective object of $\Mod_{sm}(R[G])$ is a direct summand of $\oplus_{i \in I} \cC_c^\infty(G_r, R)$ for some index set $I$. It's therefore enough to show that $\cC_c^\infty(G_r, R)$ is projective in $\Mod_{sm}(R[H])$. The proof of Proposition \ref{prop_mod_G_enough_projectives} also shows that $\cC_c^\infty(G_r, R)$ is a direct summand of $\oplus_{i \in I} \cC_c^\infty(K_i \backslash G, R)$ for some index set $I$ and collection $(K_i)_{i \in I}$ of open, compact, pro-$p$ subgroups of $G$. It's therefore enough to show that $\cC_c^\infty(K \backslash G, R)$ is projective in $\Mod_{sm}(R[H])$ whenever $K$ is an open, compact, pro-$p$ subgroup of $G$.

Let $(g_j)_{j \in J}$ be a set of representatives for the double cosets $K \backslash G / H$. We claim that there is an isomorphism 
\[ \cC_c^\infty(K \backslash G, R) \cong \oplus_{j \in J} \cC_c^\infty( (g_j^{-1} K g_j \cap H) \backslash H, R) \]
in $\Mod_{sm}(R[H])$. Since each group $g_j^{-1} K g_j \cap H$ is an open, compact, pro-$p$ subgroup of $H$, this will imply that $\cC_c^\infty(K \backslash G, R)$ is indeed projective in $\Mod_{sm}(R[H])$. To see the claim, we note that there is an $H$-invariant decomposition $G = \sqcup_{j \in J} K g_j H$, and each $K g_j H$ is an open subset of $G$. We therefore need to show that there is an $H$-equivariant homeomorphism $K \backslash K g_j H \cong g_j^{-1} K g_j \cap H \backslash H$. This is elementary. 
\end{proof}
We write $\mathbf{D}(R[G])$ for the derived category of cochain complexes in $\Mod_{sm}(R[G])$, and $\mathbf{D}^-(R[G])$ for its subcategory of complexes which are cohomologically bounded above, i.e. those $A^\bullet$ such that $H^i(A^\bullet) = 0$ for all $i$ sufficiently large.  Since $\Mod_{sm}(R[G])$ has enough projectives, any object of $\mathbf{D}^-(R[G])$ is isomorphic to a bounded above complex of projectives. 

If $G_1, G_2, G_3$ are groups satisfying our conditions, then we define a bifunctor
\[ - \otimes_{R[G_2]} - : \Mod_{sm}(R[G_1 \times G_2]) \times  \Mod_{sm}(R[G_2 \times G_3]) \to \Mod_{sm}(R[G_1 \times G_3]) \]
by the formula 
\[ M \otimes_{R[G_2]} N = (M \otimes_R N) / \langle g_2 m \otimes g_2 n - m \otimes n \mid g_2 \in G_2, m \in M, n \in N \rangle. \]
Given complexes $A^\bullet \in \mathbf{D}^-(R[G_1 \times G_2]), B^\bullet \in \mathbf{D}^-(R[G_2 \times G_3])$, we define
\[ A^\bullet \otimes^{\bL}_{R[G_2]} B^\bullet \in \mathbf{D}^-(R[G_1 \times G_3]) \]
as follows: choose quasi-isomorphisms $P^\bullet \to A^\bullet$, $Q^\bullet \to B^\bullet$, where $P^\bullet, Q^\bullet$ are bounded-above complexes of projective objects. Then $A^\bullet \otimes^{\bL}_{R[G_2]} B^\bullet = \operatorname{Tot}(P^\bullet \otimes_{R[G_2]} Q^\bullet)$. This defines the functor of total tensor product: 
\begin{proposition}
\begin{enumerate}
    \item $- \otimes^{\bL}_{R[G_2]} -$ is a functor 
    \[ \mathbf{D}^-(R[G_1 \times G_2]) \times \mathbf{D}^-(R[G_2 \times G_3]) \to \mathbf{D}^-(R[G_1 \times G_3]). \]
    \item For $A^\bullet \in \mathbf{D}^-(R[G_1 \times G_2])$ fixed, the functor
    \[ A^\bullet \otimes^{\bL} - : \mathbf{D}^-(R[G_2 \times G_3]) \to \mathbf{D}^-(R[G_1 \times G_3]) \]
    may be identified with the left-derived functor of the functor 
    \[ \mathbf{K}^-(R[G_2 \times G_3]) \to \mathbf{K}^-(R[G_1 \times G_3]), \]
    \[ B^\bullet \mapsto \operatorname{Tot}(A^\bullet  \otimes_{R[G_2]} B^\bullet). \]
    \item Consider $C_c^\infty(G_1, R)$ as an object of $\Mod_{sm}(R[G_1 \times G_1])$ by the formula $((g, h) \cdot f)(x) = f(g^{-1} x h)$. Then the functor 
    \[ C_c^\infty(G_1, R) \otimes^{\bL}_{R[G_1]} - : \mathbf{D}^-(R[G_1 \times G_2]) \to \mathbf{D}^-(R[G_1 \times G_2]) \]
    is naturally isomorphic to the identity functor. In particular for any $M \in \Mod_{sm}(R[G_1 \times G_2])$, there are natural isomorphisms
    \[ C_c^\infty(G_1, R) \otimes^{\bL}_{R[G_1]} M \cong C_c^\infty(G_1, R) \otimes_{R[G_1]} M \cong M \]
    in $\mathbf{D}^-(R[G_1 \times G_2])$.
\end{enumerate}
\end{proposition}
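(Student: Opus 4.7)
Parts (1) and (2) are essentially formal derived-category bookkeeping, and the real content sits in (3). For (1), given $A^\bullet \in \mathbf{D}^-(R[G_1 \times G_2])$ and $B^\bullet \in \mathbf{D}^-(R[G_2 \times G_3])$, Proposition \ref{prop_mod_G_enough_projectives} supplies bounded-above projective resolutions $P^\bullet \to A^\bullet$ and $Q^\bullet \to B^\bullet$. I would invoke the standard fact that a bounded-above complex of projectives is $K$-flat to conclude that $\operatorname{Tot}(P^\bullet \otimes_{R[G_2]} Q^\bullet)$ is independent of the choice of resolutions up to canonical isomorphism in $\mathbf{D}^-(R[G_1 \times G_3])$; cohomological boundedness above is automatically preserved under the total complex of such a double complex. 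Part (2) is then immediate from the construction: fixing $A^\bullet$ and varying only a projective resolution of $B^\bullet$ produces exactly the left-derived functor of $B^\bullet \mapsto \operatorname{Tot}(A^\bullet \otimes_{R[G_2]} B^\bullet)$, and agreement with the definition in (1) again follows from $K$-flatness of $Q^\bullet$ via the induced quasi-isomorphism $\operatorname{Tot}(P^\bullet \otimes_{R[G_2]} Q^\bullet) \to \operatorname{Tot}(A^\bullet \otimes_{R[G_2]} Q^\bullet)$.

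The substance lies in (3). The key observation is that, when we forget the first-coordinate action, $C_c^\infty(G_1, R)$ becomes exactly the object $C_c^\infty((G_1)_r, R)$ already shown to be projective in $\Mod_{sm}(R[G_1])$ during the proof of Proposition \ref{prop_mod_G_enough_projectives}. Projectivity implies flatness over $R[G_1]$, so the derived tensor product $C_c^\infty(G_1, R) \otimes^{\bL}_{R[G_1]} M$ coincides with the ordinary one. It thus suffices to construct a natural $R[G_1 \times G_2]$-equivariant isomorphism
\[
\phi : C_c^\infty(G_1, R) \otimes_{R[G_1]} M \liso M.
\]
My candidate is the convolution map $\phi(f \otimes m) = \int_{G_1} f(g) \, (g \cdot m) \, d\mu(g)$, with $\mu$ the measure constructed in the proof of Proposition \ref{prop_mod_G_enough_projectives}; a short change of variable using the left-invariance of $\mu$ shows that $\phi$ kills the relations defining the tensor product over $R[G_1]$ and that it is equivariant for both the residual first-coordinate $G_1$-action on $C_c^\infty(G_1, R)$ and the $G_2$-action on $M$.

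The main obstacle is showing that $\phi$ is a bijection. Surjectivity is straightforward: for any pro-$p$ open compact subgroup $K \subseteq G_1$ fixing $m \in M$, one has $\phi(\mathbf{1}_K \otimes m) = \mu(K) m$, and $\mu(K)$ is invertible because $R$ is a $\bbZ[1/p]$-algebra. For injectivity I would reduce to fixed level, exploiting the decomposition $C_c^\infty(G_1, R) = \varinjlim_K e_K \cdot C_c^\infty(G_1, R) = \varinjlim_K C_c^\infty(K \backslash G_1, R)$ over pro-$p$ open compact $K \subseteq G_1$, and identifying $C_c^\infty(K \backslash G_1, R) \otimes_{R[G_1]} M$ with $M^K$ through $\phi$ (this is essentially the content of the fact that the Hecke algebra of $(G_1, K)$ acts by the identity on $K$-invariants of a smooth representation, combined with Frobenius reciprocity for compact induction). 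Passing to the direct limit then recovers all of $M = \varinjlim_K M^K$. What remains is bookkeeping with the left/right translation conventions and the normalisation of $\mu$, which should be routine.
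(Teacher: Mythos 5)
Your argument is correct, and for parts (1) and (2) it coincides in substance with the paper's: the paper establishes well-definedness of the bifunctor via the convergent spectral sequence $E_2^{p,q} = H^q(A^\bullet) \otimes_{R[G_2]} Q^p$, which is precisely the bounded-above incarnation of the $K$-flatness you invoke — both rest on the fact that tensoring with a projective (hence flat) object of $\Mod_{sm}(R[G_2])$ is exact. For part (3), your map $\phi$, the acyclicity argument via Proposition \ref{prop_restriction_of_projective} (restriction to the middle copy of $G_1$ preserves projectives, and $C_c^\infty(G_{1,r},R)$ is projective), and the surjectivity argument $\phi(\mathbf{1}_K \otimes m) = \mu(K)\,m$ all match the paper. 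The one genuinely different step is injectivity. The paper proves it by an explicit element computation: it writes an element of the kernel as $\sum_j \mathbf{1}_{Kg_j} \otimes m_j$, passes to an open normal subgroup $K_1 \leq K$ fixing the finitely many vectors $k_l g_j m_j$, and exhibits the element as a sum of terms $\bigl((k_l g_j)^{-1} - 1\bigr)\bigl(\mathbf{1}_{K_1} \otimes k_l g_j m_j\bigr)$, which vanish in the coinvariants. You instead reduce to finite level, identifying $C_c^\infty(K \backslash G_1, R) \otimes_{R[G_1]} M$ with $M^K$ (via $\cInd_K^{G_1} R \otimes_{R[G_1]} M \cong M_K \cong M^K$, using that $K$ is pro-$p$ and $p$ is invertible in $R$), and then pass to the filtered colimit over $K$; the compatibility of these finite-level identifications is automatic because they are all restrictions of the globally defined $\phi$. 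Your route is more structural and avoids the coset bookkeeping, at the cost of importing the standard fact about compact induction and coinvariants; the paper's is self-contained at the level of elements. Both are complete modulo routine verification of the left/right translation conventions.
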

\begin{proof}
    This is all standard when the category of smooth $R[G_1 \times G_2]$-modules is replaced by the category of all $R[G_1 \times G_2]$-modules. The same arguments apply equally in our case. We give the details, following \cite[\S 10.6]{Wei94}. First, let $A^\bullet \in \mathbf{D}^-(R[G_1 \times G_2])$. We consider the functor $\mathbf{L}^-(A^\bullet \otimes_{R[G_2]} -) : \mathbf{D}^-(R[G_2 \times G_3]) \to  \mathbf{D}^-(R[G_1 \times G_3])$ which is the left-derived functor of the functor $B^\bullet \mapsto \operatorname{Tot}(A^\bullet \otimes_{R[G_2]} B^\bullet)$. Thus by definition, $\mathbf{L}^-(A^\bullet \otimes_{R[G_2]} -)(B^\bullet) = \operatorname{Tot}(A^\bullet \otimes_{R[G_2]} Q^\bullet)$, where $Q^\bullet \to B^\bullet$ is a quasi-isomorphism from a bounded above complex of projective objects, and by the general machinery of derived functors, this is canonically independent of the choice of $Q^\bullet$.
    
    If $A^\bullet \to C^\bullet$ is a morphism in $\mathbf{K}^-(R[G_1 \times G_2])$, then there is an induced morphism $\operatorname{Tot}(A^\bullet \otimes_{R[G_2]} Q^\bullet) \to \operatorname{Tot}(C^\bullet \otimes_{R[G_2]} Q^\bullet)$. This is a quasi-isomorphism if $A^\bullet \to C^\bullet$ is a quasi-isomorphism, as follows from the existence of the convergent spectral sequence
    \[ E_2^{p, q} = H^q(A^\bullet) \otimes_{R[G_2]} Q^p \Rightarrow H^{p+q}(\operatorname{Tot}(A^\bullet \otimes_{R[G_2]} Q^\bullet)). \]
    The first two parts of the proposition now follow. 
    
    The final part says that $C_c^\infty(G_1, R)$ is acyclic for the functor $\mathbf{L}^-(- \otimes_{R[G_1]} M)$ and that $C_c^\infty(G_1, R) \otimes_{R[G_1]} M \cong M$. We take these in turn. The restriction functor $\Mod_{sm}(R[G_1 \times G_1]) \to \Mod_{sm}([G_1])$ preserves projectives (by Proposition \ref{prop_restriction_of_projective}), so for $A^\bullet \in \mathbf{D}^-(G_1 \times G_2)$, the image of $\mathbf{L}^-(A^\bullet \otimes_{R[G_1]} M)$ in $\mathbf{D}^-(R)$ may be computed using a resolution of $A^\bullet$ in $\Mod_{sm}([G_1])$. In particular, since $C_c^\infty(G_{1, r}, R)$ is projective, $C_c^\infty(G_1, R)$ is acyclic. We next write down an isomorphism $p: C_c^\infty(G_1, R) \otimes_{R[G_1]} M \to M$ in $\Mod_{sm}(R[G_1 \times G_2])$. It is the map 
    \[ p(f \otimes m) = \int_{g \in G_1} f(g) g m\, dg. \]
    It is easy to check that this map is well-defined and satisfies $p(L_h(f) \otimes m) = h p(f \otimes m)$. We need to check that $p$ is an isomorphism. 
    
    It is surjective since if $K$ is an open compact pro-$p$ subgroup of $G_1$ which fixes $m$, then $p(e_{K} \otimes m) = m$.     To see that it is injective, suppose that $p(\sum_i f_i \otimes v_i) = 0$. Choose a sufficiently small open compact pro-$p$ subgroup $K$ of $G_1$ so that each $f_i$ is left $K$-invariant; then we can rewrite $\sum_i f_i \otimes v_i = \sum_j \mathbf{1}_{K g_j} \otimes m_j$ for some elements $g_j \in G_1$ and $m_j \in M$. Choose an open normal subgroup $K_1 \leq K$ such that for each $j$, $g_j m_j$ is fixed by $K_1$, and choose coset representatives $k_l$ so that $K = \sqcup_k K_1 k_l$.  Then we can further rewrite $\sum_j \mathbf{1}_{K g_j} \otimes m_j = \sum_{j, l} \mathbf{1}_{K_1 k_l g_j} \otimes m_j$. Since $K_1$ is normal in $K$, $K_1$ fixes each vector $k_l g_j m_j$, and so we have $p( \mathbf{1}_{K_1 k_l g_j} \otimes m_j ) = \operatorname{vol}(K_1) k_l g_j m_j$, and therefore $\sum_{l, j} k_l g_j m_j = 0$, and
    \[ \sum_i f_i \otimes v_i = \sum_{l, j} (  \mathbf{1}_{K_1 k_l g_j} \otimes m_j - \mathbf{1}_{K_1} \otimes k_l g_j m_j ) = \sum_{k, j} ((k_l g_j)^{-1} - 1) (\mathbf{1}_{K_1} \otimes k_l g_j m_j). \]
    This vector is zero in $C_c^\infty(G_1, R) \otimes_{R[G_1]} M$, as required. 
\end{proof}
One important special case is when the groups $G_1, G_3$ are trivial, and $G_2 = G$ (say), in which case we have a functor $\mathbf{D}^-(R[G]) \times \mathbf{D}^-(R[G]) \to \mathbf{D}^-(R)$. We remark that if $X, Y \in \Mod_{sm}(R[G])$ then $H^i(X \otimes^{\bL}_{R[G]} Y)$ can be non-zero only if $i \leq 0$. It is natural to define $H_i(G, X) = H^{-i}(X \otimes^{\bL}_{R[G]} R)$. This gives the homology groups used in the introduction to this paper (for the remainder of the paper, we stick to cohomology groups only). 

The total tensor product is associative:
\begin{proposition}
If $G_1, G_2, G_3, G_4$ are groups satisfying our conditions then for any $A^\bullet \in \mathbf{D}^-(R[G_1 \times G_2]), B^\bullet \in \mathbf{D}^-(R[G_2 \times G_3])$, and $C^\bullet \in \mathbf{D}^-(R[G_3 \times G_4])$ there is a natural isomorphism
\[ (A^\bullet \otimes^{\bL}_{R[G_2]} B^\bullet) \otimes^{\bL}_{R[G_3]} C^\bullet \cong A^\bullet \otimes^{\bL}_{R[G_2]} (B^\bullet \otimes^{\bL}_{R[G_3]} C^\bullet) \]
in $\mathbf{D}^-(R[G_1 \times G_4])$.
\end{proposition}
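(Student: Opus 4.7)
The approach is the classical construction of the associativity isomorphism for derived tensor products via projective resolutions, adapted to the smooth category. The plan is to choose bounded-above projective resolutions $P^\bullet \to A^\bullet$, $Q^\bullet \to B^\bullet$, $S^\bullet \to C^\bullet$ in the respective categories (which exist by Proposition \ref{prop_mod_G_enough_projectives}) and identify both sides of the desired isomorphism with the totalization of the triple complex $P^\bullet \otimes_{R[G_2]} Q^\bullet \otimes_{R[G_3]} S^\bullet$. At the level of underived tensor products of smooth modules, the associativity $(M \otimes_{R[G_2]} N) \otimes_{R[G_3]} L \cong M \otimes_{R[G_2]} (N \otimes_{R[G_3]} L)$ is formal and functorial: both sides are naturally the quotient of $M \otimes_R N \otimes_R L$ by the relations imposed by the $G_2$ and $G_3$ actions.

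For the left-hand side, part 2 of the previous proposition gives
\[ (A^\bullet \otimes^{\bL}_{R[G_2]} B^\bullet) \otimes^{\bL}_{R[G_3]} C^\bullet \cong \operatorname{Tot}\bigl((A^\bullet \otimes^{\bL}_{R[G_2]} B^\bullet) \otimes_{R[G_3]} S^\bullet\bigr), \]
derived by resolving the second variable. By Proposition \ref{prop_restriction_of_projective}, each $S^i$ is projective (hence flat) over $R[G_3]$, so the functor $\operatorname{Tot}(-\otimes_{R[G_3]} S^\bullet)$ sends quasi-isomorphisms of bounded-above complexes in $\Mod_{sm}(R[G_1 \times G_3])$ to quasi-isomorphisms, by a standard spectral sequence argument. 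Proposition \ref{prop_restriction_of_projective} applied to the factor $G_2$ also shows that each $P^i$ is flat over $R[G_2]$, which gives the ``balanced'' form of part 2: $A^\bullet \otimes^{\bL}_{R[G_2]} B^\bullet$ may be computed by resolving either variable, so in particular equals $\operatorname{Tot}(P^\bullet \otimes_{R[G_2]} B^\bullet)$. Substituting this representative and then rearranging the double totalization identifies the left-hand side with $\operatorname{Tot}(P^\bullet \otimes_{R[G_2]} B^\bullet \otimes_{R[G_3]} S^\bullet)$.

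For the right-hand side, a parallel argument applies: resolve the left variable of the outer $\otimes^{\bL}_{R[G_2]}$ by $P^\bullet$, then substitute $\operatorname{Tot}(B^\bullet \otimes_{R[G_3]} S^\bullet)$ for $B^\bullet \otimes^{\bL}_{R[G_3]} C^\bullet$, which is justified by the flatness of each $P^i$ over $R[G_2]$. Underived associativity and rearrangement of totalizations again yield $\operatorname{Tot}(P^\bullet \otimes_{R[G_2]} B^\bullet \otimes_{R[G_3]} S^\bullet)$, and composing gives a natural isomorphism in $\mathbf{D}^-(R[G_1 \times G_4])$; naturality in $A^\bullet, B^\bullet, C^\bullet$ follows from the fact that projective resolutions are functorial up to quasi-isomorphism. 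The main point requiring care is the balanced property of the derived tensor product, namely that $A^\bullet \otimes^{\bL}_{R[G_2]} B^\bullet$ can be computed by resolving either variable; in the smooth category this ultimately rests on Proposition \ref{prop_restriction_of_projective}, which guarantees that projective objects in $\Mod_{sm}(R[G_i \times G_j])$ remain flat after restriction to either factor subgroup.
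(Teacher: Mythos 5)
Your proof is correct and follows essentially the same route as the paper, which replaces $A^\bullet$, $B^\bullet$, $C^\bullet$ by bounded-above complexes of projectives and then invokes the associativity of the totalization of the resulting triple complex (the cited Stacks Project remark). The only difference is one of bookkeeping: where you justify the intermediate substitutions via the balanced property of $\otimes^{\bL}$ (flatness of restricted projectives, Proposition \ref{prop_restriction_of_projective}), the paper's one-line argument implicitly rests on the fact, verified later in the proof of Proposition \ref{prop_homology_change_of_coefficients}, that the tensor product $P \otimes_{R[G_2]} Q$ of projectives is again projective, so that the iterated totalizations are themselves admissible representatives.
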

\begin{proof}
After replacing $A^\bullet, B^\bullet$, and $C^\bullet$ by complexes of projective objects, we can apply \cite[\href{https://stacks.math.columbia.edu/tag/08BI}{Remark 08BI}]{stacks-project}.
\end{proof}
We also have a change of ring functor. If $G$ is a group satisfying our conditions, $S$ is a (commutative) $R$-algebra, and $M \in \Mod_{sm}(R[G])$, then $M \otimes_R S \in \Mod_{sm}(S[G])$. We write $- \otimes^{\bL}_R S$ for the left-derived functor of $- \otimes_R S$.
\begin{proposition}\label{prop_homology_change_of_coefficients}
If $G_1, G_2, G_3$ are groups satisfying our assumptions and $S$ is an $R$-algebra, then for any $A^\bullet \in \mathbf{D}^-(R[G_1 \times G_2]), B^\bullet \in \mathbf{D}^-(R[G_2 \times G_3])$, there is a natural isomorphism
\[ (A^\bullet \otimes^{\bL}_{R[G_2]} B^\bullet) \otimes^{\bL}_R S \cong (A^\bullet \otimes^{\bL}_R S) \otimes^{\bL}_{S[G_2]} (B^\bullet \otimes^{\bL}_R S) \]
in $\mathbf{D}^-(S[G_1 \times G_3])$.
\end{proposition}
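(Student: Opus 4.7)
The plan is to run the standard base-change-for-derived-tensor-products argument in our smooth setting. Two structural facts about projective smooth modules play the role of the usual flatness inputs, and I would establish these first. By the proof of Proposition \ref{prop_mod_G_enough_projectives}, every projective object in $\Mod_{sm}(R[G])$ is a direct summand of a direct sum of copies of $\cC_c^\infty(G_r, R)$; writing $\cC_c^\infty(G_r, R) = \varinjlim_K \cC_c^\infty(K \backslash G, R)$ as a filtered colimit of free $R$-modules shows that any such projective is $R$-flat, while the obvious isomorphism $\cC_c^\infty(G_r, R) \otimes_R S \cong \cC_c^\infty(G_r, S)$ shows that $- \otimes_R S$ carries projectives in $\Mod_{sm}(R[G])$ to projectives in $\Mod_{sm}(S[G])$.

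Now fix a quasi-isomorphism $P^\bullet \to A^\bullet$ in $\mathbf{K}^-(R[G_1 \times G_2])$ with $P^\bullet$ termwise projective. By Proposition \ref{prop_restriction_of_projective}, $P^\bullet$ remains termwise projective over $R[G_2]$, so $P^\bullet \otimes_{R[G_2]} -$ preserves quasi-isomorphisms and computes $A^\bullet \otimes^{\bL}_{R[G_2]} -$ on all of $\mathbf{D}^-(R[G_2 \times G_3])$. Picking an $R$-flat resolution $F^\bullet \to S$ and exploiting that $F^\bullet$ carries trivial $G_2$-action, the termwise identity $(M \otimes_{R[G_2]} N) \otimes_R F \cong M \otimes_{R[G_2]} (N \otimes_R F)$ yields
\[ (A^\bullet \otimes^{\bL}_{R[G_2]} B^\bullet) \otimes^{\bL}_R S \cong P^\bullet \otimes_{R[G_2]} (B^\bullet \otimes^{\bL}_R S). \]
On the other hand, the two structural facts above imply that $P^\bullet \otimes_R S$ represents $A^\bullet \otimes^{\bL}_R S$ (using $R$-flatness) and is termwise projective over $S[G_1 \times G_2]$, hence over $S[G_2]$ by the $S[\cdot]$-analogue of Proposition \ref{prop_restriction_of_projective}, so that
\[ (A^\bullet \otimes^{\bL}_R S) \otimes^{\bL}_{S[G_2]} (B^\bullet \otimes^{\bL}_R S) \cong (P^\bullet \otimes_R S) \otimes_{S[G_2]} (B^\bullet \otimes^{\bL}_R S). \]

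The proof is then completed by the termwise underived identity $M \otimes_{R[G_2]} N \cong (M \otimes_R S) \otimes_{S[G_2]} N$, valid whenever $N$ carries a compatible $S[G_2 \times G_3]$-structure (both sides are natural models for $(M \otimes_R N)_{G_2}$). The main delicate point is precisely the passage from underived to derived tensor products in the various module categories: each underived tensor in the argument computes the correct derived object only once one knows the relevant complexes are projective or $R$-flat over the appropriate ring, and the two structural facts about projective smooth modules are arranged to license this bookkeeping at each step.
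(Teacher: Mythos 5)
Your argument is correct, and it reaches the statement by a genuinely different (though closely related) route from the paper's. The paper resolves both $A^\bullet$ and $B^\bullet$ by projectives and then proves one key lemma: for $P, Q$ projective, $P \otimes_{R[G_2]} Q$ is projective in $\Mod_{sm}(R[G_1 \times G_3])$, by reducing to the explicit isomorphism $C_c^\infty(G_1 \times G_2, R) \otimes_{R[G_2]} C_c^\infty(G_2 \times G_3, R) \cong C_c^\infty(G_1 \times G_2 \times G_3, R)$. This makes $\operatorname{Tot}(P^\bullet \otimes_{R[G_2]} Q^\bullet)$ simultaneously a model for the inner derived tensor product and a complex acyclic for $- \otimes^{\bL}_R S$, so everything collapses to the right-exactness identity $(P \otimes_{R[G_2]} Q) \otimes_R S \cong (P \otimes_R S) \otimes_{S[G_2]} (Q \otimes_R S)$. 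You instead isolate $R$-flatness of smooth projectives and preservation of projectivity under $- \otimes_R S$, and chain one-sided resolutions via balancing; this buys you never having to identify the tensor product of two projectives, at the cost of more bookkeeping. Two points you should make explicit to close the argument fully: (i) the exactness of $P \otimes_{R[G_2]} -$ for $P$ projective (what "termwise projective, hence preserves quasi-isomorphisms" really rests on) follows from the description of projectives as summands of sums of the modules $\cC_c^\infty(G_r, R)$ and $\cC_c^\infty(K \backslash G, R)$ together with the isomorphisms $C_c^\infty(G, R) \otimes_{R[G]} M \cong M$ and $\cInd_K^G R \otimes_{R[G]} M \cong M^K$; and (ii) the identification of $- \otimes^{\bL}_R S$, which the paper defines via projective resolutions in the smooth category, with $- \otimes_R F^\bullet$ for an $R$-flat resolution $F^\bullet \to S$ itself depends on your fact that smooth projectives are $R$-flat, so that fact must come first in the logical order.
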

\begin{proof}
After replacing $A^\bullet$, $B^\bullet$ by complexes of projective objects, it is enough to check that if $P \in \Mod_{sm}(R[G_1 \times G_2])$ and $Q \in \Mod_{sm}(R[G_2 \times G_3])$ are projective then $P \otimes_{R[G_2]} Q$ is a projective object of $\Mod_{sm}(R[G_1 \times G_3])$ and moreover that there is a natural isomorphism $(P \otimes_{R[G_2]} Q) \otimes_R S \cong (P \otimes_R S) \otimes_{S[G_2]} (Q \otimes_R S)$. The existence of the latter isomorphism follows from the right-exactness of $-\otimes_R S$; to show that $P \otimes_{R[G_2]} Q$ is projective, we can assume that $P = C_c^\infty(G_1 \times G_2, R)$ and $Q = C_c^\infty(G_2 \times G_3, R)$, in which case there is an isomorphism $P \otimes_{R[G_2]} Q \cong C_c^\infty(G_1 \times G_2 \times G_3, R)$. This is projective in $\Mod_{sm}(R[G_1 \times G_3])$.
\end{proof}
We can also use the derived tensor product to construct an internal tensor product, i.e.\ a functor $\mathbf{D}^-(R[G]) \times \mathbf{D}^-(R[G]) \to \mathbf{D}^-(R[G])$, which sends $(A^\bullet, B^\bullet)$ to the restriction of $A^\bullet \otimes^{\bL}_R B^\bullet$ to the diagonally embedded subgroup $G \subset G \times G$. The following lemma concerns a variant of this construction and will be used in the proof of Theorem \ref{thm_torsion_in_derived_tensor_product}.
\begin{lemma}\label{lem_two_actions_of_G}
Consider the two functors $F_1, F_2 : \mathbf{D}^-(R[G]) \to \mathbf{D}^-(R[G \times G])$, where $F_1(A^\bullet)$ is the restriction of $A^\bullet \otimes^{\bL}_R C_c^\infty(G, R) \in \mathbf{D}^-(R[G \times G \times G])$ to the subgroup $G \times G$, embedded as $(g_1, g_2) \mapsto (g_1, g_1, g_2)$, and $F_2(A^\bullet)$ is the restriction of $A^\bullet \otimes^{\bL}_R C_c^\infty(G, R) \in \mathbf{D}^-(R[G \times G \times G])$ to the subgroup $G \times G$, embedded as $(g_1, g_2) \mapsto (g_2, g_1, g_2)$. Then $F_1$ and $F_2$ are naturally isomorphic. 
\end{lemma}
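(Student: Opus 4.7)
The plan is to produce an explicit natural isomorphism at the level of smooth modules and then extend termwise to complexes. First, I would observe that $C_c^\infty(G, R)$ is flat as an $R$-module, since it is the direct limit of the free $R$-modules $C_c^\infty(K \backslash G, R)$ over open compact subgroups $K \leq G$. Consequently $A^\bullet \otimes^{\bL}_R C_c^\infty(G, R)$ coincides in the derived category with the ordinary termwise tensor product $A^\bullet \otimes_R C_c^\infty(G, R)$, and since restriction along a closed subgroup inclusion is an exact functor on smooth representations, both $F_1$ and $F_2$ are honest termwise functors on complexes. It therefore suffices to exhibit a natural isomorphism $\phi_A : F_1(A) \to F_2(A)$ for every $A \in \Mod_{sm}(R[G])$; this will extend termwise to cochain complexes and descend to $\mathbf{D}^-(R[G \times G])$.

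For such an $A$, I would identify $A \otimes_R C_c^\infty(G, R)$ with the space $C_c^\infty(G, A)$ of compactly supported, locally constant functions $G \to A$ via $a \otimes f \mapsto (h \mapsto f(h) a)$. Unwinding the two-sided action formula $((g_1, g_2) \cdot f)(x) = f(g_1^{-1} x g_2)$ for $C_c^\infty(G, R)$, the $F_1$-action of $(g_1, g_2)$ on $F \in C_c^\infty(G, A)$ becomes $((g_1, g_2) \cdot F)(h) = g_1 F(g_1^{-1} h g_2)$, whereas the $F_2$-action becomes $((g_1, g_2) \cdot F)(h) = g_2 F(g_1^{-1} h g_2)$. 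I would then define $\phi_A$ by $(\phi_A F)(h) = h^{-1} F(h)$, with inverse $(\phi_A^{-1} F)(h) = h F(h)$. A direct calculation shows that both $\phi_A((g_1, g_2) \cdot_{F_1} F)(h)$ and $((g_1, g_2) \cdot_{F_2} \phi_A F)(h)$ simplify to $h^{-1} g_1 F(g_1^{-1} h g_2)$, using that $g_2 (g_1^{-1} h g_2)^{-1} = h^{-1} g_1$ in the second case, so $\phi_A$ intertwines the two actions.

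The one nonobvious point, which I expect to be the main technicality, is checking that $\phi_A F$ remains locally constant as a function of $h$, despite the non-locally-constant factor $h^{-1}$. At a point $h_0 \in G$, I would use that $v = F(h_0) \in A$ is a smooth vector to choose an open compact subgroup $L \leq G$ stabilizing $v$, and then pick an open compact $K \leq G$ with $h_0 K h_0^{-1} \subseteq L$ such that $F$ is also right-$K$-invariant near $h_0$; a short check then shows $\phi_A F$ is constant on $h_0 K$. Compact support is automatic because the support of $\phi_A F$ is contained in that of $F$. Naturality of $\phi_A$ in $A$ is immediate from the formula, and the resulting termwise natural isomorphism of complexes yields the desired natural isomorphism $F_1 \simeq F_2$ of functors $\mathbf{D}^-(R[G]) \to \mathbf{D}^-(R[G \times G])$.
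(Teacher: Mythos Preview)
Your proof is correct and follows essentially the same approach as the paper: reduce to modules, identify $A \otimes_R C_c^\infty(G,R) \cong C_c^\infty(G,A)$, and use the intertwiner $F \mapsto (h \mapsto h^{-1}F(h))$. Your justification of the reduction via flatness of $C_c^\infty(G,R)$ and your verification that $\phi_A F$ remains locally constant are details the paper leaves implicit, so if anything your argument is more complete.
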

\begin{proof}
Both $F_1, F_2$ are induced by functors $\Mod_{sm}(R[G]) \to \Mod_{sm}(R[G \times G])$ and it suffices to show that these functors are naturally isomorphic, i.e.\ that if $M \in \Mod_{sm}(R[G])$ then there is a natural isomorphism $M \otimes_R C_c^\infty(G, R) \cong M \otimes_R C_c^\infty(G, R)$, where $(g_1, g_2)$ acts on the source by $(g_1, g_2) \cdot (m \otimes f) = g_1 m \otimes L_{g_1} R_{g_2} f$ and on the target by $(g_1, g_2) \cdot (m \otimes f) = g_2 m \otimes L_{g_1} R_{g_2} f$. We may identify $M \otimes_R C_c^\infty(G, R) = C_c^\infty(G, M)$, where the two actions are now given respectively by $((g_1, g_2) \cdot f)(g) = g_1 f(g_1^{-1} g g_2)$ and $((g_1, g_2) \cdot f)(g) = g_2 f (g_1^{-1} g g_2)$. These two actions are intertwined by the map $f \mapsto (F_f(g) = g^{-1} f(g))$. This is the desired natural isomorphism. 
\end{proof}
Finally, we establish some useful finiteness properties.
\begin{lemma}
Suppose that $R$ is a local ring of residue field $k = R / \ffrm_R$, and let $V$ be an admissible $R[G]$-module.
\begin{enumerate}
    \item If $W \subset V$ is an $R[G]$-submodule such that $V = W + \ffrm_R V$, then $V = W$. 
    \item If $V / \ffrm_R V$ is a finitely generated $k[G]$-module, then $V$ is a finitely generated $R[G]$-module.
\end{enumerate}
\end{lemma}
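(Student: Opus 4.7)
The plan is to treat this as a version of Nakayama's lemma for admissible smooth representations, reducing everything to the classical statement applied separately to each of the finitely generated $R$-modules $V^K$, as $K$ ranges over open compact pro-$p$ subgroups of $G$. The restriction to pro-$p$ subgroups is essential and is the only subtle point I anticipate: the averaging idempotents $e_K = \mu(K)^{-1}\mathbf{1}_K$ constructed in the proof of Proposition \ref{prop_mod_G_enough_projectives} lie in $\cC_c^\infty(G, R)$ only when $\mu(K)$ is a power of $p$, which (given the standing hypothesis that $R$ is a $\bZ[1/p]$-algebra) is precisely the case when $K$ is pro-$p$. Once this is arranged, the rest is smoothness, admissibility, and classical commutative algebra.

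For part (1), I would first observe that by smoothness of $V$ and the fact that every open compact subgroup of $G$ contains an open compact pro-$p$ subgroup, we have $V = \bigcup_K V^K$ with $K$ running over open compact pro-$p$ subgroups of $G$; it is therefore enough to show $V^K = W^K$ for each such $K$. Given $v \in V^K$, I would write $v = w + \sum_i m_i u_i$ with $w \in W$, $m_i \in \ffrm_R$, $u_i \in V$, and apply $e_K$ to both sides. Since $e_K$ acts as the projector onto $V^K$, this gives $v = e_K w + \sum_i m_i (e_K u_i)$, where $e_K w \in W^K$ because $W$ is $G$-stable (and $e_K w$ is a finite $R$-linear combination of $K$-translates of $w$, by smoothness of $w$) and $e_K u_i \in V^K$. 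Hence $V^K = W^K + \ffrm_R V^K$, and since $V^K$ is finitely generated over $R$ by admissibility, the classical Nakayama lemma over the local ring $R$ forces $V^K = W^K$.

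For part (2), the argument is standard: lift a finite $k[G]$-generating set $\bar v_1, \dots, \bar v_n$ of $V/\ffrm_R V$ to elements $v_1, \dots, v_n \in V$, set $W = R[G]\cdot\{v_1, \dots, v_n\}$, and observe that $W + \ffrm_R V = V$ because the image of $W$ in $V/\ffrm_R V$ is an $R[G]$-submodule containing a generating set. Part (1) then yields $W = V$, which is finitely generated over $R[G]$ by construction.
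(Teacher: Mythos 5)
Your proof is correct and follows the same route as the paper: reduce to the finitely generated $R$-modules $V^K$ for $K$ open compact pro-$p$, apply classical Nakayama there, and take the union over $K$; part (2) then follows formally from part (1). The only difference is that you spell out, via the idempotent $e_K$, why $V = W + \ffrm_R V$ implies $V^K = W^K + \ffrm_R V^K$ — a step the paper asserts without comment — and your justification of it is sound.
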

Recall that $V \in \Mod_{sm}(R[G])$ is admissible if for each open compact subgroup $K \subset G$, $V^K$ is a finitely generated $R$-module. 
\begin{proof}
The second part follows from the first (take $W$ to be the submodule generated by the lifts of a generating set for $V / \ffrm_R V$). We prove the first. If $K \subset G$ is an open compact subgroup of pro-$p$ order, then $V^K = W^K + \ffrm_R V^K$. Since $V$ is admissible, $V^K$ is a finite $R$-module, so Nakayama's lemma implies that $V^K = W^K$. Since $V$ is the union of the submodules $V^K$ (as $K$ ranges over all open compact pro-$p$-subgroups of $G$), we find $V = W$.
\end{proof}
\begin{lemma}\label{lem_coinvariants_finitely_generated}
Suppose that $R$ is Noetherian, and let $V, W \in \Mod_{sm}(R[G])$ be such that $V$ is finitely generated as an $R[G]$-module and $W$ is admissible. Then $V \otimes_{R[G]} W$ is a finitely generated $R$-module.
\end{lemma}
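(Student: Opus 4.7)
The plan is to present $V \otimes_{R[G]} W$ as a quotient of a finite direct sum of copies of an invariants module $W^K$, which is finitely generated over $R$ by admissibility, and then conclude using that $R$ is Noetherian.

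Choose generators $v_1, \dots, v_n$ for $V$ as an $R[G]$-module. Because $V$ is smooth and $G$ contains an open compact pro-$p$ subgroup by hypothesis, we can find a single open compact pro-$p$ subgroup $K \subset G$ with $v_1, \dots, v_n \in V^K$. The universal property of compact induction (namely, that $\Hom_{R[G]}(\cInd_K^G R, M) = M^K$ for every smooth $R[G]$-module $M$) then provides a surjection of smooth $R[G]$-modules
\[ \bigoplus_{i=1}^n \cInd_K^G R \twoheadrightarrow V, \]
sending the generator $\mathbf{1}_K$ of the $i^{\text{th}}$ summand to $v_i$. Applying the right-exact functor $- \otimes_{R[G]} W$ and using the natural isomorphism $\cInd_K^G R \otimes_{R[G]} W \cong W_K$, where $W_K$ denotes the $K$-coinvariants, we obtain a surjection
\[ W_K^n \twoheadrightarrow V \otimes_{R[G]} W. \]

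Since $K$ is pro-$p$ and $R$ is a $\bbZ[1/p]$-algebra, averaging over suitable finite $p$-group quotients of $K$ defines an idempotent on each $K$-smooth vector; this identifies the natural composition $W^K \hookrightarrow W \twoheadrightarrow W_K$ as an $R$-linear isomorphism. Admissibility of $W$ says precisely that $W^K$ is a finitely generated $R$-module, so the Noetherian hypothesis on $R$ implies that the quotient $V \otimes_{R[G]} W$ of the finitely generated $R$-module $(W^K)^n$ is itself finitely generated over $R$.

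The only step requiring any real verification is the identification $\cInd_K^G R \otimes_{R[G]} W \cong W_K$; this is standard and follows from the fact that $\cInd_K^G R$ is cyclic as a smooth $R[G]$-module, generated by $\mathbf{1}_K$ with stabilizer $K$ (so that after translating any simple tensor to $\mathbf{1}_K \otimes w$ using the relations defining $\otimes_{R[G]}$, the residual relations are exactly $\mathbf{1}_K \otimes w = \mathbf{1}_K \otimes kw$ for $k \in K$). Neither this verification nor the pro-$p$ averaging argument poses a genuine obstacle; the content of the lemma is really just the passage from finite generation over $R[G]$ to finite generation over $R$, which is forced by the admissibility of $W$.
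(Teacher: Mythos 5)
Your proof is correct and takes essentially the same route as the paper's: pick generators $v_1,\dots,v_n$ of $V$, an open compact pro-$p$ subgroup $K$ fixing them, and show $(W^K)^n$ surjects onto $V\otimes_{R[G]}W$ by averaging over $K/K'$ (which is where the pro-$p$ hypothesis and invertibility of $p$ enter). The paper performs this averaging directly on simple tensors $v_i\otimes w$ rather than packaging it through the identifications $\cInd_K^G R\otimes_{R[G]}W\cong W_K\cong W^K$, but the underlying argument is identical.
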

\begin{proof}
Let $v_1, \dots, v_n \in V$ be generators for $V$ as $R[G]$-module and let $K \subset G$ be an open compact pro-$p$ subgroup such that $v_1, \dots, v_n \in V^K$. We claim that the map $(W^K)^n \to V \otimes_{R[G]} W$, $(w_i) _i \mapsto \sum_i v_i \otimes w_i$, is surjective. Since $W$ is admissible and $R$ is Noetherian, this will imply the truth of the lemma. 

It is easy to see that $V \otimes_{R[G]} W$ is generated as an $R$-module by elements of the form $v_i \otimes w$, with $w \in W$. Take such an element, and choose an open subgroup $K' \subset K$ such that $w \in W^{K'}$. Then in $V \otimes_{R[G]} W$ we have
\[ v_i \otimes w \equiv [K : K']^{-1} \sum_{k \in K / K'} v_i \otimes k w \equiv [K : K']^{-1}( v_i \otimes \tr_{K / K'}(w) ). \]
Since $\tr_{K / K'}(w) \in W^{K}$, we're done. 
\end{proof}
\begin{proposition}\label{prop_tor_is_fg}
Suppose that $R$ is Noetherian, let $K \subset G$ be an open compact subgroup, and let $W \in \Mod_{sm}(R[K])$ be finitely generated as $R$-module. If $V$ is an admissible $R[G]$-module, then for each $i \in \bZ$ $H^i(\cInd_K^G W \otimes^{\bL}_{R[G]} V)$ is a finitely generated $R$-module. The same holds more generally if $V$ is replaced by a bounded above complex $A^\bullet \in \mathbf{D}^-(R[G])$ whose cohomology groups are admissible $R[G]$-modules.
\end{proposition}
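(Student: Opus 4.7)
The plan is to build a convenient projective resolution of $\cInd_K^G W$ and then compute directly. The key observation is that since $W$ is smooth as an $R[K]$-module and finitely generated as an $R$-module, there exists an open, normal, pro-$p$ subgroup $K_0 \subseteq K$ with $W^{K_0} = W$: take a finite set of $R$-generators of $W$, intersect their open stabilizers, then intersect the result with all $K$-conjugates and with an open pro-$p$ subgroup of $K$. The quotient $K/K_0$ is finite, so $W$ is a finitely generated $R[K/K_0]$-module. Since $R$ is Noetherian, every $R[K/K_0]$-submodule of a finite free $R[K/K_0]$-module is again finitely generated, so iterating I obtain a resolution $Q^\bullet \to W$ in $\Mod_{sm}(R[K])$ with each $Q^i \cong R[K/K_0]^{n_i}$. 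The identification $R[K/K_0] \cong \cInd_{K_0}^K R$ in $\Mod_{sm}(R[K])$, combined with Proposition \ref{prop_mod_G_enough_projectives}, shows each $Q^i$ is projective in $\Mod_{sm}(R[K])$.

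Next I apply $\cInd_K^G$. Since $K$ is open in $G$ the functor $\cInd_K^G$ is exact, and because its right adjoint (restriction to $K$) is also exact, $\cInd_K^G$ preserves projectivity. By induction in stages $\cInd_K^G Q^i \cong (\cInd_{K_0}^G R)^{n_i}$, and $\cInd_K^G Q^\bullet \to \cInd_K^G W$ is a projective resolution in $\Mod_{sm}(R[G])$. Using the standard Frobenius reciprocity for tensor products,
\[ \cInd_{K_0}^G R \otimes_{R[G]} V \cong R \otimes_{R[K_0]} V|_{K_0} = V_{K_0} = V^{K_0}, \]
where the last equality uses that $K_0$ is pro-$p$ and $R$ is a $\bbZ[1/p]$-algebra (so trivial-coinvariants equals trivial-invariants). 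Admissibility of $V$ then implies $V^{K_0}$ is a finitely generated $R$-module, so each term of the complex $\cInd_K^G Q^\bullet \otimes_{R[G]} V \cong (V^{K_0})^{n_\bullet}$ is finitely generated, and hence so is each cohomology group $H^i(\cInd_K^G W \otimes^{\bL}_{R[G]} V)$, since $R$ is Noetherian. This disposes of the case when $A^\bullet$ is a single admissible module in degree $0$.

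For a general bounded-above $A^\bullet$ with admissible cohomology, I reduce to the preceding case via the hypercohomology spectral sequence
\[ E_2^{p,q} = H^p\bigl(\cInd_K^G W \otimes^{\bL}_{R[G]} H^q(A^\bullet)\bigr) \Rightarrow H^{p+q}\bigl(\cInd_K^G W \otimes^{\bL}_{R[G]} A^\bullet\bigr). \]
For fixed $p+q=n$ the surviving pairs satisfy $p \leq 0$ (non-negativity of Tor-degrees) and $q$ is bounded above (since $A^\bullet$ is cohomologically bounded above), so only finitely many $(p,q)$ contribute to each total degree. Each $E_2^{p,q}$ is finitely generated by the module case just established, and a finite successive extension of finitely generated modules over a Noetherian ring is finitely generated, giving the result.

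The step most deserving of care is the tensor-product Frobenius reciprocity $\cInd_{K_0}^G R \otimes_{R[G]} V \cong V^{K_0}$ in the smooth setting; this is the natural analogue of the Hom-side Frobenius reciprocity already used in Proposition \ref{prop_mod_G_enough_projectives} to recognize $\cInd_{K_0}^G R$ as projective, and it can be verified along the lines of the proof that $C_c^\infty(G_1, R) \otimes_{R[G_1]} M \cong M$ given earlier in this section (effectively via the idempotent $e_{K_0} \in C_c^\infty(G, R)$, which presents $\cInd_{K_0}^G R$ as $C_c^\infty(G,R) \cdot e_{K_0}$).
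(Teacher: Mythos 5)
Your proof is correct, and its skeleton — resolve $W$ by projectives that are finitely generated over $R$, push forward by the exact, projective-preserving functor $\cInd_K^G$, and then use admissibility plus Noetherianity of $R$ termwise — is the same as the paper's. Two points differ in a way worth recording. First, the paper simply asserts that "we can find a resolution $P^\bullet \to W$ by projective $R[K]$-modules which are finitely generated as $R$-modules"; your reduction to a finite free resolution over the Noetherian ring $R[K/K_0]$, after cutting out an open normal pro-$p$ subgroup $K_0$ acting trivially on $W$, is precisely the justification of that assertion, and it also lets you avoid the paper's case split between $K$ pro-$p$ and $K$ not pro-$p$. Second, where the paper deduces finite generation of each term from Lemma \ref{lem_coinvariants_finitely_generated} (finite generation of $V \otimes_{R[G]} W$ for $V$ finitely generated over $R[G]$ and $W$ admissible), you instead identify each term outright as $(V^{K_0})^{n_i}$ via the tensor-product form of Frobenius reciprocity $\cInd_{K_0}^G R \otimes_{R[G]} V \cong V_{K_0} = V^{K_0}$; this is a clean substitute, and your indicated verification through the idempotent $e_{K_0}$ and the isomorphism $C_c^\infty(G, R) \otimes_{R[G]} V \cong V$ is exactly the right one. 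The spectral-sequence reduction for a bounded-above complex with admissible cohomology matches what the paper leaves implicit.
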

\begin{proof}
If $K$ is pro-$p$, then $\cInd_K^G W \otimes^{\bL}_{R[G]} V = \cInd_K^G W \otimes_{R[G]} V$ (as this compact induction is projective, cf. the proof of Proposition \ref{prop_mod_G_enough_projectives}). Since $\cInd_K^G W$ is finitely generated, Lemma \ref{lem_coinvariants_finitely_generated} implies that $\cInd_K^G W \otimes_{R[G]} V$ is a finitely generated $R$-module. 

If $K$ is not pro-$p$, then we can find a resolution $P^\bullet \to W$ by projective $R[K]$-modules which are finitely generated as $R$-modules. Then 
\[ \cInd_K^G W \otimes^{\bL}_{R[G]} V = \cInd_K^G P^\bullet \otimes_{R[G]} V \]
and the result follows from the fact that each $\cInd_K^G P^i \otimes_{R[G]} V$ is a finitely generated $R$-module and our assumption that $R$ is Noetherian.
\end{proof}

\begin{theorem}\label{thm_tor_is_fg}
Suppose that $R$ is Noetherian and that $G = \GL_n(F_v)$ for number field $F$ and $p$-adic place $v$ of $F$. Let $G^0 = \{ g \in G \mid \det(g) \in \cO_{F_v}^\times \}$.
\begin{enumerate}
    \item Suppose that $V, W \in \Mod_{sm}(R[G])$ are admissible and that $V$ is finitely generated as an $R[G^0]$-module. Then for each $i \in \bZ$, $H^i(V \otimes^{\bL}_{R[G^0]} W)$ is finitely generated as an $R$-module.
    \item More generally, suppose that $A^\bullet, B^\bullet \in \mathbf{D}^-(R[G])$ are such that for each $i \in \bZ$, $H^i(A^\bullet)$ is admissible and finitely generated as an $R[G^0]$-module, and $H^i(B^\bullet)$ is admissible. Then for each $i \in \bZ$, $H^i(A^\bullet \otimes^{\bL}_{R[G^0]} W)$ is finitely generated as an $R$-module.
\end{enumerate}
\end{theorem}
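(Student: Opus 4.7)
The plan is to prove (1) first and to derive (2) from it by the standard K\"unneth-type (hyper-Tor) spectral sequence. The strategy for (1) is to exhibit a bounded resolution of $V$ in $\Mod_{sm}(R[G^0])$ by modules to which the obvious analog of Proposition~\ref{prop_tor_is_fg} applies, and then to conclude by the hypercohomology spectral sequence.

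I first observe that Proposition~\ref{prop_tor_is_fg} holds verbatim with $G$ replaced by its open subgroup $G^0$, as its proof uses only general properties of the standing framework that $G^0$ equally satisfies. Granting this, it is enough to produce a resolution
\[ 0 \to C_{n-1} \to \cdots \to C_0 \to V \to 0 \]
in $\Mod_{sm}(R[G^0])$ in which each $C_i$ is a \emph{finite} direct sum of modules of the form $\cInd_{K_\sigma}^{G^0} V^{U_\sigma}$, with $K_\sigma \subset G^0$ an open compact subgroup and $V^{U_\sigma}$ finitely generated as $R$-module (by admissibility of $V$). The hypercohomology spectral sequence associated to such a resolution then has finitely generated $E_1$-terms, so its abutment $H^{p+q}(V \otimes^{\bL}_{R[G^0]} W)$ is a subquotient of a finitely generated $R$-module, hence itself finitely generated by the Noetherianity of $R$.

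To construct such a resolution I would use the Schneider--Stuhler construction attached to the Bruhat--Tits building $X$ of $\GL_n(F_v)$ (which has dimension $n-1$): for each facet $\sigma$ of $X$, let $U_\sigma \subset G$ denote the pro-$p$ radical of its stabilizer, and let $K_\sigma = G^0 \cap \operatorname{Stab}_G(\sigma)$. The chain complex whose $i$th term is $\bigoplus_{\sigma \in X^{(i)}} V^{U_\sigma}$, with differentials induced by the inclusions $V^{U_\sigma} \subset V^{U_\tau}$ for $\tau$ a face of $\sigma$, is exact when augmented by $V$, by the main result of Schneider and Stuhler. Regrouping cells by $G^0$-orbits expresses each $C_i$ as $\bigoplus_{[\sigma] \in X^{(i)}/G^0} \cInd_{K_\sigma}^{G^0} V^{U_\sigma}$, and the orbit set $X^{(i)}/G^0$ is finite because $G^0$ acts on $X$ cocompactly (which in turn follows from the cocompact action of $G$ together with the fact that $G/G^0 \cong \bZ$ permutes only finitely many vertex types).

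Part (2) then follows from (1) via the K\"unneth-type spectral sequence
\[ E_2^{p, q} = \bigoplus_{p_1 + p_2 = p} \Tor_{-q}^{R[G^0]}(H^{p_1}(A^\bullet), H^{p_2}(B^\bullet)) \Rightarrow H^{p+q}(A^\bullet \otimes^{\bL}_{R[G^0]} B^\bullet), \]
whose summands are finitely generated over $R$ by (1), and finite in number for each $p$ because both $A^\bullet$ and $B^\bullet$ are cohomologically bounded above. The main obstacle I anticipate is verifying that the Schneider--Stuhler resolution is available in the present generality --- over a Noetherian $\bZ[1/p]$-algebra $R$ rather than $\bC$, and for $G^0$ in place of $G$; the essential inputs (contractibility of $X$, cocompactness of the $G^0$-action, and availability of $1/p$ in $R$ for averaging over pro-$p$ subgroups) all carry over, but this adaptation is the most substantive ingredient of the argument.
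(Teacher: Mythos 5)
Your overall architecture is the paper's: resolve $V$ by a bounded complex whose terms are finite direct sums of compact inductions of the finitely generated $R$-modules of invariants coming from the Bruhat--Tits building, apply (the $G^0$-version of) Proposition \ref{prop_tor_is_fg} termwise, and conclude with spectral sequences; part (2) is likewise dispatched by a spectral sequence in both treatments. The paper gets the resolution from \cite[Theorem 3.2]{Gro05}, whose terms are exactly the $\oplus_j \cInd_{G_j}^{G} V^{K_{\sigma_j}}$ over the finitely many orbits of simplices.

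There is, however, one genuine gap in your execution. You fix $U_\sigma$ to be the pro-$p$ radical of the stabilizer of $\sigma$ --- the level-zero Schneider--Stuhler coefficient system --- and invoke their exactness theorem. That theorem requires $V$ to be generated by its $U_C$-invariants for a chamber $C$, i.e.\ by its pro-$p$-Iwahori invariants with your choice, and a general admissible $V$ need not satisfy this: if $V$ has no pro-$p$-Iwahori fixed vectors every term $V^{U_\sigma}$ vanishes while $V\neq 0$, so the augmented complex is not exact. This is precisely the point at which the finite-generation hypothesis on $V$ must be used, and your proposal never uses it anywhere; note that the theorem is simply false without that hypothesis (take $V$ an admissible infinite direct sum of irreducibles of unbounded depth and $W$ the direct sum of their smooth duals: the zeroth cohomology of $V\otimes^{\bL}_{R[G^0]}W$ is then infinite-dimensional). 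The repair is the paper's opening move: finite generation over $R[G^0]$ together with admissibility produces a principal congruence subgroup $K$ of some level $m\geq 2$ such that $V$ is generated by the finitely generated $R$-module $V^{K}$, and one then takes the building coefficient system at that level (the subgroups $K_\sigma$ of \cite[\S 3]{Gro05}, which depend on $m$) rather than at level zero. With the level chosen this way, the rest of your argument --- finiteness of orbits for $G^0$, the $G^0$-analogue of Proposition \ref{prop_tor_is_fg}, and the two spectral sequences --- goes through as you describe.
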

A similar statement in the case that $R$ is a field appears in \cite{Vig97}.
\begin{proof}
The second part follows from the first by a spectral sequence argument, so we prove just the first. Since $V$ is finitely generated, we can find $m \geq 2$ such that, taking $K$ to be the principal congruence subgroup of level $m$, $V$ is generated by $V^K$. Since $V$ is assumed admissible, $V^K$ is a finitely generated $R$-module. We now appeal to \cite[Theorem 3.2]{Gro05}; this states that there is an isomorphism $C^\bullet(X, \underline{V}) \to V$ in $\mathbf{D}^-(R[G])$, where:
\begin{itemize}
    \item $X$ is the Bruhat--Tits building of $\PGL_n(F_v)$, considered as  simplicial complex,  each simplex $\sigma$ corresponding to a homothety class of lattice chains $L_0 \supset L_1 \supset \dots \supset L_k \supset \varpi_v L_0$ being endowed with the cyclic orientation $(L_0, \dots, L_k)$. 
    \item $\underline{V}$ is the coefficient system on $X$ which assigns to each simplex $\sigma$ as above the finitely generated $R$-module $V^{K_\sigma}$, where $K_\sigma$ is the subgroup of $\GL_n(F_v)$ defined in \cite[\S 3]{Gro05}.
    \item $C^q(X, \underline{V}) = \oplus_{\sigma : \dim \sigma = -q} V^{K_\sigma}$, endowed with the action of $G = \GL_n(F_v)$ given by the formula $(g \cdot (v_\sigma)_\sigma)_\tau = g v_{g^{-1} \tau}$.
\end{itemize}
Let $\sigma_0, \dots, \sigma_r$ be representatives of the finitely many $G$-orbits of simplices of dimension $-q$. Let $G_j = \operatorname{Stab}_G(\sigma_j)$. Then $G_j$ contains $K_{\sigma_j}$ as a normal subgroup and is the pre-image in $G$ of an open compact subgroup of $\PGL_n(F_v)$, and there is an isomorphism
\[ C^q(X, \underline{V}) = \oplus_{j=1}^r \cInd_{G_j}^G V^{K_{\sigma_j}} \]
in $\Mod_{sm}(R[G])$. We claim that for each $i \in \bZ$, $H^i(C^q(X, \underline{V}) \otimes^{\bL}_{R[G^0]} W)$ is a finitely generated $R$-module. This implies the statement of the theorem, by a spectral sequence argument. It suffices to show that for each $i \in \bZ$, $H^i(\cInd_{G_j}^G V^{K_{\sigma_j}} \otimes^{\bL}_{R[G^0]} W)$ is a finitely generated $R$-module. This is the content of Proposition \ref{prop_tor_is_fg}. 
\end{proof}
We will apply this theorem in conjunction with the following proposition.
\begin{proposition}\label{prop_GGT}
Suppose that $R$ is Noetherian and that $G = \GL_n(F_v)$ for number field $F$ and $p$-adic place $v$ of $F$. Let $G^0 = \{ g \in G \mid \det(g) \in \cO_{F_v}^\times \}$. Let $P$ be a standard parabolic subgroup of $G$ and let $M = C_c^\infty(P / G, R)$. Then $M$ is a finitely generated $R[G^0]$-module.
\end{proposition}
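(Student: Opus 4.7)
The plan is to show that $M$ is generated as an $R[G^0]$-module by a single element, hence in particular finitely generated. The first step is to apply the Iwasawa decomposition $G = P \cdot K$ with $K = \GL_n(\cO_{F_v})$, which yields a $K$-equivariant homeomorphism $P \backslash G \cong (P \cap K) \backslash K$. This realises $P \backslash G$ as a compact, totally disconnected space and identifies $M$ with the smooth induction $\Ind_{P \cap K}^K R$ as a smooth $R[K]$-module. In particular, $M^{K''}$ is a finitely generated $R$-module for every open compact subgroup $K'' \subset K$, so $M$ is admissible; and since $K \subset G^0$, any generation statement over $R[K]$ will transfer to one over $R[G^0]$.

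Take $v_0 = \mathbf{1}_{U_0} \in M$, where $U_0 \subset P \backslash G$ is the image of the principal congruence subgroup $K_1 \subset K$ of level $1$. Since $K$ acts transitively on $P \backslash G$ and normalises $K_1$, the $K$-translates of $v_0$ span $M^{K_1}$ over $R$, and hence every function constant on the $K_1$-orbits of $P \backslash G$ lies in $R[K] v_0 \subset R[G^0] v_0$. To reach the deeper invariants $M^{K_m}$ for $m \geq 2$, I use dilation elements $a \in G^0$ of the form $\mathrm{diag}(\varpi_v^{\lambda_1}, \dots, \varpi_v^{\lambda_n})$ with $\sum_i \lambda_i = 0$. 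The strategy is to first assemble an $R$-linear combination of $K$-translates of $v_0$ representing the indicator $\mathbf{1}_V$ of a union of $K_1$-orbits $V \subset P \backslash G$, and then apply $a$ to produce the indicator $\mathbf{1}_{a^{-1} V}$ of the dilated set, which can be arranged (by suitable choice of the $\lambda_i$'s) to be a basic open set at a finer level. Iterating this procedure together with further $K$-translations and with unipotent translations coming from $\SL_n(F_v) \subset G^0$ produces the indicator of every $K_m$-orbit, and hence spans $M^{K_m}$. Since every element of $M$ is $K_m$-invariant for some $m$, this would yield $M = R[G^0] v_0$.

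The main obstacle is making the dilation step rigorous for an arbitrary standard parabolic $P$, and in particular tracking the combined effect of $K$-translation and $G^0$-dilation on the Bruhat stratification of $P \backslash G$. The mechanism can be seen explicitly in the case $G = \GL_2(F_v)$, $P = B$: here $P \backslash G = \mathbf{P}^1(F_v)$, $v_0 = \mathbf{1}_{\varpi_v \cO_{F_v}}$, and summing $K$-translates yields $\mathbf{1}_{\cO_{F_v}} = \sum_{\alpha \in k(v)} \mathbf{1}_{\alpha + \varpi_v \cO_{F_v}}$; the element $a = \mathrm{diag}(\varpi_v^{-1}, \varpi_v) \in G^0$ then satisfies $a \cdot \mathbf{1}_{\cO_{F_v}} = \mathbf{1}_{\varpi_v^2 \cO_{F_v}}$, the indicator of a level-$2$ ball, and iterating this pattern with further $K$-translates and with the inversion element of $\SL_2(F_v)$ recovers the indicator of every open compact subset of $\mathbf{P}^1(F_v)$. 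The general case should proceed along similar lines using the Cartan decomposition $G^0 = K \cdot A^0 \cdot K$ of the unit-determinant part, where $A^0$ denotes the diagonal torus elements of determinant in $\cO_{F_v}^\times$.
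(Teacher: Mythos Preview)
Your approach is genuinely different from the paper's and, as you yourself concede, incomplete: the dilation-and-translation scheme is worked out only for $\GL_2$ with $P = B$, and you give no argument that for a general standard parabolic $P$ the indicators of arbitrary basic open sets in the partial flag variety $P \backslash G$ can be reached from $v_0$ using only elements of $G^0$. The constraint $\sum_i \lambda_i = 0$ on your diagonal elements is exactly what makes this awkward, and ``should proceed along similar lines'' is not a proof.

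The paper sidesteps this combinatorics by a one-line reduction you did not spot: the centre $Z$ of $G$ is contained in every standard parabolic, hence acts trivially on $M$; and $G^0$ has index $n$ in $G$ modulo $Z$, since $\det(G^0 Z) = \cO_{F_v}^\times (F_v^\times)^n$. Thus any finite $R[G]$-generating set for $M$, translated by coset representatives for $G/G^0 Z$, is a finite $R[G^0]$-generating set. This reduces the problem to finite generation of $M$ over $R[G]$, i.e.\ to the statement that parabolic induction from a Levi of $\GL_n(F_v)$ preserves finite generation, for which the paper cites \cite[Corollaire 3.9, Proposition 7.5]{Dat09} (and alludes to an unpublished elementary argument). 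Your direct route, if completed, would be more explicit and would even give cyclicity; but the centre trick is the missing idea that lets one invoke an existing result rather than redo the combinatorics on each partial flag variety.
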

\begin{proof}
Since the centre of $G$ acts trivially on $M$, and $G^0$ has finite index in $G$ mod centre, it suffices to show that $M$ is finitely generated as an $R[G]$-module. Guillem Garcia Tarrach has shown us an elementary proof of this statement. It also follows from the deeper statement that parabolic induction from a Levi subgroup of $\GL_n(F_v)$ preserves the property of being finitely generated (see \cite[Corollaire 3.9, Proposition 7.5]{Dat09}).
\end{proof}

\section{The elliptic representations}\label{sec_elliptic_representations}

Let $F$ be a number field and let $v$ be a finite place of $F$. If 
\[ \lambda : n = \lambda_1 + \dots + \lambda_k \]
is an ordered partition of $n$, then we write $\pi_\lambda$ for the irreducible smooth $\bC[\GL_n(F_v)]$-module such that 
\[ \rec_{F_v}(\pi_\lambda) = \oplus_{i=1}^k \Sp_{\lambda_i}(| \cdot |^{(n+1)/2 - (\lambda_1 + \dots + \lambda_{i-1} + (\lambda_i + 1 )/2)}). \]
Pictorially, we can represent $ \rec_{F_v}(\pi_\lambda) $ by putting the characters $ | \cdot |^{(n-1)/2}, \dots,  | \cdot |^{(1-n)/2}$ on the diagonal and making the monodromy operator $N$ block diagonal with nilpotent Jordan blocks of sizes $\lambda_1, \dots, \lambda_k$. It is convenient to associate to $\lambda$ the partition (as sets) $\{ 1, \dots, n \} = \Lambda_1 \sqcup \dots \sqcup \Lambda_k$, where $\Lambda_i = \{ \lambda_1 + \dots + \lambda_{i-1} + 1, \dots, \lambda_1 + \dots + \lambda_i \}$.

We have the following observations.
\begin{proposition}\label{prop_Jacquet_module_of_elliptic_representation}
\begin{enumerate}
    \item $\pi_n$ is the Steinberg representation, and $\pi_{1, 1, \dots, 1}$ is the trivial representation.
    \item Each representation $\pi_\lambda$ has the same cuspidal support as $\pi_n$. The Jordan--H\"older factors of the representation $i_{B_n(F_v)}^{\GL_n(F_v)} \otimes_{i=1}^n | \cdot |^{(n+1-2i)/2}$ are precisely the representations $\pi_\lambda$, each occurring with multiplicity one. 
    \item The normalised Jacquet module $r_{B_n}(\pi_\lambda)$ may be described as follows: it is a direct sum of the characters $\sigma(1) \otimes \dots \otimes \sigma(n)$ of $T_n(F_v) = (F_v^\times)^n$, where $\sigma$ ranges over the set of bijective functions 
    \[ \sigma : \{ 1, \dots, n \} \to \{ | \cdot|^{(n-1)/2}, \dots, | \cdot |^{(1-n)/2} \} \]
    such that for each $1 \leq i \leq n-1$, we have 
    \[ \sigma^{-1}( | \cdot |^{(n+1-2i)/2} ) < \sigma^{-1}( | \cdot |^{(n-1-2i)/2}) \]
    if and only if there is $1 \leq j \leq k$ such that $\{ i, i+1 \} \subset \Lambda_j$.
\end{enumerate}
\end{proposition}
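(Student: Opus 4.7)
The plan is to deduce all three parts from the Bernstein--Zelevinsky theory of smooth representations of $\GL_n$ over a $p$-adic field. Throughout, set $c_i = |\cdot|^{(n+1-2i)/2}$ for $i = 1, \dots, n$, so that the inducing character appearing in (2) is $\chi = c_1 \otimes \cdots \otimes c_n$, and write $P_\lambda$ for the standard parabolic subgroup with Levi $L_\lambda = \GL_{\lambda_1} \times \cdots \times \GL_{\lambda_k}$.

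Part (1) is a direct verification of Langlands parameters: for $\lambda = (n)$ the single block produces $\Sp_n$ with trivial twist, so $\pi_\lambda = \St_n$; for $\lambda = (1,\dots,1)$ each block contributes $\Sp_1(c_i) = c_i$, recovering the parameter of the trivial representation.

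For part (2), the cuspidal support assertion is immediate: by construction, the diagonal characters of $\rec_{F_v}(\pi_\lambda)$ are always $c_1,\dots,c_n$ regardless of $\lambda$. The description of the Jordan--Hölder constituents of $i_{B_n(F_v)}^{\GL_n(F_v)}\chi$ is the classical result of Zelevinsky (\emph{Induced representations of reductive p-adic groups II}): the multisegment attached to $\chi$ is the single segment $\Delta = (c_1,\dots,c_n)$, and the irreducible subquotients of this principal series are parametrised, each with multiplicity one, by the ways of writing $\Delta$ as an ordered disjoint union of sub-segments. Such decompositions correspond bijectively to the compositions $\lambda$ of $n$ via successive block lengths, and the irreducible representation indexed by $\lambda$ is $\pi_\lambda$ by comparison of Langlands parameters.

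For part (3), I would realise $\pi_\lambda$ as a subquotient of
\[ I_\lambda := i_{P_\lambda(F_v)}^{\GL_n(F_v)}(\St_{\lambda_1}(\alpha_1) \otimes \cdots \otimes \St_{\lambda_k}(\alpha_k)), \]
for suitable unramified characters $\alpha_i$ making the Langlands parameter of one constituent equal to that of $\pi_\lambda$. Combining the formula $r_{B_m}(\St_m(\alpha)) = \alpha|\cdot|^{(m-1)/2} \otimes \cdots \otimes \alpha|\cdot|^{(1-m)/2}$ with transitivity of Jacquet modules and the Bernstein--Zelevinsky geometric lemma applied to $r_{B_n} \circ i_{P_\lambda}^{\GL_n}$, one computes that $r_{B_n}(I_\lambda)$ is the direct sum of the characters $w\chi$ as $w$ ranges over the minimal-length representatives of $W_{L_\lambda}\backslash W$, with $W = S_n$ and $W_{L_\lambda} = S_{\lambda_1} \times \cdots \times S_{\lambda_k}$. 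These representatives are exactly the permutations preserving the relative order within each block $\Lambda_j$, which match the bijections $\sigma$ in the statement. The main obstacle is then to verify that these terms all pass to the single constituent $\pi_\lambda$ rather than being distributed among the other Jordan--Hölder constituents of $I_\lambda$ (which by construction are further coarsenings $\pi_\mu$ with $\mu$ refining $\lambda$). I would close this by a global counting argument: by exactness of $r_{B_n}$ applied to part (2) one has $\sum_\lambda [r_{B_n}(\pi_\lambda)] = \bigoplus_{w \in S_n} w\chi$ in the Grothendieck group, and by the standard combinatorial bijection between $S_n$ and pairs (composition of $n$, permutation with that descent set), the number of $\sigma$ satisfying the descent condition for $\lambda$, summed over all $\lambda$, equals $n!$. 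Together with the lower bound on each $r_{B_n}(\pi_\lambda)$ produced by $I_\lambda$, this forces equality; moreover the distinctness of the characters $w\chi$ (which have distinct central exponents) implies that $r_{B_n}(\pi_\lambda)$ is honestly a direct sum, not merely an equality in the Grothendieck group.
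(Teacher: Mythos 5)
Parts (1) and (2) of your proposal are fine and follow the same route as the paper: everything is reduced to Zelevinsky's analysis of principal series whose cuspidal support is a single segment (the paper cites \cite[Proposition 2.1]{Zel80} for the multiplicity-one decomposition and identifies the constituents via the Bernstein--Zelevinsky construction of $\rec_{F_v}$). For part (3) the paper simply quotes \cite[Proposition 2.10]{Zel80}, which realises $\pi_\lambda$ as the unique irreducible quotient of an induction from $B_n$ of a suitably ordered character and \emph{also} records the formula for $r_{B_n}(\pi_\lambda)$; you instead try to reprove this via the geometric lemma plus a counting argument, and this is where there is a genuine gap.

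The problem is the sentence ``together with the lower bound on each $r_{B_n}(\pi_\lambda)$ produced by $I_\lambda$, this forces equality.'' Since $\pi_\lambda$ is a subquotient of $I_\lambda$ and $r_{B_n}$ is exact, the geometric-lemma computation $[r_{B_n}(I_\lambda)] = \sum_{w} w\chi$ (sum over the $n!/\prod_j \lambda_j!$ shuffles, i.e.\ the $\sigma$ whose ascent set \emph{contains} $I_\lambda$) gives an \emph{upper} bound on $[r_{B_n}(\pi_\lambda)]$, not a lower bound; note also that these shuffles do not ``match the bijections $\sigma$ in the statement,'' which are only those with ascent set \emph{equal} to $I_\lambda$. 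The upper bound together with the global identity $\sum_\lambda [r_{B_n}(\pi_\lambda)] = \sum_{w \in S_n} w\chi$ does not pin down the answer: already for $n=3$ one can satisfy both constraints by putting only one of the two characters with ascent set $\{1\}$ into $r_{B_n}(\pi_{2,1})$ and dumping the other into $r_{B_n}(\pi_{1,1,1})$, whose upper bound is vacuous. To close the argument along your lines you would need the full Jordan--H\"older decomposition of \emph{every} $I_\lambda$ (its constituents are $\pi_\lambda$ together with the $\pi_\mu$ for $\mu$ a \emph{coarsening} of $\lambda$, i.e.\ $I_\mu \supsetneq I_\lambda$ --- your parenthetical ``coarsenings $\pi_\mu$ with $\mu$ refining $\lambda$'' has this backwards --- each with multiplicity one), after which M\"obius inversion over the Boolean lattice of subsets of $\{1,\dots,n-1\}$ gives the stated formula. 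But that decomposition is itself the substance of Zelevinsky's \S 2, so at that point one may as well cite \cite[Proposition 2.10]{Zel80} directly, as the paper does.
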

\begin{proof}
According to the construction of the local Langlands correspondence $\rec_{F_v}$ using the Bernstein--Zelevinsky classification (as explained e.g.\ in \cite{Wed08}), $\pi_\lambda$ is the unique irreducible quotient of the induced representation 
\[ i_{P_\lambda(F_v)}^{\GL_n(F_v)} \St_{\lambda_1}( | \cdot |^{(n-\lambda_1)/2}) \otimes \dots \otimes \St_{\lambda_i}( | \cdot |^{(n - 2 \lambda_1 - \dots - 2 \lambda_{i-1} - \lambda_i)/2}) \otimes \dots \otimes \St_{\lambda_k}( | \cdot |^{(\lambda_k - n)/2}). \]
This representation is in turn a quotient of the induced representation
\begin{multline*} i_{B_n(F_v)}^{\GL_n(F_v)} | \cdot |^{(n+1 - 2 \lambda_1)/2} \otimes \dots \otimes | \cdot |^{(n-1)/2} \\ \otimes | \cdot |^{(n+1 - 2(\lambda_1 + \lambda_2))/2} \otimes \dots \otimes | \cdot |^{(n+1 - 2 (\lambda_1 + 1))/2}\\  \otimes \dots \otimes | \cdot |^{(2-(n+1))/2} \otimes \dots \otimes | \cdot |^{(2 \lambda_k - (n+1))/2}.
\end{multline*}
By \cite[Proposition 2.10]{Zel80}, this induction from $B_n(F_v)$ in fact has $\pi_\lambda$ as its unique irreducible quotient. The same result contains a description of $r_{B_n}(\pi_\lambda)$. The identification of $\pi_n$ and $\pi_{1, \dots, 1}$ in the first part of the proposition is a very special case of this. Finally, \cite[Proposition 2.1]{Zel80} shows that the induced representation decomposes with multiplicity one.
\end{proof}
\begin{corollary}\label{cor_invariant_WD_filtrations}
Let $n = n_1 + n_2$ be a partition with $n_1, n_2 \geq 1$, and let $\lambda : n = \lambda_1 + \dots + \lambda_k$ be any partition such that $\lambda \neq n, (n_1, n_2)$. Then there exists a character $\chi = \chi_1 \otimes \dots \otimes \chi_n$ of $T_n(F_v) = (F_v^\times)^n$ which is a subquotient of the normalised Jacquet module $r_{B_n}(\pi_\lambda)$ and such that there is no increasing filtration $0 \subset \Fil_1 \subset \Fil_2 \subset \dots \subset \Fil_n = \bC^n$ by sub-Weil--Deligne representations of $\rec_{F_v}(\pi_{n_1, n_2})$ such that for each $i = 1, \dots, n$, $\Fil_i / \Fil_{i-1} \cong \bC(\chi_i)$. 
\end{corollary}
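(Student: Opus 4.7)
The plan is to compute $\rec_{F_v}(\pi_{n_1, n_2})$ explicitly and then show, by a combinatorial argument involving descent sets of permutations, that the characters appearing in $r_{B_n}(\pi_\lambda)$ for $\lambda \neq (n), (n_1, n_2)$ cannot be realised as the successive quotients of a full flag of sub-Weil--Deligne representations of $\rec_{F_v}(\pi_{n_1, n_2})$.

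First I will unwind the definitions: $\rec_{F_v}(\pi_{n_1, n_2}) = \Sp_{n_1}(|\cdot|^{n_2/2}) \oplus \Sp_{n_2}(|\cdot|^{-n_1/2})$, and in the evident basis $e_1, \dots, e_n$ of $\bC^n$ the Weil group acts diagonally with $e_i$ transforming via the character $\mu_i := |\cdot|^{(n+1-2i)/2}$. The $\mu_i$ are pairwise distinct, and $N$ satisfies $N e_i = e_{i-1}$ for $i \in \{2, \ldots, n_1, n_1+2, \ldots, n\}$ with $N e_1 = N e_{n_1+1} = 0$, giving two Jordan blocks of sizes $n_1$ and $n_2$. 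Because the $\mu_i$ are distinct, any Weil-stable complete flag is automatically a coordinate flag $\Fil_k = \operatorname{span}(e_{\tau(1)}, \ldots, e_{\tau(k)})$ for a unique permutation $\tau \in S_n$, and the condition $\Fil_k / \Fil_{k-1} \cong \bC(\chi_k)$ then forces $\chi_k = \mu_{\tau(k)}$. The further requirement that this flag be $N$-stable translates into the combinatorial condition $\tau^{-1}(i) < \tau^{-1}(i+1)$ for every $i \in \{1, \dots, n-1\} \setminus \{n_1\}$, equivalently that the descent set of $\tau^{-1}$ is contained in $\{n_1\}$.

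On the other hand, Proposition \ref{prop_Jacquet_module_of_elliptic_representation}(3) parameterises the characters $\mu_{\tau(1)} \otimes \dots \otimes \mu_{\tau(n)}$ that occur as direct summands of $r_{B_n}(\pi_\lambda)$ by permutations $\tau \in S_n$ such that the descent set of $\tau^{-1}$ equals exactly $S_\lambda := \{\lambda_1, \lambda_1+\lambda_2, \ldots, \lambda_1 + \cdots + \lambda_{k-1}\}$. Such permutations exist for any $S_\lambda \subseteq \{1,\dots,n-1\}$ by a standard combinatorial construction (alternatively, $r_{B_n}(\pi_\lambda) \neq 0$ since $\pi_\lambda$ shares the cuspidal support of a principal series by Proposition \ref{prop_Jacquet_module_of_elliptic_representation}(2)); picking one will supply the desired character. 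The hypotheses $\lambda \neq (n)$ and $\lambda \neq (n_1, n_2)$ translate precisely to $S_\lambda \neq \emptyset$ and $S_\lambda \neq \{n_1\}$, so $S_\lambda$ must contain some $i \in \{1,\dots,n-1\}$ with $i \neq n_1$; this descent in $\tau^{-1}$ obstructs the $N$-stability condition isolated above, so no compatible filtration can exist. The only technical hurdle is aligning the two combinatorial conditions --- the descent description in Proposition \ref{prop_Jacquet_module_of_elliptic_representation}(3) and the flag-preservation condition for $N$ --- but once this dictionary is in place the argument is a direct verification.
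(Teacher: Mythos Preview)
Your proof is correct and in fact establishes more than the corollary asks: you show that \emph{every} character occurring in $r_{B_n}(\pi_\lambda)$ fails to admit such a filtration, not merely that one does. The paper instead exhibits a single explicit character (the one corresponding to reversing the order within each block $\Lambda_j$) and then argues case by case, first checking whether the initial graded piece $\chi_1$ matches one of the two $N$-invariant lines in $\rec_{F_v}(\pi_{n_1,n_2})$, and if so ($\lambda_k = n_2$) examining $\Fil_{n_2+1}$ directly to find a contradiction with $N$-stability.

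Your route via descent sets is genuinely different and cleaner: once you translate both the Jacquet-module description from Proposition~\ref{prop_Jacquet_module_of_elliptic_representation}(3) and the $N$-stability of a coordinate flag into statements about the descent set of $\tau^{-1}$, the conclusion is immediate and uniform, with no case split. The paper's approach has the virtue of being more concrete (one can read off the specific obstructing character), but yours reveals the underlying combinatorial reason and avoids the somewhat ad hoc case analysis. Both arguments ultimately rest on the same key observations --- distinctness of the weight characters forces coordinate flags, and $N$-stability pins down which $\tau$ are allowed --- but your packaging is more transparent.
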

\begin{proof}
Using the third part of Proposition \ref{prop_Jacquet_module_of_elliptic_representation}, we see that $r_{B_n}(\pi_\lambda)$ contains the character 
\begin{multline*} \chi = | \cdot |^{(n + 1 - 2 (\lambda_1 + \dots + \lambda_{k-1} + 1))/2} \otimes \dots \otimes  | \cdot |^{(1-n)/2} \\ \otimes | \cdot |^{(n + 1 - 2 (\lambda_1 + \dots + \lambda_{k-2} + 1))/2} \otimes \dots \otimes | \cdot |^{(n + 1 - 2 (\lambda_1 + \dots + \lambda_{k-1}))/2} \\ \otimes \dots \otimes | \cdot |^{(n-1)/2} \otimes \dots \otimes | \cdot |^{(n + 1 - 2 \lambda_1)/2}. 
\end{multline*}
We will show that the corollary holds with this choice of $\chi$. In order to see this, we split into cases. The existence of a filtration $\Fil_\bullet$ as in the statement of the corollary would imply that $\rec_{F_v}(\pi_{n_1, n_2})$ contains an invariant line on which $W_{F_v}$ acts by the character $| \cdot |^{(2 \lambda_k - (n+1))/2}$. However, there are exactly two invariant lines in $\rec_{F_v}(\pi_{n_1, n_2})$, on which $W_{F_v}$ acts by the characters $| \cdot |^{(n-1)/2}$ and $| \cdot |^{(n - 1 - 2n_1)/2}$. If $\lambda_k \neq n_2$, then neither of these equal $| \cdot |^{(2 \lambda_k - (n+1))/2}$, leading to the desired contradiction. Suppose then that $\lambda_k = n_2$. Then $\lambda_{k-1} < n_1$ and we look at $\Fil_{\lambda_k + 1} = \Fil_{n_2 + 1}$, which would be the sum of those 1-dimensional subspaces of $\rec_{F_v}(\pi_{n_1, n_2})$ where $W_{F_v}$ acts by the characters 
\[ | \cdot |^{(n + 1 - 2 (\lambda_1 + \dots + \lambda_{k-2} + 1))/2}, | \cdot |^{(n + 1 - 2 (\lambda_1 + \dots + \lambda_{k-1} + 1))/2}, \dots, | \cdot |^{(1-n)/2}. \]
 This subspace of $\rec_{F_v}(\pi_{n_1, n_2})$ is not in fact a sub-Weil--Deligne representation (as it is not invariant by $N$), so we obtain the desired contradiction in this case also. 
\end{proof}
We now compute some derived tensor products of these representations. We need some notation. Let $G = \GL_n(F_v)$, $G^0 = \{ g \in G \mid \det(g) \in \cO_{F_v}^\times \}$. We identify $\{ 1, \dots, n-1 \}$ with the set of simple roots of the maximal torus $T_n \subset \GL_n$, sending $i$ to $(t_1, \dots, t_n) \in T_n \mapsto t_i / t_{i+1}$. If $\lambda$ is a partition of $n$, let $P_\lambda$ denote the associated standard parabolic subgroup of $\GL_n$, $M_\lambda$ its standard Levi subgroup, and let $I_\lambda \subset \{ 1, \dots, n-1 \}$ denote the set of simple roots which are roots of $M_\lambda$. Identifying $S_n$ with the Weyl group of $T_n$ in $\GL_n$, we write $w_0 \in S_n$ for the longest element of $S_n$ with respect to the Borel subgroup $B_n$ and, for subsets $I, J \subset \{ 1, \dots, n-1 \}$, $\delta(I, J)$ for the cardinality of the symmetric difference $(I \cup J) - (I \cap J)$. We write $I^c = \{1, \dots, n-1 \} - I$.
\begin{theorem}\label{thm_homology_of_PGL_n_elliptic_representations}
Let $\lambda$ be a partition of $n$ and let $\pi$ be an irreducible admissible $\bC[G]$-module of trivial central character. If $\pi_\lambda \otimes^{\bL}_{\bC[\PGL_n(F_v)]} \pi \neq 0$, then there is a partition $\mu$ such that $\pi \cong \pi_\mu$. In this case, $H^{-i}(\pi_\lambda \otimes^{\bL}_{\bC[\PGL_n(F_v)]} \pi)$ is non-zero if and only if $i = \delta(I_\lambda, -w_0(I_\mu))$, in which case it is a 1-dimensional $\bC$-vector space.
\end{theorem}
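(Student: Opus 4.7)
My plan is two-step: a reduction via Bernstein theory to the case $\pi \cong \pi_\mu$, followed by an explicit $\Tor$ computation using the Iwahori--Hecke algebra.

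For the reduction, I would use the Bernstein center $\mathcal{Z} = \mathcal{Z}(\PGL_n(F_v))$, which acts on every smooth representation, and in particular on the irreducibles $\pi_\lambda$ and $\pi$ through characters $\theta_\lambda$ and $\theta_\pi$ respectively. Since $\mathcal{Z}$ acts by natural transformations of the identity functor of $\Mod_{sm}(\bC[\PGL_n(F_v)])$, its action is automatically compatible with all $G$-maps and hence extends to any projective resolution of $\pi_\lambda$; the two induced actions of $\mathcal{Z}$ on $\pi_\lambda \otimes^{\bL}_{\bC[\PGL_n(F_v)]} \pi$ (through either factor) must therefore coincide. Nonvanishing of this derived tensor product forces $\theta_\lambda = \theta_\pi$, which in turn places $\pi$ in the principal series Bernstein block with cuspidal support in the Weyl orbit of $\chi_0 = \otimes_{i=1}^n |\cdot|^{(n+1-2i)/2}$. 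By Proposition \ref{prop_Jacquet_module_of_elliptic_representation}(2), the irreducibles with this cuspidal support are exactly the $\pi_\mu$.

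For the computation with $\pi = \pi_\mu$, I would appeal to Borel's theorem: the functor $V \mapsto V^I$ of Iwahori-fixed vectors gives an exact equivalence between the Iwahori-spherical Bernstein block and the category of finitely generated modules over the Iwahori--Hecke algebra $\mathcal{H} = \mathcal{H}(G, I)$, where $G = \PGL_n(F_v)$. Since this equivalence is exact and preserves projectives, it induces an isomorphism
\[ \pi_\lambda \otimes^{\bL}_{\bC[G]} \pi_\mu \cong \pi_\lambda^I \otimes^{\bL}_{\mathcal{H}} \pi_\mu^I \]
in $\mathbf{D}^-(\bC)$. Restricting to the specialization $\mathcal{H}_\theta$ of $\mathcal{H}$ at the central character $\theta_\lambda$ yields a finite-dimensional algebra whose simple modules $L_I$ are labelled by subsets $I \subset \{1, \dots, n-1\}$, with $\pi_\nu^I$ matching $L_{I_\nu}$. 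The computation then reduces to showing that $\Tor^i_{\mathcal{H}_\theta}(L_{I_\lambda}, L_J)$ is one-dimensional precisely when $i = \delta(I_\lambda, J)$ and vanishes otherwise, applied with $J = -w_0(I_\mu)$. The asymmetric appearance of $-w_0$ comes from the Chevalley involution implicit in converting the right $\mathcal{H}$-module $\pi_\mu^I$ into a left module in order to form the tensor product.

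The principal obstacle is this last step: the combinatorial $\Tor$ computation within $\mathcal{H}_\theta$, and the verification that its simple modules are indexed by subsets of $\{1, \dots, n-1\}$ via $\pi_\nu^I \leftrightarrow L_{I_\nu}$. Two approaches present themselves: either a direct analysis using the Bernstein presentation of $\mathcal{H}$ in type $\tilde{A}_{n-1}$ together with explicit Koszul resolutions of each $L_I$, or a geometric computation via Schneider--Stuhler resolutions of the $\pi_\nu$ on the Bruhat--Tits building of $\PGL_n(F_v)$, tracking parahoric simplex stabilizers against the subset $I_\nu$. In either route the final identity reduces to a counting argument on the Coxeter complex of type $A_{n-1}$.
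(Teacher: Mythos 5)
Your reduction to the case $\pi \cong \pi_\mu$ via the Bernstein centre is essentially the argument the paper itself uses for that half of the statement (the two commuting actions of a central element on the derived tensor product, combined with Proposition \ref{prop_Jacquet_module_of_elliptic_representation}(2)), so that part is fine. The problem is that the quantitative heart of the theorem --- that the derived tensor product of $\pi_\lambda$ with $\pi_\mu$ is concentrated in the single degree $\delta(I_\lambda, -w_0(I_\mu))$ and is one-dimensional there --- is exactly the step you label ``the principal obstacle'' and do not carry out. The paper does not reprove this from scratch either: it converts the Tor groups into $\Ext^i_{\Mod_{sm}(\bC[\PGL_n(F_v)])}(\pi_\lambda, \pi^\vee)$ by tensor--hom adjunction and then quotes Dat's computation \cite[Th\'eor\`eme 2.1.4]{Dat06} of the Ext groups between the representations $\pi_J$, together with \cite[Remarque 2.3.4, Lemme 2.3.3]{Dat06} to identify $\pi_\lambda \cong \pi_{-w_0(I_\lambda^c)}$ and to compute smooth duals (this is where the $-w_0$ actually enters). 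Gesturing at two possible strategies (Bernstein presentation plus Koszul resolutions, or Schneider--Stuhler resolutions) without executing either, or without citing a reference that does, leaves the theorem unproved.

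There is also a concrete flaw in the route you sketch. Passing from $\pi_\lambda \otimes^{\bL}_{\bC[G]} \pi_\mu$ to $\pi_\lambda^I \otimes^{\bL}_{\mathcal{H}} \pi_\mu^I$ over the full Iwahori--Hecke algebra $\mathcal{H}$ is legitimate (Borel's equivalence is exact and preserves projectives, modulo the left/right bookkeeping you mention), but the further replacement of $\Tor^{\mathcal{H}}_i$ by $\Tor^{\mathcal{H}_\theta}_i$ over the finite-dimensional fibre algebra $\mathcal{H}_\theta = \mathcal{H} \otimes_{Z(\mathcal{H})} \bC_\theta$ is not valid: the two are related by a change-of-rings spectral sequence whose $E_2$-page involves $\Tor^{Z(\mathcal{H})}_\bullet(\bC_\theta, \bC_\theta)$, an exterior algebra on $n-1$ generators since $Z(\mathcal{H})$ has Krull dimension $n-1$. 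So even a complete computation of $\Tor^{\mathcal{H}_\theta}_\bullet(L_{I_\lambda}, L_J)$ would not by itself give the degrees asserted in the theorem. The cleanest repair is to dualize to Ext in the smooth category and invoke \cite[Th\'eor\`eme 2.1.4]{Dat06}, as the paper does.
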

\begin{proof}
Tensor-hom adjunction gives for any $i \in \bZ$ an isomorphism 
\[ H^{-i}(\pi_\lambda \otimes^{\bL}_{\bC[\PGL_n(F_v)]} \pi)^\vee \cong \Ext^i_{\Mod_{sm}(\bC[\PGL_n(F_v)])}(\pi_\lambda, \pi^\vee) \]
(where the two $\vee$'s denote vector space dual and smooth dual, respectively). For a subset $J \subset \{ 1, \dots, n-1 \}$, let $\pi_J$ denote the representation defined in \cite{Dat06}; it is the unique irreducible quotient of the unnormalised induction $\Ind_{P_J(F_v)}^{\GL_n(F_v)} \bC$, where $P_J$ is the standard parabolic subgroup generated by $B_n$ and the negative roots corresponding to elements of $J$. Then \cite[Remarque 2.3.4]{Dat06} shows that there is an isomorphism $\pi_\lambda \cong \pi_{-w_0(I_\lambda^c)}$, while \cite[Th\'eorème 2.1.4]{Dat06} shows that $\Ext^i_{\Mod_{sm}(\bC[\PGL_n(F_v)])}(\pi_I, \pi_{J})$ is non-zero only when $i = \delta(I, J)$, in which case it is a 1-dimensional $\bC$-vector space. Finally, \cite[Lemme 2.3.3]{Dat06} shows that the smooth dual of $\pi_J$ is $\pi_{-w_0(J)}$. We conclude that $H^{-i}(\pi_\lambda \otimes^{\bL}_{\bC[\PGL_n(F_v)]} \pi_\mu)$ is non-zero only if $i = \delta(-w_0(I_\lambda^c), I_\mu^c) = \delta(I_\lambda, -w_0(I_\mu))$, in which case it is a 1-dimensional $\bC$-vector space.

To complete the proof of the theorem, we need to show that if $\pi$ is an irreducible admissible representation of trivial central character that is not of the form $\pi_\mu$ for any partition $\mu$, then $H^\ast(\pi_\lambda \otimes^{\bL}_{\bC[\PGL_n(F_v)]} \pi) = 0$. It's enough to show that $\Ext^\ast_{\Mod_{sm}(\bC[\PGL_n(F_v)])}(\pi_\lambda, \pi^\vee) = 0$. If $\alpha$ is an element of the centre of $\Mod_{sm}(\bC[\PGL_n(F_v)])$, then the two actions of $\alpha$ on $\Ext^\ast_{\Mod_{sm}(\bC[\PGL_n(F_v)])}(\pi_\lambda, \pi^\vee)$, one induced by the endomorphism $\alpha_{\pi_\lambda} : \pi_\lambda \to \pi_\lambda$, the other induced by the endomorphism $\alpha_{\pi^\vee} : \pi^\vee \to \pi^\vee$, are the same (as follows e.g. from the fact that 
\[ \Ext^\bullet_{\Mod_{sm}(\bC[\PGL_n(F_v)])}(-, \pi^\vee) \]
 is a universal $\delta$-functor). 

 Since $\pi$ is not of the form $\pi_\mu$, the representations $\pi$ and $\pi^\vee$ have distinct cuspidal supports. Bernstein's description of the centre of $\Mod_{sm}(\bC[\PGL_n(F_v)])$ implies that we can choose $\alpha$ so that $\alpha_{\pi_\lambda} = 0$ but $\alpha_{\pi^\vee}$ is the identity. This implies that $\Ext^\bullet_{\Mod_{sm}(\bC[\PGL_n(F_v)])}(\pi_\lambda, \pi^\vee)$ = 0, as required.
 \end{proof}
The definition of the representations $\pi_J$ appearing in the proof makes sense over any field $K$ of characteristic 0. We extend the definition of  $\pi_\lambda$ to any such field by setting $\pi_\lambda = \Ind_{P_{-w_0(I_\lambda^c)}(F_v)}^{\GL_n(F_v)} K$. If $K$ is abstractly isomorphic to $\bC$, then there is an isomorphism
\[ \rec_{F_v}^T(\pi_\lambda) \cong \oplus_{i=1}^k \Sp_{\lambda_i}(| \cdot |^{n - (\lambda_1 + \dots + \lambda_{i-1} + (\lambda_i + 1 )/2)}), \]
where we note that the definition of the representation on the right-hand side makes sense over any such field because only integer powers of the norm character $| \cdot |$ appear in the definition.

Here is a variant of Theorem \ref{thm_homology_of_PGL_n_elliptic_representations}.
\begin{theorem}\label{thm_homology_of_G_0_elliptic_representations}
Let $\lambda$ be a partition of $n$ and let $\pi$ be an irreducible admissible $\bC[G]$-module. If $\pi_\lambda \otimes^{\bL}_{\bC[G^0]} \pi \neq 0$, then there is a partition $\mu$ and an unramified character $\psi : F_v^\times \to \bC^\times$ such that $\pi \cong \pi_\mu \otimes \psi$. In this case, $H^{-i}(\pi_\lambda \otimes^{\bL}_{\bC[G^0]} \pi)$ is non-zero if and only if $i = \delta(I_\lambda, -w_0(I_\mu))$, in which case it is a 1-dimensional $\bC$-vector space.
\end{theorem}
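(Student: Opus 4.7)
The plan is to reduce Theorem~\ref{thm_homology_of_G_0_elliptic_representations} to Theorem~\ref{thm_homology_of_PGL_n_elliptic_representations} via a twist-and-induce argument. The key structural observation is that $G^0$ contains the compact central subgroup $Z^0 := Z(G) \cap G^0 = \cO_{F_v}^\times$, and its image $H := G^0/Z^0 \subset \PGL_n(F_v)$ has index exactly $n$, with cyclic quotient $\PGL_n(F_v)/H \cong \ZZ/n\ZZ$; this is computed using the valuation-of-determinant map $G \to \ZZ$, whose kernel is $G^0$ and whose restriction to $Z(G) = F_v^\times$ has image $n\ZZ$. The $n$ characters of this cyclic quotient pull back to $\PGL_n(F_v)$ as unramified twists $\eta^a \circ \det$ for $a = 0, \ldots, n-1$, where $\eta$ is a fixed unramified character of $F_v^\times$ of exact order $n$.

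First I would dispose of the case $\chi_\pi|_{\cO_{F_v}^\times} \neq 1$: since $\pi_\lambda$ has trivial central character, $Z^0$ acts on the diagonal module $\pi_\lambda \otimes_\CC \pi$ through $\chi_\pi|_{Z^0}$. As $Z^0$ is profinite and $\CC$ has characteristic zero, the category of smooth $\CC[Z^0]$-modules is semisimple, so $H_i(Z^0, M) = 0$ for all $i \geq 0$ whenever $Z^0$ acts on $M$ through a single non-trivial character; the Lyndon--Hochschild--Serre spectral sequence for $Z^0 \trianglelefteq G^0$ then forces $\pi_\lambda \otimes^{\bL}_{\CC[G^0]} \pi = 0$. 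Assuming henceforth that $\chi_\pi$ is unramified, I next choose an unramified character $\psi : F_v^\times \to \CC^\times$ with $\psi^n = \chi_\pi^{-1}$ (which exists because $\CC^\times$ is $n$-divisible); then $\pi \otimes \psi$ has trivial central character. Because $\psi \circ \det$ is trivial on $G^0$ (as $\det(G^0) \subset \cO_{F_v}^\times$ and $\psi$ is unramified), we have $\pi|_{G^0} = (\pi \otimes \psi)|_{G^0}$, so I may replace $\pi$ by $\pi \otimes \psi$ and assume $\pi$ factors through $\PGL_n(F_v)$.

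Under this reduction, the tensor products over $\CC[G^0]$ and over $\CC[H]$ agree, and the finite-index projection formula together with $\Ind_H^{\PGL_n(F_v)} \mathbf{1} \cong \bigoplus_{a=0}^{n-1} \eta^a$ yields
\[
\pi_\lambda \otimes^{\bL}_{\CC[G^0]} \pi \;\cong\; \bigoplus_{a=0}^{n-1} \pi_\lambda \otimes^{\bL}_{\CC[\PGL_n(F_v)]} (\pi \otimes \eta^a).
\]
By Theorem~\ref{thm_homology_of_PGL_n_elliptic_representations}, each summand vanishes unless $\pi \otimes \eta^a \cong \pi_\mu$ for some partition $\mu$, in which case it is one-dimensional and concentrated in degree $-\delta(I_\lambda, -w_0(I_\mu))$. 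The main subtle step I anticipate is uniqueness: at most one $a \in \ZZ/n\ZZ$ can satisfy $\pi \otimes \eta^a \cong \pi_\mu$, since the cuspidal support of $\pi_\mu$ is the standard unramified multiset $\{|\cdot|^{(n-1)/2}, \ldots, |\cdot|^{(1-n)/2}\}$, and no non-trivial power of $\eta$ preserves this multiset (any character permuting the standard support must send $\varpi_v$ to an integer power of $q_v$, while $\eta^a$ for $a \not\equiv 0 \pmod n$ sends $\varpi_v$ to a non-trivial $n$-th root of unity). Unwinding the original twist, $\pi$ has the form $\pi_\mu \otimes \psi'$ for the unramified character $\psi' = \psi^{-1} \eta^{-a}$, completing the proof.
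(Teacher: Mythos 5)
Your proposal is correct and follows essentially the same route as the paper's proof: dispose of the case of non-trivial central character on $Z^0=\cO_{F_v}^\times$, twist by an unramified $n$-th root of the central character to reduce to a $\PGL_n(F_v)$-module, decompose over the cyclic quotient $\PGL_n(F_v)/\overline{G}^0$ of order $n$ (you phrase this as a projection-formula direct sum of derived tensor products, the paper as taking $\ZZ/n$-invariants of Ext groups over $\overline{G}^0$ — the same Mackey/Frobenius reciprocity step), apply Theorem \ref{thm_homology_of_PGL_n_elliptic_representations} to each summand, and use cuspidal supports to see that at most one twist $\pi\otimes\eta^a$ can be some $\pi_\mu$.
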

\begin{proof}
Let $Z$ denote the centre of $G$ and let $Z^0 = Z \cap G^0$. Let $\omega_\pi : Z \to \bC^\times$ denote the central character of $\pi$. If $\omega_\pi|_{Z^0} \neq 1$, then $\pi_\lambda \otimes^{\bL}_{\bC[G^0]} \pi = 0$ and $\pi$ is not an unramified twist of any $\pi_\mu$. If $\omega_\pi|_{Z^0} = 1$, then we can assume without loss of generality (after replacing $\pi$ by an unramified twist) that in fact $\omega_\pi = 1$ and $\pi$ is a smooth $\bC[\PGL_n(F_v)]$-module.

Let $\overline{G}^0 = G^0 / Z^0$ denote the image of $G^0$ in $\PGL_n(F_v)$. Then we have
\[ \Ext^i_{\Mod_{sm}(\bC[G^0])}(\pi_\lambda, \pi) = \Ext^i_{\Mod_{sm}(\bC[\overline{G}^0])}(\pi_\lambda, \pi) \]
because $C_c^\infty(\overline{G}^0, \bC)$ is projective in $\Mod_{sm}(\bC[G^0])$, and
\[ \Ext^i_{\Mod_{sm}(\bC[\PGL_n(F_v)])}(\pi_\lambda, \pi) = \Ext^i_{\Mod_{sm}(\bC[\overline{G}^0])}(\pi_\lambda, \pi)^{\PGL_n(F_v) / \overline{G}^0}, \]
noting that $\PGL_n(F_v) / \overline{G}^0$ is a cyclic group of order $n$. If $\Ext^i_{\Mod_{sm}(\bC[G^0])}(\pi_\lambda, \pi) \neq 0$, then we can find a character $\psi : \PGL_n(F_v) / \overline{G}^0 \to \bC^\times$ (otherwise said, an unramified character of $\GL_n(F_v)$ of order dividing $n$) such that $\psi$ appears in $\Ext^i_{\Mod_{sm}(\bC[\overline{G}^0])}(\pi_\lambda, \pi)$, and hence such that $\Ext^i_{\bC[\PGL_n(F_v)]}(\pi_\lambda, \pi \otimes \psi) \neq 0$. Theorem \ref{thm_homology_of_PGL_n_elliptic_representations} implies that $\pi \otimes \psi \cong \pi_\mu$ for some $\mu$. If $\psi'$ is another character which appears in this way then we similarly obtain $\pi \otimes \psi' \cong \pi_{\mu'}$ for some $\mu'$. Looking at cuspidal supports, we see that this is possible only if $\psi = \psi'$, and moreover that there is an isomorphism
\[ \Ext^i_{\Mod_{sm}(\bC[G^0])}(\pi_\lambda, \pi) \cong \Ext^i_{\Mod_{sm}(\bC[\PGL_n(F_v)])}(\pi_\lambda, \pi \otimes \psi). \]
The desired statement now follows from Theorem \ref{thm_homology_of_PGL_n_elliptic_representations}.
\end{proof}
The representations $\pi_\lambda$ are of interest to us for two reasons. The first is that they show up in spaces of automorphic representations when the level-raising congruence is satisfied (because of the second part of Proposition \ref{prop_Jacquet_module_of_elliptic_representation}). The second is that they appear in the cohomology of the Drinfeld upper half-space. Indeed, let $p$ be a prime not dividing $q_v$ and let $E / \bQ_p$ be a finite extension. Let $R$ be one of $E$, $\cO_E$, or $\cO_E / (\varpi^c)$ for some $c \geq 1$. The Drinfeld upper half-space is the rigid analytic space $\Omega_{F_v}^n$ which is the admissible open subspace of $\bP^{n-1}_{F_v}$ obtained by deleting all $F_v$-rational hyperplanes. The group $\PGL_n(F_v)$ acts on it and, according to \cite[\S 4.1.3]{Dat06}, its compactly supported cohomology is computed by a complex
\[ R \Gamma_c(\Omega_{F_v}^{n-1, ca}, R) \in \mathbf{D}^-(R[\PGL_n(F_v)]). \]
(Here the superscript $ca$, following the notation of \cite{Dat06}, denotes base extension to the completion $\widehat{\overline{F}}_v$ of the fixed algebraic closure of $F_v$.) Formation of this complex is compatible with change of coefficients $R \to R'$ (where $R'$ is also of our allowed form). When $R = E$, we have the following explicit description of this complex, which is part of \cite[Proposition 4.2.2]{Dat06}:
\begin{theorem}\label{thm_decomposition_of_R_Gamma_Omega}
There is an isomorphism in $\mathbf{D}^-(E[\PGL_n(F_v)])$:
\[ R \Gamma_c(\Omega_{F_v}^{n-1, ca}, E) \cong \bigoplus_{i=0}^{n-1} \pi_{i+1, 1, 1, \dots, 1}[i-2(n-1)]. \]
\end{theorem}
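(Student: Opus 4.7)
The plan is to split the statement into two independent assertions: an abstract computation of the individual cohomology groups $H^j_c(\Omega_{F_v}^{n-1,ca}, E)$ as smooth $E[\PGL_n(F_v)]$-modules, and a formality claim showing that the complex $R\Gamma_c(\Omega_{F_v}^{n-1,ca}, E)$ is quasi-isomorphic to the direct sum of its shifted cohomology objects in $\mathbf{D}^-(E[\PGL_n(F_v)])$.

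For the first step, I would use the standard acyclic resolution of the constant sheaf on $\Omega_{F_v}^{n-1}$ arising from the reduction map to the Bruhat--Tits building of $\PGL_n(F_v)$: the cohomology of $\Omega^{n-1}$ with its $\PGL_n(F_v)$-action is computed by a complex whose terms are, up to normalisation, parabolically induced representations $\Ind_{P_J(F_v)}^{\GL_n(F_v)} E$ indexed by subsets $J \subset \{1, \dots, n-1\}$. Taking cohomology of this complex (or appealing directly to the Schneider--Stuhler identification) gives the isomorphisms $H^{2(n-1)-i}_c(\Omega_{F_v}^{n-1, ca}, E) \cong \pi_{i+1, 1, \dots, 1}$ for $0 \leq i \leq n-1$ and vanishing otherwise. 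This matches the cohomology of the claimed right-hand side in each degree, so it only remains to establish that the complex splits.

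The second step is the formality statement. I would run the Postnikov tower of truncations and analyse, at each stage, the $k$-invariant living in $\Ext^2_{\Mod_{sm}(E[\PGL_n(F_v)])}(\pi_{j+1, 1, \dots, 1}, \pi_{i+1, 1, \dots, 1})$ for the appropriate pairs $i < j$, together with the potentially more intricate higher Massey-type obstructions to splitting. These $\Ext$ groups are controlled by the Ext-calculus of elliptic representations already cited in the proof of Theorem \ref{thm_homology_of_PGL_n_elliptic_representations}, namely that $\Ext^r_{\Mod_{sm}(E[\PGL_n(F_v)])}(\pi_I, \pi_J)$ vanishes unless $r = \delta(I, J)$. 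The combinatorics of the specific sets $I$ associated to the partitions $(i+1, 1, \dots, 1)$ that actually occur should force all of the relevant obstruction classes to land in $\Ext$ groups of the wrong degree, hence to vanish.

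The main obstacle is precisely this second step. The first step is a reformulation of classical Schneider--Stuhler, but showing that the filtered complex genuinely splits, rather than merely having the correct associated graded, requires the combinatorial input that only the specific chain of partitions $(1, \dots, 1), (2, 1, \dots, 1), \dots, (n)$ appears and that the function $\delta(I_{(j+1,1,\dots,1)}, -w_0(I_{(i+1, 1, \dots, 1)}))$ never equals the degree shift $j - i + 1$ (or the relevant higher-Massey degrees) separating these pieces in the Postnikov tower. Verifying this combinatorial identity, and ensuring that no higher-order obstructions appear, is the heart of the argument.
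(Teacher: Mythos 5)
This theorem is not proved in the paper: it is quoted directly from Dat, \cite[Proposition 4.2.2]{Dat06}, so there is no in-paper argument to compare against. Your outline is, in substance, a reconstruction of Dat's own proof: the identification of the individual groups $H^{2(n-1)-i}_c(\Omega_{F_v}^{n-1,ca},E)$ with the elliptic representations $\pi_{i+1,1,\dots,1}$ goes back to Schneider--Stuhler (and is \cite[Th\'eor\`eme 3.1.1 ff.]{Dat06} in the compactly supported setting), and the splitting is deduced exactly as you propose from the concentration of $\Ext^\bullet(\pi_I,\pi_J)$ in the single degree $\delta(I,J)$.

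One remark on what you call the heart of the argument: the combinatorial verification is immediate rather than delicate. In Dat's indexing one has $\pi_{(i+1,1,\dots,1)} = \pi_{\{1,\dots,n-1-i\}}$, so for $i<j$ the two index sets are nested and $\delta = j-i$, whereas the obstruction to splitting off $H^{2(n-1)-i}$ from the lower truncation lies in $\bigoplus_{j>i}\Ext^{j-i+1}(\pi_{i+1,1,\dots,1},\pi_{j+1,1,\dots,1})$, which vanishes since $j-i+1\neq j-i$. Moreover the standard induction on truncation triangles shows that the vanishing of $\Ext^{q-p+1}(H^q,H^p)$ for all $p<q$ already suffices; no separate analysis of higher Massey-type obstructions is needed. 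The genuinely hard inputs are the two black boxes you invoke --- Dat's $\Ext$ computation (\cite[Th\'eor\`eme 2.1.4]{Dat06}) and the Schneider--Stuhler-type computation of $H^\bullet_c$ for the Berkovich space $\Omega_{F_v}^{n-1,ca}$ --- not the splitting step itself.
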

For general $R$, we can at least prove:
\begin{theorem}
Let $R$ be one of $E$, $\cO_E$, or $\cO_E / (\varpi^c)$. Let $G = \GL_n(F_v)$, $G^0 = \{ g \in G \mid \det(g) \in \cO_{F_v}^\times \}$.  Let $A^\bullet \in \mathbf{D}^-(R[G])$ be a complex such that for each $i \in \bZ$, $H^i(A^\bullet)$ is an admissible $R[G]$-module. Then for each $i \in \bZ$, 
\[ H^i(R \Gamma_c(\Omega_{F_v}^{n-1, ca}, R) \otimes^{\bL}_{R[G^0]} A^\bullet) \]
is a finitely generated $R$-module.
\end{theorem}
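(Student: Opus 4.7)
The plan is to invoke Theorem \ref{thm_tor_is_fg}(2) column-by-column, after producing an explicit bounded model for $R\Gamma_c(\Omega_{F_v}^{n-1,ca}, R)$ by smooth $R[G]$-modules that are admissible and finitely generated as $R[G^0]$-modules. To that end, I would appeal to the construction of $R\Gamma_c(\Omega_{F_v}^{n-1,ca}, R)$ in \cite[\S 4.1.3]{Dat06}: up to quasi-isomorphism this complex is represented by a bounded complex $C^\bullet$ of smooth $R[G]$-modules whose terms $C^q$ are finite direct sums of modules of the form $C_c^\infty(P_I(F_v) \backslash G, R)$, with $P_I$ ranging over the standard parabolic subgroups of $\GL_n$. (Such a model arises, for instance, from an acyclic affinoid covering of $\Omega_{F_v}^{n-1}$ indexed by the simplices of the Bruhat--Tits building of $\PGL_n(F_v)$, and is manifestly compatible with change of coefficients $R \to R'$.) Since $P_I(F_v) \backslash G$ is compact, $C_c^\infty(P_I(F_v) \backslash G, R) = \Ind_{P_I(F_v)}^G R$; this is admissible (for any open compact $K \subset G$, its $K$-invariants are the $R$-valued functions on the finite set $P_I(F_v) \backslash G / K$, hence form a free $R$-module of finite rank) and, by Proposition \ref{prop_GGT}, finitely generated as an $R[G^0]$-module.

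With this model in place, pick a quasi-isomorphism $Q^\bullet \to A^\bullet$ from a bounded above complex of projective smooth $R[G^0]$-modules. Then the total complex $\operatorname{Tot}(C^\bullet \otimes_{R[G^0]} Q^\bullet)$ computes $R\Gamma_c(\Omega_{F_v}^{n-1,ca}, R) \otimes^{\bL}_{R[G^0]} A^\bullet$, and filtering it by the columns of $C^\bullet$ yields a strongly convergent spectral sequence
\[ E_1^{p,q} = H^q\bigl(C^p \otimes^{\bL}_{R[G^0]} A^\bullet\bigr) \Longrightarrow H^{p+q}\bigl(R\Gamma_c(\Omega_{F_v}^{n-1,ca}, R) \otimes^{\bL}_{R[G^0]} A^\bullet\bigr). \]
Applying Theorem \ref{thm_tor_is_fg}(2) with the first complex taken to be $C^p$ (concentrated in degree zero) and the second taken to be $A^\bullet$ shows that each $E_1^{p,q}$ is a finitely generated $R$-module. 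Since $C^\bullet$ is bounded, only finitely many columns contribute to each total degree, and since each of $E$, $\cO_E$, and $\cO_E / (\varpi^c)$ is Noetherian, the abutment is finitely generated over $R$ in every total degree, as required.

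The only nontrivial input is the existence of the resolution $C^\bullet$ with the claimed properties over the integral coefficient rings $\cO_E$ and $\cO_E / (\varpi^c)$; this is precisely what the explicit construction of $R\Gamma_c(\Omega_{F_v}^{n-1,ca}, R)$ in \cite[\S 4.1]{Dat06} is designed to provide. Once that is granted, the remainder of the argument is entirely formal and reduces to the finiteness theorem already established in Theorem \ref{thm_tor_is_fg}.
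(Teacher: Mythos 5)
Your overall strategy --- reducing to Theorem \ref{thm_tor_is_fg} via Proposition \ref{prop_GGT} and input from \cite{Dat06} --- is the right one, but the load-bearing claim, namely that $R\Gamma_c(\Omega_{F_v}^{n-1,ca},R)$ is represented by a bounded complex $C^\bullet$ whose terms are finite direct sums of $C_c^\infty(P_I(F_v)\backslash G, R)$, is not established in \cite[\S 4.1]{Dat06}, and your proposed justification does not produce it. An acyclic covering indexed by the simplices of the Bruhat--Tits building yields a complex whose terms are compact inductions $\cInd_{K_\sigma}^{G}(\cdot)$ from the (compact open) simplex stabilizers, not smooth inductions from the (non-compact) parabolic subgroups; the parabolic inductions enter only in Dat's computation of the individual cohomology groups, where \cite[Th\'eor\`eme 3.1.1]{Dat06} exhibits each $H^i_c(\Omega_{F_v}^{n-1,ca},R)$ as a \emph{quotient} of a parabolic induction --- a statement about the cohomology, not a model of the complex. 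As written, the key input to your argument is therefore unsubstantiated (the formal spectral-sequence part that follows it is fine).

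The gap is easy to close, and closing it shortens the proof: Theorem \ref{thm_tor_is_fg}(2) is stated for complexes and its hypotheses bear only on the cohomology groups, not on any chosen representative, so no model $C^\bullet$ and no column spectral sequence are needed. It suffices to check that each $H^i_c(\Omega_{F_v}^{n-1,ca},R)$ is admissible and finitely generated as an $R[G^0]$-module. This follows from \cite[Th\'eor\`eme 3.1.1]{Dat06} together with Proposition \ref{prop_GGT}: a quotient of $C_c^\infty(P_I(F_v)\backslash G, R)$ is finitely generated over $R[G^0]$, and admissibility passes to quotients here because taking invariants under an open compact pro-$p$ subgroup is exact for smooth modules over a $\bZ[1/p]$-algebra and $R$ is Noetherian. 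This is exactly the argument the paper gives.
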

\begin{proof}
This follows from Theorem \ref{thm_tor_is_fg}, provided we can show that each group $H^i_c(\Omega_{F_v}^{n-1, ca}, R)$ is a finitely generated admissible $R[G^0]$-module. This is a consequence of Proposition \ref{prop_GGT} (finite generation of parabolic induction) and \cite[Th\'eor\`eme 3.1.1]{Dat06} (which shows that each group $H^i_c(\Omega_{F_v}^{n-1, ca}, R)$ is a quotient of a parabolic induction). 
\end{proof}
	
\section{The definite unitary group}\label{sec_definite_unitary_group}

The proof of our level-raising theorem will take place on a definite unitary group. We introduce the notation and assumptions we need here. Let $F_0$ be an imaginary quadratic field and let $F$ be a CM number field of the form $F = F^+ F_0$, such that $F / F^+$ is everywhere unramified. Let $n \geq 1$ is an integer, and let $G$ be the reductive group over $\cO_{F^+}$ whose functor of points is
\begin{equation}\label{eqn_defn_of_G} G(R) = \{ g \in \GL_n(R \otimes_{\cO_{F^+}} \cO_F) \mid g = (1 \otimes c)(g)^{-t} \}. 
\end{equation}
Note that for each finite place $v$ of $F^+$, $G_{F^+_v}$ is unramified (hence quasi-split), while for each infinite place $v$, $G(F^+_v)$ is compact. We write $U_n$ for the reductive group over $\cO_{F^+}$ whose functor of points is 
\begin{equation} U_n(R) = \{ g \in \GL_n(R \otimes_{\cO_{F^+}} \cO_F) \mid g = \Phi_n (1 \otimes c)(g)^{-t} \Phi_n^{-1} \}, 
\end{equation}
where $\Phi_n = ((-1)^i \delta_{i, n+1-j}) \in \GL_n(\bZ)$. The group $U_n$ is quasi-split (with Borel subgroup $B$ and maximal torus $T$ defined over $\cO_{F^+}$ respectively by the upper-triangular matrices and diagonal matrices) and admits an $F$-splitting (given by the standard matrices $X_i = E_{i, i+1}$ for $i = 1, \dots, n-1$). For any finite place $v$ of $F^+$, the tuple $(B, T, \{ X_i \})$ is admissible for $G_{\cO_{F^+_v}}$, in the sense of \cite[Definition 7.1]{Hal93}. 

We can identify
\[ U_{n, F} = \{ (g_1, g_2) \in \GL_n \times \GL_n \mid g_2 = \Phi_n g_1^{-t} \Phi_n^{-1} \} \]
and
\[ G_F = \{ (g_1, g_2) \in \GL_n \times \GL_n \mid g_2 = g_1^{-t} \}. \]
We define an inner twist $\xi : U_{n, F} \to G_F$ by the formula $\xi(g_1, g_2) = (g_1, \Phi_n g_2 \Phi_n^{-1})$. By definition, this means that $\xi^{-1} {}^c \xi$ is an inner automorphism of $U_{n, F}$. We can lift this to a pure inner twist. We recall that a pure inner twist (see \cite[\S 2]{Kal11}) of $U_n$ is a triple $(H, f, x)$ where $H$ is a reductive group over $F^+$, $f : U_{n, \overline{F}} \to H_{\overline{F}}$ is an isomorphism, and $x \in Z^1(F^+, U_n)$ is a cocycle such that for all $\sigma \in \Gal(\overline{F} / F^+)$, $f^{-1} {}^\sigma f = \Ad(x_\sigma)$. An isomorphism $(H, f, x) \to (H', f', x')$ of pure inner twists is a pair $(g, h)$ consisting of an isomorphism $g : H \to H'$ and an element $h \in U_n(\overline{F})$ such that $(f')^{-1} \circ g \circ f = \Ad(h)$ and for all $\sigma \in \Gal(\overline{F} / F^+)$, we have $x'_\sigma = h x_\sigma {}^\sigma h^{-1}$. Two pure inner twists are isomorphic if and only if the cocycles $x, x'$ have the same image in $H^1(F^+, U_n)$. Similar definitions and remarks apply with $F^+$ replaced by any completion $F^+_v$. 

To lift $\xi$ to the data of pure inner twist, it suffices to give $z \in U_n(F)$ such that $z {}^c z = 1$ and $\xi^{-1} {}^c \xi = \Ad(z)$. If $n$ is odd, then $(\Phi_n, \Phi_n) \in U_n(F^+)$ and we take $z = (\Phi_n, \Phi_n)$. If $n$ is even, then we choose $\zeta \in F_0^\times$ such that ${}^c \zeta = - \zeta$ and take $z = (\zeta \Phi_n, - \zeta \Phi_n)$. Our constructions will depend on this choice but the precise choice is unimportant. We do make use of the observation that since $F_0 \subset F$, the group $G$ can be naturally defined over $\bQ$, and moreover that the pure inner twist is also defined over $\bQ$. 

If $v$ is a finite place of $F^+$, then the cocycle in $Z^1(F / F^+, U_n)$ sending $c$ to $z$ has trivial image in $H^1(F^+_v, U_n)$ (cf. \cite[\S 1.9]{Rog90}). It follows that $(\xi, z)$ becomes isomorphic to the trivial pure inner twist $(U_n, \operatorname{1}, 1)$ over $F^+_v$; such an isomorphism is determined uniquely up to automorphisms of the trivial pure inner twist, or in other words up to conjugation by $U_n(F^+_v)$. We thus have for each finite place $v$ of $F^+_v$ an isomorphism $\iota_v :G(F^+_v) \to U_n(F^+_v)$, determined up to $U_n(F^+_v)$-conjugacy. 
\begin{lemma}
We can choose the isomorphisms $\iota_v$ so that for all but finitely many places $v$ of $F^+$, and for every $v$ which splits in $F$, $\iota_v^{-1}(U_n(\cO_{F^+_v})) = G(\cO_{F^+_v})$.
\end{lemma}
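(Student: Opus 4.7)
The plan is to separate split and inert places, exploiting the fact that $F/F^+$ is everywhere unramified so that every finite place of $F^+$ is either split or inert in $F$.

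For a place $v$ which splits in $F/F^+$ as $v = w w^c$, the decomposition $\cO_F \otimes_{\cO_{F^+}} \cO_{F^+_v} \cong \cO_{F_w} \times \cO_{F_{w^c}}$ (with $c$ interchanging the factors) identifies both $G(\cO_{F^+_v})$ and $U_n(\cO_{F^+_v})$ with $\GL_n(\cO_{F_w})$ via projection to the first factor. The inner twist $\xi$ is visibly the identity on the first factor, and the pure inner twist cocycle is trivial locally because it takes values in a product group on which $c$ acts by swapping the factors, so its cohomology vanishes by a Shapiro-type argument. Choosing $\iota_v$ as the isomorphism induced by this canonical trivialization gives $\iota_v^{-1}(U_n(\cO_{F^+_v})) = G(\cO_{F^+_v})$ tautologically.

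For an inert place $v$, let $w$ denote the unique place of $F$ above $v$. For all but finitely many $v$, the cocycle $z \in U_n(F)$ is integral at $v$: this is automatic for $n$ odd (where $z = (\Phi_n, \Phi_n)$), and for $n$ even it requires only that $\zeta$ be a $v$-unit, which holds away from a finite set. For such $v$ the class of $z$ lies in $H^1(\Gal(F_w/F^+_v), U_n(\cO_{F_w}))$. Since $F_w/F^+_v$ is unramified and $U_n$ is a smooth reductive group scheme over $\cO_{F^+_v}$, the reduction map identifies this cohomology with $H^1(\Gal(k(w)/k(v)), U_n(k(w)))$ (using smoothness and Hensel's lemma applied to the kernel of $U_n(\cO_{F_w}) \to U_n(k(w))$), and this group vanishes by Lang's theorem applied to the connected reductive group $U_n$ over the finite field $k(v)$. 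A trivializing element $h_v \in U_n(\cO_{F_w})$ then yields an isomorphism of integral pure inner twists, hence an isomorphism of group schemes $\iota_v : G_{\cO_{F^+_v}} \to U_{n, \cO_{F^+_v}}$, which restricts to the desired identification on $\cO_{F^+_v}$-points.

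The main technical point will be the integral Lang--Steinberg step at inert places, which requires $U_n$ to admit a reductive model over $\cO_{F^+_v}$ (granted by the everywhere-unramified hypothesis) together with the integrality of the cocycle $z$ at $v$. Both are straightforward to verify, and the finite exceptional set is contained in the set of inert places where $\zeta$ fails to be integral, so it is in particular disjoint from the split places, as required.
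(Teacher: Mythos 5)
Your proof is correct and follows essentially the same route as the paper: away from the finitely many places where $z$ fails to lie in $U_n(\cO_{F_w})$, trivialize the cocycle integrally via Lang's theorem for the reductive model of $U_n$ over $\cO_{F^+_v}$ (the paper phrases this as vanishing of $H^1(F_w^u/F^+_v, U_n(\cO_{F_w^u}))$ rather than reducing to the residue field, but these are the same argument), and thereby descend $\Ad(h^{-1})\circ\xi$ to an isomorphism of integral group schemes. The only cosmetic difference is at split places, where the paper simply observes the claim is vacuous because all hyperspecial maximal compact subgroups of $\GL_n(F_w)$ are conjugate, so any $\iota_v$ in the prescribed $U_n(F^+_v)$-conjugacy class can be adjusted to match the integral structures — this sidesteps your need to verify that the explicit "first-factor" trivialization actually lies in that conjugacy class.
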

\begin{proof}
The claim is vacuous at split places $v$ (as then there is a unique conjugacy class of hyperspecial maximal compact subgroups). It is also vacuous at inert places when $n$ is odd, for the same reason, although we don't need this. In general, let $v$ be any finite place of $F^+$ such that, if $w$ denotes a place of $F$ lying above $v$, then $z$ lies in $U_n(\cO_{F_w})$. Let $F_w^u$ denote the ring of integers of the maximal unramified extension of $F_w$. Lang's theorem implies that $H^1(F_w^u / F^+_v, U_n(\cO_{F_w^u}))$ is trivial, and hence that we can find $h \in U_n(\cO_{F_w^u})$ such that $z = h^{-1} {}^\sigma h$ (where $\sigma$ denotes arithmetic Frobenius). Then $\Ad(h^{-1}) \circ \xi : U_{n, \cO_{F_w^u}} \to G_{\cO_{F_w^u}}$ descends to an isomorphism $U_{n, \cO_{F^+_v}} \to G_{\cO_{F^+_v}}$, and we can take $\iota_v^{-1}$ to be the map obtained after passage to $F^+_v$-points.  
\end{proof}
Henceforth we fix a system of isomorphisms $\iota_v$ satisfying the conclusion of the lemma. If $v$ is a finite place of $F^+$ which splits $v = w w^c$ in $F$, then we also write $\iota_w$ for the isomorphism $G(F^+_v) \to \GL_n(F_w)$ given by canonical inclusion $G(F^+_v) \subset \GL_n(F^+_v \otimes_{F^+} F) = \GL_n(F_w) \times \GL_n(F_{w^c})$ followed by projection to $\GL_n(F_w)$. Then $\iota_w(G(\cO_{F^+_v})) = \GL_n(\cO_{F_w})$.

Automorphic representations of $G(\bA_{F^+})$ are related to conjugate self-dual automorphic representations of $\GL_n(\bA_F)$ by base change, and have associated Galois representations. Theorem \ref{thm_base_change}, Corollary \ref{cor_Gal_reps_for_G} and Theorem \ref{thm_descent_for_cusp_forms} below follow from \cite[Theorem 1.2, Corollary 1.3, Theorem 1.4]{New21}, where they are deduced from the results of \cite{Lab11}; we state them again here for ease of reference.
\begin{theorem}\label{thm_base_change}
Let $\sigma$ be an automorphic representation of $G(\bA_{F^+})$. Then we can find a partition $n = n_1 + \dots + n_k$ and discrete, conjugate self-dual automorphic representations $\pi_i$ of $\GL_{n_i}(\bA_F)$ ($i = 1, \dots, k$) with the following properties:
\begin{enumerate}
    \item Let $\pi = \pi_1 \boxplus \dots \boxplus \pi_k$. Then for any finite place $v$ of $F^+$ such that $\sigma_v$ is unramified, and any place $w | v$ of $F$, $\pi_w$ is unramified and is related to $\sigma_v$ by unramified base change.
    \item For each place $v$ of $F^+$ which splits $v = w w^c$ in $F$, there is an isomorphism $\pi_w \cong \sigma_v \circ \iota_w^{-1}$.
    \item For each place $v | \infty$ of $F$, $\pi_v$ has the same infinitesimal character as the algebraic representation $\otimes_{\tau : F_v \to \bC} W_\tau$, where $W_\tau$ is the unique algebraic representation of $\GL_n(F_v) \cong \GL_n(\bC)$ such that $W_\tau|_{G(F^+_v)} \cong \sigma_v$. 
\end{enumerate}
\end{theorem}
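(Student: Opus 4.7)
The plan is to reduce this to the known stable base change theorem for quasi-split unitary groups, applied to the pure inner twist relating $G$ and $U_n$. Starting from $\sigma$, the first step is to transfer it to a packet of automorphic representations of $U_n(\bA_{F^+})$ using the pure inner twist $(\xi, z)$ constructed above. Concretely, at each finite place $v$ of $F^+$ we use the isomorphism $\iota_v : G(F^+_v) \to U_n(F^+_v)$ to transport $\sigma_v$ to a representation of $U_n(F^+_v)$; at archimedean places $v$, $G(F^+_v)$ is compact and $U_n(F^+_v)$ is its quasi-split inner form, so we use the local Langlands correspondence for these real groups to produce the archimedean component. The assembly of these local components into a global automorphic representation (or more generally a discrete automorphic character of the appropriate stable Arthur parameter) is exactly what Labesse establishes via the stabilization of the trace formula in \cite{Lab11}.

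The second step is to apply stable base change from $U_n$ to $\Res_{F/F^+}\GL_n$. Labesse's theorem produces an automorphic representation $\pi$ of $\GL_n(\bA_F)$ which, by the general shape of the discrete spectrum of $\GL_n$ established by M\oe glin--Waldspurger, must be an isobaric sum $\pi = \pi_1 \boxplus \dots \boxplus \pi_k$ with each $\pi_i$ a discrete automorphic representation of $\GL_{n_i}(\bA_F)$. The conjugate self-duality of each $\pi_i$ follows from the conjugate self-duality built into stable base change from the unitary group, combined with the uniqueness of the isobaric decomposition.

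The third step is to verify the three local compatibilities. At unramified finite places, the statement reduces to unramified local base change, which is built into the construction via the Satake isomorphism: if $\sigma_v$ corresponds via $\iota_v$ to an unramified representation of $U_n(F^+_v)$, the corresponding Satake parameter base-changes to an unramified parameter of $\GL_n(F_w)$. At split places $v = ww^c$, the group $U_n(F^+_v)$ is already isomorphic to $\GL_n(F_w)$ via projection to the $w$-factor, and local base change is the identity in this setting; combined with $\iota_w$ this gives $\pi_w \cong \sigma_v \circ \iota_w^{-1}$. At archimedean places, the infinitesimal character statement is the archimedean analogue of base change for compact unitary groups: the unique algebraic representation $W_\tau$ of $\GL_n(\bC)$ whose restriction realizes $\sigma_v$ has infinitesimal character that matches $\pi_v$ under base change, which is exactly the computation done by Labesse in the compact real case.

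The main obstacle is really not an obstacle at this point, since the theorem is cited directly from \cite[Theorem 1.2]{New21}, which in turn relies on \cite{Lab11}; the only nontrivial bookkeeping is to check that the choice of pure inner twist $(\xi, z)$ and the isomorphisms $\iota_v$ fixed above are compatible with the transfer factors used in the stabilization, so that the statement at split places takes the clean form $\pi_w \cong \sigma_v \circ \iota_w^{-1}$ without any auxiliary twist. This compatibility is exactly why the pure inner twist (rather than just the inner twist) was introduced, and it is already handled in the references.
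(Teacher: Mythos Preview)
Your proposal is correct and matches the paper's approach: the paper does not give an independent proof of this theorem but simply cites \cite[Theorem 1.2]{New21}, where it is deduced from \cite{Lab11}, and your sketch is a reasonable expansion of what lies behind that citation. One minor remark: in Labesse's argument the comparison of trace formulae goes directly from $G$ to the twisted $\GL_n$ (using the stabilization, with the pure inner twist entering through the normalisation of transfer factors), rather than by first producing a global automorphic representation of the quasi-split $U_n$ as an intermediate object; but this does not affect the correctness of your outline.
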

\begin{corollary}\label{cor_Gal_reps_for_G}
Let $\sigma$ be an automorphic representation of $G(\bA_{F^+})$, let $p$ be a prime number, and let $\iota : \overline{\bQ}_p \to \bC$ be an isomorphism. Then there exists a continuous semisimple representation $r_{\sigma, \iota} : G_F \to \GL_n(\overline{\bQ}_p)$ satisfying the following conditions:
\begin{enumerate}
    \item For any finite, prime-to-$p$ place $v$ of $F^+$ such that $\sigma_v$ is unramified, and for any place $w | v$ of $F$, $r_{\sigma, \iota}|_{G_{F_w}}$ is unramified.
    \item For each place $w | p$ of $F$, $r_{\sigma, \iota}|_{G_{F_w}}$ is de Rham. 
    \item For each place $v$ of $F^+$ which splits $v = w w^c$ in $F$, there is an isomorphism $\mathrm{WD}(r_{\sigma, \iota}|_{G_{F_w}})^{F-ss} \cong \rec_{F_v}^T(\sigma_v \circ \iota_w^{-1})$. 
\end{enumerate}
\end{corollary}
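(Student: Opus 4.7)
The plan is to reduce the claim to the existence of Galois representations attached to conjugate self-dual cuspidal automorphic representations of general linear groups, combined with Theorem \ref{thm_base_change}.

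First, I would apply Theorem \ref{thm_base_change} to $\sigma$ in order to obtain a partition $n = n_1 + \dots + n_k$ and discrete, conjugate self-dual automorphic representations $\pi_i$ of $\GL_{n_i}(\bA_F)$ whose isobaric sum $\pi = \pi_1 \boxplus \dots \boxplus \pi_k$ realizes the base change of $\sigma$. Since $G(F^+_v)$ is compact at every infinite place, $\sigma_\infty$ is an algebraic representation of $G(F^+_\infty)$, and part (3) of Theorem \ref{thm_base_change} then forces $\pi_\infty$ (hence each $\pi_{i,\infty}$) to have regular algebraic infinitesimal character. By the Moeglin--Waldspurger classification of the discrete spectrum of $\GL_{n_i}(\bA_F)$, any non-cuspidal discrete automorphic representation is a Speh-type representation whose infinitesimal character fails to be regular. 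Combined with regularity, this forces each $\pi_i$ to be cuspidal; thus each $\pi_i$ is RACSDC (possibly after adjusting by a suitable algebraic Hecke character to normalize the conjugate self-duality).

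Second, to each $\pi_i$ I would attach the Galois representation
\[ r_\iota(\pi_i) : G_F \to \GL_{n_i}(\overline{\bQ}_p) \]
supplied by the now-standard results on RACSDC automorphic representations (see \cite[Theorem 2.2]{Clo14} and the references therein to \cite{Har16, Sch15}). I would then simply define
\[ r_{\sigma, \iota} = \bigoplus_{i=1}^{k} r_\iota(\pi_i). \]

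Third, I would verify the three required properties for $r_{\sigma, \iota}$ by reading them off from the corresponding properties of each summand. If $v$ is a finite prime-to-$p$ place of $F^+$ with $\sigma_v$ unramified and $w \mid v$, then Theorem \ref{thm_base_change}(1) gives that each $\pi_{i,w}$ is unramified, whence local-global compatibility for $r_\iota(\pi_i)$ makes $r_\iota(\pi_i)|_{G_{F_w}}$ unramified. For $w \mid p$, the de Rham property of each $r_\iota(\pi_i)|_{G_{F_w}}$ is part of the construction of Galois representations for RACSDC representations. Finally, if $v = w w^c$ splits in $F$, then Theorem \ref{thm_base_change}(2) identifies $\pi_w \cong \sigma_v \circ \iota_w^{-1}$, so that the Frobenius-semisimple Weil--Deligne parameter of $r_{\sigma, \iota}|_{G_{F_w}}$ is the direct sum of those of the $r_\iota(\pi_i)|_{G_{F_w}}$, which matches $\rec_{F_v}^T(\sigma_v \circ \iota_w^{-1})$ by additivity of $\rec$ under isobaric sums.

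I do not expect any serious obstacle here, as everything is a bookkeeping assembly of existing results; the one point worth being careful about is the passage from \emph{discrete} to \emph{cuspidal} for the $\pi_i$, which uses Moeglin--Waldspurger together with the regularity forced by the compactness of $G$ at infinity, and the normalization by a Hecke character to fit into the RACSDC framework used by the Galois-representations literature.
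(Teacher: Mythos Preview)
Your overall strategy---apply Theorem \ref{thm_base_change}, then assemble $r_{\sigma,\iota}$ from Galois representations attached to the pieces $\pi_i$---is exactly what underlies the paper's (cited) proof. However, your argument contains a genuine error at the step where you claim each $\pi_i$ must be cuspidal.

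You assert that ``any non-cuspidal discrete automorphic representation is a Speh-type representation whose infinitesimal character fails to be regular.'' This is false. A Speh representation $\mathrm{Speh}(\tau,m)$ has the same infinitesimal character as the full induced representation $\tau|\cdot|^{(m-1)/2} \times \cdots \times \tau|\cdot|^{(1-m)/2}$, and this can certainly be regular. The simplest counterexample is the trivial representation of $\GL_n(\bA_F)$ for $n \geq 2$: it is $\mathrm{Speh}(\mathbf{1},n)$, it lies in the discrete (residual) spectrum, it is conjugate self-dual, and it is regular algebraic of weight $0$ (its archimedean component \emph{is} an irreducible algebraic representation, namely the trivial one). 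It arises as the base change of the trivial representation of $G(\bA_{F^+})$. The paper itself explicitly acknowledges that non-cuspidal $\pi_i$ occur: see the remark immediately following the Corollary, which says this ``happens precisely when one of the constituents $\pi_i$ appearing in the statement of Theorem \ref{thm_base_change} is not cuspidal,'' and refers to the formula in \cite[Corollary 3.4]{Ana21}.

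The fix is straightforward but requires an extra layer. Write each discrete $\pi_i$ as $\mathrm{Speh}(\tau_i, m_i)$ with $\tau_i$ cuspidal on $\GL_{d_i}(\bA_F)$. One checks that $\tau_i$ is again conjugate self-dual and that a suitable half-integral twist of $\tau_i$ is regular algebraic; one then attaches a Galois representation to $\tau_i$ and defines $r_\iota(\pi_i)$ as the appropriate direct sum of cyclotomic twists of $r_\iota(\tau_i)$ (so that $\rec_{F_w}^T$ matches at split places). This is precisely what is carried out in the reference the paper cites. Your verification of properties (1)--(3) then goes through as written, once the building blocks are the cuspidal $\tau_i$ rather than the possibly non-cuspidal $\pi_i$.
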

In the situation of Corollary \ref{cor_Gal_reps_for_G}, it is useful to note that if there is a split place $v = w w^c$ such that $\sigma_w$ is not generic (in the sense of having a Whittaker model), then $r_{\sigma, \iota}$ has two irreducible subquotients which differ by a twist of the cyclotomic character (see the formula for $r_{\sigma, \iota}$ given in the proof of \cite[Corollary 3.4]{Ana21} -- this happens precisely when one of the constituents $\pi_i$ appearing in the statement of Theorem \ref{thm_base_change} is not cuspidal). Conversely, if the residual representation $\overline{r}_{\sigma, \iota}$ satisfies the genericity condition (3) of Theorem \ref{thm_intro_thm}, then $\sigma_w$ must be generic at every split place $w$. 
\begin{theorem}\label{thm_descent_for_cusp_forms}
Let $\pi$ be a RACSDC automorphic representation of $\GL_n(\bA_F)$. Suppose that for each place $w$ of $F$ such that $\pi_w$ is ramified, $w$ is split over $F^+$. Then there exists an automorphic representation $\sigma$ of $G(\bA_{F^+})$ satisfying the following conditions:
\begin{enumerate}
    \item For each finite place $v$ of $F^+$ which is inert in $F$, $\sigma_v^{\iota_v^{-1}(U_n(\cO_{F^+_v}))} \neq 0$ and if $w$ denotes the unique place of $F$ lying above $v$, then $\pi_w$ is related to $\sigma_v$ by unramified base change.
    \item For each finite place $v$ of $F$ which splits $v = w w^c$ in $F$, $\pi_w \cong \sigma_v \circ \iota_w^{-1}$.
    \item  For each place $v | \infty$ of $F$, $\pi_v$ has the same infinitesimal character as the algebraic representation $\otimes_{\tau : F_v \to \bC} W_\tau$, where $W_\tau$ is the unique algebraic representation of $\GL_n(F_v) \cong \GL_n(\bC)$ such that $W_\tau|_{G(F^+_v)} \cong \sigma_v$. 
\end{enumerate}
\end{theorem}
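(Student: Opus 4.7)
The plan is to deduce this from Labesse's stable base change and descent results for unitary groups \cite{Lab11}, combined with an inner-form transfer between the quasi-split unitary group $U_n$ and the compact-at-infinity group $G$; this is the packaging carried out in \cite[Theorem~1.4]{New21}, which I would simply invoke in the final writeup, but here I sketch how one would proceed from scratch.

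First I would produce an automorphic representation $\sigma^{\mathrm{qs}}$ of $U_n(\bA_{F^+})$ whose weak base change to $\GL_n(\bA_F)$ is $\pi$. The existence of such a descent is supplied by the main results of \cite{Lab11}, and the hypotheses are met because $\pi$ is conjugate self-dual, cuspidal, and cohomological, with regularity of the weight providing the discrete-series condition at the archimedean places. The local components of $\sigma^{\mathrm{qs}}$ are then controlled by the base change identities: at a split place $v = w w^c$ of $F^+$, the split base change isomorphism identifies $U_n(F^+_v)$ with $\GL_n(F_w)$ and $\sigma^{\mathrm{qs}}_v$ with $\pi_w$; at an inert place $v$ with unique place $w$ of $F$ above it, the hypothesis that every place of $F$ at which $\pi$ ramifies splits over $F^+$ forces $\pi_w$ to be unramified, and unramified local base change then makes $\sigma^{\mathrm{qs}}_v$ unramified with matching Satake parameters.

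Next, I would transfer $\sigma^{\mathrm{qs}}$ across the pure inner twist $\xi$ to obtain $\sigma$ on $G(\bA_{F^+})$. At every finite place the inner twist is split, and the local isomorphisms $\iota_v : G(F^+_v) \to U_n(F^+_v)$ fixed in the preceding lemma allow one to build the finite components of $\sigma$ directly from those of $\sigma^{\mathrm{qs}}$; the hyperspecial compatibility arranged in that lemma ensures that at an inert $v$ the component $\sigma_v$ admits a nonzero $\iota_v^{-1}(U_n(\cO_{F^+_v}))$-fixed vector, giving condition~(1), while at a split $v$ a direct unwinding of the isomorphism $\iota_w$ gives condition~(2). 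At each archimedean place, $G(F^+_v)$ is compact while $U_n(F^+_v)$ is quasi-split, and the Jacquet--Langlands correspondence for compact inner forms pairs each discrete series of $U_n(F^+_v)$ of regular infinitesimal character with a unique irreducible finite-dimensional representation of $G(F^+_v)$; matching this up with the algebraic representation $\otimes_\tau W_\tau$ yields condition~(3).

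The real work is global: turning a compatible collection of prescribed local components into a \emph{single} automorphic representation of $G$ requires a trace-formula identity. This is where the stable trace formula comparison between $G$ and its quasi-split inner form $U_n$ enters, in the form made explicit in \cite{Lab11} and used in \cite{New21}: the stable distribution on $G$ obtained by transferring the discrete part of the trace formula for $U_n$ is supported on the packet cut out by the local recipe above, and evaluating it against test functions that isolate these local components produces a nonzero contribution to the discrete spectrum of $G$, yielding the desired $\sigma$. I expect this global assembly, rather than any local computation, to be the main obstacle; fortunately it is entirely handled by the endoscopic machinery already developed in \cite{Lab11}, which is why the theorem reduces to the citation indicated.
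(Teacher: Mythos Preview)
Your proposal is correct and takes essentially the same approach as the paper: the paper gives no proof beyond the remark that Theorem~\ref{thm_descent_for_cusp_forms} follows from \cite[Theorem~1.4]{New21}, itself deduced from \cite{Lab11}, and you explicitly say you would invoke exactly that citation. Your additional sketch of the underlying mechanism is a reasonable gloss, though note that in \cite{Lab11} the comparison is made directly between the twisted trace formula for $\GL_n$ and the trace formula for the (possibly non-quasi-split) unitary group $G$, rather than first descending to the quasi-split form $U_n$ and then performing a separate inner-form transfer; your two-step picture is morally equivalent but not how the cited argument is actually organised.
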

The rest of this section is devoted to an analogue of Theorem \ref{thm_descent_for_cusp_forms} for conjugate self-dual automorphic representations of $\GL_n(\bA_F)$ which are not cuspidal. It is a special case of the endoscopic classification of automorphic representations of $G(\bA_{F^+})$. It would follow from the results of \cite{Kal14}. Here it is relatively simple to deduce what we need from the results of \cite{Lab11}, following similar lines to the arguments given in \cite{New21}.
\begin{theorem}\label{thm_endoscopic_forms_exist}
Fix a partition $n = n_1 + n_2$ and let $\pi_1, \pi_2$ be cuspidal, conjugate self-dual automorphic representations of $\GL_{n_1}(\bA_F), \GL_{n_2}(\bA_F)$ such that $\pi = \pi_1 \boxplus \pi_2$ is regular algebraic. Suppose that the following conditions are satisfied:
\begin{enumerate}
    \item\label{ass_even} Let $\lambda_i = (\lambda_{i, \tau}) \in (\bZ_+^{n_i})^{\Hom(F, \bC)}$ be the weight of $\pi_i| \cdot |^{(n_i-n)/2}$. Then for each $i = 1, 2$, for any embedding $\tau_0 : F_0 \to \bC$ and any $(\mu_1, \mu_2) \in \bZ_+^{n_1} \times \bZ_+^{n_2}$, the number of $\tau \in \Hom(F, \bC)$ such that $\tau|_{F_0} = \tau_0$ and $(\lambda_{1, \tau}, \lambda_{2, \tau}) = (\mu_1, \mu_2)$ is even. (For example, this condition holds if $\pi_1, \pi_2$ arise by base change from a quadratic CM extension $F / F'$.) 
    \item If $w$ is a finite place of $F$ such that $\pi_w$ is ramified, then $w$ is split over $F^+$.
\end{enumerate}
Then there exists an automorphic representation $\sigma$ of $G(\bA_{F^+})$ with the following properties:
\begin{enumerate}
    \item For each finite place $v$ of $F^+$ which is inert in $F$, $\sigma_v^{\iota_v^{-1}(U_n(\cO_{F^+_v}))} \neq 0$ and if $w$ denotes the unique place of $F$ lying above $v$, then $\pi_w$ is related to $\sigma_v$ by unramified base change.
    \item For each finite place $v$ of $F^+$ which splits $v = w w^c$ in $F$, $\pi_w \cong \sigma_v \circ \iota_w^{-1}$.
    \item For each infinite place $v$ of $F^+$, $\pi_v$ has the same infinitesimal character as the representation $\otimes_{\tau : F_v \to \bC} W_\tau$ of $\GL_n(F_v)$, where $W_\tau$ is the irreducible algebraic representation of $\GL_n(F_v) \cong \GL_n(\bC)$ whose restriction to $G(F^+_v) \subset \GL_n(F_v)$ contains $\sigma_v$. 
\end{enumerate}
\end{theorem}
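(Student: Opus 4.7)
I would model the argument on the proof of Theorem \ref{thm_descent_for_cusp_forms} (the cuspidal case, proved in \cite{New21} via \cite{Lab11}), adjusted for the fact that $\pi = \pi_1 \boxplus \pi_2$ is not cuspidal but comes from the elliptic endoscopic datum $U_{n_1}\times U_{n_2}\to U_n$. In full generality this is a special case of the endoscopic classification of \cite{Kal14}, but in our restricted setting (a sum of just two conjugate self-dual cuspidals, with all ramified places split over $F^+$) the sought-after automorphic form $\sigma$ can be produced directly from Labesse's stabilised trace formula comparisons in \cite{Lab11}, without appealing to the full machinery of \cite{Kal14}.

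\textbf{Steps.} First, I would apply Labesse's descent to obtain an automorphic representation $\tilde\sigma$ of the quasi-split group $U_n(\bA_{F^+})$ whose stable base change to $\GL_n(\bA_F)$ is $\pi$, with local components matching $\pi$ via unramified base change at inert places, via the identification $\iota_w$ at split places, and lying in the correct archimedean $A$-packet at each $v\mid\infty$. Second, I would transfer $\tilde\sigma$ to $G$ via the pure inner twist $(\xi,z)$ constructed in Section \ref{sec_definite_unitary_group}. At each finite place $v$ the cocycle $z$ becomes locally trivial and the chosen $\iota_v$ identifies $\sigma_v$ with $\tilde\sigma_v$, so the serious content is at archimedean places. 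There $U_n(F^+_v)$ is quasi-split but $G(F^+_v)$ is compact, and one is asking for the (unique) representation of $G(F^+_v)$ with infinitesimal character $\otimes_\tau W_\tau$ to occur in the archimedean $A$-packet associated to the parameter of $\pi_1\oplus\pi_2$. Whether this happens is governed by a sign character on the centraliser of the local $A$-parameter; assembling these local signs into Labesse's global multiplicity formula yields a single parity condition on the pairs $(\lambda_{1,\tau},\lambda_{2,\tau})$, and this is exactly the hypothesis (\ref{ass_even}).

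\textbf{Main obstacle.} The substantive step is the archimedean sign computation: verifying that (\ref{ass_even}) is precisely what makes the global sign in Labesse's multiplicity formula trivial, so that $\sigma$ appears with positive multiplicity in the automorphic spectrum of $G(\bA_{F^+})$. This is essentially bookkeeping, parallel to the archimedean computations in \cite{New21} and implicit in \cite{Lab11}. Once this is done, property (1) follows from compatibility of descent with unramified base change, property (2) from the identification $G(F^+_v)\cong\GL_n(F_w)$ via $\iota_w$ at split places, and property (3) holds by construction of $\sigma_v$ at the infinite places.
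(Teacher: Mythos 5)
Your proposal matches the paper's proof in all essentials: both run Labesse's stabilised trace formula comparison for the endoscopic datum $U_{n_1}\times U_{n_2}$, normalise transfer factors via the pure inner twist (extended to even $n$), and reduce everything to showing that the product of archimedean signs in the multiplicity formula $\tfrac12\bigl(1+\prod_{v\mid\infty}\epsilon(v,\cE,\varphi_H)\bigr)$ equals $1$. The one point worth noting is that the paper never computes the individual signs (the step you call the main obstacle): since the pure inner twist is defined over $F_0$, two infinite places with the same restriction to $F_0$ and the same weight data yield equal signs, so hypothesis (\ref{ass_even}) forces each sign to occur an even number of times and the product is trivially $1$.
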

\begin{proof}
The theorem is proved using a comparison of trace formulae, using the results of \cite{Lab11}. Most of the the necessary set-up is detailed in \cite[\S 1]{New21}, see especially \cite[\S 1.5]{New21} for a description of the equivalence classes of endoscopic data for $G$ and a normalisation of local transfer factors under the assumption that $n$ is odd. We first describe a normalisation of local transfer factors also in the case that $n$ is even. As outlined in \cite[\S 1.5]{New21}, the choice of pinning determines the Langlands--Shelstad \cite{Lan87} normalisation of local transfer factors for the quasi-split group $U_n$ which can be transferred, using the fixed structure of pure inner twist, to a normalisation of local transfer factors $\Delta_v^\cE$ for the group $G$ (with respect to any given endoscopic datum $\cE$). It follows from \cite[Proposition 4.4.1]{Kal18} that this normalisation of local transfer factors satisfies the adelic product formula. More precisely, if a choice of non-trivial character $\psi : F^+ \backslash \bA_{F^+} \to \bC$ is fixed, one can define the associated Whittaker normalisation of the local transfer factors for $U_n$, hence for $G$, which differs from $\Delta_v^\cE$ by a local root number $\epsilon_v$, a sign which is equal to 1 for all but finitely many places $v$ of $F^+$, and \cite[Proposition 4.4.1]{Kal18} shows that the local transfer factors $\epsilon_v \Delta_v^\cE$ satisfy the adelic product formula. However, the product $\prod_v \epsilon_v$ is 1 (as it is the root number of a real representation -- this observation appears also in the proof of the cited proposition). 

We can then state the following result, which is a generalisation of \cite[Proposition 1.7]{New21} to include the case where $n$ is even:
\begin{proposition} Let $v$ be a finite place of $F^+$, and let $f_v \in C_c^\infty(G(F^+_v))$. Suppose given an extended endoscopic triple $\cE = (H, s, \eta)$.
\begin{enumerate}
    \item Suppose that $v$ is inert in $F$ and that $f_v$ is $\iota_v^{-1}(U_n(\cO_{F^+_v}))$-biinvariant. Suppose given an unramified Langlands parameter $\varphi_H : W_{F^+_v} \to {}^L H$ and let $\sigma_{v, H}$, $\sigma_v$ be the  $H(\cO_{F^+_v})$-unramified and $\iota_v^{-1}(U_n(\cO_{F^+_v}))$-unramified irreducible representations of $H(F^+_v)$ and $G(F^+_v)$ associated to $\varphi_H$ and $\eta \circ \varphi_H$, respectively. Let $\pi_v$ be the unramified irreducible representation of $M^H(F^+_v)$ associated to $\varphi_H|_{W_{F_v}}$. Then there is an identity
    \[ \widetilde{\pi}_v(\widetilde{f}_v^H) = \sigma_{v, H}(f_v^H) = \sigma_v(f_v), \]
    where the twisted trace is normalised so that $\theta$ fixes the unramified vector of $\pi$. (If $\pi$ is generic, then this agreed with the Whittaker normalisation of the twisted trace.)
    \item Suppose that $v = w w^c$ splits in $F$. Suppose given a bounded Langlands parameter $\varphi_H : W_{F^+_v} \to {}^L H$ and let $\sigma_{v, H}$, $\sigma_v$ be the irreducible representations of $H(F^+_v)$ and $G(F^+_v)$ associated to the parameters $\varphi_H$ and $\eta \circ \varphi_H$, respectively, by the local Langlands correspondence for general linear groups. Then $\pi_v$ be the irreducible representation of $M^H(F^+_v)$ associated to $\varphi_H|_{W_{F_w}}$. Then there is an identity
    \[ \widetilde{\pi}_v(\widetilde{f}_v^H) = \sigma_{v, H}(f_v^H) = \sigma_v(f_v), \]
    where the twisted trace is Whittaker normalised.
\end{enumerate}
\end{proposition}
The proof is essentially the same as the proof of \cite[Proposition 1.7]{New21}, which relies only on the properties of the local transfer factors associated to the $F^+_v$-pinning of the quasi-split group $U_n$. Here is another result valid at the infinite places, which is a generalisation of \cite[Proposition 1.6]{New21} that includes the case that $n$ is even:
\begin{proposition}
Let $v$ be an infinite place of $F^+$. Suppose given an extended endoscopic triple $\cE = (H, s, \eta)$ for $G$ and a Langlands parameter $\varphi_H : W_{F^+_v} \to {}^L H$ such that $\eta \circ \varphi_H$ is the Langlands parameter of an irreducible representation $\sigma_v$ of $G(F^+_v)$. Let $\pi_v$ be the (necessarily tempered, $\theta$-invariant) irreducible admissible representation of $H(F_v)$ associated to the Langlands parameter $\varphi_H|_{W_{F_v}}$, and let $f_v \in C_c^\infty(G(F^+_v))$ be a coefficient for $\sigma_v$. Then there is a sign $\epsilon(v, \cE, \varphi_H) \in \{ \pm 1 \}$ such that the identities
\[ \widetilde{\pi}_v(\widetilde{f}^H_v)  = \epsilon(v, \cE, \varphi_H) \sigma_v(f_v) = \epsilon(v, \cE, \varphi_H) \]
hold, where the twisted trace is Whittaker normalised.
\end{proposition}
The proof is again the same as the proof of the proof of  \cite[Proposition 1.6]{New21}, taking into account our normalisation of local transfer factors in the case $n$ even.  Repeating the argument of \cite[Proposition 4.6]{New21} (if $n_1 \neq n_2$) or \cite[\S 3.5]{Ana21} (if $n_1 = n_2$) and using the above two propositions, we see that the multiplicity of $\sigma$ as an automorphic representation of $G(\bA_{F^+})$ is $\frac{1}{2}(1 + \prod_{v | \infty} \epsilon(v, \cE, \varphi_H))$, where $\cE$ is the endoscopic triple associated to the group $H = U_{n_1} \times U_{n_2}$. We claim that the product of signs $\prod_{v | \infty} \epsilon(v, \cE, \varphi_H)$ equals 1. Indeed, we are assuming that $F$ has the form $F^+ {F_0}$ and that our choice of pure inner twist is defined over ${F_0}$. It follows that if $v, v'$ are infinite places of $F$ then there is a canonical isomorphism $F_v \cong \bR \otimes_\bQ  {F_0}  \cong F_{v'}$ and that if $\sigma_v$ and $\sigma_{v'}$ are identified under this isomorphism, then $\epsilon(v, \cE, \varphi_H) = \epsilon(v', \cE, \varphi_H)$. Our claim therefore follows from assumption \ref{ass_even} of Theorem \ref{thm_endoscopic_forms_exist}. 
\end{proof}
	
	\section{Analysis of a derived tensor product}

We continue with the set-up of the previous section. Thus we let ${F_0}$ be an imaginary quadratic field and let $F$ be a CM number field of the form $F = F^+ {F_0}$, such that $F / F^+$ is everywhere unramified, and take $G$ to be the reductive group over $\cO_{F^+}$ defined by (\ref{eqn_defn_of_G}). We need to set up some more notation. Recall that we have fixed for each finite place $v$ of $F^+$ an isomorphism $\iota_v :  G(F^+_v) \to U_n(F^+_v) $. Let $p$ be a prime number, and let $E / \bQ_p$ be a coefficient field. Let $S_p$ denote the set of $p$-adic places of $F^+$. If $S$ is a finite set of places of $F^+$, then we write $\cJ_S$ for the set of open compact subgroups $K = \prod_v K_v \subset G(\bA_{F^+}^\infty)$ which are sufficiently small, in the sense that such that if $g \in G(\bA_{F^+}^\infty)$ then $G(F^+) \cap g K g^{-1}$ is torsion-free (hence trivial), and moroever such that for each $v\notin S$, $K_v = \iota_v^{-1}(U_n(\cO_{F^+_v}))$. If $v_0 \in S$, then we write $\cJ_S^{v_0}$ for the set of open compact subgroups $K = \prod_{v \neq v_0} K_v \subset G(\bA_{F^+}^{\infty, v_0})$ which are sufficiently small, in the sense that for each $g \in G(\bA_{F^+}^{\infty, v_0})$ the group $G(F^+) \cap g K g^{-1}$ (intersection in $G(\bA_{F^+}^{\infty, v_0})$) is torsion-free, and such that if $v\notin S$, then $K_v = \iota_v^{-1}(U_n(\cO_{F^+_v}))$. If $K = \prod_v K_v \subset G(\bA_{F^+}^\infty)$ is an open compact subgroup such that $K^{v_0} \in \cJ_S^{v_0}$, then $K \in \cJ_S$. (The finite $\cO_{F^+}$-congruence subgroups of $K$ are contained in the torsion-free $\cO_{F^+}[1/v_0]$-congruence subgroups of $K^{v_0}$.)

We write $\T^S \subset \cH(G(\bA_{F^+}^{\infty, S}), K^S) \otimes_\bZ \cO$ for the $\cO$-subalgebra generated by the operators $\iota_w(T_w^i)$, $\iota_w((T_w^n)^{-1})$, for each split place $v = w w^c$ of $F^+$ with $v \not\in S$. Here $T_w^i$ denotes the usual unramified Hecke operator
\[ T_w^i = [ \GL_n(\cO_{F_w}) \diag(\underbrace{\varpi_w, \dots, \varpi_w}_{i}, \underbrace{1, \dots, 1}_{n-i}) \GL_n(\cO_{F_w})] \in \cH(\GL_n(F_w), \GL_n(\cO_{F_w})). \]
We define as usual the polynomial
\[ P_w(X) = \sum_{i=0}^n (-1)^i X^{n-i} q_w^{i(i-1)/2} T_w^i \in \T^S[X] \]
(characteristic polynomial of Frobenius under $\rec_{F_w}^T$).

Now fix a prime number $q \neq p$ which splits $q = u_0 u_0^c$ in ${F_0}$, let $v_0$ be a place of $F^+$ dividing $q$, and let $w_0$ be the unique place of $F$ lying above both $u_0$ and $v_0$. Let $S$ be a finite set of places of $F^+$ containing $S_p$ and such that $S \cap S_q = \{ v_0 \}$. Let $K^{v_0} \in \cJ_S^{v_0}$. We define
\[ \cA = \cA(K^{v_0}, k) = C_c^\infty( G(F^+) \backslash G(\bA_{F^+}^\infty) / K^{v_0}, k). \]
We view $\cA$ as a $k[\GL_n(F_{w_0})]$-module using $\iota_{w_0}$. Then $\cA \in \Mod_{sm}(k[\GL_n(F_{w_0})])$ is admissible and there is a natural map 
\[ \T^S \to \End_{\Mod_{sm}(k[\GL_n(F_{w_0})])}(\cA). \]
\begin{proposition} Let notation and assumptions be as above. Then:
\begin{enumerate}
    \item If $\m \subset \T^S$ is a maximal ideal in the support of $\cA$, then the localisation $\cA_\m$ is naturally isomorphic to $\varinjlim_{K_{v_0}} (\cA^{K_{v_0}})_\m$. In particular, the residue field of $\m$ is a finite extension of $k$, $\m$ occurs in the support of $\cA^{K_{v_0}}$ for some open compact subgroup $K_{v_0} \subset G(F^+_{v_0})$, and $\cA_\m$ may be identified with a $\T^S$-invariant subspace of $\cA$.
    \item There is a $\T^S$-invariant direct sum decomposition $\cA = \oplus_\m \cA_\m$, where the sum runs over the set of maximal ideals of $\T^S$ which are in the support of $\cA$.
    \item  If $\m$ is a maximal ideal of $\T^S$ in the support of 
    \[ H^\ast( R \Gamma_c( \Omega_{F_{w_0}}^{n-1, ca}, k) \otimes^{\bL}_{k[\GL_n(F_{w_0})^0]} \cA), \]
    then there is an isomorphism of $\T^S$-modules
    \[ H^\ast( R \Gamma_c( \Omega_{F_{w_0}}^{n-1, ca}, k) \otimes^{\bL}_{k[\GL_n(F_{w_0})^0]} \cA)_\m \cong H^\ast( R \Gamma_c( \Omega_{F_{w_0}}^{n-1, ca}, k) \otimes^{\bL}_{k[\GL_n(F_{w_0})^0]} \cA_\m). \]
    In particular, $\m$ occurs in the support of $\cA$.
\end{enumerate}
\end{proposition}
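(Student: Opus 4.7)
The organising principle is that $\cA$ is smooth, so equals $\varinjlim_{K_{v_0}} \cA^{K_{v_0}}$ as $K_{v_0}$ runs through open compact subgroups of $G(F^+_{v_0})$, and each $\cA^{K_{v_0}}$ is a finite-dimensional $k$-vector space by the standard finiteness of the double coset space $G(F^+)\backslash G(\bA_{F^+}^\infty)/K^{v_0} K_{v_0}$ (using that $G$ is anisotropic at infinity and that $K^{v_0} \in \cJ_S^{v_0}$). Given this, the image of $\T^S$ in $\End_k(\cA^{K_{v_0}})$ is a commutative finite $k$-algebra, hence a finite product of local Artinian $k$-algebras. Correspondingly one obtains a direct sum decomposition
\[
\cA^{K_{v_0}} = \bigoplus_{\m} (\cA^{K_{v_0}})_\m,
\]
indexed by the finitely many maximal ideals $\m$ in its support, where each summand is the image of the idempotent cutting out the corresponding local factor. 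On this summand every element of $\m$ acts nilpotently and every element of $\T^S \setminus \m$ acts invertibly, so it coincides with the $\m$-localisation; the residue field at $\m$ is a finite extension of $k$ because it is a quotient of a finite $k$-algebra. These decompositions are visibly compatible under inclusions $K_{v_0}' \subset K_{v_0}$, and parts (1) and (2) then follow by passing to the filtered colimit, since both localisation and direct sums commute with colimits.

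For part (3), fix a bounded-above complex $P^\bullet$ of projective objects of $\Mod_{sm}(k[\GL_n(F_{w_0})])$ quasi-isomorphic to $R\Gamma_c(\Omega_{F_{w_0}}^{n-1, ca}, k)$; such a resolution exists by Proposition \ref{prop_mod_G_enough_projectives}. Then the derived tensor product may be computed as the ordinary tensor product $P^\bullet \otimes_{k[G^0]} (-)$, which commutes with arbitrary direct sums in the second argument. Applied to the decomposition of part (2), this yields a $\T^S$-equivariant isomorphism
\[
R\Gamma_c(\Omega_{F_{w_0}}^{n-1, ca}, k) \otimes^{\bL}_{k[G^0]} \cA \;\cong\; \bigoplus_\m \Bigl( R\Gamma_c(\Omega_{F_{w_0}}^{n-1, ca}, k) \otimes^{\bL}_{k[G^0]} \cA_\m \Bigr),
\]
and the same splitting on each cohomology group. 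Because every $T \in \T^S \setminus \m$ acts invertibly on $\cA_\m$, and this action commutes with that of $G^0$, $T$ acts invertibly on $P^\bullet \otimes_{k[G^0]} \cA_\m$ and hence on each of its cohomology groups. Therefore the $\m$-th summand on the right-hand side is itself $\m$-local, so localising the full direct sum at $\m$ isolates precisely that summand. The final assertion of part (3) is then immediate, since nonvanishing of the left-hand side after localisation at $\m$ forces $\cA_\m \neq 0$.

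\textbf{Main obstacle.} I do not expect a serious obstacle. The only delicate point is to ensure that the derived tensor product is compatible with the (possibly infinite) direct sum decomposition from part (2); this is the reason to fix a projective resolution of $R\Gamma_c(\Omega_{F_{w_0}}^{n-1, ca}, k)$ once and for all, rather than resolving $\cA$ or each $\cA_\m$ separately. Once that is arranged, everything else is formal manipulation of generalised Hecke eigenspaces and filtered colimits.
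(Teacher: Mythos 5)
Your argument follows essentially the same route as the paper's: write $\cA = \varinjlim_{K_{v_0}} \cA^{K_{v_0}}$, use the finite-dimensionality of each $\cA^{K_{v_0}}$ to decompose into generalised eigenspaces, pass to the colimit for (1) and (2), and then use compatibility of the derived tensor product with the resulting direct sum for (3). The paper phrases the eigenspace decomposition via the identification $(\cA^{K_{v_0}})_\m = \cA^{K_{v_0}}[\m^\infty]$ rather than via idempotents, but these are the same finite commutative algebra fact.

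There is one incomplete step at the end of (3). You check that every $T \in \T^S \setminus \m$ acts invertibly on $H^\ast(P^\bullet \otimes_{k[G^0]} \cA_\m)$, i.e.\ that localising the $\m$-summand at $\m$ returns it unchanged; but to conclude that localisation at $\m$ "isolates precisely that summand" you must also show that the summands indexed by $\m' \neq \m$ vanish after localising at $\m$, and this does not follow merely from their being $\m'$-local (a module over $\T^S_{\m'}$ need not die when localised at a different maximal ideal unless some finiteness or torsion hypothesis is in force). The gap is easily closed with material you already have: by (1), $\cA_{\m'} = \varinjlim_N \cA[\m'^N]$, the functor $P^\bullet \otimes_{k[G^0]} -$ commutes with filtered colimits, and $\T^S$ acts on the tensor product through the second factor, so $H^\ast(P^\bullet \otimes_{k[G^0]} \cA_{\m'})$ is a colimit of $\m'$-power-torsion modules; choosing $T \in \m' \setminus \m$, a suitable power of $T$ kills any given element, whence the localisation at $\m$ vanishes. (This colimit observation is exactly the reduction the paper makes.) Alternatively one may invoke the finite generation of these cohomology groups over $k$ from Theorem \ref{thm_tor_is_fg} and argue via supports of finitely generated modules. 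With this addition your proof is complete.
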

\begin{proof}
    We have $\cA = \varinjlim_{K_{v_0}} \cA^{K_{v_0}}$ as $\T^S$-modules, where the direct limit runs over the set of open compact subgroups $K_{v_0} \subset G(F^+_{v_0})$. Since localisation commutes with direct limits, this shows that for any maximal ideal $\m \subset \T^S$, we have
    \[  \cA_\m = \varinjlim_{K_{v_0}} (\cA^{K_{v_0}})_\m. \]
    In particular, $\cA_\m$ is non-zero if and only if some $(\cA^{K_{v_0}})_\m$ is non-zero. Each $\cA^{K_{v_0}}$ is finite-dimensional as $k$-vector space, which shows that if $\cA_\m$ is non-zero then the residue field of $\m$ is a finite extension of $k$ and that $(\cA^{K_{v_0}})_\m$ may be identified with the submodule $\cA^{K_{v_0}}[\m^\infty] \subset \cA^{K_{v_0}}$ of $\m^\infty$-torsion. By passage to the direct limit, we find that $\cA_\m$ may be identified with the submodule of $\cA$ of $\m^\infty$-torsion. This shows the first and second parts of the proposition. For the third, we note that there is a $\T^S$-invariant direct sum decomposition
    \[  H^\ast( R \Gamma_c( \Omega_{F_{w_0}}^{n-1, ca}, k) \otimes^{\bL}_{k[\GL_n(F_{w_0})^0]} \cA) = \oplus_\m H^\ast( R \Gamma_c( \Omega_{F_{w_0}}^{n-1, ca}, k) \otimes^{\bL}_{k[\GL_n(F_{w_0})^0]} \cA_\m). \]
    It suffices therefore to show that if $H^\ast( R \Gamma_c( \Omega_{F_{w_0}}^{n-1, ca}, k) \otimes^{\bL}_{k[\GL_n(F_{w_0})^0]} \cA_\m)$ is non-zero, then $\m$ is the only maximal ideal of $\T^S$ in its support. It even suffices to show that the natural map
   \begin{multline*} \varinjlim_N H^\ast( R \Gamma_c( \Omega_{F_{w_0}}^{n-1, ca}, k) \otimes^{\bL}_{k[\GL_n(F_{w_0})^0]} \cA[\m^N]) \\ \to  H^\ast( R \Gamma_c( \Omega_{F_{w_0}}^{n-1, ca}, k) \otimes^{\bL}_{k[\GL_n(F_{w_0})^0]} \cA_\m) 
   \end{multline*}
    is an isomorphism. This is a consequence of the fact that if $A^\bullet \in \mathbf{D}^-(k[\GL_n(F_{w_0})^0])$ then the functor $A^\bullet \otimes^{\bL}_{k[\GL_n(F_{w_0})^0]} - $ commutes with direct limits. 
\end{proof}
A standard consequence of Corollary \ref{cor_Gal_reps_for_G} and the theory of algebraic modular forms states that if $\m$ occurs in the support of $\cA^{K_{v_0}}$, then there is a continuous semisimple representation $\overline{\rho}_\m : G_{F, S} \to \GL_n(\T^S / \m)$ such that for each finite place $w \nmid S$ of $F$ which splits over $F^+$, the characteristic polynomial $\det(X - \overline{\rho}_\m(\Frob_w))$ equals $P_w(X)$. The goal of this section is to prove the following theorem concerning the finite-dimensional $k$-vector spaces $H^j( R \Gamma_c( \Omega_{F_{w_0}}^{n-1, ca}, k) \otimes^{\bL}_{k[\GL_n(F_{w_0})^0]} \cA)$:
\begin{theorem}\label{thm_torsion_in_derived_tensor_product}
With notation and assumptions as above, let $\m \subset \T^S$ be a maximal ideal of residue field $k$ with the following property:
\begin{enumerate}
    \item There exists a finite place $w \nmid S$ of $F$ such that $\overline{\rho}_\m|_{G_{F_w}}^{ss} \cong \oplus_{i=1}^n \overline{\chi}_i$, where $\overline{\chi}_i : G_{F_w} \to k^\times$ are unramified characters such that if $i \neq j$ then $\overline{\chi}_i / \overline{\chi}_j \neq \epsilon$. 
\end{enumerate}
Then $H^j( R \Gamma_c( \Omega_{F_{w_0}}^{n-1, ca}, k) \otimes^{\bL}_{k[\GL_n(F_{w_0})^0]} \cA )_\m = 0$ if $j \neq n-1$.
\end{theorem}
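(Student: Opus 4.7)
The plan is to realise the complex
\[ R\Gamma_c(\Omega_{F_{w_0}}^{n-1, ca}, k) \otimes^{\bL}_{k[\GL_n(F_{w_0})^0]} \cA \]
as the \'etale cohomology of a suitable Harris--Taylor Shimura variety in characteristic $q$, and then invoke the generic vanishing theorems of Boyer \cite{Boy19} and Caraiani--Scholze \cite{Car17} after localising at $\m$.

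First I would fix a PEL-type Shimura datum attached to an inner form $\bH$ of $G$ over $\bQ$, built using the CM tower $\bQ \subset F_0 \subset F$, whose signature is $(1, n-1)$ at a single infinite place and trivial at the others. Because $q$ splits in $F_0$ and $v_0$ splits as $w_0 w_0^c$ in $F$, the associated Shimura variety $\mathrm{Sh}_\bH$ has good reduction at the place above $w_0$, and its basic locus in the special fibre is Rapoport--Zink uniformised \cite{Rap96} by the Drinfeld upper half space for $\GL_n(F_{w_0})$. For a suitable tame level $K^{v_0}$ and taking the limit over the Drinfeld level structure at $w_0$, the cohomological form of this uniformisation (in the spirit of \cite{Dat06}) should produce a Hecke-equivariant isomorphism
\[ R\Gamma_c(\mathrm{Sh}_{\bH, K^{v_0}}^{ca}, k) \cong R\Gamma_c(\Omega_{F_{w_0}}^{n-1, ca}, k) \otimes^{\bL}_{k[\GL_n(F_{w_0})^0]} \cA \]
in $\mathbf{D}^-(k[\T^S])$. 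The subgroup $\GL_n(F_{w_0})^0$ appears here because the valuation-of-determinant grading on the Drinfeld side corresponds to the connected-component stratification of the basic locus under the uniformisation; coinvariants under $\GL_n(F_{w_0})^0$ (rather than under the full group) is precisely what is needed to recover the full Shimura variety. Finite generation of the $\T^S$-modules involved is guaranteed by Theorem \ref{thm_tor_is_fg} and Proposition \ref{prop_GGT}.

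Second, I would localise at $\m$; since localisation commutes with formation of the derived tensor product and with taking cohomology, it suffices to show that the $\m$-localisation of the Shimura variety cohomology on the left-hand side is concentrated in degree $n-1$. This is where the genericity assumption on $\overline{\rho}_\m|_{G_{F_w}}^{ss}$ enters: by the main theorem of Caraiani--Scholze \cite{Car17}, complemented by Boyer's results \cite{Boy19} in the Harris--Taylor setting, the $\m$-localisation of the mod $p$ \'etale cohomology of $\mathrm{Sh}_\bH$ vanishes outside the middle degree $n-1$ under exactly this hypothesis.

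The main obstacle is the careful setup of the Rapoport--Zink identification above with torsion coefficients: one needs to verify that the inner form arising from the uniformisation is precisely our $G$ (which should follow from the choice of signatures and the formalism of pure inner twists from Section \ref{sec_definite_unitary_group}), that the $\T^S$-action matches through the compatibility of unramified base change with the Satake isomorphism at split places, and that the level structures at $w_0$ and at the remaining places are correctly aligned so that the identification holds as complexes in $\mathbf{D}^-(k[\T^S])$ rather than only after a spectral-sequence argument. Once this translation is secure, the conclusion is a direct appeal to \cite{Boy19, Car17}.
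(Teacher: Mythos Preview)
Your proposal is correct and follows essentially the same route as the paper: identify the derived tensor product with the $k$-cohomology of a Harris--Taylor Shimura variety via Rapoport--Zink $q$-adic uniformisation, then apply the Caraiani--Scholze/Boyer vanishing theorem at generic maximal ideals. The paper fills in exactly the technical points you flag as obstacles, with two refinements worth noting: it first rewrites the tensor product as the cohomology of the explicit rigid quotient $G(F^+) \backslash (\Omega_{F_{w_0}}^{n-1, ca} \times G(\bA_{F^+}^\infty) / K^{v_0} \iota_{w_0}^{-1}(\GL_n(F_{w_0})^0))$ using Lemma~\ref{lem_two_actions_of_G}, and then compares this not with a Shimura variety for an inner form of $G$ but with one for the unitary \emph{similitude} group $\widetilde{H}$ over $\bQ$ (the unitary-group quotient embeds as a union of connected components after suitably shrinking the level, a reduction justified by Hochschild--Serre plus Poincar\'e duality).
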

We will prove this theorem by comparing the cohomology groups in the statement of the theorem with the cohomology groups of a $q$-adically uniformized Shimura variety using \cite{Rap96}, and then using the vanishing theorems proved in \cite{Boy19} or \cite{Car17}. We begin by finding PEL data that will allow a comparison with the group $G$. Let $D$ be a central simple algebra over $F$ of rank $n$ and let $\ast$ be a positive involution of $D$ (therefore of the second kind). Let $V = D$, considered as left $D$-module, and let $\psi  : V \times V \to \bQ$ be a non-degenerate alternating bilinear form such that for all $d \in D$, $v, w \in V$, we have $\psi (dv, w) = \psi (v, d^\ast w)$. Then the tuple $(D, \ast, V, \psi)$ is a PEL datum and gives rise to a Shimura datum $(\widetilde{H}, X)$, where $\widetilde{H}$ is the reductive group over $\bbQ$ whose functor of points is
\[ \widetilde{H}(R) = \{ (g, \mu) \in \End_{D \otimes_\bQ R}(V \otimes_\bQ R) \times R^\times \mid \forall v, w \in V \otimes_\bQ R, \psi(gv, gw) = \mu \psi(v, w) \}, \]
and $X$ is the set of homomorphisms $h : \bC^\times \to \widetilde{H}(\bR)$ which arise by restriction from an $\bR$-algebra homomorphism $h : \bC \to \End_{D \otimes_\bQ \bR}(V \otimes_\bQ \bR)$ such that the pairing $(v, w) = \psi(v, h(i) w)$ is symmetric and either positive or negative definite. (The set $X$ forms a single $\widetilde{H}(\bR)$-conjugacy class, by \cite[Lemma 4.3]{Kot92}, and the pair $(\widetilde{H}, X)$ defines a Shimura datum, by \cite[Lemma 4.1]{Kot92}.) If $\tau : F \to \bC$ is an embedding, we define numbers $p_\tau, q_\tau$ as follows: choose $h \in X$ such that $(v, w) = \psi(v, h(i) w)$ is positive definite. Then $\dim_\bC (V \otimes_{F, \tau} \bC)^{h(i) = i} = n p_\tau$ and $\dim_\bC (V \otimes_{F, \tau} \bC)^{h(i) = -i} = n q_\tau$ (so $p_\tau + q_\tau = n$). 
\begin{lemma}\label{lem_existence_of_PEL_datum}
Fix an embedding $\tau : F \to \bC$. Then we can find data $(D, \ast, V, \psi)$ such that the following conditions are satisfied:
\begin{enumerate}
    \item $\operatorname{inv} D_{w_0} = 1 / n$, $\operatorname{inv} D_{w_0^c} = - 1 / n$, and $\operatorname{inv} D_w = 0$ for every place $w \neq w_0, w_0^c$ of $F$.
    \item For each prime $r \neq q$, the group $\widetilde{H}_{\bQ_r}$ is quasi-split.
    \item We have $(p_\tau, q_\tau) = (1, n-1)$ and $(p_{\tau'}, q_{\tau'}) = (0, n)$ for any $\tau' : F \to \bC$ such that $\tau'|_{F_0} = \tau|_{F_0}$ but $\tau' \neq \tau$. 
\end{enumerate}
\end{lemma}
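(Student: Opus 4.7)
The construction is standard in the theory of Shimura varieties of PEL type, following for example \cite{Kot92, Har01}, and proceeds in three stages: build $D$, choose an involution $\ast$, then construct the pairing $\psi$.

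For the first step, by the fundamental exact sequence for the Brauer group of the number field $F$, a central simple algebra $D$ with prescribed local invariants exists provided those invariants lie in $\bQ/\bZ$, are zero at almost all places, sum to zero, and vanish at archimedean places. The proposed invariants $1/n, -1/n, 0, \dots$ satisfy these conditions. They are moreover conjugate-symmetric, i.e.\ $\operatorname{inv} D_{w^c} = -\operatorname{inv} D_w$, which is precisely the condition for $D$ to admit an involution of the second kind (Albert's theorem, as recalled in \cite[\S 2]{Kot92}). By a standard inner twisting argument (at archimedean places $D$ splits, and the conjugate-transpose involution on $M_n(\bC)$ is positive), one can arrange $\ast$ to be positive at every infinite place.

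Pairings $\psi$ on $V = D$ satisfying $\psi(dv, w) = \psi(v, d^\ast w)$ are in bijection with elements $\delta \in D^\times$ satisfying $\delta^\ast = -\delta$, via $\psi(v, w) = \operatorname{tr}_{F/\bQ}\operatorname{tr}_{D/F}(v\delta w^\ast)$. The local isomorphism class of $\widetilde{H}_{\bQ_r}$ and the signatures $(p_\tau, q_\tau)$ are both determined by the local class of $\delta$. At primes $r \neq q$ split in $F_0$, $\widetilde H_{\bQ_r}$ is automatically a product of general linear similitude groups and hence quasi-split; at primes $r \neq q$ inert or ramified in $F_0$, $D$ splits locally and $\widetilde H_{\bQ_r}$ is a classical unitary similitude group, which has a quasi-split form of each rank. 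The archimedean signatures $(1, n-1)$ at $\tau$ and $(0, n)$ at every other $\tau'$ with $\tau'|_{F_0} = \tau|_{F_0}$ are individually realizable on $V \otimes_{F, \tau'} \bC$.

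The main obstacle, and the real content of the lemma, is to produce a single global $\delta$ realizing all the specified local classes simultaneously. This is handled by the Hasse principle for skew-Hermitian forms over CM extensions of $\bQ$: global skew-Hermitian forms on $V$ are classified by their local classes up to a single reciprocity constraint on the product of local invariants, which is automatic for our choices. Concretely, one chooses the desired local $\delta_r$ at each place $r$ (including $r = q$, where a free choice is allowed), notes that the product of the associated local invariants/discriminants satisfies the necessary global relation because the signatures were prescribed compatibly with the global dimension count, and then invokes the global existence theorem for skew-Hermitian forms to produce $\delta$. Once $\delta$ is in hand, the asserted properties of $\widetilde H$ hold by construction.
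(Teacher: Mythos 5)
Your three-step strategy (construct $D$ from its local invariants, equip it with a positive involution of the second kind, then realise the pairing $\psi$ by a suitable skew element $\delta$ and a local–global argument) is the standard one and is essentially what underlies the reference the paper uses, namely \cite[Lemma I.7.1]{Har01}. The local facts you cite are all fine. The gap is in the final, decisive step: you assert that the global reciprocity constraint on the collection of local forms "is automatic for our choices" because "the signatures were prescribed compatibly with the global dimension count". That is not an argument, and the assertion is false in general. For $n$ even there is a genuine obstruction valued in a group of order $2$ (the cokernel of $H^1(F^+, U) \to \oplus_v H^1(F^+_v, U)$ is detected by the sum of local invariants in $\pi_0(Z(\widehat{U}_{\mathrm{sc}})^{\Gamma})^D \cong \bZ/2$, equivalently by the product formula for the discriminant of the Hermitian form). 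Computing this sum for the prescribed data — quasi-split at every finite place away from $q$, a fixed nonzero contribution at $q$ coming from $\operatorname{inv} D_{w_0} = 1/n$ (so there is in fact \emph{no} free choice at $q$, contrary to what you write), signature $(1, n-1)$ at one archimedean place and $(0,n)$ at the remaining $[F^+:\bQ]-1$ places — one finds a condition of the shape $[F^+:\bQ]\cdot n/2 \equiv 0 \pmod 2$. This fails, for instance, when $[F^+:\bQ]$ is odd and $n \equiv 2 \pmod 4$, so the lemma as stated is not a purely formal consequence of the local analysis.

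What saves the day, and what the paper's one-line proof explicitly flags, is that $[F^+:\bQ]$ is even in the present setting: since $F = F^+F_0$ with $F/F^+$ everywhere unramified and $F_0/\bQ$ imaginary quadratic (hence ramified at some finite prime $\ell$), every completion $F^+_v$ with $v \mid \ell$ must absorb the ramification of $F_{0,\ell}/\bQ_\ell$, forcing each local degree, and hence $[F^+:\bQ] = \sum_{v \mid \ell}[F^+_v:\bQ_\ell]$, to be even. Your proof needs to (a) identify the obstruction precisely rather than declare it automatic, (b) account for the forced contribution of the division algebra at $q$, and (c) invoke the evenness of $[F^+:\bQ]$ (with its justification) to conclude that the obstruction vanishes.
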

\begin{proof}
This is \cite[Lemma I.7.1]{Har01}. It uses the fact that $[F^+ : \bQ]$ is even. 
\end{proof}
We henceforth fix a choice of $\tau : F \to \bC$ and data $(D, \ast, V, \psi)$ as in the statement of the lemma. Then the reflex field of the Shimura datum $(\widetilde{H}, X)$ equals $\tau(F)$ and for any sufficiently small open compact subgroup $\widetilde{K} \subset \widetilde{H}(\bA_\bQ^\infty)$, there is an associated Shimura variety $\operatorname{Sh}_{\widetilde{K}}$ over $\tau(F)$. Let $\iota : \overline{\bQ}_p \to \bC$ be an isomorphism such that $\iota^{-1} \circ \tau$ induces the place $w_0$ of $F$. We write $\widetilde{G}$ for the unitary similitude group associated to $G$. More precisely, $\widetilde{G}$ is the reductive group over $\bQ$ whose functor of points is given by
\[ \widetilde{G}(R) = \{ (g, \mu) \in \GL_n(F \otimes_\bQ R) \times R^\times \mid g (c \otimes 1)(g)^{t} = \mu \}. \]
If $R$ is an ${F_0}$-algebra, then we can use the isomorphism $F \otimes_\bQ R \cong F \otimes_{F_0} R \times F \otimes_{{F_0}, c} R$ to identify
\[ \widetilde{G}(R) \cong \GL_n(F \otimes_{F_0} R) \times R^\times. \]
In particular, viewing $\bQ_q$ as an ${F_0}$-algebra via the isomorphism $F_{0,u_0} \cong \bQ_q$, we can identify
\begin{equation}\label{eqn_split_prime} \widetilde{G}(\bQ_q) \cong \prod_{w | u_0} \GL_n(F_{w}) \times \bQ_q^\times. \end{equation}
This identification appears in the statement of the following theorem. 
\begin{theorem}\label{thm_rap_zink}
Let $\widetilde{K}^q \subset \widetilde{G}(\bA_\bQ^{\infty, q})$ be a sufficiently small open compact subgroup, and let 
\[ \widetilde{K}_q = \GL_n(F_{w_0})^0 \times \prod_{w | u_0, w \neq w_0} \GL_n(\cO_{F_w}) \times \bZ_p^\times, \]
an open (but not compact) subgroup of $\widetilde{G}(\bQ_q)$. Then we can find an isomorphism $f : \widetilde{G}(\bA_\bQ^{\infty, q}) \to \widetilde{H}(\bA_\bQ^{\infty, q})$ and an $\cH(\widetilde{G}(\bA_\bQ^{\infty, q}), \widetilde{K}^q)$-equivariant isomorphism of rigid analytic spaces over $\widehat{\overline{F}}_{w_0}$:
    \begin{equation}\label{eqn_q-adic_uniformization}  \widetilde{G}(\bQ) \backslash \left( \Omega_{F_{w_0}}^{n-1, ca} \times \widetilde{G}(\bA_\bQ^\infty) / \widetilde{K} \right) \cong \operatorname{Sh}_{\widetilde{K}_{\widetilde{H}}}^{rig, ca}, 
    \end{equation}
    where $\widetilde{K}_{\widetilde{H}}^q = f(\widetilde{K}^q)$ and $\widetilde{K}_{\widetilde{H}, q} \subset \widetilde{H}(\bQ_q)$ is the  maximal compact subgroup described in \cite[Proposition 6.49]{Rap96}.
\end{theorem}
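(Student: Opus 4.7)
The plan is to invoke the non-archimedean uniformization theorem of Rapoport--Zink \cite[Theorem 6.50]{Rap96} at the prime $q$. Because $\operatorname{inv} D_{w_0} = 1/n$, the algebra $D_{w_0}$ is a division algebra, and the signature condition $(p_\tau,q_\tau)=(1,n-1)$ at our distinguished embedding together with $p_{\tau'}=0$ at the other embeddings inducing the $u_0$-adic place means the local PEL datum at $q$ falls into the ``Drinfeld case''. As recorded in the statement, \cite[Proposition 6.49]{Rap96} identifies the corresponding Rapoport--Zink space over $\widehat{\overline{F}}_{w_0}$ with the Drinfeld space $\Omega^{n-1}_{F_{w_0}}$, and in particular the basic locus of the special fiber of $\operatorname{Sh}_{\widetilde{K}_{\widetilde{H}}}$ at $q$ coincides with its entire special fiber, so that one obtains global (not merely basic-locus) uniformization.

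The Rapoport--Zink theorem then yields a natural Hecke-equivariant isomorphism of rigid analytic spaces over $\widehat{\overline{F}}_{w_0}$,
\[ \operatorname{Sh}^{rig,ca}_{\widetilde{K}_{\widetilde{H}}} \cong I(\bQ) \backslash \left( \Omega^{n-1,ca}_{F_{w_0}} \times J(\bQ_q)/\widetilde{K}_{\widetilde{H},q} \times \widetilde{H}(\bA^{\infty,q}_\bQ)/\widetilde{K}_{\widetilde{H}}^q \right), \]
where $J$ is the group of quasi-self-isogenies of the basic isocrystal at $q$ and $I$ is the inner form of $\widetilde{H}$ characterised by: (i) $I$ and $\widetilde{H}$ are isomorphic at every finite place $\ell\neq q$; (ii) $I(\bQ_q)\cong J(\bQ_q)$; and (iii) $I(\bR)$ is compact modulo centre. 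A direct computation of $J$ using the isoclinic slope $1/n$ decomposition of the basic isocrystal gives
\[ J(\bQ_q) \cong \GL_n(F_{w_0}) \times \prod_{w\mid u_0,\, w\neq w_0} \GL_n(F_w) \times \bQ_q^\times, \]
and the description of $\widetilde{K}_{\widetilde{H},q}$ in \cite[Proposition 6.49]{Rap96} matches the subgroup $\widetilde{K}_q$ defined in the statement of the theorem.

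What remains is to produce an isomorphism $I \cong \widetilde{G}$ over $\bQ$; the desired map $f$ is then obtained as the resulting isomorphism on $\bA^{\infty,q}_\bQ$-points (which one checks is independent of choices up to the action of $\widetilde{G}(\bQ)$). Both $\widetilde{G}$ and $I$ are inner forms of $\widetilde{H}$; they are identified at $q$ via (\ref{eqn_split_prime}); and both are compact modulo centre at the real places of $\bQ$ (for $\widetilde{G}$, this is because $G$ is anisotropic at each infinite place of $F^+$ by construction in \S\ref{sec_definite_unitary_group}). By the Hasse principle for adjoint groups over number fields, $I$ and $\widetilde{G}$ are isomorphic as $\bQ$-groups. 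The main obstacle, and the step requiring the most care, is to verify that the inner form produced by the Rapoport--Zink construction coincides with the inner form used to define $\widetilde{G}$ in \S\ref{sec_definite_unitary_group}; concretely, one must check that the pure inner twist data determined by $z=(\Phi_n,\Phi_n)$ (when $n$ is odd) or $z=(\zeta\Phi_n,-\zeta\Phi_n)$ (when $n$ is even) match the Kottwitz--style class of the basic isocrystal at $q$. This is a local cohomological verification at $q$ and $\infty$ which, together with the unramifiedness of the remaining local data, forces the global identification.
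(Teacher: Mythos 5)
Your plan follows the same route as the paper: invoke the Cherednik--Drinfeld-type uniformization of \cite[Theorem 6.50]{Rap96} (applicable since the whole special fibre is basic for this PEL datum), and then identify the global inner form produced by Rapoport--Zink with $\widetilde{G}$ by checking local isomorphisms everywhere and applying the Hasse principle for the adjoint group. The paper carries out the last step slightly more explicitly — working with the unitary group over $F^+$, writing the discrepancy as a cocycle in $Z^1(F/F^+, G^{ad})$, and killing it via the triviality of $\ker(H^1(F^+_v, G^{ad}) \to H^1(F^+_v, \Aut(G)))$ together with Kottwitz's computation of $\ker^1$ — but this is the same argument you sketch.
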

The quotient in the left-hand side (\ref{eqn_q-adic_uniformization}) can be formed in the naive sense: the group $\widetilde{G}(\bQ)$ is discrete and each point of $\Omega_{F_{w_0}}^{n-1, ca} \times \widetilde{G}(\bA_\bQ^\infty) / \widetilde{K}$ admits an open affinoid neighbourhood $\cU$ such that if $\gamma \in \widetilde{G}(\bQ)$ and $\gamma \cU \cap \cU \neq \emptyset$, then $\gamma = 1$.
\begin{proof}
Let $C = \End_D(V)$, a central simple $F$-algebra which is isomorphic to $D^{op}$. The conditions of \cite[\S 6.40]{Rap96} are satisfied and we can invoke \cite[Theorem 6.50]{Rap96}, which has the following consequences.  We can find a central simple $F$-algebra $B$, together with a positive involution $\dagger$ and an isomorphism $B \otimes_\bQ \bA^{\infty, q}_\bQ \cong C \otimes_\bQ \bA_\bQ^{\infty, q}$ which identifies $\dagger$ with adjoint with respect to $\psi$, with the following properties:
\begin{itemize}
    \item For each place $w$ of $F$, $B_{F_w}$ is split. Consequently, we can find an isomorphism $B \cong M_n(F)$ of $F$-algebras.
    \item Let $\widetilde{I}$ denote the unitary similitude group of the pair $(B, \dagger)$, so that there is an induced isomorphism $\widetilde{I}(\bA_{\bQ}^{\infty, q}) \cong \widetilde{H}(\bA_{\bQ}^{\infty, q})$. For any neat open compact subgroup $\widetilde{U}^q \subset \widetilde{I}(\bA_\bQ^{\infty, q})$, there is an 
    $\cH(\widetilde{I}(\bA_\bQ^{\infty, q}), \widetilde{U}^q)$-equivariant isomorphism of rigid analytic spaces over $\widehat{\overline{F}}_{w_0}$:
    \[ \widetilde{I}(\bbQ) \backslash \left( \Omega_{F_{w_0}}^{n-1, ca} \times \widetilde{H}(\bA_\bQ^\infty) / \widetilde{U} \right) \cong \operatorname{Sh}_{\widetilde{U}}^{rig, ca}, \]
    where $\widetilde{U}_q \subset \widetilde{H}(\bQ_q)$ is a certain maximal compact subgroup and the action of $\widetilde{I}(\bQ_q)$ on $\Omega_{F_{w_0}}^{n-1, ca} \times \widetilde{H}(\bQ_q) / \widetilde{U}_q$ is as described in \cite[Proposition 6.49]{Rap96}.
\end{itemize}
 Let $G^{ad}$ denote the adjoint group of $G$, and let $I$ denote the unitary group over $F^+$ defined by $(B, \dagger)$. Since $\widetilde{I}_{\bQ_r}$ is quasi-split for every prime $r$, $I_{F^+_v}$ is quasi-split for every finite place $v$ of $F^+$. It follows that for every place $v$ of $F^+$, the groups $G_{F^+_v}$ and $I_{F^+_v}$ are isomorphic (if $v$ is finite, these groups are both quasi-split; if $v$ is infinite, then they are both the compact form of $\GL_{n, F^+_v}$). Choose $g \in \GL_n(F)$ such that the involution $b \mapsto b^\dagger$ of $B$ is identified with the involution $X \mapsto g {}^t X^c g^{-1}$. Then $g {}^t g^c$ is a scalar matrix and $g$ defines a 1-cocycle in $Z^1(F / F^+, G^{ad})$. For any place $v$ of $F^+$, the image of this 1-cocycle in $H^1(F^+_v, \Aut(G))$ is trivial. The map  $H^1(F^+_v, G^{ad}) \to H^1(F^+_v, \Aut(G))$ has trivial pointed kernel, because the map $\Aut(G)(F^+_v) \to \operatorname{Out}(G)(F^+_v)$ is surjective. It follows that the image of our 1-cocycle in $H^1(F^+_v, G^{ad})$ is trivial. By the Hasse principle for $H^1(F^+, G^{ad})$  (proved using the description of $\ker^1$ given in \cite[\S 4]{Kot84}), we conclude that our 1-cocycle in $Z^1(F / F^+, G^{ad})$ is in fact a coboundary and that we can in fact choose the isomorphism $B \cong M_n(F)$ so that $\dagger$ is identified with the involution $X \mapsto {}^t X^c$ and $\widetilde{I} = \widetilde{G}$. This leads to the claimed statement.
\end{proof}
Before stating the result we need from \cite{Car17}, we need to introduce a bit more notation. Suppose given a finite set $T$ of prime numbers containing $p$, $q$, and all the primes which are ramified in $F$, and let $\widetilde{K}_{\widetilde{H}} \subset \widetilde{H}(\bA_{\bQ}^\infty)$ be a sufficiently small open compact subgroup of the form $\widetilde{K}_{\widetilde{H}} = \prod_\ell \widetilde{K}_{\widetilde{H}, \ell}$ such that for each $\ell \not\in T$, $\widetilde{K}_{\widetilde{H}, \ell} \subset \widetilde{H}(\bQ_\ell)$ is a hyperspecial maximal compact subgroup. Let $\operatorname{Spl}_{{F_0} / \bQ}$ denote the set of prime numbers which split in $F_0$. We define 
\[ \widetilde{\bT}^T \subset \otimes'_{\ell \in \operatorname{Spl}_{{F_0} / \bQ} - T} \cH(\widetilde{H}(\bQ_\ell), \widetilde{K}_{\widetilde{H}, \ell}) \otimes_\bZ \cO, \]
to be the $\cO$-subalgebra generated by all the Hecke operators $T_w^i$ and $(T_w^n)^{-1}$, as $w$ ranges over places of $F$ lying above a prime $\ell \in \operatorname{Spl}_{{F_0} / \bQ} - T$, and where these operators can be considered as elements of $\cH(\widetilde{H}(\bQ_\ell), \widetilde{K}_{\widetilde{H}, \ell})$ using the analogue of the isomorphism (\ref{eqn_split_prime}) for the group $\widetilde{H}$. 
\begin{theorem}\label{thm_car_sch}
Let $T$, $\widetilde{K}_{\widetilde{H}}$ be as in the previous paragraph. Let $T_{F^+}$ denote the set of places of $F^+$ lying above an element of $T$. Suppose given a maximal ideal $\widetilde{\m} \subset \widetilde{\T}^T$ which is in the support of $H^\ast(\operatorname{Sh}_{\widetilde{K}_{\widetilde{H}}}^{rig, ca}, k)$. Then:
\begin{enumerate}
    \item There exists a continuous semisimple representation 
    \[ \overline{\rho}_{\widetilde{\m}} : G_{F, T_{F^+}} \to \GL_n(\widetilde{\T}^S / \widetilde{\m}) \]
    such that for each prime number $\ell \in \operatorname{Spl}_{{F_0} / \bQ} - T$ and each place $w | \ell$ of $F$, $\det(X - \overline{\rho}_{\widetilde{\m}}(\Frob_w)) = P_w(X) \text{ mod }\widetilde{\m}$.
    \item Suppose further that there exists a finite place $w \nmid T$ of $F$ such that $\overline{\rho}_{\widetilde{\m}}|_{G_{F_w}}^{ss} \cong \oplus_{i=1}^n \overline{\chi}_i$, where $\overline{\chi}_i : G_{F_w} \to \overline{k}^\times$ ($i = 1, \dots, n$) are unramified characters such that if $i \neq j$ then $\overline{\chi}_i / \overline{\chi}_j \neq \epsilon$. Then if $H^j(\operatorname{Sh}_{\widetilde{K}_{\widetilde{H}}}^{rig, ca}, k)_{\widetilde{\m}} \neq 0$, then $j = n-1$.
\end{enumerate}
\end{theorem}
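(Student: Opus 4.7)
The plan is to treat the two assertions separately. Part (1) is a standard construction of Galois representations associated to mod $p$ Hecke eigensystems in the cohomology of a PEL Shimura variety; part (2) is a direct application of the main vanishing theorem of Caraiani--Scholze \cite{Car17} (or Boyer \cite{Boy19}) to the Harris--Taylor Shimura variety constructed via Lemma \ref{lem_existence_of_PEL_datum}.

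For (1), I would first observe that since $D$ is a division algebra, the derived group of $\widetilde{H}$ is anisotropic over $\bQ$, so $\operatorname{Sh}_{\widetilde{K}_{\widetilde{H}}}$ is proper and the cohomology groups $H^\ast(\operatorname{Sh}_{\widetilde{K}_{\widetilde{H}}}^{rig, ca}, \cO)$ are finitely generated $\cO$-modules carrying an $\widetilde{\T}^T$-action. Because $\widetilde{\m}$ is in the support of $H^\ast(\operatorname{Sh}_{\widetilde{K}_{\widetilde{H}}}^{rig, ca}, k)$, Nakayama's lemma forces $H^\ast(\operatorname{Sh}_{\widetilde{K}_{\widetilde{H}}}^{rig, ca}, \cO)_{\widetilde{\m}} \neq 0$, so there is a minimal prime $\widetilde{\m}' \subset \widetilde{\m}$ occurring in the $\overline{\bQ}_p$-cohomology. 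Via the Matsushima formula (in the rigid-analytic guise following from proper base change and the comparison with Betti cohomology after $\iota$), the eigensystem cut out by $\widetilde{\m}'$ corresponds to an automorphic representation $\pi$ of $\widetilde{H}(\bA_\bQ)$ in the discrete spectrum. Passing to the associated unitary group and invoking the base-change results of Labesse (the similitude analogue of Theorem \ref{thm_base_change}, together with Corollary \ref{cor_Gal_reps_for_G}), one obtains a conjugate self-dual isobaric sum $\Pi = \pi_1 \boxplus \dots \boxplus \pi_r$ of discrete automorphic representations of $\GL_n(\bA_F)$ together with its associated representation $r_{\Pi, \iota} = \oplus_i r_\iota(\pi_i): G_F \to \GL_n(\overline{\bQ}_p)$. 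Local--global compatibility at the split unramified places of $F$ implies $\det(X - r_{\Pi, \iota}(\Frob_w)) = P_w(X)$ modulo $\widetilde{\m}'$ for each place $w$ as in the statement. Reducing modulo the maximal ideal of $\overline{\bZ}_p$ and semisimplifying gives the desired $\overline{\rho}_{\widetilde{\m}}$.

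For (2), the key point is that the PEL datum chosen in Lemma \ref{lem_existence_of_PEL_datum} is precisely of Harris--Taylor type: signature $(1, n-1)$ at one complex place and $(0, n)$ at all others. The Shimura variety $\operatorname{Sh}_{\widetilde{K}_{\widetilde{H}}}$ is therefore a compact PEL Shimura variety of dimension $n-1$, satisfying the hypotheses of \cite{Car17}. The genericity assumption in part (2) of the theorem — that $\overline{\rho}_{\widetilde{\m}}|_{G_{F_w}}^{ss}$ is a sum of unramified characters whose pairwise ratios avoid $\epsilon$ at some split place $w \nmid T$ — is exactly the generic-maximal-ideal hypothesis of Caraiani--Scholze. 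Their vanishing theorem then forces $H^j(\operatorname{Sh}_{\widetilde{K}_{\widetilde{H}}}^{rig, ca}, k)_{\widetilde{\m}} = 0$ for $j \neq n-1$, which is the conclusion sought.

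The main obstacle is the bookkeeping in part (1): one needs to carry out the passage from an automorphic representation of the unitary similitude group $\widetilde{H}$ down to its unitary subgroup, and thence via base change to $\GL_n(\bA_F)$, tracking central characters and local-global compatibility at split primes. The vanishing result of part (2) is a black-box citation and carries no genuine difficulty once (1) is in place, because the Caraiani--Scholze theorem is stated in exactly the required generality for compact unitary PEL Shimura varieties.
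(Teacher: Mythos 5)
Your treatment of part (2) matches the paper's: both are direct citations of the Caraiani--Scholze vanishing theorem. The paper additionally records two points you elide --- the passage between the commutative subalgebra $\widetilde{\T}^T$ and the full Hecke algebra $\cH^T$ (one decomposes $H^\ast(\cdot)_{\widetilde{\m}}$ as a sum over the maximal ideals $\widetilde{\mathfrak{n}} \subset \cH^T$ lying above $\widetilde{\m}$ and applies \cite[Theorem 6.3.1(2)]{Car17} to each), and the fact that the hypothesis of \emph{loc.\ cit.} as literally stated also demands $\overline{\chi}_i/\overline{\chi}_j \neq 1$ for $i \neq j$, which must be removed via Koshikawa's observation (\cite[Remark 1.4]{Car19}). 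These are citation details rather than conceptual gaps, as is the comparison between Berkovich and scheme-theoretic \'etale cohomology.

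Part (1), however, contains a genuine gap. You pass from $H^\ast(\operatorname{Sh}_{\widetilde{K}_{\widetilde{H}}}^{rig, ca}, \cO)_{\widetilde{\m}} \neq 0$ to the existence of a minimal prime $\widetilde{\m}' \subset \widetilde{\m}$ ``occurring in the $\overline{\bQ}_p$-cohomology'', i.e.\ you assume the mod-$p$ eigensystem lifts to characteristic zero. This does not follow: the localised integral cohomology may consist entirely of $\varpi$-power torsion classes, and a torsion eigensystem need not be the reduction of one attached to an automorphic representation of $\widetilde{H}(\bA_\bQ)$. The Euler-characteristic argument that sometimes rescues such a claim is unavailable because one does not yet know the cohomology is concentrated in a single degree --- that is the content of part (2), whose hypothesis already presupposes $\overline{\rho}_{\widetilde{\m}}$, so no circular appeal is possible. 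This is precisely the problem that the Scholze and Caraiani--Scholze construction of Galois representations attached to torsion classes was designed to solve, and it is why the paper proves part (1) by quoting \cite[Theorem 6.3.1(1)]{Car17}, which produces a representation over $\cH^T/\widetilde{\mathfrak{n}}$ directly from the torsion cohomology, and then descends to $\widetilde{\T}^S/\widetilde{\m}$ using the density of the Frobenius conjugacy classes. (You also do not address why your representation, a priori valued in $\GL_n(\overline{k})$, is defined over the residue field $\widetilde{\T}^S/\widetilde{\m}$ as the statement requires; that is the easier of the two issues, but it still needs the same Chebotarev argument.)
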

\begin{proof}
Let $\cH^T = \otimes'_{\ell \in \operatorname{Spl}_{{F_0} / \bQ} - T} \cH(\widetilde{H}(\bQ_\ell), \widetilde{K}_{\widetilde{H}, \ell}) \otimes_\bZ \cO$. Then \cite[Theorem 6.3.1(1)]{Car17} states that for any maximal ideal $\widetilde{\mathfrak{n}} \subset \cH^T$ that occurs in the support of $H^\ast(\operatorname{Sh}_{\widetilde{K}_{\widetilde{H}}}^{rig, ca}, k)$, there exists a continuous representation $\overline{\rho}_{\widetilde{\mathfrak{n}}} : G_{F, T_{F^+}} \to \GL_n( \cH^T / \widetilde{\mathfrak{n}} )$ such that for each prime number $\ell \in \operatorname{Spl}_{{F_0} / \bQ} - T$ and each place $w | \ell$ of $F$, $\det(X - \overline{\rho}_{\widetilde{\mathfrak{n}}}(\Frob_w)) = P_w(X) \text{ mod }\widetilde{\mathfrak{n}}$. If $\widetilde{\m} \subset \widetilde{\T}^S$ is a maximal ideal that is in the support of the cohomology, then $\widetilde{\m} = \widetilde{\T}^S \cap \widetilde{\mathfrak{n}}$ for some maximal ideal $\widetilde{\mathfrak{n}} \subset \cH^T$ that is in the support, and we can take $\overline{\rho}_{\widetilde{\m}}$ to be a conjugate of $\overline{\rho}_{\widetilde{\mathfrak{n}}}$ that is defined over  $\widetilde{\T}^S / \widetilde{\m}$. Such a conjugate exists because the conjugacy classes of elements $\Frob_w$ ($w$ a place of $F$ lying above $\operatorname{Spl}_{{F_0} / \bQ} - T$) are dense in $G_{F, T_{F^+}}$. This shows the first part.

For the second, we can decompose
\[  H^\ast(\operatorname{Sh}_{\widetilde{K}_{\widetilde{H}}}^{rig, ca}, k)_{\widetilde{\m}} = \oplus_{\widetilde{\mathfrak{n}}} H^\ast(\operatorname{Sh}_{\widetilde{K}_{\widetilde{H}}}^{rig, ca}, k)_{\widetilde{\mathfrak{n}}}, \]
where the sum runs over the set of maximal ideals of $\cH^T$ lying above $\widetilde{\m}$. If $\widetilde{\m}$ satisfies the hypothesis in the second part of the theorem then each $\widetilde{\mathfrak{n}}$ satisfies the hypothesis of \cite[Theorem 6.3.1(2)]{Car17} (using here \cite[Remark 6.3.3]{Car17}) and we deduce that each $H^\ast(\operatorname{Sh}_{\widetilde{K}_{\widetilde{H}}}^{rig, ca}, k)_{\widetilde{\mathfrak{n}}}$, and therefore $ H^\ast(\operatorname{Sh}_{\widetilde{K}_{\widetilde{H}}}^{rig, ca}, k)_{\widetilde{\m}}$, is concentrated in degree $n-1$. (In fact, \cite[Theorem 6.3.1(2)]{Car17} includes the additional condition that $\overline{\chi}_i /\overline{\chi}_j \neq 1$ if $i \neq j$, but this can be suppressed, as observed by Koshikawa, see \cite[Remark 1.4]{Car19}.) We remark that the \'etale cohomology of rigid spaces used in \cite{Dat06} is that of Berkovich \cite{Ber93}. Berkovich's paper contains the comparison theorem for the \'etale cohomology of schemes of finite type with tame torsion coefficients, which we invoke here.
\end{proof}
We can now complete the proof of Theorem \ref{thm_torsion_in_derived_tensor_product}.
\begin{proof}[Proof of Theorem \ref{thm_torsion_in_derived_tensor_product}]
Recall that we are given a maximal ideal $\m \subset \T^S$ and we need to show that $H^j( R \Gamma_c(\Omega_{F_{w_0}}^{n-1, ca}, k) \otimes^{\bL}_{k[\GL_n(F_{w_0})^0]} \cA)_\m$ is non-zero only if $j = n-1$. We first note that there is a $\T^S$-equivariant isomorphism 
\begin{multline}\label{eqn_isomorphism_of_cohomology_groups}
H^j( R \Gamma_c(\Omega_{F_{w_0}}^{n-1, ca}, k) \otimes^{\bL}_{k[\GL_n(F_{w_0})^0]} \cA) \cong \\ H^j( G(F^+) \backslash \left( \Omega_{F_{w_0}}^{n-1, ca} \times G(\bA_{F^+}^\infty) / K^{v_0} \iota_{w_0}^{-1}(\GL_n(F_{w_0})^0) \right), k). 
\end{multline} 
To see this, recall that 
\[ \cA = C_c^\infty(G(F^+) \backslash G(\bA_{F^+}^\infty) / K^{v_0}, k) = C_c^\infty(G(\bA_{F^+}^\infty) / K^{v_0}, k) \otimes^{\bL}_{k[G(F^+)]} k. \]
Using \cite[Proposition B.3.1]{Dat06}, we see that it is enough to construct a $\bT^S$-equivariant isomorphism
\begin{multline*}  R \Gamma_c(\Omega_{F_{w_0}}^{n-1, ca}, k) \otimes^{\bL}_{k[\GL_n(F_{w_0})^0]} C_c^\infty(G(\bA_{F^+}^\infty) / K^{v_0}, k) \cong \\ R \Gamma_c( \Omega_{F_{w_0}}^{n-1, ca}  \times  G(\bA_{F^+}^\infty) / K^{v_0} \iota_{w_0}^{-1}(\GL_n(F_{w_0})^0), k) 
\end{multline*} 
in $\mathbf{D}(k[G(F^+)])$. The right hand-side here may be further decomposed as
\[   R \Gamma_c( \Omega_{F_{w_0}}^{n-1, ca}, k) \otimes_k C_c^\infty( G(\bA_{F^+}^\infty) / K^{v_0} \iota_{w_0}^{-1}(\GL_n(F_{w_0})^0), k). \]
Since there is an isomorphism  
\[ C_c^\infty( G(\bA_{F^+}^\infty) / K^{v_0} \iota_{w_0}^{-1}(\GL_n(F_{w_0})^0), k) \cong C_c^\infty( G(\bA_{F^+}^\infty) / K^{v_0}, k) \otimes^{\bL}_{k[\GL_n(F_{w_0})^0]} k, \]
we see that it is even enough to construct a $\T^S$-equivariant isomorphism
\[ R \Gamma_c(\Omega_{F_{w_0}}^{n-1, ca}, k) \otimes_k C_c^\infty(G(\bA_{F^+}^\infty) / K^{v_0}, k) \cong  R \Gamma_c(\Omega_{F_{w_0}}^{n-1, ca}, k) \otimes_k C_c^\infty(G(\bA_{F^+}^\infty) / K^{v_0}, k) \]
in $\mathbf{D}(k[\GL_n(F_{w_0}) \times \GL_n(F_{w_0})])$, where $(g_1, g_2) \in \GL_n(F_{w_0}) \times \GL_n(F_{w_0})$ acts by $(g_1, g_2) \cdot (\omega \otimes f) = g_1 \omega \otimes L_{g_1} R_{g_2} f$ on the left-hand side and by $(g_1, g_2) \cdot (\omega \otimes f) = g_2 \omega \otimes L_{g_1} R_{g_2} f$ on the right-hand side (restricting these actions to the subgroup $G(F^+) \times \GL_n(F_{w_0})^0$ and passing to derived coinvariants then recovers the complexes whose cohomology groups appear in (\ref{eqn_isomorphism_of_cohomology_groups})). The existence of such an isomorphism is the content of Lemma \ref{lem_two_actions_of_G}. 

We next note that we are free to enlarge $S$ if necessary (replacing $\m$ by its intersection with the subalgebra $\bT^{S'} \subset \T^S$) and also to replace $K^{v_0}$ by an open normal subgroup $K_1^{v_0}$. Indeed, the Hochschild--Serre spectral sequence implies that if the Theorem holds for $K_1^{v_0}$ then the groups $H^j( R \Gamma_c(\Omega_{F_{w_0}}^{n-1, ca}, k) \otimes^{\bL}_{k[\GL_n(F_{w_0})^0]} \cA)_\m$ are at least non-zero only in degrees $j \geq n-1$. The isomorphism (\ref{eqn_isomorphism_of_cohomology_groups}), together with the fact that the quotient $G(F^+) \backslash \left( \Omega_{F_{w_0}}^{n-1, ca} \times G(\bA_{F^+}^\infty) / K^{v_0} \iota_{w_0}^{-1}(\GL_n(F_{w_0})^0) \right)$ is the rigid space associated to a smooth proper scheme, which therefore satisfies Poincar\'e duality for \'etale cohomology with $k$-coefficients, then implies that the truth of the Theorem for $K^{v_0}$. 

We can therefore assume that the following additional conditions hold:
\begin{itemize}
    \item There is a sufficiently small open compact subgroup $\widetilde{K}^q \subset \widetilde{G}(\bA_\bQ^{\infty, q})$ such that $\widetilde{K}^q \cap G(\bA_\bQ^{\infty, q}) = K^q$.
    \item Defining $\widetilde{K}_q$ as in the statement of Theorem \ref{thm_torsion_in_derived_tensor_product}, and writing $\mu : \widetilde{G} \to \bG_m$ for the similitude character, we have $\mu(\widetilde{K}) \cap \bQ^\times = \{ 1 \}$.
\end{itemize}
Let $T$ be a finite set of rational primes including $p$, $q$, and all those primes $l$ such that $\widetilde{K}_l$ is not a hyperspecial maximal compact subgroup of $\widetilde{G}(\bQ_l)$ or lying below an element of $S$. These conditions imply that the induced map
\[ G(F^+) \backslash \left( \Omega_{F_{w_0}}^{n-1, ca} \times G(\bA_{F^+}^\infty) / K^{v_0} \iota_{w_0}^{-1}(\GL_n(F_{w_0})^0) \right) \to \widetilde{G}(\bQ) \backslash \left( \Omega_{F_{w_0}}^{n-1, ca} \times \widetilde{G}(\bA_\bQ^\infty) / \widetilde{K} \right) \]
is an isomorphism onto a union of connected components, which is moreover equivariant for the action of the Hecke algebra $\widetilde{\T}^T \subset \T^S$. (Compare \cite[\S 3.1]{Kar21}.) Let $\widetilde{\m}$ denote the pullback of the ideal $\m$ to $\widetilde{\T}^T$. Then Theorem \ref{thm_car_sch} implies that $H^\ast(\widetilde{G}(\bQ) \backslash \left( \Omega_{F_{w_0}}^{n-1, ca} \times \widetilde{G}(\bA_\bQ^\infty) / \widetilde{K} \right), k)_{\widetilde{\m}}$ is concentrated in degree $n-1$; since the map 
\begin{multline*} H^\ast(\widetilde{G}(\bQ) \backslash \left( \Omega_{F_{w_0}}^{n-1, ca} \times \widetilde{G}(\bA_\bQ^\infty) / \widetilde{K} \right), k)_{\widetilde{\m}} \to \\ H^\ast( G(F^+) \backslash \left( \Omega_{F_{w_0}}^{n-1, ca} \times G(\bA_{F^+}^\infty) / K^{v_0} \iota_{w_0}^{-1}(\GL_n(F_{w_0})^0) \right), k)_{\widetilde{\m}} 
\end{multline*}
is surjective, and $H^\ast(G(F^+) \backslash \left( \Omega_{F_{w_0}}^{n-1, ca} \times G(\bA_{F^+}^\infty) / K^{v_0} \iota_{w_0}^{-1}( \GL_n(F_{w_0})^0 ) \right), k)_\m$ is a direct summand of $H^\ast(G(F^+) \backslash \left( \Omega_{F_{w_0}}^{n-1, ca} \times G(\bA_{F^+}^\infty) / K^{v_0} \iota_{w_0}^{-1}( \GL_n(F_{w_0})^0 ) \right), k)_{\widetilde{\m}}$, the desired result follows in view of the isomorphism (\ref{eqn_isomorphism_of_cohomology_groups}).
\end{proof}

\section{Raising the level}

We continue with the set-up of the previous section. Thus we let ${F_0}$ be an imaginary quadratic field, $F = F^+ {F_0}$ a CM field such that $F / F^+$ is everywhere unramified, and take $G$ to be the reductive group over $\cO_{F^+}$ whose functor of points is given by (\ref{eqn_defn_of_G}), with its associated isomorphisms $\iota_v : G(F^+_v) \to  U_n(F^+_v) $ for each finite place $v$ of $F^+$ and isomorphisms $\iota_w : G(F^+_v) \to \GL_n(F_w)$ for places $w$ of $F$ split over a place $v$ of $F^+$. We fix a prime number $p$, an isomorphism $\iota : \overline{\bQ}_p \to \bC$, a coefficient field $E / \bQ_p$, and a finite set $S$ of places of $F^+$, each of which splits in $F$, and containing the set $S_p$ of $p$-adic places. We assume that $E$ is large enough to contain the image of any embedding $F \to \overline{\bQ}_p$.

We also fix for each $v \in S$ a place $\wv$ of $F$ lying above $v$, and set $\widetilde{S} = \{ \wv \mid v \in S \}$. We write $\widetilde{I}_p$ for the set of embeddings $\tau : F \to \overline{\bQ}_p$ which induce a place of $\widetilde{S}_p$. We have defined $\cJ_S$ to be the set of sufficiently small open compact subgroups $K = \prod_v K_v$ of $G(\bA_{F^+}^\infty)$ such that for each finite place $v \not\in S$ of $F^+$, $K_v = \iota_v^{-1}(U_n(\cO_{F^+_v}))$. We define $\cJ_S^p$ to be the set of sufficiently small open compact subgroups $K^p = \prod_{v \nmid p} K_v$ of $G(\bA_{F^+}^{\infty, p})$ such that for each finite place $v\nmid p$ of $F^+$, $K_v = \iota_v^{-1}(U_n(\cO_{F^+_v}))$ (a slight variation of the notation $\cJ_S^{v_0}$ also established in the previous section). 

If $w$ is a place of $F$, then we define $\Iw_w \subset \GL_n(\cO_{F_w})$ to be the standard Iwahori subgroup (pre-image of $B_n(k(w))$ under reduction modulo $\varpi_w$) and for $d \geq c \geq 1$, $\Iw_w(c, d)$ to be the intersection of the pre-image of $B_n(\cO_{F_w} / \varpi_w^d)$ under reduction modulo $\varpi_w^d$ and the pre-image of $N_n(\cO_{F_w} / \varpi_w^c)$ under reduction modulo $\varpi_w^c$ (recall $B_n \supset N_n$ denote the standard Borel subgroup of $\GL_n$ and its unipotent radical, respectively). 

If $K^p \in \cJ_S^p$, then we have defined the Hecke algebra $\bT^S \subset \cH(G(\bA_{F^+}^S), K^S)$. Suppose that $v \in S - S_p$ and $K_v = \iota_\wv^{-1}(\Iw_{\wv})$. In this case we define an enlarged (and still commutative) algebra $\bT^S_v \subset \cH(G(\bA_{F^+}^{S - \{ v \}}), K^{S - \{ v \}}) \otimes_\bZ \cO$. Let $\Lambda_\wv = T_n(F_\wv) / T_n(\cO_{F_\wv})$. Bernstein's description of the algebra $\cH( \GL_n(F_\wv), \Iw_\wv) \otimes_\bZ \cO$ implies the existence of a unique $\cO$-algebra homomorphism 
\[ \cO[\Lambda_\wv] \to \cH( \GL_n(F_\wv), \Iw_\wv) \otimes_\bZ \cO \]
with the property that if $t = (\varpi_\wv^{m_1}, \varpi_\wv^{m_2}, \dots, \varpi_\wv^{m_n}) \in T_n(F_\wv)$ with $m_1 \geq m_2 \geq \dots \geq m_n$, then $t$ is mapped to the $[\Iw_\wv t \Iw_\wv]$. We define $\T^S_v$ to be the algebra generated by $\T^S$ and the image of $\cO[\Lambda_\wv]$. We define unramified characters $\chi_{\wv, i} : F_\wv^\times \to (\T^S_v)^\times$ ($i = 1, \dots, n$) by declaring that $\chi_{\wv, i}(\varpi_\wv)$ is the image of $(1, \dots, 1, \varpi_\wv, 1, \dots, 1) \in \Lambda_\wv$ (where $\varpi_\wv$ sits in the $i^\text{th}$ position). We can define a group determinant $D_\wv$ of $W_{F_\wv}$ of rank $n$ with coefficients in $\T^S_v$ as the one associated to the representation $\oplus_{i=1}^n \chi_{\wv, i} \circ \Art_{F_\wv}^{-1}$. Then \cite[Proposition 2.2.8]{All18} shows that $D_\wv$ ``is'' the semi-simplified Tate-normalised local Langlands correspondence $\rec_{F_\wv}^T$ for Iwahori-spherical representations of $\GL_n(F_\wv)$. We define $\Delta_\wv \in \T^S_v$ to be the discriminant of the characteristic polynomial $D_\wv(X - \Frob_\wv)$ of Frobenius. 

We need to define spaces of ordinary algebraic modular forms. We first define what we mean by ordinary parts. If $M$ is an $\cO$-module equipped with an endomorphism $U : M \to M$, then the ordinary part $M^{ord} \subset M$ is the $\cO$-submodule consisting of elements $f \in M$ such that there is a polynomial $P(X) \in \cO[X]$ such that $P(0) \in \cO^\times$ and $P(U) f = 0$.
\begin{lemma} Let $M$ be an $\cO[U]$-module.
\begin{enumerate}
    \item $M^{ord}$ is an $\cO[U]$-submodule of $M$ and the functor $M \mapsto M^{ord}$ is left exact and respects direct sums. 
    \item If $M$ is a finitely generated $\cO$-module then we can decompose $M = \oplus_\m M_\m$ as a direct sum of its non-zero localisations at maximal ideals of $\cO[U]$, and then $M^{ord} = \oplus_{\m : U \not\in \m} M_\m$. In particular, $M^{ord}$ is a direct summand of $M$.
    \item If $M$ is a finite dimensional $E$-vector space, then $M^{ord}$ is the largest $E$-vector subspace of $M$ where $U$ acts with eigenvalues that have non-positive valuation. 
    \item More generally, if $M$ is a direct limit of $\cO[U]$-modules which are finitely generated $\cO$-modules or finite dimensional $E$-vector spaces, then $M^{ord}$ is a direct summand of $M$ as $\cO[U]$-module. 
\end{enumerate}
\end{lemma}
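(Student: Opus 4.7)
The argument treats the four parts in sequence. Part (1) is formal: $M^{ord}$ is closed under $\cO[U]$ by commutativity and closed under sums by multiplying annihilating polynomials (the constant term stays a unit), so it is a submodule; left exactness and compatibility with direct sums both reduce to the finite-support case, since any element of a direct sum involves only finitely many summands, on each of which the argument just given applies.

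For (2), the image of $\cO[U]$ in $\End_\cO(M)$ is a finite $\cO$-algebra $R$ (because $M$ is finitely generated over the Noetherian ring $\cO$), hence semilocal, which yields the standard decomposition $M = \bigoplus_\mathfrak{m} M_\mathfrak{m}$ over the finitely many maximal ideals of $R$, equivalently the maximal ideals of $\cO[U]$ in the support of $M$. The key dichotomy to verify is: if $U \notin \mathfrak{m}$, then $U^{-1}$ exists in $R_\mathfrak{m}$ and is integral over $\cO$ (as $R_\mathfrak{m}$ is finite over $\cO$), and reversing the coefficients of a monic integral relation for $U^{-1}$ gives $P \in \cO[X]$ with $P(0) = 1$ and $P(U) = 0$ on $M_\mathfrak{m}$; whereas if $U \in \mathfrak{m}$, then for any $P \in \cO[X]$ with $P(0) \in \cO^\times$ the element $P(U) \equiv P(0)$ is nonzero in the residue field of the local ring $R_\mathfrak{m}$, so $P(U)$ is a unit and annihilates no nonzero element of $M_\mathfrak{m}$.

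For (3), I will use the generalized-eigenspace decomposition of $M \otimes_E \overline{E}$, regrouped by Galois orbits to produce an $E$-rational $M = \bigoplus_\alpha V_\alpha$. If $v(\alpha) \leq 0$, then $\alpha^{-1}$ is integral over $\cO$; coefficient-reversing its monic minimal polynomial gives $P_\alpha \in \cO[X]$ with $P_\alpha(0) = 1$ and $P_\alpha(\alpha) = 0$, and a sufficiently high power of $P_\alpha$ annihilates the (finite-dimensional) generalized eigenspace $V_\alpha$, placing $V_\alpha \subset M^{ord}$. If $v(\alpha) > 0$, then $\alpha$ is integral and its $E$-minimal polynomial $Q_\alpha$ lies in $\cO[X]$ with constant term equal to the norm of $\alpha$, hence of positive valuation; Gauss's lemma forces any $P \in \cO[X]$ vanishing on $V_\alpha$ to be divisible in $\cO[X]$ by a power of $Q_\alpha$, so $P(0) \in \varpi\cO$ and no nonzero element of $V_\alpha$ is ordinary.

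For (4), the crucial structural step is to introduce the intrinsic candidate complement
\[ M^{\mathrm{n.o.}} := \bigcap_{P \in \cO[X],\ P(0) \in \cO^\times} P(U) \cdot M, \]
which is visibly an $\cO[U]$-submodule and is preserved by any $\cO[U]$-equivariant map $f : M \to N$, since $f(P(U) M) \subset P(U) N$ for each such $P$. I will then check, in each of the two basic cases, that $M^{\mathrm{n.o.}}$ coincides with the complementary summand to $M^{ord}$ furnished by (2) or (3): for finite-dimensional $E$-vector spaces using the polynomials $P_\alpha^n$ for $n \gg 0$ on each $V_\alpha \subset M^{ord}$, and for finitely generated $\cO$-modules by invoking Krull's intersection theorem on the summands $M_\mathfrak{m}$ with $U \notin \mathfrak{m}$, applied to high powers of a monic lift of the residue-field minimal polynomial of $U$ at the appropriate maximal ideal of $\cO[X]$. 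Writing $M = \varinjlim M_i$ with each $M_i$ in one of the basic classes, the decompositions $M_i = M_i^{ord} \oplus M_i^{\mathrm{n.o.}}$ are then preserved by the transition maps and pass to the colimit, producing the required direct summand. The main obstacle is precisely this intrinsic identification of $M^{\mathrm{n.o.}}$ in the finitely generated case, because only such a manifestly functorial characterization lets one glue the decompositions coherently across the direct system.
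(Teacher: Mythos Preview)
Your proof is correct and largely parallel to the paper's, but with a few genuine methodological differences worth noting. In part (2), where the paper uses a Hensel-lift factorization of a monic relation for $U$ to handle the case $U \notin \mathfrak{m}$ and Nakayama for $U \in \mathfrak{m}$, you instead use the coefficient-reversal trick (integrality of $U^{-1}$) and the observation that $P(U)$ is a unit in the local ring when $U \in \mathfrak{m}$; both are clean and equivalent in strength. In part (3), the paper argues via the Newton polygon of the characteristic polynomial, while you use integrality of $\alpha^{-1}$ plus Gauss's lemma on the minimal polynomial $Q_\alpha$; again both work, and your Gauss-lemma argument is perhaps more transparent.

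Part (4) is where you add real content: the paper simply declares it ``elementary,'' whereas you supply the key structural ingredient, namely the functorial complement $M^{\mathrm{n.o.}} = \bigcap_P P(U)M$. This is the right idea, and once you verify $M_i = M_i^{ord} \oplus M_i^{\mathrm{n.o.}}$ in the two basic cases, functoriality of both summands and the identification $\varinjlim M_i^{ord} = M^{ord}$ finish the job. One small wrinkle: your description of the verification in the finitely generated case is slightly muddled. You invoke Krull's intersection theorem on the summands with $U \notin \mathfrak{m}$, but in fact no such appeal is needed: your own part-(2) argument already produces a single $P$ with $P(0)=1$ and $P(U)M_\mathfrak{m}=0$, so $M_\mathfrak{m}^{\mathrm{n.o.}}=0$ immediately; and for $U \in \mathfrak{m}$ your observation that each $P(U)$ is a unit gives $M_\mathfrak{m}^{\mathrm{n.o.}}=M_\mathfrak{m}$ directly. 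So the Krull reference can simply be dropped.
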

\begin{proof}
The first part is elementary. For the second, let $A$ denote the $\cO$-subalgebra of $\End_\cO(M)$ generated by $U$. After replacing $M$ by one of its localisations, we can assume that $A$ is a local ring with maximal ideal $\m$. If $U \in \m$, we need to show that $M^{ord} = 0$. First, note that $M^{ord}$ is an $A$-submodule of $M$. If $f \in M^{ord}$ and $P(U) f = 0$, where $P(X) \in \cO[X]$ and $P(0) \in \cO^\times$, then $f \in \m M^{ord}$. Thus $M^{ord} = \m M^{ord}$ and Nakayama's lemma shows that $M^{ord} = 0$. If $U \not\in \m$, we need to show that $M^{ord} = M$. We can find a monic polynomial $P(X) \in \cO[X]$ such that $P(U) = 0$ in $A$. Let $\overline{P}(X) \in k[X]$ denote the reduction of $P(X)$ modulo $\varpi$ and factor $\overline{P}(X) = \overline{P}_1(X) \overline{P}_2(X)$, where $\overline{P}_1$ is a power of $X$ and $\overline{P}_2(0) \neq 0$. Hensel's lemma implies that this lifts uniquely to a factorisation $P(X) = P_1(X) P_2(X)$ in $\cO[X]$ where $P_1, P_2$ are also monic. Then $P_1(U) P_2(U) = 0$ in $A$. The image of $P_1(U)$ in $A / \m_A$ is a power of $U$, showing that $P_1(U) \in A^\times$ and therefore that $P_2(U) = 0$ in $A$. This implies that $M = M^{ord}$.

For the third part, suppose that $M$ is a finite-dimensional $E$-vector space, and let $P(X) = \det(X - U : M \to M)$. We can assume that all of the roots of $P(X)$ in $\overline{\bQ}_p$ have the same $p$-adic valuation $s$. If $s > 0$ (or if $s = \infty$), we need to show that $M^{ord} = 0$. If $f \in M^{ord}$ then $Q(U)f = 0$ for some $Q(X) \in \cO[x]$ with $Q(0) \in \cO^\times$. The polynomials $P(X)$, $Q(X)$ are coprime in $E[X]$, so we find that the relations $Q(U) f = P(U) f = 0$ imply $f = 0$, hence $M^{ord} = 0$. If $s \leq 0$, we need to show that $M^{ord} = M$. Since the Newton polygon of $P$ has the single slope $s$, the constant term $P(0)$ must have minimal valuation. In particular, $Q(X) = P(0)^{-1} P(X) \in \cO[X]$, $Q(U) = 0$, and $Q(0) = 1$, showing that $M = M^{ord}$ in this case.

The final part of the lemma is also elementary. 
\end{proof}
If $H \subset G(\bA_{F^+}^\infty)$ is a closed subgroup and $R = \cO$, $E$, or $\cO / (\varpi^c)$, then  we write $\cA(H, R)$ for the set of functions $f : G(\bA_{F^+}^\infty) \to R$, continuous with respect to the $\varpi$-adic topology on $R$, such that for each $\gamma \in G(F^+)$, $g \in G(\bA_{F^+}^\infty)$, and $h \in H$, we have $f(\gamma g h ) = f(g)$. We define compact subgroups
\[ \mathfrak{n}_p = \prod_{v \in S_p} \iota_\wv^{-1}(N_n(\cO_{F_\wv})) \subset G(F^+_p) \]
and
\[ \mathfrak{t}_p = \prod_{v \in S_p} \iota_\wv^{-1}( \ker(T_n(\cO_{F_\wv}) \to T_n(k(\wv)))) \subset G(F^+_p) \]
and an element
\[ \eta_p = (\iota_\wv^{-1}\diag(\varpi_\wv^{n-1}, \varpi_\wv^{n-2}, \dots, 1))_{v \in S_p} \in G(F^+_p). \]
Observe that $\mathfrak{t}_p$ normalises $\mathfrak{n}_p$. We define an endomorphism $U_p$ of $\cA(\mathfrak{n}_p, R)$ by the formula $U_p(f)(g) = \sum_{n \in \mathfrak{n}_p / \eta_p \mathfrak{n}_p \eta_p^{-1}} f(g n \eta_p)$. Then the action of $U_p$ on $\cA(\mathfrak{n}_p, R)$ commutes with the action of the group $G(\bA_{F^+}^{p, \infty}) \times \mathfrak{t}_p$. We define $\cA(\mathfrak{n}_p, R)^{ord}$ to be the ordinary part of $\cA(\mathfrak{n}_p, R)$ with respect to the action of $U_p$. Thus 
\[ \cA(\mathfrak{n}_p, R)^{ord} \subset \cA(\mathfrak{n}_p, R) \]
is an $R$-submodule which is invariant both under $U_p$ and the right translation action of $G(\bA_{F^+}^{p, \infty}) \times \mathfrak{t}_p$. 
If $\lambda : \mathfrak{t}_p \to \cO^\times$ is a continuous character and $K^p \in \cJ^p_S$, then we define $H_\lambda(K^p) = \cA(K^p \times \mathfrak{n}_p, \cO)^{ord, \mathfrak{t}_p = w_0 \lambda^{-1}}$ (where $w_0$ denotes the longest element in the Weyl group of $\prod_{v \in S_p} \GL_n$). It is a $\T^S[U_p]$-module. 

We now explain how $H_\lambda(K^p)$ may be compared with spaces of classical algebraic modular forms. If $\mu = (\mu_\tau)_\tau \in (\bbZ^n_+)^{\widetilde{I}_p}$, then we write $V_\mu$ for the $E[\prod_{v \in S_p} G(F^+_v)]$-module denoted by $W_\mu$ in \cite[Definition 2.3]{Ger19} and $\cV_\mu \subset V_\mu$ for the $\cO[\prod_{v \in S_p} G(\cO_{F^+_v})]$-submodule denoted by $M_\mu$ in \emph{loc. cit.}, which is an $\cO$-lattice. There is an embedding 
\[ (\bbZ^n)^{\widetilde{I}_p} \hookrightarrow \Hom_{cts}(\mathfrak{t}_p, \cO^\times), \]
which sends $\mu$ to the character $\mu : (\iota_\wv^{-1}(t_v))_{v \in S_p} \mapsto \prod_{\tau \in \widetilde{I}_p} \tau(\mu(t_{v(\tau)}))$ (here $v(\tau)$ denotes the place of $F$ induced by $\tau$). We say that a character $\lambda \in \Hom_{cts}(\mathfrak{t}_p, \cO^\times)$ is locally algebraic if it has the form $\lambda = \mu \psi$ for some $\mu \in (\bbZ^n)^{\widetilde{I}_p}$ and finite order character $\psi : \mathfrak{t}_p \to \cO^\times$, and locally dominant algebraic if further $\mu \in (\bZ^n_+)^{\widetilde{I_p}}$.

 If $K^p \in \cJ_S^p$ and $c \geq b \geq 1$, then we define
\[ \mathfrak{t}_p(c) = \mathfrak{t}_p = \prod_{v \in S_p} \iota_\wv^{-1}( \ker(T_n(\cO_{F_\wv}) \to T_n(\cO_{F_\wv} / (\varpi_\wv^c)))) \]
and
\[ K^p(b, c) = K^p \prod_{v \in S_p} \iota_\wv^{-1}( \Iw_\wv(b, c) ). \]
Choose an integer $c \geq 1$ such that $\psi|_{\mathfrak{t}_p(c, c)}$ is trivial. We can then view $\psi$ as a character of the group $K^p(1, c)$ (by projection to the factor at $p$). If $1 \leq b \leq c$ and $\lambda = \mu \psi$ is locally dominant algebraic then we define $M_\lambda(K^p(b, c), R)$ to be the set of functions
\[ f : G(\bA_{F^+}^\infty) \to \cV_\mu \otimes_\cO \cO(w_0 \psi) \otimes_\cO R \]
such that for any $\gamma \in G(F^+)$, $g \in G(\bA_{F^+}^\infty)$, and $u \in K^p(b, c)$, we have $u f ( \gamma g u ) = f(g)$. Then $M_\lambda(K^p(b, c), R)$ is a finite free $R$-module (cf. the discussion on \cite[p. 1351]{Ger19}). If $b \leq b' \leq c$ then $\mathfrak{t}_p(b) / \mathfrak{t}_p(b') \cong \prod_{v \in S_p} \Iw_\wv(b, c) / \Iw_\wv(b', c)$ acts on $M_\lambda(K^p(b', c), R)$ by the formula $(u \cdot f)(g) = uf(gu)$. By definition, the invariants of this action are $M_\lambda(K^p(b, c), R)$. It follows from \cite[Lemma 2.6]{Ger19} and the fact that $K^p$ is sufficiently small that $M_\lambda(K^p(b', c), R)$ is in fact a free $R[\mathfrak{t}_p(b) / \mathfrak{t}_p(b')]$-module and that there is an isomorphism
\begin{equation}\label{eqn_coinvariants_of_algebraic_modular_forms}
M_\lambda(K^p(b', c), R) \otimes_{R[\mathfrak{t}_p(b) / \mathfrak{t}_p(b')]} R \cong M_\lambda(K^p(b, c), R)
\end{equation}
of $\T^S$-modules, induced by the trace. 

We define the $U_p$-operator on $M_\lambda(K^p(b, c), R)$  as in \cite[Definition 2.8]{Ger19}, as a Hecke operator at level $K^p(b, c)$ (normalized by a scalar if $\mu \neq 0$). Then \cite[Lemma 2.10]{Ger19} shows that the natural inclusion $M_\lambda(K^p(b, c), R) \to M_\lambda(K^p(b, c'), R)$ is a homomorphism of $\bT^S[U_p]$-modules for any $c' \geq c \geq b$, while \cite[Lemma 2.19]{Ger19} shows that this inclusion induces an isomorphism on ordinary parts. We define $S_\lambda(K^p, R) = M_\lambda(K^p(1, c), R)^{ord}$, safe in the knowledge that this is independent of the choice of integer $c \geq 1$ such that $\psi|_{\mathfrak{t}_p \cap K^p(c, c)}$ is trivial. The description of $S_\lambda(K^p, \cO) \otimes_{\cO, \iota} \bC$ in terms of automorphic representations of $G(\bA_{F^+})$ is given by \cite[Lemma 2.5]{Ger19}. If $\lambda$ is the trivial character, we omit it from the notation. It follows from \cite[Proposition 2.22]{Ger19} that for any locally dominant algebraic character there is a $\bT^S[U_p]$-equivariant isomorphism
\[ S_\lambda(K^p, \cO) \otimes_\cO k \cong S(K^p, k). \]
The following proposition includes the comparison between $H_\lambda(K^p)$ and $S_\lambda(K^p, \cO)$.
\begin{proposition}\label{prop_Hida_theory} Let $K^p \in \cJ_S^p$.
\begin{enumerate}
    \item For any $\lambda \in \Hom_{cts}(\mathfrak{t}_p, \cO^\times)$, $H_\lambda(K^p)$ is a finite free $\cO$-module.
    \item For any $\lambda_1, \lambda_2 \in \Hom_{cts}(\mathfrak{t}_p, \cO^\times)$ such that $\lambda_1 \equiv \lambda_2 \text{ mod }(\varpi^c)$, there is a $\T^S$-equivariant isomorphism $H_{\lambda_1}(K^p) / (\varpi^c) \cong H_{\lambda_2}(K^p) / (\varpi^c)$.
    \item For any $\lambda \in  \Hom_{cts}(\mathfrak{t}_p, \cO^\times)$ which is locally dominant algebraic, there is a $\T^S[U_p]$-equivariant isomorphism $H_\lambda(K^p) \cong S_\lambda(K^p, \cO)$.
\end{enumerate}
 \end{proposition}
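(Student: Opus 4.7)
The plan is to prove part (3) first --- it carries the substantive content, a form of Hida's classicality theorem --- and then to deduce part (2) by a direct mod-$\varpi^c$ analysis and part (1) from parts (2) and (3) via topological Nakayama.

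For part (3), fix a locally dominant algebraic character $\lambda = \mu\psi$ and an integer $c \geq 1$ with $\psi|_{\mathfrak{t}_p(c)}$ trivial. The $\cO$-module $\cV_\mu$ has a canonical rank-one quotient given by its $\mathfrak{n}_p$-coinvariants, on which $\mathfrak{t}_p$ acts by $w_0 \mu$; twisting by $\cO(w_0 \psi)$ yields an $\mathfrak{n}_p$-invariant and $\mathfrak{t}_p$-equivariant projection $\pi : \cV_\mu \otimes_\cO \cO(w_0 \psi) \twoheadrightarrow \cO(w_0 \lambda)$. Post-composing with $\pi$ defines a $\bT^S[U_p]$-equivariant map
\[ \Phi_c \colon M_\lambda(K^p(1, c), \cO) \longrightarrow \cA(K^p \times \mathfrak{n}_p, \cO)^{\mathfrak{t}_p = w_0 \lambda^{-1}}, \]
where compatibility with $U_p$ uses the $\mu$-normalisation of $U_p$ in \cite[Definition 2.8]{Ger19}. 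The key point is that $\Phi_c$ becomes an isomorphism on ordinary parts: on the kernel of $\pi$ the $U_p$-operator acts with positive $\varpi$-adic valuation and is therefore killed by the ordinary idempotent. This is the standard classicality mechanism, and the argument is carried out in our setting in \cite[Propositions 2.20 and 2.22]{Ger19}. Combined with the stabilisation isomorphism (\ref{eqn_coinvariants_of_algebraic_modular_forms}) in the limit over $c$, this gives the desired identification $S_\lambda(K^p, \cO) \cong H_\lambda(K^p)$.

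For part (2), the natural comparison map
\[ \cA(K^p \times \mathfrak{n}_p, \cO)^{ord} / (\varpi^c) \longrightarrow \cA(K^p \times \mathfrak{n}_p, \cO/(\varpi^c))^{ord} \]
is an isomorphism: both sides may be computed as direct limits over open subgroups of $\mathfrak{t}_p$ of finite-level versions, for each of which the claim reduces to the fact that $U_p$ acts through a finite $\cO$-algebra. The $(w_0 \lambda_i^{-1})$-isotypic component of the right-hand side depends only on $w_0 \lambda_i^{-1} \bmod \varpi^c$, which by hypothesis is the same for $i = 1, 2$; this yields the required $\bT^S$-equivariant isomorphism.

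For part (1), note that since $\mathfrak{t}_p$ is pro-$p$ and $k^\times$ has order prime to $p$, every continuous $\lambda : \mathfrak{t}_p \to \cO^\times$ reduces to the trivial character modulo $\varpi$. Applying part (2) with $c = 1$ and $\lambda_2 = \mathbf{1}$ gives $H_\lambda(K^p)/(\varpi) \cong H_{\mathbf{1}}(K^p)/(\varpi)$, which by part (3) applied to the (locally dominant algebraic) trivial character is isomorphic to $S(K^p, \cO)/(\varpi) \cong S(K^p, k)$, a finite-dimensional $k$-vector space by \cite[Lemma 2.5]{Ger19}. The module $H_\lambda(K^p)$ is a closed subspace of the $\varpi$-adically complete module $\cA(K^p \times \mathfrak{n}_p, \cO)$ (cut out by the continuous conditions of ordinariness and $\mathfrak{t}_p$-isotypy), hence is itself $\varpi$-adically complete, and it is $\cO$-torsion-free as a subspace of an $\cO$-valued function space. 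Topological Nakayama then forces $H_\lambda(K^p)$ to be finitely generated over $\cO$, and torsion-freeness upgrades this to finite free. The main obstacle is the classicality step in part (3); once that is granted, the rest is bookkeeping.
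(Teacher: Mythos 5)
Your overall architecture is reasonable and you cite the right inputs from \cite{Ger19}, but there is a genuine gap at the point where you pass between $\cO$-coefficients and $\cO/(\varpi^c)$-coefficients on the infinite-level space. In part (2) you assert that $\cA(K^p \times \mathfrak{n}_p, \cO)^{ord}/(\varpi^c) \to \cA(K^p \times \mathfrak{n}_p, \cO/(\varpi^c))^{ord}$ is an isomorphism because ``both sides may be computed as direct limits over open subgroups of $\mathfrak{t}_p$ of finite-level versions.'' This is false for the left-hand side: since $\mathfrak{n}_p$ is not open in $G(F^+_p)$, a continuous $\cO$-valued function invariant under $K^p \times \mathfrak{n}_p$ need not be locally constant, so $\cA(K^p \times \mathfrak{n}_p, \cO)$ is the $\varpi$-adic \emph{completion} of $\varinjlim_d M(K^p(d,d), \cO)$, not the direct limit itself, and the finite-level argument does not transfer. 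Moreover, even granting that isomorphism, you still need the $\mathfrak{t}_p$-eigenspace functor to commute with reduction, i.e.\ $\bigl(\cA(\cO)^{ord, \mathfrak{t}_p = w_0\lambda^{-1}}\bigr)/(\varpi^c) \cong \bigl(\cA(\cO/(\varpi^c))^{ord}\bigr)^{\mathfrak{t}_p = w_0\lambda^{-1}}$; taking eigenspaces is only left exact, so neither surjectivity (lifting a mod-$\varpi^c$ eigenfunction to an eigenfunction over $\cO$) nor injectivity is automatic. The ingredient that makes this work — and which your argument never invokes — is that $M(K^p(d,d), \cO/(\varpi^c))^{ord}$ is \emph{free} over $\cO/(\varpi^c)[\mathfrak{t}_p/\mathfrak{t}_p(d)]$ because $K^p$ is sufficiently small (\cite[Lemma 2.6]{Ger19}). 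The same issue undermines the surjectivity of $\Phi_c$ on ordinary parts in your part (3): the claim that every element of $H_\lambda(K^p)$ is a classical form of level $K^p(1,c)$ is an infinite-level statement over $\cO$, whereas \cite[Propositions 2.20, 2.22]{Ger19} give finite-level, mod-$\varpi^c$ comparisons; to promote them you must first control $H_\lambda(K^p)$ by its mod-$\varpi^c$ reductions, which is the content of parts (1)–(2), so your order of deduction is close to circular. (A smaller soft spot: in part (1) you take $H_\lambda(K^p)$ to be closed in $\cA(K^p\times\mathfrak{n}_p,\cO)$; the eigenspace condition is closed, but the ordinary part is a union over polynomials $P$ with $P(0) \in \cO^\times$ of closed sets and is not obviously closed, so the completeness needed for topological Nakayama requires justification.)

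For contrast, the paper works entirely with the mod-$\varpi^c$ spaces: it identifies $\cA(K^p\times\mathfrak{n}_p, \cO/(\varpi^c))^{ord, \mathfrak{t}_p = w_0\lambda^{-1}}$ with the finite-level space $M(K^p(d,d), \cO/(\varpi^c))^{ord, \mathfrak{t}_p = w_0\lambda^{-1}}$ for any sufficiently large $d$ using the independence-of-level statement \cite[Lemma 2.19]{Ger19}, then uses freeness over $\cO/(\varpi^c)$ and compatibility of these finite modules as $c$ varies to deduce that $H_\lambda(K^p)$ — a priori only embedded in the inverse limit — is finite free with the stated reductions; parts (2) and (3) then fall out because the level-$c$ term depends only on $\lambda \bmod (\varpi^c)$ and is compared with $S_\lambda(K^p, \cO/(\varpi^c))$ by \cite[Proposition 2.22]{Ger19}. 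To repair your proof you should route everything through these finite-level mod-$\varpi^c$ identifications rather than arguing directly on $\cA(K^p\times\mathfrak{n}_p,\cO)$.
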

\begin{proof}
Let $\lambda \in \Hom_{cts}(\mathfrak{t}_p, \cO^\times)$. There is an isomorphism
\[ \cA(K^p \times \mathfrak{n}_p, \cO) = \varprojlim_c \cA(K^p \times \mathfrak{n}_p, \cO / (\varpi^c)), \]
and hence an embedding
\[ H_\lambda(\cO) \hookrightarrow \varprojlim_c \left( \cA(K^p \times \mathfrak{n}_p, \cO / (\varpi^c))^{ord, \mathfrak{t}_p = w_0\lambda^{-1}} \right). \]
We will first show that for each $c \geq 1$, $\cA(K^p \times \mathfrak{n}_p, \cO / (\varpi^c))^{ord, \mathfrak{t}_p = w_0\lambda^{-1}}$ is a finite free $\cO / (\varpi^c)$-module, and the natural map
\[ \cA(K^p \times \mathfrak{n}_p, \cO / (\varpi^{c+1}))^{ord, \mathfrak{t}_p = w_0\lambda^{-1}} \otimes_\cO \cO / (\varpi^c) \to \cA(K^p \times \mathfrak{n}_p, \cO / (\varpi^c))^{ord, \mathfrak{t}_p = w_0\lambda^{-1}} \]
is an isomorphism. This will imply that $H_\lambda(K^p)$ is a finite free $\cO$-module and that for each $c \geq 1$, there is an isomorphism
\[ H_\lambda(K^p) / (\varpi^c) \cong \cA(K^p \times \mathfrak{n}_p, \cO / (\varpi^c))^{ord, \mathfrak{t}_p = w_0\lambda^{-1}}. \]
Since the right-hand side here depends only on the reduction of $\lambda$ mod $(\varpi^c)$, this will imply the truth of the first two parts of the proposition.

If $d \geq 1$ is such that $\lambda \text{ mod }(\varpi^c)$ is trivial on $\mathfrak{t}_p(d)$, then $\lambda \text{ mod }(\varpi^c)$ extends to a character of $K^p(1, d)$. We can compute
\[ \cA(K^p \times \mathfrak{n}_p, \cO / (\varpi^c))^{ord, \mathfrak{t}_p = w_0\lambda^{-1}} = \varinjlim_d M(K^p(d, d), \cO / (\varpi^c)(\lambda^{-1}))^{ord, \mathfrak{t}_p = w_0 \lambda^{-1}}, \]
where the maps
\[ M(K^p(d, d), \cO / (\varpi^c)) \to M(K^p(d+1, d+1), \cO / (\varpi^c)) \]
are the natural inclusions (which respect the action of the $U_p$ operator, by \cite[Lemma 2.10]{Ger19}). Then \cite[Lemma 2.19]{Ger19} shows that these inclusions become isomorphisms after passage to ordinary part, and so we have simply
\[ \cA(K^p \times \mathfrak{n}_p, \cO / (\varpi^c))^{ord, \mathfrak{t}_p = w_0\lambda^{-1}} = M( K^p(d, d), \cO / (\varpi^c))^{ord, \mathfrak{t}_p = w_0 \lambda^{-1}} \]
for any $d \geq 1$ such that $\lambda \text{ mod }(\varpi^c)$ is trivial on $\mathfrak{t}_p(d)$. To complete the proof of the first two parts of the proposition, it is enough to observe that if $d \geq 1$ is such that $\lambda \text{ mod }(\varpi^{c+1})$ is trivial on $\mathfrak{t}_p(d)$, then $M(K^p(d, d), \cO / (\varpi^{c+1}))^{ord, \mathfrak{t}_p = w_0 \lambda^{-1}}$ is a finite free $\cO / (\varpi^{c+1})$-module and the natural map
\[  M( K^p(d, d), \cO / (\varpi^{c+1}))^{ord, \mathfrak{t}_p = w_0 \lambda^{-1}} \otimes_\cO \cO / (\varpi^c) \to M( K^p(d, d), \cO / (\varpi^c))^{ord, \mathfrak{t}_p = w_0 \lambda^{-1}} \]
is an isomorphism (both claims being true before passage to ordinary direct summands and as a consequence of $K^p$ being sufficiently small, cf. \cite[Lemma 2.6]{Ger19}).

To prove the third part of the lemma, it is enough to construct a compatible sequence of isomorphisms
\[ H_\lambda(K^p) / (\varpi^c) \cong S_\lambda(K^p, \cO) / (\varpi^c), \]
or equivalently a compatible sequence of isomorphisms
\[ M( K^p(d, d), \cO / (\varpi^c))^{ord, \mathfrak{t}_p = w_0 \lambda^{-1}} \cong S_\lambda(K^p, \cO / (\varpi^c)). \]
The existence of such a sequence is the content of \cite[Proposition 2.22]{Ger19}.
\end{proof}
To formulate a corollary, we introduce some notation. Let $\lambda \in \Hom_{cts}(\mathfrak{t}_p, \cO^\times)$ and let $\m \subset \T^S$ be a maximal ideal  which is in the support of $S(K^p, k)$. Fix a place $v_0 \in S - S_p$ of $F^+$ which splits $v_0 = w_0 w_0^c$ in $F$, and define 
\[ \cH_\lambda = \varinjlim_{V_{v_0}} H_\lambda(K^{p, v_0} V_{v_0})_\m, \]
\[ \cS = \varinjlim_{V_{v_0}} S(K^{p, v_0} V_{v_0}, k)_\m, \]
the limit in each case running over the directed system of open compact subgroups $V_{v_0} \subset G(F^+_{v_0})$. Then:
\begin{corollary}\label{cor_vanishing_of_derived_tensor_product}
\begin{enumerate}
    \item $\cH_\lambda$ is an admissible $\cO[\GL_n(F_{w_0})]$-module, which is $\cO$-flat and $\varpi$-adically separated.
    \item There is a $\T^S$-equivariant isomorphism $\cH \otimes_\cO k \cong \cS$ of admissible $k[\GL_n(F_{w_0})]$-modules. 
    \item Suppose that $\cS \otimes^{\bL}_{k[\GL_n(F_{w_0})^0]} R \Gamma_c( \Omega_{F_{w_0}}^{n-1, ca}, k) = 0$. Then we have
    \[ \cH_\lambda \otimes^{\bL}_{\cO[\GL_n(F_{w_0})^0]} R \Gamma_c( \Omega_{F_{w_0}}^{n-1, ca}, E) = 0, \]
    and consequently $\cH_\lambda[1/p] \otimes^{\bL}_{E[\GL_n(F_{w_0})^0]} \pi_{i+1,1,1,\dots,1} = 0$ for each $i = 0, \dots, n-1$. 
\end{enumerate}
\end{corollary}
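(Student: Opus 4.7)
The plan is to proceed in three steps mirroring the three parts of the statement. The essential inputs from Proposition \ref{prop_Hida_theory} are that each $H_\lambda(K^{p,v_0}V_{v_0})$ is a finite free $\cO$-module and that these spaces are insensitive modulo $\varpi$ to the choice of $\lambda$; this is what will let us compare with the classical mod $p$ space $\cS$.

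For part (1), since $\m$-localisation of a finite free $\cO$-module is a direct summand, each $H_\lambda(K^{p,v_0}V_{v_0})_\m$ is finite free over $\cO$. Taking $V_{v_0} = \iota_{w_0}^{-1}(K_{w_0})$ for an open compact $K_{w_0} \subset \GL_n(F_{w_0})$ identifies $\cH_\lambda^{K_{w_0}}$ with this module, so $\cH_\lambda$ is admissible and $\cO$-flat. For $\varpi$-adic separation, suppose $x \in \cH_\lambda$ lies in $\varpi^c \cH_\lambda$ for every $c$. Choose $V_{v_0}$ with $x \in H_\lambda(K^{p,v_0}V_{v_0})_\m$ and write $x = \varpi^c y_c$ with $y_c \in H_\lambda(K^{p,v_0}V^c)_\m$ for some $V^c \subset V_{v_0}$. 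For any $g \in V_{v_0}$ one has $\varpi^c(g y_c - y_c) = gx - x = 0$, and $\cO$-freeness forces $g y_c = y_c$; hence $y_c \in H_\lambda(K^{p,v_0}V_{v_0})_\m$ for every $c$, and Krull's intersection theorem for this finite free $\cO$-module gives $x = 0$.

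For part (2), the group $\mathfrak{t}_p$ is pro-$p$, so any continuous character $\lambda : \mathfrak{t}_p \to \cO^\times$ lands in $1 + \varpi \cO$ and satisfies $\lambda \equiv 1 \pmod \varpi$. Proposition \ref{prop_Hida_theory}(2) with $c = 1$ then yields a $\T^S$-equivariant isomorphism $H_\lambda(K^p) \otimes_\cO k \cong H_0(K^p) \otimes_\cO k$, while Proposition \ref{prop_Hida_theory}(3) (applied to the locally dominant algebraic character $\lambda = 0$) and the cited isomorphism $S(K^p, \cO) \otimes_\cO k \cong S(K^p, k)$ identify the right-hand side with $S(K^p, k)$. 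Passing to the direct limit over $V_{v_0}$ and localising at $\m$ (both of which commute with $-\otimes_\cO k$) then gives $\cH_\lambda \otimes_\cO k \cong \cS$.

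For part (3), the final theorem of \S \ref{sec_elliptic_representations}, applied with $R = \cO$ and $A^\bullet = \cH_\lambda$ (placed in degree zero, admissible by part (1)), shows that each cohomology group of the complex $\cC^\bullet := R\Gamma_c(\Omega_{F_{w_0}}^{n-1, ca}, \cO) \otimes^{\bL}_{\cO[\GL_n(F_{w_0})^0]} \cH_\lambda$ is a finitely generated $\cO$-module. Proposition \ref{prop_homology_change_of_coefficients}, the $\cO$-flatness of $\cH_\lambda$, and compatibility of $R\Gamma_c(\Omega_{F_{w_0}}^{n-1, ca}, -)$ with change of coefficients give a quasi-isomorphism
\[ \cC^\bullet \otimes^{\bL}_\cO k \cong R\Gamma_c(\Omega_{F_{w_0}}^{n-1, ca}, k) \otimes^{\bL}_{k[\GL_n(F_{w_0})^0]} \cS, \]
which vanishes by hypothesis. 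The universal coefficient spectral sequence and finite generation of $H^i(\cC^\bullet)$ then give $H^i(\cC^\bullet) \otimes_\cO k = 0$ for all $i$, and Nakayama's lemma forces $H^i(\cC^\bullet) = 0$. Inverting $p$, associativity of derived tensor products produces
\[ 0 = \cC^\bullet \otimes^{\bL}_\cO E \cong \cH_\lambda[1/p] \otimes^{\bL}_{E[\GL_n(F_{w_0})^0]} R\Gamma_c(\Omega_{F_{w_0}}^{n-1, ca}, E), \]
and the splitting of Theorem \ref{thm_decomposition_of_R_Gamma_Omega} realises this as a direct sum of shifts of the complexes $\cH_\lambda[1/p] \otimes^{\bL}_{E[\GL_n(F_{w_0})^0]} \pi_{i+1, 1, \dots, 1}$, each of which must therefore vanish. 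The main obstacle is the bookkeeping around $\m$-localisation commuting with $V_{v_0}$-direct limits and with reduction modulo $\varpi$; once admissibility and $\cO$-flatness of $\cH_\lambda$ are in place, change of coefficients and the finite generation of $H^i(\cC^\bullet)$ reduce the rest to Nakayama's lemma.
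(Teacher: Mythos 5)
Your proposal is correct and follows essentially the same route as the paper: part (1) via finite freeness of the localised spaces $H_\lambda(K^{p,v_0}V_{v_0})_\m$ (your direct argument for $\varpi$-adic separation replaces the paper's observation that $\cH_\lambda$ is a saturated submodule of $\cA(K^{p,v_0}\times\mathfrak{n}_p,\cO)$, but both work), part (2) via Proposition \ref{prop_Hida_theory} and passage to the limit, and part (3) via finite generation of the cohomology of $\cC^\bullet$ (Theorem \ref{thm_tor_is_fg}), change of coefficients, Nakayama for complexes, and the splitting of Theorem \ref{thm_decomposition_of_R_Gamma_Omega}. Your explicit universal-coefficient-plus-Nakayama step is exactly the content of the derived Nakayama lemma the paper cites, so there is no substantive difference.
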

We recall that the representations $\pi_\nu$ associated to an ordered partition $\nu$ of $n$ have been defined in \S \ref{sec_elliptic_representations}.
\begin{proof}
The $\cO[\GL_n(F_{w_0})]$-module $\cH_\lambda$ is a saturated submodule of $\cA(K^{p, v_0} \times \mathfrak{n}_p, \cO)$ (i.e. the cokernel of the inclusion map is $\varpi$-torsion-free). In particular, it is $\varpi$-adically separated, and if $V_{v_0} \subset G(F^+_{v_0})$, then $\cH_\lambda^{V_{v_0}} = H_\lambda(K^{p, v_0} V_{v_0})_\m$. Since $\cH_\lambda$ is smooth by definition, and $H_\lambda(K^{p, v_0} V_{v_0})_\m$ is a finite free $\cO$-module, this shows that $\cH_\lambda$ is even admissible. For any open compact subgroup $V_{v_0}$, there is an isomorphism $H_\lambda(K^{p, v_0} V_{v_0})_\m \otimes_\cO k \cong S(K^{p, v_0} V_{v_0}, k)_\m$, and the second part of the corollary follows by passage to the limit.

For the last part, let $C^\bullet = \cH_\lambda \otimes^{\bL}_{\cO[\GL_n(F_{w_0})^0]} R \Gamma_c( \Omega_{F_{w_0}}^{n-1, ca}, \cO) \in \mathbf{D}^-(\cO)$. Then each group $H^i(C^\bullet)$ is a finitely generated $\cO$-module, by Theorem \ref{thm_tor_is_fg}. Invoking Proposition \ref{prop_homology_change_of_coefficients} and our assumption, we find that
\[ C^\bullet \otimes^{\bL}_\cO k = \cS \otimes^{\bL}_{k[\GL_n(F_{w_0})^0]} R \Gamma_c( \Omega^{n-1, ca}_{F_{w_0}}, k) = 0. \]
Applying a version of Nakayama's lemma for complexes (e.g.\ \cite[\href{https://stacks.math.columbia.edu/tag/0G1U}{Lemma 0G1U}]{stacks-project}) we find that $C^\bullet = 0$, and therefore that 
\[ C^\bullet \otimes^{\bL}_\cO E = \cH_\lambda \otimes^{\bL}_{\cO[\GL_n(F_{w_0})^0]} R \Gamma_c( \Omega_{F_{w_0}}^{n-1, ca}, E) = 0. \]
The final claim in the corollary follows from Theorem \ref{thm_decomposition_of_R_Gamma_Omega} (namely, the decomposition of $R \Gamma_c( \Omega_{F_{w_0}}^{n-1, ca}, E)$).
\end{proof}
In the next proposition, we introduce some new notation: if $M$ is a $\T^S$-module, then $\T^S(M)$ denotes the image of the map $\T^S \to \End_\cO(M)$ (and similarly for $\T^S_{v_0}$).
\begin{proposition}\label{prop_local_hecke_algebra}
Let $\lambda \in \Hom_{cts}(\mathfrak{t}_p, \cO^\times)$ and $K^p \in \cJ^p_S$. Then:
\begin{enumerate} 
\item There exists an $n$-dimensional group determinant $D$ of $G_{F, S}$ with coefficients in $\T^S(H_\lambda(K^p))$ such that for any place $w \nmid S$ of $F$ which splits over $F^+$, $D(X - \Frob_w)$ equals the image of $P_w(X)$ in $\T^S(H_\lambda(K^p))$. 
\item Let $v_0 \in S - S_p$ and suppose that $K_{v_0} = \iota_{\wv_0}^{-1}(\Iw_{\wv_0})$. Then the pushforward of $D|_{W_{F_{\wv_0}}}$ under the inclusion
\[ \T^S(H_\lambda(K^p)) \to \T^S_{v_0}(H_\lambda(K^p)) \]
equals the pushforward of $D_{\wv_0}$ under the map
\[ \T^S_{v_0} \to \T^S_{v_0}(H_\lambda(K^p)). \]
\item Let $T : G_{F, S} \to \T^S(H_\lambda(K^p))$ be the trace associated to the group determinant $D$. Then for all $\sigma \in G_{F, S}$ and for all $\tau_1, \dots, \tau_n \in W_{F_{\wv_0}}$, we have the relation
\[ \Delta_{\wv_0}^{n!} T(\sigma(\tau_1 - \chi_{\wv_0, 1}(\tau_1)) \dots (\tau_n - \chi_{\wv_0, n}(\tau_n)) ) = 0 \]
in $\T^S_{v_0}(H_\lambda(K^p))$.
\end{enumerate}
\end{proposition}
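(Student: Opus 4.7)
All three parts will be derived from the identification of $H_\lambda(K^p)$ with classical spaces of ordinary algebraic modular forms (Proposition \ref{prop_Hida_theory}) and the Galois representations attached to ordinary automorphic representations of $G(\bA_{F^+})$ (Corollary \ref{cor_Gal_reps_for_G}), combined with density and integrality arguments to descend from $E$ to $\cO$.

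\textbf{Part (1).} The plan is first to treat the case where $\lambda$ is locally dominant algebraic, and then to bootstrap via density. For such $\lambda$, Proposition \ref{prop_Hida_theory}(3) gives $H_\lambda(K^p) \cong S_\lambda(K^p, \cO)$, and by \cite[Lemma 2.5]{Ger19} the $E$-points decompose according to ordinary automorphic representations of $G(\bA_{F^+})$. Corollary \ref{cor_Gal_reps_for_G} attaches to each such representation a semisimple Galois representation; taking the product of these associated determinants gives a group determinant over $\T^S(H_\lambda(K^p))[1/p]$ whose trace on $\Frob_w$ coincides with the image of $P_w(X)$ for split $w \nmid S$. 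Since $H_\lambda(K^p)$ is $\cO$-flat (Proposition \ref{prop_Hida_theory}(1)), so is $\T^S(H_\lambda(K^p))$, and the trace values on Frobenii already lie in this integral algebra; the general descent theory of determinants (Chenevier) then produces the desired $D$ with coefficients in $\T^S(H_\lambda(K^p))$. For arbitrary $\lambda$, note that $\T^S(H_\lambda(K^p))$ is a finite $\cO$-algebra, hence $\varpi$-adically complete, so it suffices to build a compatible system of determinants modulo each $\varpi^c$. By Proposition \ref{prop_Hida_theory}(2), the ring $\T^S(H_\lambda(K^p)/(\varpi^c))$ only depends on $\lambda \bmod \varpi^c$, and the locally dominant algebraic characters are dense, so we may approximate and pull back the determinant already constructed in the locally algebraic case.

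\textbf{Part (2).} At a split place $v_0 = \wv_0 \wv_0^c$ with $K_{v_0} = \iota_{\wv_0}^{-1}(\Iw_{\wv_0})$, the point is that both determinants describe exactly the same data, namely the (Frobenius-)semisimplified Tate-normalised local Langlands correspondence of the Iwahori-spherical representation $\sigma_{v_0} \circ \iota_{\wv_0}^{-1}$ of $\GL_n(F_{\wv_0})$. For $D_{\wv_0}$ this is the content of \cite[Proposition 2.2.8]{All18}, already invoked in the statement; for $D|_{W_{F_{\wv_0}}}$ it follows by local-global compatibility at prime-to-$p$ places (Corollary \ref{cor_Gal_reps_for_G}(3)) applied pointwise to each ordinary automorphic eigenform contributing to $H_\lambda(K^p)[1/p]$. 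This gives the equality after inverting $p$; integrality on both sides (using $\cO$-flatness of $\T^S_{v_0}(H_\lambda(K^p))$, which inherits flatness from $H_\lambda(K^p)$) then yields the equality of pushforwards. As in Part (1) we first do this for locally dominant algebraic $\lambda$, and then spread out modulo $\varpi^c$ to arbitrary $\lambda$.

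\textbf{Part (3).} This is the main obstacle, and I would reduce it to a pointwise statement via the faithful flatness/product arguments of Parts (1) and (2). Concretely: after localisation at the generic fibre at an ordinary automorphic eigenform, $r_{\sigma,\iota}|_{G_{F_{\wv_0}}}$ is de Rham away from $p$, hence identified with a genuine Weil--Deligne representation $(r, N)$ via the full local-global compatibility used for Corollary \ref{cor_Gal_reps_for_G}. Whenever the semisimplified characters $\chi_{\wv_0,i}$ are pairwise distinct, i.e.\ $\Delta_{\wv_0}$ is invertible, the monodromy $N$ is forced to vanish and $r_{\sigma,\iota}|_{W_{F_{\wv_0}}}$ becomes the direct sum of the $\chi_{\wv_0,i} \circ \Art_{F_{\wv_0}}^{-1}$; then by Cayley--Hamilton each of the elements $\sigma(\tau_1 - \chi_{\wv_0,1}(\tau_1))\dots(\tau_n - \chi_{\wv_0,n}(\tau_n))$ acts as zero on the representation, so the trace vanishes. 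Clearing denominators over the generic locus requires multiplying by a power of $\Delta_{\wv_0}$ to accommodate all $n!$ permutations of the characters and to control the nilpotent contribution of $N$ near the discriminant vanishing locus; the exponent $n!$ comes from the need to handle the ordered product over all permutations simultaneously, as in the Cayley--Hamilton arguments of \cite{Sch15} and \cite{All18}. Once the relation holds after inverting $p$, the integrality of both sides (the Hecke operators generating $\chi_{\wv_0,i}(\tau)$ and the trace values) together with $\cO$-flatness of $\T^S_{v_0}(H_\lambda(K^p))$ gives the relation integrally; the extension to arbitrary continuous $\lambda$ proceeds by the same mod $\varpi^c$ approximation as in Parts (1) and (2).
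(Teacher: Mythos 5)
Your parts (1) and (2) are essentially correct and close to the paper's argument. The paper reduces directly to the trivial-weight classical spaces $S(K^p(d,d),\cO)$ via the surjections $\T^S(S(K^p(d,d),\cO)) \to \T^S(H_\lambda(K^p)/(\varpi^c))$ and glues using Chenevier's theory of determinants, whereas you first treat locally dominant algebraic $\lambda$ and then approximate modulo $\varpi^c$; both routes work, though for (2) (and (3)) your approximation step tacitly needs the mod-$\varpi^c$ identifications of Proposition \ref{prop_Hida_theory}(2) to be equivariant for the larger algebra $\T^S_{v_0}$, which is true but should be noted.

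Part (3) contains a genuine error. Your claim that when the $\chi_{\wv_0,i}$ are pairwise distinct ``the monodromy $N$ is forced to vanish'' is false: the Steinberg representation has pairwise distinct diagonal characters and $N \neq 0$, and the whole point of the level-raising setup is that $\Delta_{\wv_0}$ is invertible on components where $\St_n$ or $\pi_{n_1,n_2}$ occurs, both of which have nontrivial monodromy. Consequently the Cayley--Hamilton argument for a direct sum of characters does not apply. The correct statement on the components where $\Delta_{\wv_0}$ is invertible is the existence of a $W_{F_{\wv_0}}$-invariant filtration whose graded pieces are the $\chi_{\wv_0,i}$ \emph{in the specific order recorded by the Iwahori--Hecke eigenvalues}; this follows from local-global compatibility at $v_0$ together with \cite[Lemma 2.18]{New21}, and it is precisely why the relation is an ordered product rather than a symmetric expression.

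More seriously, you give no argument on the components of $\T^S_{v_0}(S(K^p(d,d),\cO))\otimes_\cO\overline{\bQ}_p$ on which $\Delta_{\wv_0}$ lies in the minimal prime. This algebra is a finite product of Artinian local rings, not a domain, so ``clearing denominators over the generic locus'' has no meaning: the identity must be checked separately on each factor, and on the factors where $\Delta_{\wv_0}$ is in the maximal ideal the Galois-theoretic input gives nothing. What rescues the statement there is the purely ring-theoretic fact that each such Artinian localisation has length at most $n!$ (the Iwahori invariants of an irreducible Iwahori-spherical representation have length at most $n! = |S_n|$ over $\cO[\Lambda_{\wv_0}]$, cf.\ \cite[Proposition 3.11]{Tho22}), so that $\Delta_{\wv_0}^{n!}=0$ on those factors and the relation holds for trivial reasons. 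This length bound, not a count of permutations, is the source of the exponent $n!$, and it is the missing ingredient in your proposal.
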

\begin{proof}
The first part is a standard consequence of Corollary \ref{cor_Gal_reps_for_G}. Indeed, we have
\[ \T^S(H_\lambda(K^p)) = \varprojlim_c \T^S(H_\lambda(K^p) / (\varpi^c)), \]
while for any $c \geq 1$ and any $d \geq c$ such that $\lambda \text{ mod }(\varpi^c)$ is trivial on $\mathfrak{t}_p \cap K^p(d, d)$, there is a surjective homomorphism of $\T^S$-algebras
\[ \T^S(S(K^p(d, d), \cO)) \to \T^S( H_\lambda(K^p) / (\varpi^c)) \]
which is induced by the inclusion $H_\lambda(K^p) / (\varpi^c) \cong S(K^p(d, d), \cO / (\varpi^c))^{\mathfrak{t}_p = w_0 \lambda^{-1}} \subset S(K^p(d, d), \cO) / (\varpi^c)$. We can decompose
\[ \T^S(S(K^p(d, d), \cO)) \otimes_\cO \overline{\bQ}_p = \oplus \overline{\bQ}_p, \]
the direct summands being indexed by the finitely many systems of $\T^S$-eigenvalues that appear in $S(K^p(d, d), \cO) \otimes_\cO \overline{\bQ}_p$. The group determinant $D$ exists as a consequence of existence of the family of $\overline{\bQ}_p$-valued determinants indexed by these summands and glueing for group determinants (\cite[Example 2.32]{Che14}). 

The second part of the lemma may be proved similarly. It is enough to show that for any $d \geq 1$, the group determinant $D_d$ of $G_{F, S}$ valued in $\T^S_{v_0}(S(K^p(d, d), \cO))$ has the property that $D_d|_{W_{F_{\wv_0}}}$ equals the pushforward of $D_{\wv_0}$. This can be checked after extending scalars to $\overline{\bQ}_p$, in which case it follows from \cite[Proposition 2.2.8]{All18}. 

The third part of the lemma is slightly more subtle, since we need to take into account Hecke operators which do not act as scalars on automorphic representations (in other words, which do not lie in the centre of the Iwahori--Hecke algebra at the place $v_0$). What we need to check is that if $d \geq 1$ and $T_d : G_{F, S} \to \T^S_{v_0}(S(K^p(d, d), \cO))$ is the trace associated to the group determinant $D_d$, then for all $\sigma \in G_{F, S}$ and for all $\tau_1, \dots, \tau_n \in W_{F_{\wv_0}}$, we have the relation
\[  \Delta_{\wv_0}^{n!} T_d(\sigma(\tau_1 - \chi_{\wv_0, 1}(\tau_1)) \dots (\tau_n - \chi_{\wv_0, n}(\tau_n)) ) = 0 \]
in $\T^S_{v_0}(S(K^p(d, d), \cO))$. This can be checked in $\T^S_{v_0}(S(K^p(d, d), \cO)) \otimes_\cO \overline{\bQ}_p$, which now splits as a sum of its localisations at the finitely many systems of $\T^S_{v_0}$-eigenvalues which appear in $S(K^p(d, d), \cO) \otimes_\cO \overline{\bQ}_p$.

Let $\q \subset \bT^S_{v_0}(S(K^p(d, d), \cO))$ be a minimal prime ideal, kernel of a homomorphism $\T^S_{v_0}(S(K^p(d, d), \cO)) \to \overline{\bQ}_p$. If $\Delta_{\wv_0} \in \q$, then $\Delta_{\wv_0}^{n!} = 0$ in $\bT^S_{v_0}(S(K^p(d, d), \cO))_{(\q)}$, because this Artinian local ring has length at most $n!$ (cf. \cite[Proposition 3.11]{Tho22}). The required relation thus certainly holds in this localisation. If $\Delta_{\wv_0} \not\in \q$, then $\bT^S_{v_0}(S(K^p(d, d), \cO))_{(\q)}$ is a field, the map $\T^S_{v_0}(S(K^p(d, d), \cO))_{(\q)} \to \overline{\bQ}_p$ is an isomorphism, and we need to check that if $\sigma$ is an automorphic representation of $G(\bA_{F^+})$ which contributes to $S(K^p(d, d), \cO)$ and $\rho = r_\iota(\sigma)$, then the relation
\[ (\rho(\tau_1) - (\chi_{\wv_0, 1} \text{ mod } \q)(\tau_1)) (\rho(\tau_2) - (\chi_{\wv_0, 2} \text{ mod }\q)(\tau_2)) \dots (\rho(\tau_n) - (\chi_{\wv_0, n} \text{ mod }\q)(\tau_n)) = 0 \]
holds in $M_n(\overline{\bQ}_p)$. This is a consequence of local-global compatibility at the place $v_0$ and \cite[Lemma 2.18]{New21}. 
\end{proof}
We next record some consequences of the results of \S \ref{sec_definite_unitary_group}.
\begin{proposition}\label{prop_ordinary_endoscopic_descent}
Fix a partition $n = n_1 + n_2$ and let $\pi_1, \pi_2$ be cuspidal, conjugate self-dual automorphic representations of $\GL_{n_1}(\bA_F)$, $\GL_{n_2}(\bA_F)$, respectively, such that $\pi = \pi_1 \boxplus \pi_2$ is regular algebraic of weight $\mu$ and $\iota$-ordinary. Define representations $\rho_i = r_\iota(\pi_i| \cdot |^{(n_i-n)/2})$, so that $r_\iota(\pi) = \rho_1 \oplus \rho_2$. Then:
\begin{enumerate}
     \item For each place $v \in S_p$, there are uniquely determined characters $\alpha_{v, 1}, \dots, \alpha_{v, n} : F_\wv^\times \to \bC^\times$ with the following properties:
\begin{enumerate}
    \item If $\val : \overline{\bQ}_p^\times \to \bQ$ denotes the $p$-adic valuation, then $\val( \iota^{-1} \alpha_{v, 1}(\varpi_\wv) ) < \dots < \val( \iota^{-1} \alpha_{v, n}(\varpi_\wv) )$.
    \item $\pi_\wv$ is an irreducible subquotient of the normalised induction $i_{B_n(F_\wv)}^{\GL_n(F_\wv)} \alpha_{v, 1} \otimes \dots \otimes \alpha_{v, n}$.
\end{enumerate}
    \item\label{desc_conditions} Suppose that condition (\ref{ass_even}) of Theorem \ref{thm_endoscopic_forms_exist} is satisfied. Assume that each character $\iota^{-1}\alpha_{v, i}$ takes values in $E^\times$, and let $\alpha_\pi : \mathfrak{t}_p \to \cO^\times$ be the restriction to $\mathfrak{t}_p$ of the character 
    \[ \iota^{-1} (\otimes_{v \in S_p} (\alpha_{v, 1} \otimes \dots \otimes \alpha_{v, n}) ) : \prod_{v \in S_p} T_n(F_\wv) \to E^\times. \]
    Let $\lambda_\pi = w_0(\alpha_\pi)^{-1} \cdot (\mu_{\iota \tau})_{\tau \in \widetilde{I}_p} \in \Hom_{cts}(\mathfrak{t}_p, \cO^\times)$. Assume that each finite place $w$ of $F$ such that $\pi_w$ is ramified lies over $S$. Let $K^p \in \cJ_S^p$ be such that for each $v \in S - S_p$, $\pi_\wv^{\iota_\wv(K_v)} \neq 0$. Then there exists a unique $\cO$-algebra homomorphism
    \[ f_\pi : \T^S(S_{\lambda_\pi}(K^p, \cO)) \to \overline{\bQ}_p \]
    such that  for each place $w \nmid S$ of $F$ which is split over $F^+$, the image of $P_w(X) \in \T^S[X]$ equals 
    \[ \det(X - r_\iota(\pi)(\Frob_w)) = \det(X - \rho_1(\Frob_w)) \det(X - \rho_2(\Frob_w)). \]
\end{enumerate}
\end{proposition}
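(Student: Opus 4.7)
The argument splits according to the two assertions. For part (1), fix $v \in S_p$ and write $\wv$ for the chosen place of $F$ above $v$. The $\iota$-ordinarity of each cuspidal factor $\pi_i$ implies, via local-global compatibility and the standard description of ordinary de Rham representations, that $r_\iota(\pi_i)|_{G_{F_\wv}}$ is conjugate to an upper-triangular representation whose diagonal characters have pairwise distinct Hodge--Tate weights (determined by the weight of $\pi_i$) and $\Frob_\wv$-eigenvalues of correspondingly ordered $p$-adic valuations. Passing through $\rec_{F_\wv}^T$ produces characters $\beta_{i,1}, \dots, \beta_{i,n_i} : F_\wv^\times \to \bC^\times$ such that $\pi_{i,\wv}$ is a subquotient of $i_{B_{n_i}(F_\wv)}^{\GL_{n_i}(F_\wv)}(\beta_{i,1} \otimes \dots \otimes \beta_{i,n_i})$. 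Transitivity of parabolic induction together with the permutation-invariance of composition factors of principal series then imply that $\pi_\wv = \pi_{1,\wv} \boxplus \pi_{2,\wv}$ is a subquotient of $i_{B_n(F_\wv)}^{\GL_n(F_\wv)}$ of any reordering of the multiset $\{\beta_{i,j}\}$. Declaring $(\alpha_{v,1}, \dots, \alpha_{v,n})$ to be this multiset arranged by increasing valuation of $\iota^{-1}\alpha_{v,i}(\varpi_\wv)$ gives existence; the regularity of the weight $\mu$ of $\pi$ forces the combined valuations to be pairwise distinct, whence strict inequality and automatic uniqueness of the ordering.

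For part (2), I would invoke Theorem \ref{thm_endoscopic_forms_exist}, whose hypotheses are satisfied by assumption (\ref{ass_even}) and the assumption that $\pi$ is ramified only at places above $S$, to obtain an automorphic representation $\sigma$ of $G(\bA_{F^+})$ with $\sigma_v \circ \iota_\wv^{-1} \cong \pi_\wv$ at each split place $v \in S$, related to $\pi$ by unramified base change at remaining finite places, and with infinitesimal character at infinity matching $\mu$. The hypothesis on $K^p$ for $v \in S - S_p$ ensures $\sigma_v^{\iota_\wv^{-1}(K_v)} \neq 0$, so by \cite[Lemma 2.5]{Ger19}, $\sigma$ contributes to $M_\mu(K^p(b, c), \cO) \otimes_{\cO, \iota} \bC$ for $b, c$ sufficiently large. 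Part (1) then shows that each $\sigma_v$ for $v \in S_p$ embeds into $i_{B_n(F_\wv)}^{\GL_n(F_\wv)}(\alpha_{v,1} \otimes \dots \otimes \alpha_{v,n})$ with characters of strictly increasing valuation; unwinding the definition of the $U_p$-operator in \cite[Definition 2.8]{Ger19} then shows that $\sigma$ has a distinguished Iwahori-fixed line on which $U_p$ acts by a unit of $\cO^\times$. Therefore $\sigma$ contributes to the ordinary summand $S_{\lambda_\pi}(K^p, \cO) \otimes_{\cO, \iota} \bC$ with the prescribed $\mathfrak{t}_p$-eigencharacter $w_0 \lambda_\pi^{-1}$, the shift by $w_0 \mu$ accounting for the passage from $V_\mu$-valued to $\cO$-valued functions in the definition of $M_\mu$.

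The system of $\T^S$-eigenvalues carried by $\sigma$ defines the sought $\cO$-algebra homomorphism $f_\pi : \T^S(S_{\lambda_\pi}(K^p, \cO)) \to \overline{\bQ}_p$. Uniqueness of $f_\pi$ is immediate because $\T^S$ is generated as an $\cO$-algebra by the coefficients of the polynomials $P_w(X)$. The identity $f_\pi(P_w(X)) = \det(X - r_\iota(\pi)(\Frob_w))$ at each split $w \nmid S$ then follows from Corollary \ref{cor_Gal_reps_for_G} applied to $\sigma$, combined with a Chebotarev argument showing that the Galois representation $r_{\sigma, \iota}$ must be semisimply isomorphic to $r_\iota(\pi_1) \oplus r_\iota(\pi_2) = r_\iota(\pi)$ (by compatibility of base change with Satake parameters at unramified split places). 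The hardest step is the bookkeeping needed to match the local ordinary eigencharacter with the precise $\lambda_\pi = w_0(\alpha_\pi)^{-1} \cdot (\mu_{\iota \tau})_\tau$: one must simultaneously track the $\delta_B^{1/2}$-twist inherent in normalised induction, the algebraic weight shift built into \cite[Definition 2.8]{Ger19}, and the $w_0$-involution in the definition of $S_\lambda(K^p, \cO)$, invoking \cite[Lemma 2.19, Proposition 2.22]{Ger19} where appropriate to reconcile normalisations.
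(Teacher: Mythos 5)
Your proposal is correct and follows essentially the same route as the paper: the paper disposes of part (1) by citing \cite[Lemma 2.5]{Clo14} (whose proof is the ordinarity/local-global-compatibility argument you reconstruct) and of part (2) by citing Theorem \ref{thm_endoscopic_forms_exist} together with the standard identification of Hecke eigensystems on $S_{\lambda_\pi}(K^p,\cO)$ via \cite[Lemma 2.5]{Ger19}. You have simply unpacked those citations in more detail, including the normalisation bookkeeping that the paper leaves implicit.
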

\begin{proof}
The first part follows from \cite[Lemma 2.5]{Clo14}. The second follows from Theorem \ref{thm_endoscopic_forms_exist}. 
\end{proof}
\begin{corollary}\label{cor_twisted_endoscopic_descent}
With assumptions as in Proposition \ref{prop_ordinary_endoscopic_descent}(2), suppose given a continuous character $\psi : G_{F, S} \to \cO^\times$ of finite order such that $\psi \psi^c = 1$ and for each $v \in S - S_p$, $\psi|_{G_{F_\wv}} \circ \Art_{F_\wv} \circ \det$ is trivial on $\iota_\wv(K_v)$. Let $\pi_\psi = (\pi_1 \otimes \iota \psi) \boxplus \pi_2$. Then 
there exists a unique $\cO$-algebra homomorphism
    \[ f_{\pi_\psi} : \T^S(S_{\lambda_{\pi_\psi}}(K^p, \cO)) \to \overline{\bQ}_p \]
    such that  for each place $w \nmid S$ of $F$ which is split over $F^+$, the image of $P_w(X) \in \T^S[X]$ equals  
    \[ \det(X - (\rho_1 \otimes \psi)(\Frob_w)) \det(X - \rho_2(\Frob_w)). \]
\end{corollary}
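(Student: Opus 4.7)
The plan is to apply Proposition \ref{prop_ordinary_endoscopic_descent}(2) directly to the isobaric sum $\pi_\psi = (\pi_1 \otimes \iota\psi) \boxplus \pi_2$. By local-global compatibility and the definition of isobaric sum, we will have the decomposition
\[ r_\iota(\pi_\psi) \cong r_\iota\bigl((\pi_1 \otimes \iota\psi)|\cdot|^{(n_1-n)/2}\bigr) \oplus r_\iota\bigl(\pi_2|\cdot|^{(n_2-n)/2}\bigr) = (\rho_1 \otimes \psi) \oplus \rho_2, \]
so the Hecke eigenvalues produced by the proposition will match the ones prescribed in the statement. The whole task therefore reduces to checking that the hypotheses of Proposition \ref{prop_ordinary_endoscopic_descent}(2) transfer from $\pi$ to $\pi_\psi$.

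On the global side, $\pi_1 \otimes \iota\psi$ remains cuspidal, since twisting by a Hecke character preserves cuspidality, and is again conjugate self-dual: from $\pi_1^c \cong \pi_1^\vee$ and the identity $\psi\psi^c = 1$ (which forces $(\iota\psi)^c = (\iota\psi)^{-1}$), we conclude that $(\pi_1 \otimes \iota\psi)^c \cong (\pi_1 \otimes \iota\psi)^\vee$. Because $\iota\psi$ is of finite order, twisting by it does not alter the infinitesimal character, so $\pi_\psi$ is regular algebraic of the same weight $\mu$ as $\pi$, and assumption (\ref{ass_even}) of Theorem \ref{thm_endoscopic_forms_exist}, which is a condition on the weight alone, is inherited verbatim. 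Since $\psi$ is unramified outside $S$, every finite place of $F$ at which $\pi_\psi$ is ramified still lies above $S$.

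The main point to verify is the $\iota$-ordinarity of $\pi_\psi$. Combining Proposition \ref{prop_ordinary_endoscopic_descent}(1) with the definition of isobaric sum, the multiset of characters $\alpha_{v,i}^{\pi_\psi}$ at a $p$-adic place $v$ is obtained from the one for $\pi$ by twisting the $n_1$ characters coming from $\pi_1$ by $(\iota\psi)|_{F_\wv^\times}$ (viewed via $\Art_{F_\wv}$) and leaving the $n_2$ characters from $\pi_2$ unchanged. Because $\iota\psi$ has finite order, $\val(\iota^{-1}(\iota\psi)(\varpi_\wv)) = 0$, so the multiset of valuations $\val(\iota^{-1}\alpha_{v,i}^{\pi_\psi}(\varpi_\wv))$ coincides with the one for $\pi$, and the strict-inequality condition of Proposition \ref{prop_ordinary_endoscopic_descent}(1)(a) persists after reordering.

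Finally, at each $v \in S - S_p$ I need to check that $(\pi_\psi)_\wv^{\iota_\wv(K_v)} \neq 0$; this follows from the corresponding property of $\pi_\wv$ together with the hypothesis that $\psi|_{G_{F_\wv}} \circ \Art_{F_\wv} \circ \det$ is trivial on $\iota_\wv(K_v)$, which ensures that the twist by $(\iota\psi)|_{F_\wv^\times} \circ \det$ preserves $\iota_\wv(K_v)$-fixed vectors on the $\GL_{n_1}$-factor while the $\GL_{n_2}$-factor is untouched. Proposition \ref{prop_ordinary_endoscopic_descent}(2) then applies to $\pi_\psi$ to yield the desired homomorphism $f_{\pi_\psi}$. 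There is no real obstacle: the content of the corollary is precisely the routine check that twisting by a finite-order, conjugate self-dual Hecke character of the prescribed local behaviour preserves all of the properties required to invoke the endoscopic descent theorem for $\pi_\psi$.
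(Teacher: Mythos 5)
Your proposal is correct and is essentially the paper's own proof: the paper disposes of the corollary in one line by observing that $\pi_\psi$ satisfies all the hypotheses of Proposition \ref{prop_ordinary_endoscopic_descent}(2), which is exactly the check you carry out in detail (conjugate self-duality via $\psi\psi^c=1$, preservation of the weight and of condition (\ref{ass_even}) since $\iota\psi$ has finite order, preservation of $\iota$-ordinarity since the twist does not change valuations, and the fixed-vector condition at $v \in S - S_p$ via the triviality hypothesis on $\psi|_{G_{F_{\wv}}} \circ \Art_{F_{\wv}} \circ \det$). No gaps.
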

\begin{proof}
We need only observe that if $\pi$ satisfies the hypotheses of Proposition \ref{prop_ordinary_endoscopic_descent}(2), then so does $\pi_\psi$.
\end{proof}
We can describe $\lambda_{\pi_\psi}$ in terms of $\lambda_\pi$. Indeed, $\pi_\psi$ and $\pi$ have the same weight $\mu$. In the notation of Proposition \ref{prop_ordinary_endoscopic_descent}(1), we can define for each $v \in S_p$ subsets $I_v, J_v \subset \{ 1, \dots, n \}$ such that $\alpha_{v, i}$ is a subquotient of $\rec_{F_\wv}(\pi_{1, \wv})$ if $i \in I_v$ and $\alpha_{v, i}$ is a subquotient of $\rec_{F_\wv}(\pi_{2, \wv})$ if $i \in J_v$. Let $\beta_\pi : \mathfrak{t}_p \to \prod_{v \in S_p} (1 + \varpi_\wv \cO_{F_\wv})$ be defined by $t_v = (t_{v, 1}, \dots, t_{v, n}) \in T_n(\cO_{F_\wv}) \mapsto \prod_{i \in I_v} t_{v, i}$. Then we have
\begin{equation}\label{eqn_weight_of_twisted_representation} \lambda_{\pi_\psi} = \lambda_\pi \cdot w_0( ( \psi|_{G_{F_\wv}} \circ \Art_{F_\wv}^{-1} \circ \beta_\pi )_{v \in S_p} )^{-1}.
\end{equation}
We need a version of Corollary \ref{cor_twisted_endoscopic_descent} ``in families''. With assumptions as in Proposition \ref{prop_ordinary_endoscopic_descent}(2), fix an integer $d \geq 1$ such that $\alpha_\pi|_{\mathfrak{t}_p(d)}$ is trivial, and let $F_d / F$ be the largest abelian extension of $F$ with the following properties:
\begin{itemize}
    \item $F_d$ is unramified outside $p$.
    \item For each place $v \in S_p$, the conductor of $F_{d, \wv} / F_\wv$ divides $(\varpi_\wv^d)$.
    \item The extension $F_d / F^+$ is Galois and $c \in \Gal(F / F^+)$ acts on $F_d$ by $-1$.
\end{itemize}
Then $\Delta_{d} = \Gal(F_d / F)$ is finite and any character $\psi : \Delta_{d} \to \cO^\times$ satisfies the conditions of Corollary \ref{cor_twisted_endoscopic_descent}. The map $\beta_\pi$, together with the Artin map, defines a homomorphism $\mathfrak{t}_p \to \Delta_d$. 

Let $\Psi : \Delta_{d} \to \cO[\Delta_{d}]^\times$ be the tautological character. Let $\T^S_{\mathfrak{t}_p} = \T^S \otimes_\cO \cO \llbracket \mathfrak{t}_p \rrbracket$. Assume (as we may, after possibly enlarging $E$) that $\rho_1, \rho_2$ take values in $\GL_{n_1}(\cO)$, $\GL_{n_2}(\cO)$. We claim that there is an $\cO \llbracket \mathfrak{t}_p \rrbracket$-algebra homomorphism
\[ \T^S_{\mathfrak{t}_p}(  M_{\lambda_\pi}(K^p(d, d), \cO)^{ord}) \to \cO[\Delta_{d}] \]
that, for each place $w \nmid S$ of $F$ which is split over $F^+$, sends $P_w(X)$ to $\det(X - ((\rho_1 \otimes \Psi) \oplus \rho_2)(\Frob_w))$. To see this, we consider the natural embedding 
\[ \cO[\Delta_{d}] \hookrightarrow \cO[\Delta_{d}] \otimes_\cO \overline{\bQ}_p \cong \prod_{\psi : \Delta_{d} \to \overline{\bQ}_p^\times} \overline{\bQ}_p. \]
Taking the product over the maps $f_{\pi_\psi}$ coming from Corollary \ref{cor_twisted_endoscopic_descent} determines a map $\T^S_{\mathfrak{t}_p}(  M_{\lambda_\pi}(K^p(d, d), \cO)^{ord}) \to \cO[\Delta_{d}] \otimes_\cO \overline{\bQ}_p$ which in fact takes values in $\cO[\Delta_{d}]$ (because the image of $P_w(X)$ for each place $w$ as above lies in $\cO[\Delta_{ d}][X]$).

The next proposition shows what happens when we allow the character $\psi$ to have infinite order (the proof is by reduction to the finite order case):
\begin{proposition}\label{prop_non-classical_twisted_endoscopic_descent}
With assumptions as in Proposition \ref{prop_ordinary_endoscopic_descent}(2), suppose given  a continuous character $\psi : G_{F, S_p} \to \cO^\times$ such that $\psi \psi^c = 1$. Let $\lambda_{\pi_\psi} \in \Hom_{cts}(\mathfrak{t}_p, \cO^\times)$ be the character defind by the formula (\ref{eqn_weight_of_twisted_representation}). Then, after possibly enlarging $\cO$, there exists a homomorphism $\T^S(H_{\lambda_{\pi_\psi}}(K^p)) \to \cO$ of $\cO$-algebras such that for each finite place $w \nmid S$ of $F$ which is split over $F^+$, the image of $P_w(X)$ in $\cO[X]$ equals $\det(X - ((\rho_1 \otimes \psi) \oplus \rho_2)(\Frob_w))$.
\end{proposition}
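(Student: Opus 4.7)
The plan is to approximate the continuous character $\psi$ by finite-order characters, reducing to Corollary \ref{cor_twisted_endoscopic_descent}, and to pass to an inverse limit using the congruences from Proposition \ref{prop_Hida_theory}(2). First, observe that since $c \in \Gal(F/F^+)$ acts by $-1$ on $\Delta_\infty := \varprojlim_d \Delta_d$ (by construction of the tower $F_d$ introduced just before this proposition), the condition $\psi \psi^c = 1$ is equivalent to $\psi$ factoring through a continuous character $\psi \colon \Delta_\infty \to \cO^\times$.

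For each integer $c \geq 1$, the reduction $\bar\psi_c := \psi \bmod \varpi^c \colon \Delta_\infty \to (\cO/\varpi^c)^\times$ has finite image and therefore factors through a finite abelian quotient of $\Delta_\infty$. After enlarging $\cO$ to a finite extension $\cO_c$ containing enough roots of unity, we lift $\bar\psi_c$ to a finite-order character $\psi_c \colon \Delta_\infty \to \cO_c^\times$ with $\psi_c \equiv \psi \bmod \varpi^c \cO_c$. Automatically $\psi_c$ is unramified outside $S_p$ and satisfies $\psi_c \psi_c^c = 1$, so Corollary \ref{cor_twisted_endoscopic_descent} applied over $\cO_c$ produces an $\cO_c$-algebra homomorphism
\[
f_c \colon \T^S(S_{\lambda_{\pi_{\psi_c}}}(K^p, \cO_c)) \to \cO_c, \quad P_w(X) \mapsto \det(X - ((\rho_1 \otimes \psi_c) \oplus \rho_2)(\Frob_w)).
\]
Using Proposition \ref{prop_Hida_theory}(3) to identify $S_{\lambda_{\pi_{\psi_c}}}(K^p, \cO_c) \cong H_{\lambda_{\pi_{\psi_c}}}(K^p) \otimes_\cO \cO_c$, and Proposition \ref{prop_Hida_theory}(2) over $\cO_c$ (applicable since formula (\ref{eqn_weight_of_twisted_representation}) combined with $\psi_c \equiv \psi$ gives $\lambda_{\pi_{\psi_c}} \equiv \lambda_{\pi_\psi} \bmod \varpi^c \cO_c$), we reduce $f_c$ modulo $\varpi^c \cO_c$ to obtain a $\T^S$-equivariant map
\[
\T^S(H_{\lambda_{\pi_\psi}}(K^p) \otimes_\cO \cO_c) / \varpi^c \cO_c \to \cO_c / \varpi^c \cO_c
\]
sending $P_w(X)$ to $\det(X - ((\rho_1 \otimes \psi) \oplus \rho_2)(\Frob_w)) \bmod \varpi^c \cO_c$.

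The key observation is that, because $\rho_1, \rho_2$, and $\psi$ take values in $\cO$, the image of every coefficient of $P_w(X)$ in fact lies in the subring $\cO / \varpi^c \subset \cO_c / \varpi^c \cO_c$. Since $\T^S$ is generated as an $\cO$-algebra by these coefficients, pulling back along the natural map $\T^S(H_{\lambda_{\pi_\psi}}(K^p))/\varpi^c \to \T^S(H_{\lambda_{\pi_\psi}}(K^p) \otimes_\cO \cO_c)/\varpi^c \cO_c$ produces an $\cO$-algebra homomorphism $g_c \colon \T^S(H_{\lambda_{\pi_\psi}}(K^p))/\varpi^c \to \cO/\varpi^c$. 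For $c' \geq c$, both $g_{c'} \bmod \varpi^c$ and $g_c$ send each $P_w(X)$ to $\det(X - ((\rho_1 \otimes \psi) \oplus \rho_2)(\Frob_w)) \bmod \varpi^c$, hence agree on generators. Since $H_{\lambda_{\pi_\psi}}(K^p)$ is a finite free $\cO$-module by Proposition \ref{prop_Hida_theory}(1), the Hecke algebra $\T^S(H_{\lambda_{\pi_\psi}}(K^p))$ is finite over $\cO$ and thus $\varpi$-adically complete; passing to the inverse limit yields the desired homomorphism $\T^S(H_{\lambda_{\pi_\psi}}(K^p)) \to \cO$. The main subtlety is a bookkeeping one: ensuring that even though the finite-order lifts $\psi_c$ a priori live in unbounded extensions $\cO_c$, the images of the Hecke generators descend to the fixed ring $\cO / \varpi^c$ because the target characteristic polynomials have coefficients in $\cO$; this is what permits the inverse limit to land in the original (once-enlarged) $\cO$.
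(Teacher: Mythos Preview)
Your argument has a genuine gap at the step where you lift $\bar\psi_c = \psi \bmod \varpi^c$ to a finite-order character $\psi_c : \Delta_\infty \to \cO_c^\times$ with $\psi_c \equiv \psi \pmod{\varpi^c \cO_c}$. Such a lift need not exist, no matter how much you enlarge $\cO$. A finite-order character must take values in roots of unity, so you are asking that for each topological generator $\gamma$ of the free part of $\Delta_\infty$ there exist a root of unity $\zeta \in \overline{\bQ}_p$ with $v_p(\zeta - \psi(\gamma)) \geq c \cdot v_p(\varpi)$. But the only roots of unity in $1 + \m_{\overline{\bZ}_p}$ are the $p$-power roots $\zeta_{p^n}$, and $v_p(\zeta_{p^n} - 1) = 1/(p^{n-1}(p-1)) < 1$ for every $n \geq 1$. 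So if, say, $\psi(\gamma) = 1 + p$ (which is exactly the kind of value produced by Lemma \ref{lem_existence_of_p-adic_character}), then $v_p(\zeta - \psi(\gamma)) = \min(v_p(\zeta - 1), 1) < 1$ for every nontrivial $p$-power root $\zeta$, and the approximation fails already for $c$ with $c \cdot v_p(\varpi) \geq 1$. Enlarging $\cO$ to $\cO_c$ does not help, since the condition $v_p(\zeta - \psi(\gamma)) \geq c \cdot v_p(\varpi)$ is intrinsic.

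The paper's proof circumvents this by never choosing a single finite-order approximation. Instead it uses the map $\T^S_{\mathfrak{t}_p}(M_{\lambda_\pi}(K^p(d,d),\cO)^{\mathrm{ord}}) \to \cO[\Delta_d]$ built from \emph{all} finite-order characters of $\Delta_d$ simultaneously, and then specialises along the ring homomorphism $\cO[\Delta_d] \to \cO/(\varpi^c)$ induced by $\psi \bmod \varpi^c$ (which does factor through $\Delta_d$). The price is that the resulting map only factors through $\T^S(H_{\lambda_{\pi_\psi}}(K^p)/(\varpi^c))$ after a further reduction to $\cO/(\varpi^{\lfloor c/m \rfloor})$, controlled by a Fitting-ideal argument bounding the nilpotence of the relevant annihilator; this loss is harmless in the limit.
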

\begin{proof}
Let $\gamma = w_0( ( \psi|_{G_{F_\wv}} \circ \Art_{F_\wv}^{-1} \circ \beta_\pi )_{v \in S_p} )^{-1} \in \Hom_{cts}(\mathfrak{t}_p, \cO^\times)$, so that $\lambda_{\pi_\psi} = \lambda_\pi \cdot \gamma$. Let $m = \dim_k M_{\lambda_\pi}(K^p(1, 1), k)^{ord}$. Let $I_\gamma$ denote the kernel of the homomorphism $\cO\llbracket \mathfrak{t}_p \rrbracket \to \cO$ determined by $w_0(\gamma)^{-1}$. 

Fix $c \geq 1$, and let $d \geq c$ be an integer such that $\gamma \text{ mod }(\varpi^c)$ is trivial on $\mathfrak{t}_p(d)$. By Proposition \ref{prop_Hida_theory}, there is a $\T^S[U_p]$-equivariant isomorphism
\[ H_{\lambda_{\pi_\psi}}(K^p) / (\varpi^c) \cong M_{\lambda_{\pi_\psi}}(K^p(d, d), \cO / (\varpi^c))^{ord, \mathfrak{t}_p = w_0(\gamma)^{-1}}. \]
Moreover, $M_{\lambda_\pi}(K^p(d, d), \cO / (\varpi^c))^{ord}$ is a free $\cO / (\varpi^c)[\mathfrak{t}_p / \mathfrak{t}_p(d)]$-module and the trace induces a $\bT^S[U_p]$-equivariant isomorphism
\[ M_{\lambda_\pi}(K^p(d, d), \cO / (\varpi^c))^{ord} / (I_\gamma) \cong M_{\lambda_\pi}(K^p(d, d), \cO / (\varpi^c))^{ord, \mathfrak{t}_p = w_0(\gamma)^{-1}} \]
(cf. Equation (\ref{eqn_coinvariants_of_algebraic_modular_forms})). There is also an isomorphism
\[  M_{\lambda_\pi}(K^p(d, d), \cO / (\varpi^c))^{ord, \mathfrak{t}_p = w_0(\gamma)^{-1}} / (\varpi) \cong M_{\lambda_\pi}(K^p(1, d), k)^{ord}. \]
Let $R = \T^S_{\mathfrak{t}_p}(M(K^p(d, d), \cO)^{ord})$. Then $R$ acts faithfully on $M(K^p(d, d), \cO)^{ord}$, hence $\Fitt_R(M(K^p(d, d), \cO)^{ord}) = 0$, hence
\[ \Fitt_R(M_{\lambda_\pi}(K^p(d, d), \cO / (\varpi^c))^{ord, \mathfrak{t}_p = w_0(\gamma)^{-1}}) = (I_\gamma, \varpi^c). \] Since $M_{\lambda_\pi}(K^p(d, d), \cO / (\varpi^c))^{ord, \mathfrak{t}_p = w_0(\gamma)^{-1}}$ is generated as an $\cO / (\varpi^c)$-module, hence as $R$-module, by $m$ elements, it follows that 
\[ \Ann_{R / (I_\gamma, \varpi^c)}( M_{\lambda_\pi}(K^p(d, d), \cO / (\varpi^c))^{ord, \mathfrak{t}_p = w_0(\gamma)^{-1}})^m = 0. \]
The discussion preceding the statement of the Proposition shows that there is an $\cO[\mathfrak{t}_p]$-algebra homomorphism $R \to \cO[\Delta_d]$, hence (by composition with $\psi$) a homomorphism $R / (I_\gamma, \varpi^c) \to \cO / (\varpi^c)$, hence a homomorphism
\[ \T^S(M_{\lambda_\pi}(K^p(d, d), \cO / (\varpi^c))^{ord, \mathfrak{t}_p = w_0(\gamma)^{-1}}) \to \cO / (\varpi^{\lfloor c / m \rfloor}). \]
which for each split place $w \nmid S$ of $F$, sends $P_w(X)$ to the image of $\det(X - ((\rho_1 \otimes \psi) \oplus \rho_2)(\Frob_w))$ in $\cO / (\varpi^{\lfloor c / m \rfloor})[X]$. Using the identification of $H_{\lambda_{\pi_\psi}}(K^p) / (\varpi^c)$, we see that there is a homomorphism
\[ \T^S(H_{\lambda_{\pi_\psi}}(K^p)) \to \cO / (\varpi^{\lfloor c / m \rfloor}) \]
with the same property with respect to the polynomials $P_w(X)$. Since $m$ was fixed and $c$ was arbitrary, we can pass to the limit to obtain the desired homomorphism $\T^S(H_{\lambda_{\pi_\psi}}(K^p)) \to \cO$.
\end{proof}

\begin{corollary}\label{cor_non_classical_twisted_endoscopic_descent}
Fix a partition $n = n_1 + n_2$ and let $\pi_1, \pi_2$ be cuspidal, conjugate self-dual automorphic representations of $\GL_{n_1}(\bA_F)$, $\GL_{n_2}(\bA_F)$, respectively, such that $\pi = \pi_1 \boxplus \pi_2$ is regular algebraic of weight $\mu$ and $\iota$-ordinary.  Let $K^p \in \cJ_S^p$ be such that for each $v \in S - S_p$, $\pi_\wv^{\iota_\wv(K_v)} \neq 0$. Suppose that condition (\ref{ass_even}) of Theorem \ref{thm_endoscopic_forms_exist} is satisfied.

Suppose given  a continuous character $\psi : G_{F, S_p} \to \cO^\times$ such that $\psi \psi^c = 1$ and a place $v_0 \in S - S_p$ such that $\pi_{1, \wv_0} \cong \St_{n_1}(\psi_1)$, $\pi_{2, \wv_0} \cong \St_{n_2}(\psi_2)$, where $\psi_1, \psi_2 : F_{\wv_0}^\times \to \bC^\times$ are unramified characters such that $| \cdot |^{(1-n_1)/2} \psi_1 (\iota \psi \circ \Art_{F_{\wv_0}}) = | \cdot |^{(1+n_2)/2} \psi_2$. Suppose that $K_{v_0} = \iota_{\wv_0}^{-1}(\Iw_{\wv_0})$, and let $\p$ denote the kernel of the homomorphism $\T^S(H_{\lambda_{\pi_\psi}}(K^p)) \to \cO$ associated to $\pi_1, \pi_2, \psi$ by Proposition \ref{prop_non-classical_twisted_endoscopic_descent}. Let $\tau$ be an irreducible $E[\GL_n(F_{\wv_0})]$-subquotient of 
\[ \cH_{{\lambda_{\pi_\psi}}, \p} = \varinjlim_{V_{v_0}} H_{\lambda_{\pi_\psi}}(K^{p, v_0} V_{v_0})_{\p}. \]
If $\tau^{\Iw_{\wv_0}} \neq 0$, then either $\iota \tau \cong \pi_n(\psi_2 |\cdot|^{n_1/2})$ or $\iota \tau \cong \pi_{n_1, n_2}(\psi_2 |\cdot|^{n_1/2})$. 
\end{corollary}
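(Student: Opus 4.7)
The plan is to combine three ingredients: an explicit computation of the local Weil--Deligne representation associated with $\p$ at $\wv_0$, the trace relation furnished by Proposition \ref{prop_local_hecke_algebra}(3), and the filtration obstruction of Corollary \ref{cor_invariant_WD_filtrations}. Throughout, write $\rho = (\rho_1 \otimes \psi) \oplus \rho_2$ and $\alpha = \psi_2 | \cdot |^{n_1/2}$.

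First I would compute $\mathrm{WD}(\rho|_{G_{F_{\wv_0}}})^{F-ss}$ using the strong form of local-global compatibility at $\wv_0$ (recall that $\pi_{i,\wv_0}$ is Steinberg, so this applies, cf.\ \cite{Tay07}). Unwinding the conventions for $\rec^T$, $(\rho_1 \otimes \psi)|_{W_{F_{\wv_0}}}$ has Frobenius-semisimple part $\Sp_{n_1}(\psi_1(\iota\psi \circ \Art_{F_{\wv_0}}) | \cdot |^{(1-n)/2})$ and $\rho_2|_{W_{F_{\wv_0}}}$ has Frobenius-semisimple part $\Sp_{n_2}(\psi_2 | \cdot |^{(1-n)/2})$. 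The level-raising hypothesis $|\cdot|^{(1-n_1)/2}\psi_1(\iota\psi \circ \Art_{F_{\wv_0}}) = |\cdot|^{(1+n_2)/2}\psi_2$ chains these two blocks: the diagonals concatenate to $\alpha, \alpha|\cdot|^{-1}, \dots, \alpha|\cdot|^{-(n-1)}$. This is precisely the $\rec^T$-diagonal of $\pi_n(\alpha)$ and of $\pi_{n_1,n_2}(\alpha)$, with the two possibilities distinguished by whether the monodromy forms one Jordan block of size $n$ or two of sizes $n_1, n_2$.

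Next I would pin down the cuspidal support of $\tau$. By Proposition \ref{prop_local_hecke_algebra}(2), the characters $\chi_{\wv_0,i}$ acting on $\tau^{\Iw_{\wv_0}}$ via $\cO[\Lambda_{\wv_0}] \subset \bT^S_{v_0}$ satisfy $\oplus_{i=1}^n \chi_{\wv_0,i} \circ \Art_{F_{\wv_0}}^{-1} = \rho|_{W_{F_{\wv_0}}}^{ss}$; in particular the multiset matches the arithmetic progression above. By Proposition \ref{prop_Jacquet_module_of_elliptic_representation}(2), $\iota\tau$ is then a Jordan--H\"older constituent of the principal series with that cuspidal support, so $\iota\tau \cong \pi_\mu \otimes (\alpha \circ \det)$ for some ordered partition $\mu$ of $n$. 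It remains only to show $\mu \in \{n, (n_1,n_2)\}$.

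For this, I would apply Proposition \ref{prop_local_hecke_algebra}(3). Since the Frobenius eigenvalues $\alpha(\Frob_{\wv_0}) q_{\wv_0}^{-i}$ are pairwise distinct, $\Delta_{\wv_0}$ is a unit in $\bT^S_{v_0}(\cH_{\lambda_{\pi_\psi}, \p}^{\Iw_{\wv_0}}) \otimes \overline{\bQ}_p$. Proposition \ref{prop_Jacquet_module_of_elliptic_representation}(3) ensures that each character in $r_{B_n}(\pi_\mu)$ occurs with multiplicity one in the normalised Jacquet module, and distinct such characters induce distinct characters of $\Lambda_{\wv_0}$; hence each character in $r_{B_n}(\iota\tau)$ is realised by some $\overline{\bQ}_p$-valued eigenvector of the Bernstein subalgebra on $\tau^{\Iw_{\wv_0}}$. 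For any such eigenvector, the corresponding prime $\q$ of $\bT^S_{v_0}$ lies over $\p$, satisfies $\Delta_{\wv_0} \notin \q$, and specialises Proposition \ref{prop_local_hecke_algebra}(3) to the identity
\[ T_\q\!\left(\sigma(\tau_1 - \chi_{\wv_0,1}(\tau_1))\cdots(\tau_n - \chi_{\wv_0,n}(\tau_n))\right) = 0 \qquad \forall \sigma \in G_{F,S},\ \tau_i \in W_{F_{\wv_0}}. \]
Applying \cite[Lemma 2.18]{New21}, this forces the existence of an invariant filtration $0 \subset \Fil_1 \subset \cdots \subset \Fil_n = \overline{\bQ}_p^n$ of $\rho|_{W_{F_{\wv_0}}}$ by sub-Weil--Deligne representations with $W_{F_{\wv_0}}$ acting on $\Fil_i/\Fil_{i-1}$ via $\chi_{\wv_0,i} \circ \Art_{F_{\wv_0}}^{-1}$. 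Untwisting by $\alpha$, this means that \emph{every} character in $r_{B_n}(\pi_\mu)$ arises as the graded pieces of some invariant flag of sub-Weil--Deligne representations of $\rec_{F_{\wv_0}}(\pi_{n_1,n_2})$. By Corollary \ref{cor_invariant_WD_filtrations}, this is impossible if $\mu \notin \{n, (n_1,n_2)\}$, giving the desired dichotomy.

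The main technical obstacle is the passage from the trace identity of Proposition \ref{prop_local_hecke_algebra}(3) to an honest invariant filtration: this is the content of \cite[Lemma 2.18]{New21}, and invoking it requires care here because $\rho|_{W_{F_{\wv_0}}}$ is visibly reducible (the direct sum of two Steinberg-type pieces), so one cannot appeal to Burnside density directly but must work block by block.
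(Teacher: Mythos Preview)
Your approach is essentially identical to the paper's: identify the cuspidal support of $\tau$ via Proposition \ref{prop_local_hecke_algebra}(2), use part (3) together with the distinctness of the Frobenius eigenvalues to produce, for every character in $r_{B_n}(\tau)$, an invariant filtration of $\rho|_{W_{F_{\wv_0}}}$ with the prescribed graded pieces, and then contradict Corollary \ref{cor_invariant_WD_filtrations} when $\mu \notin \{n,(n_1,n_2)\}$. Two small corrections: the passage from the trace identity to the vanishing of the product $(\rho(\tau_1)-\chi_{\wv_0,1}(\tau_1))\cdots(\rho(\tau_n)-\chi_{\wv_0,n}(\tau_n))=0$ in $M_n(\overline{\bQ}_p)$ uses the \emph{global} semisimplicity of $\rho=(\rho_1\otimes\psi)\oplus\rho_2$ (so Burnside-type density applies directly and your concern about the local restriction being reducible is misplaced), and the paper then invokes \cite[Lemma 6.2.11]{All18}, not \cite[Lemma 2.18]{New21}, to extract the $W_{F_{\wv_0}}$-invariant filtration from this vanishing.
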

\begin{proof}
If $\tau$ is an irreducible subquotient of $\cH_{{\lambda_{\pi_\psi}}, \p}$, then $\tau^{\Iw_{\wv_0}}$ is a subquotient $\cO[\Lambda_{\wv_0}]$-module of $\cH_{{\lambda_{\pi_\psi}}, \p}^{\Iw_{\wv_0}} = H_{{\lambda_{\pi_\psi}}}(K^p)_\p$. For any prime ideal $\mathfrak{r} \subset \cO[\Lambda_{\wv_0}]$ of characteristic 0 which is in the support of $\tau^{\Iw_{\wv_0}}$ we can therefore find a prime ideal $\q \subset \T^S_{v_0}(H_{{\lambda_{\pi_\psi}}}(K^p))$ lying above both $\p$ and $\mathfrak{r}$. By Proposition \ref{prop_local_hecke_algebra}, the restriction of $\mathfrak{r}$ to $\cO[\Lambda_{\wv_0}]^{S_n}$ corresponds to the polynomial
\[ \det(X - ((\rho_1 \otimes \psi) \oplus \rho_2)(\Frob_{\wv_0})) = \prod_{i=1}^n(X - (\iota^{-1} \psi_2 |\cdot|^{(n_1 + 2 (i-1) )/2})(\varpi_{\wv_0})), \]
which has distinct roots; therefore $\Delta_{\wv_0} \not\in \mathfrak{q}$. Let $\chi_{\wv_0, i, \mathfrak{r}} = \chi_{\wv_0, i} \text{ mod }\mathfrak{r}$ and let $\rho = (\rho_1 \otimes \psi) \oplus \rho_2$. Proposition \ref{prop_local_hecke_algebra} implies that for any $\sigma \in G_F, \tau_1, \dots, \tau_n \in W_{F_{\wv_0}}$ we have
\[ \tr \left(  \rho(\sigma)(\rho(\tau_1) - \chi_{\wv_0, 1, \mathfrak{r}}(\tau_1))(\rho(\tau_2) - \chi_{\wv_0, 2, \mathfrak{r}}(\tau_2)) \dots (\rho(\tau_n) - \chi_{\wv_0, n, \mathfrak{r}}(\tau_n))  \right) = 0. \]
Since $\rho$ is semisimple, it follows that for any $ \tau_1, \dots, \tau_n \in W_{F_{\wv_0}}$, we have
\[ (\rho(\tau_1) - \chi_{\wv_0, 1, \mathfrak{r}}(\tau_1))(\rho(\tau_2) - \chi_{\wv_0, 2, \mathfrak{r}}(\tau_2)) \dots (\rho(\tau_n) - \chi_{\wv_0, n, \mathfrak{r}}(\tau_n)) = 0 \]
in $M_n(\overline{\bQ}_p)$. Using once more that the characters $\{ \chi_{\wv_0, i, \mathfrak{r}} \}_{i = 1, \dots, n} = \{ (\iota^{-1}\psi_2 |\cdot|^{(n_1 + 2 (i-1) )/2}) \circ \Art_{F_{\wv_0}} \}_{i = 1, \dots, n}$ are pairwise distinct, we can apply \cite[Lemma 6.2.11]{All18} to conclude that there is a $W_{F_{\wv_0}}$-invariant filtration $0 \subset \Fil_1 \subset \dots \subset \Fil_n = \overline{\bQ}_p^n$ of $\overline{\bQ}_p^n$ such that for each $i = 1, \dots, n$, $\Fil_i / \Fil_{i-1} \cong \overline{\bQ}_p( \chi_{\wv_0, i, \mathfrak{r}} )$. This contradicts Lemma \ref{lem_invariant_filtrations} below unless $\tau  \cong \pi_n(\psi_2 |\cdot|^{n_1/2})$ or $\tau \cong \pi_{n_1, n_2}(\psi_2 |\cdot|^{n_1/2})$. 
\end{proof}
\begin{lemma}\label{lem_invariant_filtrations}
Let $v_0 \in S - S_p$ and let $\rho : G_{F_{\wv_0}} \to \GL_n(\overline{\bQ}_p)$ be a continuous representation such that  $\mathrm{WD}(\rho)^{F-ss} \cong \rec_{F_{\wv_0}}^T(\pi_{n_1, n_2})$. Let $\lambda : n = \lambda_1 + \dots + \lambda_k$ be a partition of $n$, and suppose that $\lambda \neq n$, $(n_1, n_2)$. Then there exists a prime ideal $\mathfrak{r} \subset \cO[\Lambda_{\wv_0}]$ in the support of $\pi_\lambda^{\Iw_{\wv_0}}$ such that there is no $W_{F_{\wv_0}}$-invariant filtration $0 \subset \Fil_1 \subset \dots \subset \Fil_n = \overline{\bQ}_p^n$ of $\overline{\bQ}_p^n$ such that for each $i = 1, \dots, n$, $\Fil_i / \Fil_{i-1} \cong \overline{\bQ}_p( \chi_{\wv_0, i, \mathfrak{r}} )$.
\end{lemma}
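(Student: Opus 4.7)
The plan is to reduce the lemma to Corollary \ref{cor_invariant_WD_filtrations} by converting between characters in the normalised Jacquet module of $\pi_\lambda$ and primes in the support of $\pi_\lambda^{\Iw_{\wv_0}}$. Corollary \ref{cor_invariant_WD_filtrations} produces a character $\chi = \chi_1 \otimes \dots \otimes \chi_n$ of $T_n(F_{\wv_0}) = (F_{\wv_0}^\times)^n$ which appears as a subquotient of $r_{B_n}(\pi_\lambda)$ and for which there is no $W_{F_{\wv_0}}$-invariant filtration $0 \subset \Fil_1 \subset \dots \subset \Fil_n = \overline{\bQ}_p^n$ of $\rec_{F_{\wv_0}}(\pi_{n_1,n_2})$ with $\Fil_i / \Fil_{i-1} \cong \overline{\bQ}_p(\chi_i)$.

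I would then use the standard theory of the Iwahori--Hecke algebra (cf.\ \cite[Proposition 2.2.8]{All18}) to convert $\chi$ into a prime ideal $\mathfrak{r} \subset \cO[\Lambda_{\wv_0}]$ in the support of the $\cO[\Lambda_{\wv_0}]$-module $\pi_\lambda^{\Iw_{\wv_0}}$ and satisfying
\[ \chi_{\wv_0, i, \mathfrak{r}} \circ \Art_{F_{\wv_0}} = \chi_i \cdot |\cdot|^{(1-n)/2}, \quad i = 1, \dots, n. \]
The uniform twist by $|\cdot|^{(1-n)/2}$ encodes the difference between $\rec_{F_{\wv_0}}$ and $\rec_{F_{\wv_0}}^T$; it can be pinned down by inspecting the Steinberg representation $\pi_n$, for which $\pi_n^{\Iw_{\wv_0}}$ is one-dimensional, $r_{B_n}(\pi_n) = |\cdot|^{(n-1)/2} \otimes \dots \otimes |\cdot|^{(1-n)/2}$, and the semisimplified diagonal of $\rec_{F_{\wv_0}}^T(\pi_n)$ is $1, |\cdot|^{-1}, \dots, |\cdot|^{1-n}$. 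Given such an $\mathfrak{r}$, suppose for contradiction that $\rec_{F_{\wv_0}}^T(\pi_{n_1,n_2})$ admitted a $W_{F_{\wv_0}}$-invariant filtration with graded pieces $\chi_{\wv_0, i, \mathfrak{r}}$. Twisting by $|\cdot|^{(n-1)/2}$ would then yield a $W_{F_{\wv_0}}$-invariant filtration of $\rec_{F_{\wv_0}}(\pi_{n_1,n_2}) = \rec_{F_{\wv_0}}^T(\pi_{n_1,n_2}) \otimes |\cdot|^{(n-1)/2}$ with graded pieces $\chi_i$, contradicting the choice of $\chi$.

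The main technical hurdle is the middle step: matching the normalisation of Bernstein's embedding $\cO[\Lambda_{\wv_0}] \hookrightarrow \cH(\GL_n(F_{\wv_0}), \Iw_{\wv_0}) \otimes_\bZ \cO$ (used to define the $\chi_{\wv_0, i}$) with the conventions for the normalised Jacquet module used in \S\ref{sec_elliptic_representations}. One must verify both the existence of a prime $\mathfrak{r}$ in the support of $\pi_\lambda^{\Iw_{\wv_0}}$ and the precise identity above, with matching orderings of the indices. This amounts to understanding the structure of $\pi_\lambda^{\Iw_{\wv_0}}$ as a module over the Iwahori--Hecke algebra in a principal-series Bernstein block, where it is determined explicitly by $r_{B_n}(\pi_\lambda)$.
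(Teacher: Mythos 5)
Your overall strategy --- reducing to Corollary \ref{cor_invariant_WD_filtrations} via the dictionary between characters of the normalised Jacquet module $r_{B_n}(\pi_\lambda)$ and primes in the support of the $\cO[\Lambda_{\wv_0}]$-module $\pi_\lambda^{\Iw_{\wv_0}}$ --- is the same as the paper's, and you make explicit the Iwahori--Hecke/Jacquet-module translation and the $\rec$ versus $\rec^T$ twist, which the paper leaves implicit; your sanity check against the Steinberg representation is the right way to pin down that normalisation.

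However, your final contradiction step has a gap, and it is precisely the step that constitutes the paper's entire (three-sentence) proof. You write ``suppose for contradiction that $\rec^T_{F_{\wv_0}}(\pi_{n_1,n_2})$ admitted a $W_{F_{\wv_0}}$-invariant filtration\dots'', but the lemma concerns filtrations of $\overline{\bQ}_p^n$ stable under the continuous representation $\rho$, while Corollary \ref{cor_invariant_WD_filtrations} concerns filtrations of $\rec_{F_{\wv_0}}(\pi_{n_1,n_2})$ by sub-Weil--Deligne representations, i.e.\ subspaces stable under both the Weil-group part $r$ and the monodromy operator $N$. Two points are needed to bridge these, and neither appears in your write-up. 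First, a subspace of $\overline{\bQ}_p^n$ is $\rho(W_{F_{\wv_0}})$-stable if and only if it is a sub-Weil--Deligne representation of $\mathrm{WD}(\rho)$; the $N$-stability is not decorative, since the proof of Corollary \ref{cor_invariant_WD_filtrations} obtains its contradiction in the case $\lambda_k = n_2$ exactly from the failure of $N$-invariance, and a filtration stable only under $r$ would not be ruled out. Second, the hypothesis gives only $\mathrm{WD}(\rho)^{F-ss} \cong \rec^T_{F_{\wv_0}}(\pi_{n_1,n_2})$, so before replacing $\mathrm{WD}(\rho)$ by $\rec^T(\pi_{n_1,n_2})$ you must observe that $\mathrm{WD}(\rho) = \mathrm{WD}(\rho)^{F-ss}$; this holds because $\rho$ sends any Frobenius lift to a regular semisimple element (the Frobenius eigenvalues of $\rec^T(\pi_{n_1,n_2})$ are pairwise distinct). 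Both points are standard and easily inserted, but as written your argument silently identifies $\rho$ with $\rec^T(\pi_{n_1,n_2})$ and ``$W_{F_{\wv_0}}$-invariant'' with ``sub-Weil--Deligne'', neither of which is automatic.
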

\begin{proof}
We remark first that $\mathrm{WD}(\rho) = \mathrm{WD}(\rho)^{F-ss}$ since the image under $\rho$ of any Frobenius lift is regular semisimple. The representations $\mathrm{WD}(\rho)$ and $\rho$ have the same invariant subspaces. The result therefore follows from Corollary \ref{cor_invariant_WD_filtrations}.
\end{proof}
\begin{lemma}\label{lem_existence_of_p-adic_character}
Let $v_0 \in S - S_p$ and let $\alpha \in 1 + \varpi \cO$. Then, after possibly enlarging $E$, we can find a continuous character $\psi : G_{F, S_p} \to 1 + \varpi \cO$ with the following properties:
\begin{enumerate}
    \item $\psi \psi^c = 1$.
    \item $\psi(\Frob_{\wv_0}) = \alpha$.
\end{enumerate}
\end{lemma}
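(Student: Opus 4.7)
The plan is to construct $\psi$ via global class field theory, reducing the question to an idèle-class computation and then invoking the CM structure. By the cohomological Artin reciprocity map, continuous characters $\psi : G_{F,S_p} \to \cO^\times$ correspond bijectively to continuous Hecke characters $\chi : \bA_F^\times / F^\times \to \cO^\times$ which are trivial on $\prod_{v \nmid p\infty} \cO_{F_v}^\times$ and on the identity component of $(F \otimes_\bQ \bR)^\times$; under this correspondence, $\psi \psi^c = 1$ translates to $\chi$ being trivial on $\mathbf{N}_{F/F^+}(\bA_F^\times)$, and $\psi(\Frob_{\wv_0}) = \alpha$ translates to $\chi(\varpi_{\wv_0}) = \alpha$, where $\varpi_{\wv_0}$ denotes the idèle having a uniformizer at $\wv_0$ and $1$ elsewhere.

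Let $X$ be the quotient of $\bA_F^\times$ by the closure of $F^\times \cdot \mathbf{N}_{F/F^+}(\bA_F^\times) \cdot \prod_{v \nmid p\infty}\cO_{F_v}^\times \cdot (F \otimes_\bQ \bR)^\times$, and let $X^{(p)}$ denote its maximal pro-$p$ quotient. By global class field theory, $X^{(p)}$ is identified with the Galois group of the maximal abelian anti-CM pro-$p$ extension of $F$ unramified outside $S_p$, and is in particular a finitely generated $\bZ_p$-module. Since $1 + \varpi \cO$ is a topological $\bZ_p$-module, constructing $\chi$ is equivalent to constructing a continuous $\bZ_p$-linear homomorphism $X^{(p)} \to 1 + \varpi \cO$ sending $[\varpi_{\wv_0}]$ to $\alpha$. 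After possibly enlarging $E$, it suffices by the structure theorem for finitely generated $\bZ_p$-modules to show that the image of $\varpi_{\wv_0}$ in $X^{(p)}$ has infinite order; given this, one chooses a $\bZ_p$-linear surjection $X^{(p)} \twoheadrightarrow \bZ_p$ sending $[\varpi_{\wv_0}] \mapsto 1$ and composes with the continuous homomorphism $\bZ_p \to 1 + \varpi \cO$, $1 \mapsto \alpha$.

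The main obstacle is the infinite-order claim. To establish it I would exploit the anti-cyclotomic $\bZ_p$-extension $F_{0,\infty}^-/F_0$ of the imaginary quadratic subfield $F_0$ of $F$: as $p$ splits in $F_0$, this extension is unramified outside $p$, and its compositum with $F$ yields an anti-CM $\bZ_p$-extension of $F$ unramified outside $S_p$, corresponding via class field theory to a continuous homomorphism $\chi_0 : X^{(p)} \to \bZ_p$. Using the hypothesis (coming from the ambient setup of the main theorem) that the rational prime $q$ below $v_0$ splits in $F_0$ as $q = u_0 u_0^c$, with $\wv_0$ above $u_0$, a direct computation with the explicit reciprocity law identifies $\chi_0([\varpi_{\wv_0}])$, up to normalization, with $\log_p(\pi_q / \bar\pi_q)$, where $\pi_q \in \cO_{F_0}$ is a generator of $u_0$ and the logarithm is taken via the embedding $F_0 \hookrightarrow F_{0, u_0} \cong \bQ_p$. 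Since $u_0 \neq u_0^c$, the element $\pi_q$ is not an associate of $\bar\pi_q$ in $\cO_{F_0}^\times$, so $\pi_q / \bar\pi_q$ is not a root of unity in $F_0^\times$; therefore its $p$-adic logarithm is nonzero, and $[\varpi_{\wv_0}]$ is of infinite order in $X^{(p)}$. The enlargement of $E$ is needed so that the image of $\chi_0$, suitably rescaled, actually lies in $\varpi \cO$ and can be composed with the exponential $1 \mapsto \alpha$.
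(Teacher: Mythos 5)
Your reduction via class field theory is the same as the paper's: both arguments translate the problem into showing that $\Frob_{\wv_0}$ has infinite order in the pro-$p$, anti-CM quotient of the Galois group of the maximal abelian extension of $F$ unramified outside $S_p$ (your $X^{(p)}$ is the paper's $\Delta/(c+1)$). Where you diverge is in proving that infinite-order statement. The paper stays inside $F$: taking $\beta \in \cO_F$ generating a power of $\wv_0$, it shows $\beta/\beta^c$ has infinite order in $\overline{\cO_F^\times}\backslash(\cO_F\otimes_{\bZ}\bZ_p)^\times$ by the elementary observation that for a CM field a power of any global unit lies in $\overline{\cO_{F^+}^\times}$, so a relation would force $(\beta)^{2N}=(\beta^c)^{2N}$ as ideals, contradicting $\wv_0\neq\wv_0^c$. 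You instead descend to the imaginary quadratic field $F_0$ and exhibit a single explicit functional, the anticyclotomic character, that is nonzero on the class of $\varpi_{\wv_0}$ via the nonvanishing of a $p$-adic logarithm. Both work; the paper's route is more self-contained (no appeal to the anticyclotomic $\bZ_p$-extension or to $\log_p$), while yours produces an explicit witness and only uses the subfield $F_0$, which is available in the ambient setup.

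Three slips in your write-up should be corrected, though none is fatal. First, the logarithm must be taken at a place of $F_0$ \emph{above $p$}, not at $u_0$: since $u_0 \mid q \neq p$ you have $F_{0,u_0}\cong\bQ_q$, and the reciprocity computation evaluates the anticyclotomic character on $\Frob_{u_0}$ via the components of a global generator at the two $p$-adic places $\mathfrak{p},\mathfrak{p}^c$ of $F_0$, yielding (up to normalisation) $\log_p(\iota_{\mathfrak{p}}(\pi_q/\bar\pi_q))$. Second, $u_0$ need not be principal; take $\pi_q$ generating $u_0^{h}$ for $h$ the class number (and account for the residue degree $f(\wv_0/u_0)$ when pulling back to $F$) --- these integer factors do not affect nonvanishing. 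Third, infinite order of $[\varpi_{\wv_0}]$ does not give a $\bZ_p$-linear \emph{surjection} $X^{(p)}\twoheadrightarrow\bZ_p$ sending it to $1$; rather, one gets a homomorphism sending it to some nonzero $a\in\bZ_p$, and the enlargement of $E$ is used to extract a $p$-power root of $\alpha$ so that $\beta^{a}=\alpha$ has a solution $\beta\in 1+\varpi\cO'$. With these repairs your argument is a valid alternative proof.
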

\begin{proof}
Let $L / F$ denote the maximal abelian pro-$p$ extension of $F$ which is unramified away from $S_p$, and let $\Delta = \Gal(L / F)$. We need to show that, after possibly enlarging $E$, there is a homomorphism $\Delta / (c+1) \to 1 + \varpi \cO$ sending $\Frob_{\wv_0}$ to $\alpha$. This will be the case if $\Frob_{\wv_0}$ does not have finite order in the finitely generated $\bZ_p$-module $\Delta / (c+1)$, or if $\Frob_{\wv_0} / \Frob_{\wv_0^c}$ does not have finite order in $\Delta$. By class field theory, it suffices to show that if $\beta \in \cO_F$ generates a (positive) power of the ideal corresponding to the place $\wv_0$, then $\beta/ \beta^c$ has infinite order in $\overline{\cO_F^\times} \backslash (\cO_F \otimes_\bZ \bZ_p)^\times$. If it does not have infinite order then there is $N \geq 1$ such that $(\beta/ \beta^c)^N \in \overline{\cO_F^\times}$; since $F$ is a CM field, there is $M \geq 1$ such that $\overline{\cO_F^\times}^M \subset \overline{\cO_{F^+}^\times}$, so after increasing $N$ we can assume that $(\beta / \beta^c)^N$ is fixed by $c$, therefore that $(\beta)^{2N} = (\beta^c)^{2N}$. This is a contradiction since these ideals are distinct ($v_0$ splits in $F$).
\end{proof}
We can now prove our main level-raising theorem. For the reader's convenience we first state our assumptions:
\begin{itemize}
    \item $F$ is a CM field of the form $F = F^+ F_0$, where $F_0$ is an imaginary quadratic field and $F / F^+$ is everywhere unramified.
    \item $p, q$ are primes which split in $F_0$.
    \item $v_0$ is a $q$-adic place of $F^+$ and $u_0$ is a $q$-adic place of $F_0$. We take $\wv_0$ to be the unique place of $F$ lying above both $v_0$ and $u_0$.
    \item $\iota : \overline{\bQ}_p \to \bC$ is an isomorphism.
\end{itemize}
\begin{theorem}\label{thm_level_raising_for_GL(n)}
With the above assumptions, fix a partition $n = n_1 + n_2$ and let $\pi_1, \pi_2$ be cuspidal, conjugate self-dual automorphic representations of $\GL_{n_1}(\bA_F)$, $\GL_{n_2}(\bA_F)$ such that $\pi = \pi_1 \boxplus \pi_2$ is regular algebraic and $\iota$-ordinary. Suppose that the following conditions are satisfied:
\begin{enumerate}
    \item If $w$ is a place of $F$ such that $\pi_w$ is ramified, then $w$ is split over $F^+$.
    \item The representations $\pi_1, \pi_2$ satisfy condition (1) of Theorem \ref{thm_endoscopic_forms_exist}.
    \item\label{ass_LR_genericity} There exists a finite place $w \nmid S$ of $F$, split over $F^+$, such that there is an isomorphism $\overline{r_\iota(\pi)}|_{G_{F_w}}^{ss} \cong \oplus_{i=1}^n \overline{\chi}_i$, where $\overline{\chi}_1, \dots, \overline{\chi}_n$ are unramified characters such that if $i \neq j$ then $\overline{\chi}_i / \overline{\chi}_j \neq \epsilon$. 
    \item There are unramified characters $\psi_1, \psi_2 : F_{\wv_0}^\times \to \bC^\times$ such that $\pi_{1, \wv_0} \cong \St_{n_1}(\psi_1)$, $\pi_{2, \wv_0} \cong \St_{n_2}(\psi_2)$, and the character $\iota^{-1}( \psi_1 \psi_2^{-1} | \cdot |^{-n/2} )$ takes values in $1 + \m_{\overline{\bZ}_p}$.
    \item If $w$ is a $q$-adic place of $F$ not lying above $v_0$, then $\pi_w$ is unramified.
\end{enumerate}
Then there exists a RACSDC $\iota$-ordinary automorphic representation $\Pi$ of $\GL_n(\bA_F)$ such that $\overline{r_\iota(\Pi)} \cong \overline{r_\iota(\pi)}$ and $\Pi_{\wv_0}$ is an unramified twist of $\St_n$. Moreover, if $w$ is a $q$-adic place of $F$ not lying above $v_0$, then $\Pi_w$ is unramified; and if $w$ is any place of $F$ such that $\Pi_w$ is ramified, then $w$ is split over $F^+$.
\end{theorem}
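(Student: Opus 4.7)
My plan is to combine the main technical inputs of the paper: the auxiliary twist by a non-classical $p$-adic character $\psi$ (which smooths over the mod-$p$ degeneracies), endoscopic descent in families (Proposition \ref{prop_non-classical_twisted_endoscopic_descent} and Corollary \ref{cor_non_classical_twisted_endoscopic_descent}), and concentration in a single cohomological degree for the derived tensor product with Drinfeld cohomology (Theorem \ref{thm_torsion_in_derived_tensor_product}).

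After enlarging $E$, the Steinberg hypothesis (4) allows me to invoke Lemma \ref{lem_existence_of_p-adic_character} and produce a continuous character $\psi : G_{F, S_p} \to 1 + \varpi \cO$ with $\psi \psi^c = 1$ and $\psi(\Frob_{\wv_0})$ equal to the unique scalar making the Frobenius eigenvalues of $\rho_\psi := (\rho_1 \otimes \psi) \oplus \rho_2$ at $\wv_0$ form a Steinberg ladder $\{ q_{\wv_0}^i \beta \}_{i=0}^{n-1}$; by construction the hypothesis of Corollary \ref{cor_non_classical_twisted_endoscopic_descent} at $v_0$ is satisfied. Let $S$ contain $S_p$, $v_0$, and all places of $F^+$ below ramified places of $\pi$, and choose $K^p \in \cJ_S^p$ with $K_{v_0} = \iota_{\wv_0}^{-1}(\Iw_{\wv_0})$ and $\pi_{\wv}^{\iota_\wv(K_v)} \neq 0$ for $v \in S - S_p$. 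Proposition \ref{prop_non-classical_twisted_endoscopic_descent}, applied to $\pi_\psi = (\pi_1 \otimes \iota \psi) \boxplus \pi_2$ (whose endoscopic-parity hypothesis holds by our assumption (2)), yields an $\cO$-algebra homomorphism $f_\psi : \T^S(H_{\lambda_{\pi_\psi}}(K^p)) \to \cO$ realizing $\rho_\psi$. Let $\p = \ker f_\psi$ and let $\m \subset \T^S$ be the maximal ideal below $\p$; as $\psi \equiv 1 \pmod{\varpi}$, this $\m$ is the Hecke ideal of $\overline{r_\iota(\pi)}$.

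Hypothesis (\ref{ass_LR_genericity}) supplies the genericity required by Theorem \ref{thm_torsion_in_derived_tensor_product}, giving concentration in degree $n-1$ of the complex $R \Gamma_c(\Omega_{F_{\wv_0}}^{n-1, ca}, k) \otimes^{\bL}_{k[\GL_n(F_{\wv_0})^0]} \cS$, where $\cS = \varinjlim_{V_{v_0}} S(K^{p, v_0} V_{v_0}, k)_\m$. Since $\cH_{\lambda_{\pi_\psi}, \m} \otimes_\cO k \cong \cS$ by Corollary \ref{cor_vanishing_of_derived_tensor_product}(2), a Nakayama-for-complexes argument as in the proof of Corollary \ref{cor_vanishing_of_derived_tensor_product}(3) lifts the concentration to $R \Gamma_c(\Omega_{F_{\wv_0}}^{n-1, ca}, E) \otimes^{\bL}_{E[\GL_n(F_{\wv_0})^0]} \cH_{\lambda_{\pi_\psi}, \m}[1/p]$, and the decomposition of Theorem \ref{thm_decomposition_of_R_Gamma_Omega} then forces each summand $\pi_{i+1, 1, \dots, 1} \otimes^{\bL}_{E[\GL_n(F_{\wv_0})^0]} \cH_{\lambda_{\pi_\psi}, \m}[1/p]$ to be concentrated in cohomological degree $i - (n-1)$. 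Specializing to $i = n-1$, so $\pi_{i+1, 1, \dots, 1} = \pi_n = \St_n$, the complex is concentrated in degree $0$. Localizing further at $\p$, Corollary \ref{cor_non_classical_twisted_endoscopic_descent} restricts the Iwahori-bearing irreducible subquotients of $\cH_{\lambda_{\pi_\psi}, \p}[1/p]$ to unramified twists of $\pi_n$ and $\pi_{n_1, n_2}$, while Theorem \ref{thm_homology_of_G_0_elliptic_representations} computes their contributions to $\pi_n \otimes^{\bL}_{\GL_n(F_{\wv_0})^0} (-)$ in degrees $0$ and $-1$ respectively; the concentration in degree $0$ therefore rules out the $\pi_{n_1, n_2}$-type contributions, and since $\cH_{\lambda_{\pi_\psi}, \p}[1/p]$ is nonzero some Steinberg twist must actually appear as a subquotient.

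To extract the classical $\Pi$, I will use the weight-independence of $\cH_\lambda \otimes_\cO k$ (Corollary \ref{cor_vanishing_of_derived_tensor_product}(2)) together with Hida theory (Proposition \ref{prop_Hida_theory}(3)) at the classical algebraic weight $\lambda_\pi$ to transfer the Steinberg subquotient to a classical ordinary automorphic representation $\sigma$ of $G(\bA_{F^+})$ with Steinberg local component at $v_0$ and Hecke eigenvalues congruent to those of $\pi$ mod $\varpi$; the base change theorem (Theorem \ref{thm_base_change}) then delivers the desired conjugate self-dual, cuspidal $\Pi$ of $\GL_n(\bA_F)$ with $\Pi_{w_0}$ an unramified twist of $\St_n$ and $\overline{r_\iota(\Pi)} \cong \overline{r_\iota(\pi)}$, cuspidality being forced by the Steinberg local component. \textbf{The main obstacle} will be this final transfer: the $\psi$-twisted analysis takes place at the potentially non-classical weight $\lambda_{\pi_\psi}$, while the target $\Pi$ must arise from the classical weight $\lambda_\pi$. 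Propagating Steinberg-appearance across the $p$-adic ordinary family will require a careful argument exploiting the weight-independence of the residual space together with the finiteness properties of the relevant derived tensor products (Theorem \ref{thm_tor_is_fg}).
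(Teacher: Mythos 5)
Your proposal assembles exactly the right ingredients (the auxiliary character $\psi$ from Lemma \ref{lem_existence_of_p-adic_character}, the descent results of Proposition \ref{prop_non-classical_twisted_endoscopic_descent} and Corollary \ref{cor_non_classical_twisted_endoscopic_descent}, and the concentration statement of Theorem \ref{thm_torsion_in_derived_tensor_product}), and the analysis you carry out at the non-classical weight $\lambda_{\pi_\psi}$ is essentially sound. But the step you flag as ``the main obstacle'' is a genuine gap, not a technicality. There is no way to ``transfer the Steinberg subquotient'' from $\cH_{\lambda_{\pi_\psi}, \p}[1/p]$ to the classical weight $\lambda_\pi$ using only the identification $\cH_\lambda \otimes_\cO k \cong \cS$ of mod-$p$ fibres: that identification says nothing about the individual irreducible constituents of the characteristic-zero fibres $\cH_\lambda[1/p]$, which genuinely vary with $\lambda$, and it is only at a locally dominant algebraic weight that Hida theory (Proposition \ref{prop_Hida_theory}) and Theorem \ref{thm_base_change} convert a Steinberg constituent into the desired classical cuspidal $\Pi$. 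As written, your argument produces a Steinberg constituent of a space of non-classical $p$-adic forms and then stops one step short of the theorem.

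The way to close the gap --- and this is how the paper argues --- is to transfer across weights not a constituent but the (non)vanishing of the derived tensor product, which is detected on the weight-independent mod-$p$ fibre. Suppose no classical $\Pi$ as in the conclusion exists. Then $\cH_{\lambda_\pi}[1/p]$, a semisimple module all of whose constituents are generic (remark after Corollary \ref{cor_Gal_reps_for_G}), contains no unramified twist of $\St_n$, so by Theorem \ref{thm_homology_of_G_0_elliptic_representations} the group $H^{n-1}(R\Gamma_c(\Omega_{F_{w_0}}^{n-1, ca}, E) \otimes^{\bL} \cH_{\lambda_\pi}[1/p])$ vanishes; combined with your concentration-and-freeness step over $\cO$, the whole complex vanishes over $\cO$, hence over $k$, i.e.\ $R\Gamma_c(\Omega_{F_{w_0}}^{n-1, ca}, k) \otimes^{\bL}_{k[\GL_n(F_{w_0})^0]} \cS = 0$. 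This vanishing propagates to every weight by Corollary \ref{cor_vanishing_of_derived_tensor_product}, giving $\cH_{\lambda_{\pi_\psi}}[1/p] \otimes^{\bL} \pi_{i+1, 1, \dots, 1} = 0$ for all $i$, and that contradicts the non-vanishing you have already established at $\lambda_{\pi_\psi}$ (either a $\pi_n$-twist contributes to $H^{-(n-1)}$ of the derived tensor with the trivial representation, or a $\pi_{n_1,n_2}$-twist contributes to $H^{-n_1}$ of the derived tensor with $\pi_{n_2+1,1,\dots,1}$). In short: the quantity that is constant in the weight variable is the $\cO$-rank of $H^{n-1}$ of the derived tensor product, and it is this, not a particular constituent, that must be carried from $\lambda_{\pi_\psi}$ back to $\lambda_\pi$; with that reformulation your argument becomes the paper's proof read contrapositively. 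A smaller caution: excluding the $\pi_{n_1,n_2}$-type constituents from the degree-$0$ concentration of $\pi_n \otimes^{\bL} \cH$ requires the sub-object injection argument together with the degree bounds supplied by Corollary \ref{cor_non_classical_twisted_endoscopic_descent}; the paper instead pairs against $\pi_{n_2+1,1,\dots,1}$, where the two candidate constituents contribute in the separated degrees $-(n_1-1)$ and $-n_1$.
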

\begin{proof}
We can choose $S$, $\widetilde{S}$, and $K^p \in \cJ_S^p$ so that the following conditions are satisfied:
\begin{itemize}
    \item $S$ contains $S_p$, $v_0$, and each place below which $\pi$ is ramified. $S$ does not contain any $q$-adic places of $F^+$ apart from $v_0$.
    \item For all $v \in S - S_p$, $\pi_\wv^{\iota_\wv(K_v)} \neq 0$ and $\iota_{\wv_0}(K_{v_0}) = \Iw_{\wv_0}$.
\end{itemize}
Suppose for contradiction that the conclusion of the theorem does not hold. By Proposition \ref{prop_ordinary_endoscopic_descent}, we can find (after possibly enlarging $E$) a locally dominant algebraic character $\lambda_\pi \in \Hom_{cts}(\mathfrak{t}_p, \cO^\times)$ and a maximal ideal $\m \subset \T^S$ in the support of $S_{\lambda_\pi}(K^p, k) \cong S(K^p, k)$ such that for each place $w \nmid S$ of $F$ which is split over $F^+$, $P_w(X) \text{ mod }\m = \det(X - \overline{r_\iota(\pi)}(\Frob_w))$. Thus $\cS$, $\cH_\lambda$ are defined for any $\lambda \in \Hom_{cts}(\mathfrak{t}_p, \cO^\times)$, and Corollary \ref{cor_vanishing_of_derived_tensor_product} shows that $\cH_\lambda \otimes_\cO k \cong \cS$.

We  claim that 
\begin{equation}\label{eqn_vanishing_of_derived_tensor_product} R \Gamma_c ( \Omega_{F_{w_0}}^{n-1, ca}, k) \otimes^{\bL}_{k[\GL_n(F_{w_0})^0]} \cS = 0. 
\end{equation}
Theorem \ref{thm_torsion_in_derived_tensor_product} shows that the group
\[ H^i(R \Gamma_c ( \Omega_{F_{w_0}}^{n-1, ca}, k) \otimes^{\bL}_{k[\GL_n(F_{w_0})^0]} \cS) \]
is non-zero only if $i = n-1$. It follows that the group
\[ H^i(R \Gamma_c ( \Omega_{F_{w_0}}^{n-1, ca}, \cO) \otimes^{\bL}_{\cO[\GL_n(F_{w_0})^0]} \cH_{\lambda_\pi}) \]
is zero if $i \neq n-1$ and is a finite free $\cO$-module if $i = n-1$. However, if 
\[ H^{n-1}(R \Gamma_c ( \Omega_{F_{w_0}}^{n-1, ca}, E) \otimes^{\bL}_{E[\GL_n(F_{w_0})^0]} \cH_{\lambda_\pi}[1/p]) \neq 0\]
then Theorem \ref{thm_homology_of_G_0_elliptic_representations} shows that $\cH_{\lambda_\pi}[1/p]$ contains an unramified twist of the Steinberg representation of $\GL_n(F_{w_0})$, contradicting our assumption that there is no $\iota$-ordinary automorphic representation  $\Pi$ of $\GL_n(\bA_F)$ such that $\overline{r_\iota(\Pi)} \cong \overline{r_\iota(\pi)}$ and $\Pi_{w_0}$ is an unramified twist of $\St_n$. (Note that $\cH_{\lambda_\pi}[1/p]$ is a semi-simple $E[\GL_n(F_{w_0})]$-module, and all of its subquotients are generic, by the remark after Corollary \ref{cor_Gal_reps_for_G}.) This proves the claimed equality (\ref{eqn_vanishing_of_derived_tensor_product}). Applying Corollary \ref{cor_vanishing_of_derived_tensor_product}, we deduce the vanishing
\begin{equation}\label{eqn_explicit_vanishing_of_tensor_products} \cH_\lambda[1/p] \otimes^{\bL}_{E[\GL_n(F_{w_0})^0]} \pi_{i+1, 1, \dots, 1} = 0 
\end{equation}
for each $i = 0, \dots, n-1$ and for any $\lambda \in \Hom_{cts}(\mathfrak{t}_p, \cO^\times)$.

Using Lemma \ref{lem_existence_of_p-adic_character} and after possibly enlarging $\cO$, we can find a character $\psi : G_{F, S_p} \to \cO^\times$ with the following properties:
\begin{itemize}
    \item $\psi \psi^c = 1$.
    \item There is an equality
    \[ | \cdot |^{(1-n_1)/2} \psi_1 (\iota \psi \circ \Art_{F_{\wv_0}}) = | \cdot |^{(1 + n_2)/2} \psi_2 \]
    of characters $F_{\wv_0}^\times \to \bC^\times$.
\end{itemize}
According to Proposition \ref{prop_non-classical_twisted_endoscopic_descent} and Corollary \ref{cor_twisted_endoscopic_descent}, there is a character $\lambda_{\pi_\psi} \in \Hom_{cts}(\mathfrak{t}_p, \cO^\times)$ and a homomorphism $\bT^S(H_{\lambda_{\pi_\psi}}(K^p)) \to \cO$ of kernel $\mathfrak{p}$ with the following properties:
\begin{itemize}
    \item For each place $w \nmid S$ of $F$ which splits over $F^+$, the image of $P_w(X)$ in $\cO[X]$ equals $\det(X - ((r_\iota(\pi_1 | \cdot |^{(n_1 - n)/2}) \otimes \psi) \oplus r_\iota(\pi_2 | \cdot |^{(n_2 - n)/2})(\Frob_w))$.
    \item The only Iwahori-spherical subquotients of the admissible $E[\GL_n(F_{w_0})]$-module $\cH_{\lambda, \p}$ are isomorphic to one of $\pi_n(\iota^{-1} \psi_2 | \cdot |^{n_1/2}) = \St_n(\iota^{-1} \psi_2 | \cdot |^{n_1/2})$ and $\pi_{n_1, n_2}(\iota^{-1} \psi_2 | \cdot |^{n_1/2})$.
\end{itemize}
Since $\cH_{\lambda_{\pi_\psi}, \p}^{\Iw_{\wv_0}} = H_{\lambda_{\pi_\psi}}(K^p)_\p$ is non-zero, there must exist at least one such Iwahori-spherical subquotient of this representation. It follows that there exists either an injection $\pi_n(\iota^{-1} \psi_2 | \cdot |^{n_1/2}) \to \cH_{\lambda_{\pi_\psi}, \p}[1/p] $ or an injection $\pi_{n_1, n_2}(\iota^{-1} \psi_2 | \cdot |^{n_1/2}) \to \cH_{\lambda_{\pi_\psi}, \p}[1/p]$ of $E[\GL_n(F_{w_0})]$-modules. In either case, we will derive a contradiction.

Suppose first that there is an injection $\pi_n(\iota^{-1} \psi_2 | \cdot |^{n_1/2}) \to \cH_{\lambda_{\pi_\psi}, \p}[1/p]$. Theorem \ref{thm_homology_of_G_0_elliptic_representations} shows that 
\[ H^{i}( \pi_n(\iota^{-1} \psi_2 | \cdot |^{n_1/2}) \otimes^{\bL}_{E[\GL_n(F_{w_0})^0]} E) \]
is non-zero only if $i = -(n-1)$ (in which case it is a 1-dimensional $E$-vector space), while 
\[ H^{i}( \pi_{n_1, n_2}(\iota^{-1} \psi_2 | \cdot |^{n_1/2}) \otimes^{\bL}_{E[\GL_n(F_{w_0})^0]} E) \]
is non-zero only if $i = -(n-2)$ (in which case it is a 1-dimensional $E$-vector space). Thus $H^i(\cH_{{\lambda_{\pi_\psi}}, \p}[1/p]  \otimes^{\bL}_{E[\GL_n(F_{w_0})^0]} E)$ is zero in degrees $i < -(n-1)$ and  there is an injection
\begin{multline*}  H^{-(n-1)}( \pi_n(\iota^{-1} \psi_2 | \cdot |^{n_1/2}) \otimes^{\bL}_{E[\GL_n(F_{w_0})^0]} E) = E \\
\to H^{-(n-1)}( \cH_{\lambda_{\pi_\psi}, \p}[1/p]  \otimes^{\bL}_{E[\GL_n(F_{w_0})^0]} E) 
\end{multline*}
contradicting (\ref{eqn_explicit_vanishing_of_tensor_products}). 

Suppose instead that there is an injection $\pi_{n_1, n_2}(\iota^{-1} \psi_2 | \cdot |^{n_1/2}) \to \cH_{{\lambda_{\pi_\psi}}, \p}[1/p] $. Theorem \ref{thm_homology_of_G_0_elliptic_representations} shows that 
\[ H^i( \pi_n(\iota^{-1} \psi_2 | \cdot |^{n_1/2}) \otimes^{\bL}_{E[\GL_n(F_{w_0})^0]} \pi_{n_2+1, 1, \dots, 1}) \]
is non-zero only if $i = -(n_1-1)$ (in which case it is a 1-dimensional $E$-vector space), while 
\[ H^i( \pi_{n_1, n_2}(\iota^{-1} \psi_2 | \cdot |^{n_1/2}) \otimes^{\bL}_{E[\GL_n(F_{w_0})^0]} \pi_{n_2+1, 1, \dots, 1}) \]
is non-zero only if $i = -n_1$ (in which case it is a 1-dimensional $E$-vector space). Thus $H^i(\cH_{{\lambda_{\pi_\psi}}, \p}[1/p]  \otimes^{\bL}_{E[\GL_n(F_{w_0})^0]} \pi_{n_2+1, 1, \dots, 1})$ is zero in degrees $i < - n_1$ and  there is an injection
\begin{multline*}  H^{n_1}( \pi_{n_1, n_2}(\iota^{-1} \psi_2 | \cdot |^{n_1/2}) \otimes^{\bL}_{E[\GL_n(F_{w_0})^0]} \pi_{n_2+1, 1, \dots, 1}) = E \to \\H^{n_1}( \cH_{{\lambda_{\pi_\psi}}, \p}[1/p]  \otimes^{\bL}_{E[\GL_n(F_{w_0})^0]} \pi_{n_2+1, 1, \dots, 1}),
\end{multline*}
again contradicting (\ref{eqn_explicit_vanishing_of_tensor_products}). This completes the proof.
\end{proof}

\section{Application to symmetric power functoriality}

The following conjecture is \cite[Conjecture 3.2]{Clo14}.
\begin{conjecture}[$\mathrm{TP}_r$]
Let $r \geq 1$. Let $F$ be a CM field and let $(\pi_1, \psi_1)$, $(\pi_2, \psi_2)$ be RAECSDC automorphic representations of $\GL_2(\bA_F)$, $\GL_r(\bA_F)$, respectively. Fix a prime $p$ and isomorphism $\iota : \overline{\bQ}_p \to \bC$. If the representation $\rho = r_\iota(\pi_1) \otimes r_\iota(\pi_2)$ is irreducible and Hodge--Tate regular, then there exists a RAECSDC automorphic representation $(\pi_3, \psi_3)$ of $\GL_{2r}(\bA_F)$ such that $r_\iota(\pi_3) \cong \rho$.
\end{conjecture}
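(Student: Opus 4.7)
To prove $\mathrm{TP}_r$, the plan is to establish automorphy of the $2r$-dimensional conjugate self-dual Galois representation $\rho = r_\iota(\pi_1) \otimes r_\iota(\pi_2)$ by a Taylor--Wiles style modularity lifting argument. The main input will be a RACSDC automorphic representation $\pi_0$ of $\GL_{2r}(\bA_F)$ whose residual Galois representation agrees with $\bar\rho$; once $\pi_0$ is in hand, a modularity lifting theorem will promote this congruence to full automorphy of $\rho$, from which the character $\psi_3$ is recovered via the determinant and class field theory.

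\textbf{Step 1: Reduction to a favourable CM base field.} First, apply Arthur--Clozel soluble base change to $\pi_1$ and $\pi_2$ to pass to a soluble CM extension $F'/F$ such that $F'$ contains an imaginary quadratic field $F_0'$ in which $p$ splits, $F'/(F')^+$ is everywhere unramified, both $\pi_{1, F'}$ and $\pi_{2, F'}$ are $\iota$-ordinary, and the residual image of $\bar{r_\iota(\pi_1)}|_{G_{F'}} \otimes \bar{r_\iota(\pi_2)}|_{G_{F'}}$ is ``adequate'' in the sense required by the relevant $\GL_{2r}$ modularity lifting theorems (including the genericity condition~(\ref{ass_LR_genericity}) of Theorem~\ref{thm_level_raising_for_GL(n)}). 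The final conclusion over $F$ will be deduced from the conclusion over $F'$ by the converse to cyclic automorphic base change.

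\textbf{Step 2: Construction of a residually automorphic companion.} The heart of the argument is producing $\pi_0$ over $F'$ with $\bar{r_\iota(\pi_0)} \cong \bar\rho|_{G_{F'}}$. The natural strategy is to connect $\pi_{2, F'}$ to a RACSDC representation $\pi_2^\flat$ of $\GL_r(\bA_{F'})$ in the same mod-$p$ Hecke eigensystem, for which $\pi_{1, F'} \boxtimes \pi_2^\flat$ is already known to be automorphic. If one can arrange $\pi_2^\flat$ to be dihedral, i.e.\ automorphically induced from a Hecke character $\chi$ of a quadratic CM extension $M/F'$, then $\pi_{1, F'} \boxtimes \pi_2^\flat = \operatorname{AI}_{M/F'}(\pi_{1, M} \otimes \chi)$ is automorphic on $\GL_{2r}(\bA_{F'})$ by known cases of automorphic induction and base change. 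To traverse from the residual eigensystem of $\pi_{2, F'}$ to that of a dihedral form, the level-raising theorem of this paper (Theorem~\ref{thm_level_raising_for_GL(n)}) can be invoked iteratively to install Steinberg components at auxiliary split places, with complementary level-lowering arguments used to clear unwanted ramification. Once $\pi_2^\flat$ is reached, tensoring with $\pi_{1, F'}$ yields the companion $\pi_0$.

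\textbf{Modularity lifting and main obstacle.} With $\pi_0$ constructed, Step~3 applies an ordinary modularity lifting theorem for $2r$-dimensional conjugate self-dual Galois representations: the adequacy of $\bar\rho|_{G_{F'}}$ secured in Step~1 supports the Taylor--Wiles system, while the $\iota$-ordinary hypothesis on $\pi_1, \pi_2$ matches the ordinary local deformation condition at $p$, yielding automorphy of $\rho|_{G_{F'}}$ and then of $\rho$ after soluble descent. The essential obstacle is Step~2: the residual representation $\bar{r_\iota(\pi_2)}$ need not be dihedral, and there is no \emph{a priori} reason one can connect an arbitrary RAECSDC residual eigensystem for $\GL_r$ to a dihedral one through a chain of congruences preserving conjugate self-duality and $\iota$-ordinarity. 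A complete solution likely requires either a substantially strengthened automorphy-propagation mechanism for $\GL_r$, going well beyond Steinberg-type local modifications at a single auxiliary prime, or a modularity lifting theorem that operates directly on tensor product Galois representations and so bypasses the need for an independent companion entirely; both avenues lie beyond the machinery developed in this paper and represent the genuine content of the conjecture.
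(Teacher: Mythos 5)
There is nothing to compare your proposal against: the statement you were asked about is a \emph{conjecture} in this paper, not a theorem. It is \cite[Conjecture 3.2]{Clo14}, restated here verbatim and then used only as a hypothesis (in Theorem \ref{thm_implication_of_SP_n} and hence in Theorem \ref{introthm_conditional_symmetric_powers}); the paper offers no proof of it, and the author explicitly defers its resolution to later joint work with Newton \cite{New22a}. So the correct ``paper proof'' to match is empty, and any submission purporting to prove $\mathrm{TP}_r$ with the tools of this paper should be treated with suspicion from the outset.

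Your proposal is, to your credit, candid that it does not close: Step 2 is exactly where the mathematical content lies, and you concede that there is no mechanism available here to connect an arbitrary conjugate self-dual residual eigensystem for $\GL_r$ to a dihedral (or otherwise tensor-automorphic) one while preserving conjugate self-duality and $\iota$-ordinarity. That concession means the write-up is a strategy sketch with an admitted gap, not a proof. Beyond that central gap, note two further problems with the outline as stated. First, Theorem \ref{thm_level_raising_for_GL(n)} raises level within a fixed residual eigensystem built from a sum $\pi_1 \boxplus \pi_2$ of two cuspidal pieces; it is not an automorphy-propagation device that can move you between genuinely different residual eigensystems, so ``iterating'' it, together with unspecified level-lowering, does not produce the congruence chain you need. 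Second, even granting a residually automorphic companion, the modularity lifting input in Step 3 requires adequacy (or an irreducibility/decomposition hypothesis of the precise shape in results like Theorem \ref{thm_modified_ALT}) for $\overline{r_\iota(\pi_1)} \otimes \overline{r_\iota(\pi_2)}$, which can fail badly for arbitrary $\pi_1, \pi_2$, and your Step 1 cannot simply ``arrange'' it by soluble base change. The conjecture was ultimately attacked in \cite{New22a} by methods (analytic continuation of functoriality) quite different from the level-raising machinery developed here, which is consistent with your own closing assessment that the genuine content of $\mathrm{TP}_r$ lies beyond this paper.
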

The following conjecture is \cite[Conjecture 3.3]{Clo14} in the most general case $\mathbf{K} = \emptyset$.
\begin{conjecture}[$\mathrm{SP}_n$]
Let $n \geq 1$. Let $F^+$ be a totally real field, and let $(\pi, \chi)$ be a RAESDC automorphic representation of $\GL_2(\bA_{F^+})$ without CM. Then there exists a RAESDC automorphic representation $(\Pi, \psi)$ of $\GL_{n}(\bA_{F^+})$ such that for any prime $p$ and isomorphism $\iota : \overline{\bQ}_p \to \bC$, $r_\iota(\Pi) \cong \Sym^{n-1} r_\iota(\pi)$.
\end{conjecture}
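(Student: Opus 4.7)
Conditional on $\mathrm{TP}_r$ for all $r \geq 1$, I would carry out, in sharper form, the inductive program sketched in \cite{Clo14}, now using Theorem \ref{thm_level_raising_for_GL(n)} in place of the level-raising conjecture imposed there. The base cases ($n \leq 4$) are classical (Gelbart--Jacquet, Kim--Shahidi). For the inductive step, assuming $\mathrm{SP}_k$ for $k < n$, the plan is to show directly that the Galois representation $\rho := \Sym^{n-1} r_\iota(\pi)$ is automorphic by invoking an ordinary automorphy lifting theorem, which in turn requires exhibiting a suitably matched residual source.

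Concretely, after soluble base change I would reduce to the case that $\pi$ is a RAECSDC representation of $\GL_2(\bA_F)$ over a CM field $F = F^+ F_0$ with $p$ split in $F_0$, and use known level-raising in $\GL_2$ (plus further soluble base change) to arrange that $\pi$ is $\iota$-ordinary, that $\pi_{v_0}$ is an unramified twist of $\St_2$ at an auxiliary $q$-adic place $v_0$ (with $q$ split in $F_0$), and that $\overline{r_\iota(\pi)}$ has large enough image for the genericity hypothesis (\ref{ass_intro_generic}) of Theorem \ref{thm_level_raising_for_GL(n)} to hold at some further auxiliary place. Next I would choose a partition $n = n_1 + n_2$ with $n_i < n$, set $\pi_1 = \Sym^{n_1 - 1} \pi$ and $\pi_2 = \Sym^{n_2 - 1} \pi$ (both known automorphic, $\iota$-ordinary, and locally Steinberg at $v_0$ by induction), and apply Theorem \ref{thm_level_raising_for_GL(n)} to the isobaric sum $\pi_1 \boxplus \pi_2$. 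This yields a cuspidal $\iota$-ordinary RACSDC representation $\Pi$ of $\GL_n(\bA_F)$ with $\overline{r_\iota(\Pi)} \cong \overline{r_\iota(\pi_1)} \oplus \overline{r_\iota(\pi_2)}$ and $\Pi_{v_0}$ an unramified twist of $\St_n$. For a judicious choice of prime $p$ depending on $n$, the Clebsch--Gordan decomposition of $\Sym^{n-1}$ of the standard representation of $\SL_2$ over $\overline{\bF}_p$ becomes reducible in the desired form, delivering the residual isomorphism $\overline{\rho} \cong \overline{r_\iota(\Pi)}$. An ordinary automorphy lifting theorem in the spirit of \cite{Ger19} then promotes $\Pi$ to a characteristic-zero match for $\rho$, and descent to $F^+$ yields $\mathrm{SP}_n$.

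The conjecture $\mathrm{TP}_r$ is used in two ways: first, to construct the tensor-product automorphic representations ($\Sym^a \pi \times \Sym^b \pi$ and their descendants) that appear as auxiliary objects verifying the genericity and adequacy hypotheses of the lifting theorem; second, to control the residual image of $\Sym^{n-1} \overline{r_\iota(\pi)}$ well enough to run Taylor--Wiles patching in the residually reducible setting. The principal obstacle is precisely the simultaneous fulfilment of all of these hypotheses---the residual matching $\overline{\rho} \cong \overline{r_\iota(\pi_1 \boxplus \pi_2)}$, the large-image and adequacy conditions on $\overline{r_\iota(\pi)}$, the Steinberg-at-$v_0$ local component, and the genericity (\ref{ass_intro_generic}) at the auxiliary place---for \emph{arbitrary} $n$. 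It is exactly here that the full strength of Theorem \ref{thm_level_raising_for_GL(n)} is needed: the removal of the banality hypothesis (a) and the Jordan-block hypothesis (b) of the earlier \cite[Theorem 7.1]{Tho14} is indispensable, because in the inductive construction one has no control over the residue characteristic $p$ relative to $n$, nor over the ramification structure of $\overline{r_\iota(\Sym^{n_i - 1} \pi)}$ at $v_0$ for large $n_i$. Without $\mathrm{TP}_r$ one must fall back on the explicit low-rank constructions of Theorem \ref{introthm_unconditional_symmetric_powers}.
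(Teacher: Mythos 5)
Your overall architecture --- reduce to a CM field, arrange ordinarity and a Steinberg place, feed an isobaric sum into Theorem \ref{thm_level_raising_for_GL(n)}, and finish with an ordinary automorphy lifting theorem, with the induction on $n$ closed up via Bertrand's postulate --- is indeed the paper's strategy (Theorem \ref{thm_implication_of_SP_n}, Proposition \ref{prop_SP_LR}, Theorem \ref{thm_implication_for_SP_n_with_conditions}, Corollary \ref{cor_unconditional_reg_alg_powers}(2)). But there is a genuine gap at the heart of your inductive step: the residual decomposition you propose is wrong. Writing $n = p + r$ with $0 < r < p$, the semisimplification of $\Sym^{p+r-1}$ of the standard representation of $\GL_2$ in characteristic $p$ is \emph{not} a sum of two smaller symmetric powers; it is
\[ (\Sym^{p+r-1}\overline{\rho})^{ss} \cong \bigl({}^\varphi\overline{\rho}\otimes\Sym^{r-1}\overline{\rho}\bigr)\oplus\bigl((\det\overline{\rho})^{r}\otimes\Sym^{p-r-1}\overline{\rho}\bigr), \]
i.e.\ a $2r$-dimensional \emph{tensor product} constituent (generically irreducible when the image is large) plus a twist of a $(p-r)$-dimensional symmetric power. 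If you take $\pi_1 = \Sym^{n_1-1}\pi$ and $\pi_2 = \Sym^{n_2-1}\pi$, then $\overline{r_\iota(\pi_1\boxplus\pi_2)}$ does not agree with $(\Sym^{n-1}\overline{r_\iota(\pi)})^{ss}$ --- the constituents, their dimensions and their Serre weights are all wrong --- so the representation $\Pi$ produced by level raising is not a residual source for $\rho = \Sym^{n-1}r_\iota(\pi)$, and the final automorphy lifting step fails. There is no ``judicious choice of $p$'' that produces a Clebsch--Gordan-type splitting into symmetric powers; the correct statement is the Steinberg-tensor-product-type decomposition above.

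This also shows that you have misidentified the role of $\mathrm{TP}_r$: it is not an auxiliary device for verifying genericity or adequacy, but the mechanism that makes the $2r$-dimensional constituent ${}^\varphi r_\iota(\pi')\otimes\Sym^{r-1}r_\iota(\pi)$ automorphic, where $\pi'$ is a congruent $\iota$-ordinary form with Hodge--Tate weights $\{0,p\}$ produced by the Khare--Wintenberger method so that its reduction realises the Frobenius twist. The inductive implication is therefore $\mathrm{SP}_{p-r}+\mathrm{SP}_{r}+\mathrm{TP}_r\Rightarrow\mathrm{SP}_{p+r}$: one needs $\mathrm{SP}_{p-r}$ for the second constituent, $\mathrm{SP}_r$ and $\mathrm{TP}_r$ for the first, while adequacy and primitivity of the residual image are supplied by Lemma \ref{lem_tensor_adequacy}, not by $\mathrm{TP}_r$. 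With the decomposition corrected, the remainder of your outline does track the paper's argument.
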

The main result we will prove here is the following (cf. \cite[Theorem 3.4]{Clo14}:
\begin{theorem}\label{thm_implication_of_SP_n}
Let $p \geq 5$ be a prime, and let $0 < r < p$ be an integer. Then the implication 
\[ \mathrm{SP}_{p-r} + \mathrm{SP}_{r} + \mathrm{TP}_r \Rightarrow \mathrm{SP}_{p+r} \]
holds. 
\end{theorem}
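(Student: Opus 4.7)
The plan is to adapt the argument of \cite[Theorem 3.4]{Clo14}, where the same implication was established conditional on a level-raising hypothesis that Theorem \ref{thm_level_raising_for_GL(n)} of the present paper now makes unconditional (in the generality needed here). The broad outline: use $\mathrm{SP}_{p-r}$, $\mathrm{SP}_r$, and $\mathrm{TP}_r$ to build an isobaric automorphic representation $\Pi_0$ of $\GL_{p+r}(\bA_F)$ (for $F$ a suitable CM extension of $F^+$) whose Galois representation is congruent modulo $p$ to $\Sym^{p+r-1} r_\iota(\pi)|_{G_F}$; upgrade $\Pi_0$ to a cuspidal representation $\Pi$ with the same residual Galois representation by applying Theorem \ref{thm_level_raising_for_GL(n)} iteratively (per the remark following Theorem \ref{thm_intro_thm}); then deduce the automorphy of $\Sym^{p+r-1}\pi$ from that of $\Pi$ via an automorphy lifting theorem.

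The combinatorial input driving the mod $p$ congruence is the structure of $\Sym^{p+r-1}V$ as a $\GL_2$-module in characteristic $p$. There is a short exact sequence
\[ 0 \to \Sym^{p-r-1}V \otimes \det{}^r \to \Sym^{p+r-1}V \to \Sym^{r-1}V \otimes V^{(p)} \to 0, \]
and, after passing to a sufficiently small open subgroup of $G_{F^+}$ so that $\bar\rho$ takes values in $\GL_2(\mathbb{F}_p)$ (which is where the Frobenius twist $V^{(p)}$ becomes identified with $V$), the second factor further decomposes by Clebsch--Gordan as $\Sym^r V \oplus (\Sym^{r-2}V \otimes \det)$, valid since $r < p$. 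This produces a three-term semisimplification; the details of matching each piece to an automorphic representation produced by $\mathrm{SP}_{p-r}$, $\mathrm{SP}_r$, and $\mathrm{TP}_r$, after appropriate Hecke-character twists and recombination of automorphic pieces along the lines of \cite[\S 3]{Clo14}, are standard but delicate.

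With $\Pi_0$ in hand, apply Theorem \ref{thm_level_raising_for_GL(n)} at an auxiliary prime-to-$p$ split place $w_0$, iterating over the summand decomposition, to produce a cuspidal $\iota$-ordinary RACSDC automorphic representation $\Pi$ of $\GL_{p+r}(\bA_F)$ with $\overline{r_\iota(\Pi)} \cong \overline{r_\iota(\Pi_0)}$ and $\Pi_{w_0}$ an unramified twist of $\St_{p+r}$. Then $\Sym^{p+r-1}r_\iota(\pi)|_{G_F}$ is itself a conjugate self-dual, Hodge--Tate regular, $\iota$-ordinary lift of $\overline{r_\iota(\Pi)}$, and an automorphy lifting theorem (e.g.\ the one in \cite{All18}) identifies it with $r_\iota(\Pi')$ for some automorphic $\Pi'$; strong multiplicity one and cyclic descent from $F$ to $F^+$ then yield $\mathrm{SP}_{p+r}$.

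The principal technical obstacle is not any single step but the coordinated arrangement of local conditions: ensuring the image of $\bar\rho|_{G_F}$ lies in $\GL_2(\mathbb{F}_p)$, so the mod $p$ congruence takes its clean form; arranging the genericity hypothesis (\ref{ass_LR_genericity}) for level raising at some split place; securing $\iota$-ordinariness of every summand of $\Pi_0$; and verifying that all such conditions are preserved through each iteration. These arrangements, handled in detail in \cite[\S 3]{Clo14} (subject there to the earlier conjectural level-raising input), constitute the real substance of the argument; the contribution here is simply to make each iterative application of level raising unconditional by appealing to Theorem \ref{thm_level_raising_for_GL(n)}.
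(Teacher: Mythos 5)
Your top-level architecture is right --- build an isobaric automorphic representation whose Galois representation is congruent mod $p$ to $\Sym^{p+r-1} r_\iota(\pi)$ from the three hypotheses, apply Theorem \ref{thm_level_raising_for_GL(n)} to replace it by a cuspidal congruent form that is Steinberg at an auxiliary split place, then conclude by an automorphy lifting theorem and soluble descent; this is indeed the shape of Proposition \ref{prop_SP_LR}, Theorem \ref{thm_modified_ALT} and Theorem \ref{thm_implication_for_SP_n_with_conditions}. But the central combinatorial step, the mod $p$ decomposition of $\Sym^{p+r-1}$, is wrong as you describe it, and the error propagates through the rest of the argument.

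First, you cannot identify $V^{(p)}$ with $V$ by ``passing to a sufficiently small open subgroup of $G_{F^+}$''. The residual image is deliberately arranged (Lemma \ref{lem_tensor_adequacy}, and the hypotheses of Proposition \ref{prop_SP_LR}) to contain a conjugate of $\SL_2(\bF_{p^a})$ with $a \geq a_0(p) \geq 3$ --- this largeness is exactly what adequacy and the genericity hypothesis (\ref{ass_LR_genericity}) require --- and restricting to an open subgroup does not shrink the coefficient field of the image: by Lemma \ref{lem_tensor_adequacy}(3), any subgroup of index $< 2p$ still contains $\SL_2(\bF_{p^a})$. So the Frobenius twist is genuinely nontrivial. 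Second, even granting your Clebsch--Gordan refinement, the three resulting pieces are $\Sym^{p-r-1}V \otimes \det{}^r$, $\Sym^{r}V$ and $\Sym^{r-2}V \otimes \det$; the hypotheses $\mathrm{SP}_{p-r}$ and $\mathrm{SP}_r$ give automorphy of $\Sym^{p-r-1}\rho$ and $\Sym^{r-1}\rho$, not of $\Sym^{r}\rho$ or $\Sym^{r-2}\rho$, so you cannot match these pieces to automorphic representations --- and $\mathrm{TP}_r$ is never actually used in your scheme, which should be a warning sign. The correct move (as in \cite[Theorem 4.2]{Clo14} and Proposition \ref{prop_SP_LR}) is to keep the two-term semisimplification $(\Sym^{r-1}V \otimes V^{(p)}) \oplus (\Sym^{p-r-1}V \otimes \det{}^r)$ and to realise the $2r$-dimensional piece automorphically by applying $\mathrm{TP}_r$ to ${}^{\varphi} r_\iota(\pi') \otimes \Sym^{r-1} r_\iota(\pi)$, where $\pi'$ is produced by the Khare--Wintenberger method so that $\overline{r_\iota(\pi')} \cong \overline{r_\iota(\pi)}$, $\pi'_{w_0}$ is an unramified twist of Steinberg, and $\mathrm{HT}_\tau(r_\iota(\pi')) = \{0, p\}$; this last condition makes the tensor product Hodge--Tate regular (so $\mathrm{TP}_r$ applies) and its reduction realises the Frobenius twist. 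A further application of Khare--Wintenberger makes the $2r$-dimensional piece Steinberg at $w_0$, after which a single application of Theorem \ref{thm_level_raising_for_GL(n)} with $(n_1, n_2) = (2r, p-r)$ suffices --- no iteration over three summands, which would in any case require re-verifying ordinarity, genericity and the condition on $\iota^{-1}(\psi_1\psi_2^{-1}|\cdot|^{-n/2})$ at each intermediate stage.
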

Before proceeding to the proof of Theorem \ref{thm_implication_of_SP_n} we give a corollary. 
\begin{corollary}\label{cor_unconditional_reg_alg_powers}
\begin{enumerate}
    \item $\mathrm{SP}_n$ holds for all $n = 1, \dots, 10$, and for all even integers $n = 12, \dots, 26$.
    \item Assume $\mathrm{TP}_r$ for all $r \geq 1$. Then $\mathrm{SP}_n$ holds for all $n \geq 1$.
\end{enumerate}
\end{corollary}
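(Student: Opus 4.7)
The plan is to deduce Corollary \ref{cor_unconditional_reg_alg_powers} from Theorem \ref{thm_implication_of_SP_n} by iteration, taking as inputs the known unconditional cases of $\mathrm{SP}_n$ for $n \leq 9$ (established by Gelbart--Jacquet, Kim--Shahidi, Kim, and earlier papers in the Clozel--Thorne programme) together with the known cases $\mathrm{TP}_1$ (trivial) and $\mathrm{TP}_3$ (Kim--Shahidi). The content of Theorem \ref{thm_implication_of_SP_n} is precisely the inductive step $\mathrm{SP}_{p-r} + \mathrm{SP}_r + \mathrm{TP}_r \Rightarrow \mathrm{SP}_{p+r}$, so the corollary reduces to producing, for each target $n$, a suitable pair $(p, r)$ with $p \geq 5$ prime and $0 < r < p$ such that the prerequisite cases of $\mathrm{SP}$ and $\mathrm{TP}$ are available.

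For part (1), the cases $n = 1, \dots, 9$ are prior results, so only $n = 10$ and the even values $n = 12, 14, \dots, 26$ remain. I would dispatch these in turn, at each stage choosing $r \in \{1, 3\}$ so that only $\mathrm{TP}_1$ or $\mathrm{TP}_3$ is needed. An elementary check shows that every such $n$ admits a decomposition $n = p + r$ in which $p$ is prime, $r \in \{1, 3\}$, and both $\mathrm{SP}_r$ and $\mathrm{SP}_{p - r}$ are either classically known or have been established earlier in the chain (for instance, $(p, r) = (7, 3)$ for $n = 10$, then $(11, 1)$ for $n = 12$, then $(13, 1)$ and $(13, 3)$ for $n = 14, 16$, and so on through $(23, 3)$ for $n = 26$).

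For part (2), I would argue by strong induction on $n$, with the same base cases $n \leq 9$ supplied by prior work. For $n \geq 10$, Bertrand's postulate produces a prime $p$ with $n/2 < p < n$; since $n \geq 10$ forces $p \geq 7$, the hypothesis $p \geq 5$ of Theorem \ref{thm_implication_of_SP_n} holds automatically. Setting $r = n - p$, one has $0 < r < p$ together with $p - r = 2p - n < n$, so $\mathrm{SP}_r$ and $\mathrm{SP}_{p - r}$ both hold by the inductive hypothesis. Combining these with the assumed $\mathrm{TP}_r$ and applying Theorem \ref{thm_implication_of_SP_n} delivers $\mathrm{SP}_n$.

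There is essentially no obstacle beyond combinatorial bookkeeping; the substantive content of the corollary is carried entirely by Theorem \ref{thm_implication_of_SP_n}. That the unconditional range in part (1) terminates at $n = 26$ reflects the simple fact that $n = p + r$ with $p$ an odd prime and $r \in \{1, 3\}$ forces $n$ to be even, and that extending to $n = 28$ would require one of $27$ or $25$ to be prime; the same parity constraint explains the absence of odd $n \geq 11$ from the statement, as reaching them would demand $\mathrm{TP}_r$ for some even $r \geq 2$ which the author chooses not to invoke.
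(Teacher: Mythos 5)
Your argument is correct and rests on the same engine as the paper's proof, namely iterating Theorem \ref{thm_implication_of_SP_n} from known base cases, but the inputs you choose differ from the paper's in a way worth spelling out. The paper takes as known only $\mathrm{SP}_n$ for $n = 1, \dots, 5$ together with $\mathrm{TP}_r$ for $r = 1, 2, 3$ — the case $r = 2$ is Ramakrishnan's $\GL_2 \times \GL_2$ functoriality, which you incorrectly omit from the list of known cases — and from these it gets the unconditional implications $\mathrm{SP}_{p-r} \Rightarrow \mathrm{SP}_{p+r}$ for all three values of $r$ and every prime $p \geq 5$, chaining through $p = 5, \dots, 23$. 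By restricting to $r \in \{1,3\}$ you make $\mathrm{SP}_7$ and $\mathrm{SP}_9$ unreachable ($7-1$, $7-3$, $9-1$, $9-3$ are all composite), which is why you must import $\mathrm{SP}_6, \dots, \mathrm{SP}_9$ wholesale from the earlier Clozel--Thorne papers; the paper instead re-derives them internally (e.g.\ $\mathrm{SP}_7$ and $\mathrm{SP}_9$ via $p=5, r=2$ and $p=7, r=2$), so it only needs the classical inputs $n \leq 5$. Your explicit chain for $n = 10$ and the even $n = 12, \dots, 26$ checks out, and your part (2) is correct — your formulation of Bertrand's postulate ($n/2 < p < n$, forcing $p \geq 7$ for $n \geq 10$) is in fact more careful than the paper's, which writes ``$n < p < 2n$''. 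One correction to your closing remarks: the absence of odd $n \geq 11$ is not because the author declines to invoke $\mathrm{TP}_2$ (he does invoke it); the genuine obstruction is that $\mathrm{SP}_{11}$ is unreachable since none of $8, 9, 10$ is prime, which then blocks most of the odd chain — indeed $\mathrm{SP}_{13}$, which would follow from $\mathrm{SP}_9$ via $p = 11$, $r = 2$, is actually within reach of the paper's implications even though the corollary does not claim it.
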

\begin{proof}
The proof is the same as the proofs of \cite[Corollary 3.5, Corollary 3.6]{Clo14}. We give the details again here. We know that $\mathrm{SP}_n$ is true for $n = 1, 2, 3, 4, 5$ and $\mathrm{TP}_r$ is true for $r = 1, 2, 3$ (see \cite[\S 3]{Clo14} for detailed references). Therefore we have the unconditional implications, valid for any prime number $p \geq 5$:
\[ \mathrm{SP}_{p-1} \Rightarrow \mathrm{SP}_{p+1}, \mathrm{SP}_{p-2} \Rightarrow \mathrm{SP}_{p+2}, \mathrm{SP}_{p-3} \Rightarrow \mathrm{SP}_{p+3}. \]
It is now elementary to check that this implies the first part of the corollary, using $p = 5, \dots, 23$. For the second, we use induction on $n$. Take an integer $n > 5$. Bertrand's postulate implies that there is a prime number $p$ such that $n < p < 2n$. Writing $n = p + r$, we have $0 < r < p$ and we deduce $\mathrm{SP}_{n} = \mathrm{SP}_{p+r}$ by Theorem \ref{thm_implication_of_SP_n} and induction.
\end{proof}
We now get everything in place to prove Theorem \ref{thm_implication_of_SP_n}, following the lines of \cite{Clo14}. 
\begin{lemma}\label{lem_tensor_adequacy}
Let $p \geq 5$ be a prime and let $\varphi \in G_{\bQ_p}$ be a lift of arithmetic Frobenius. There exists an integer $a_0 = a_0(p) \geq 3$ such that if $a \geq a_0$ and  $G \leq \GL_2(\overline{\bF}_p)$ is a finite subgroup which contains a conjugate of $\SL_2(\bF_{p^a})$, then $G$ has the following properties:
\begin{enumerate}
    \item For each $0 < r  < p$, the image of the homomorphism $\Sym^{r-1} : G \to \GL_r(\overline{\bF}_p)$ is adequate,  in the sense of \cite[Definition 2.3]{Tho12}. 
    \item For each $0 < r < p$, the image of the homomorphism ${}^\varphi \mathrm{Std} \otimes \Sym^{r-1} : G \to \GL_{2r}(\overline{\bF}_p)$ is adequate.
    \item If $H \leq G$ is a subgroup of index $[G : H] < 2p$, then $H$ contains a conjugate of $\SL_2(\bF_{p^a})$.
\end{enumerate}
\end{lemma}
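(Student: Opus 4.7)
I would prove the three properties separately. The third is a straightforward subgroup-index argument: the smallest index of a proper subgroup of $\SL_2(\bF_{p^a})$ is $p^a + 1$, realised by the stabiliser of a line on $\bP^1(\bF_{p^a})$ (a Borel), since $\SL_2(\bF_{p^a})$ is perfect for $p^a \geq 4$ and, by Dickson's classification of subgroups, has no proper subgroup of smaller index once one quotients by the centre. For $p \geq 5$ and $a \geq 2$ we have $p^a + 1 > 2p$, so if $H_0 \subseteq G$ is a conjugate of $\SL_2(\bF_{p^a})$ and $H \leq G$ satisfies $[G : H] < 2p$, then $[H_0 : H_0 \cap H] \leq [G : H] < p^a + 1$, forcing $H \supseteq H_0$. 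This step imposes only $a_0 \geq 2$.

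For properties (1) and (2), I would verify the adequacy conditions of \cite[Definition~2.3]{Tho12} first for $H_0 = g_0 \SL_2(\bF_{p^a}) g_0^{-1}$ and then propagate them to $G$. Write $V_1 = \Sym^{r-1}$ and $V_2 = {}^\varphi \mathrm{Std} \otimes \Sym^{r-1}$. For $a \geq 2$, Steinberg's tensor product theorem identifies $V_1$ and $V_2$ with the simple $\SL_2(\bF_{p^a})$-modules corresponding to the $p$-adic digit tuples $(r-1, 0, \dots, 0)$ and $(r-1, 1, 0, \dots, 0)$ respectively, both absolutely irreducible of dimensions $r$ and $2r$. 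These dimensions are coprime to $p$ since $0 < r < p$ and $p$ is odd. Absolute irreducibility and coprimality to $p$ give $H^0(H_0, \ad^0 V_i) = 0$ by Schur's lemma; the semisimple ``separating'' element required for adequacy is exhibited by $\diag(\zeta, \zeta^{-1})$ for $\zeta \in \bF_{p^a}^\times$ of sufficiently large order, which acts with pairwise distinct eigenvalues on $V_i$.

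The principal obstacle is the cohomological vanishing $H^1(H_0, \ad^0 V_i) = 0$. I would decompose $\ad^0 V_i$ into Jordan--H\"older constituents via the modular Clebsch--Gordan rule (for $V_1$) together with the Steinberg tensor product theorem (for $V_2$); each constituent is an absolutely simple $\SL_2(\bF_{p^a})$-module whose highest weight is bounded purely in terms of $p$. The Cline--Parshall--Scott--van der Kallen generic cohomology stability theorem then yields $H^1(\SL_2(\bF_{p^a}), W) = 0$ for each such $W$ once $a$ exceeds a bound depending only on the weight of $W$; since $r$ varies in the finite set $\{1, \dots, p-1\}$, a uniform $a_0 = a_0(p)$ suffices. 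Propagation from $H_0$ to $G$ rests on $V_i$ remaining absolutely irreducible over $G$ (since $H_0$-isotypic components of $V_i^{\otimes n}$ are $G$-stable) together with a standard restriction--corestriction argument on $H^1$, enlarging $a_0$ if necessary to absorb any residual contribution from $G / H_0$.
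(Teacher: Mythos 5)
Your part (3) is correct and is essentially the paper's argument: the paper phrases it through the classification of subgroups of $\PSL_2(\bF_{p^b})$, while you use the equivalent fact that every proper subgroup of $\SL_2(\bF_{p^a})$ has index at least $p^a+1>2p$ once $p^a>11$; your version even returns the given conjugate $H_0$ itself inside $H$, which is slightly cleaner.

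For parts (1) and (2) the paper does something much shorter than what you propose: it first invokes Dickson's classification of finite subgroups of $\PGL_2(\overline{\bF}_p)$ to place $G$, up to conjugacy, in a sandwich $\SL_2(\bF_{p^b})\leq G\leq k^\times\SL_2(\bF_{p^b})$ with $b\geq a$, and then simply quotes the adequacy of the resulting images from \cite[Appendix, Theorem 2, Lemma 4]{Die15}. Your plan to reprove that input is legitimate in principle, but two steps do not go through as written. The first is the propagation from $H_0$ to $G$: $H_0$ need not be normal in $G$, and $[G:H_0]$ need not be prime to $p$ (take $G$ to be a conjugate of $\SL_2(\bF_{p^{2a}})$), so restriction--corestriction does not make $H^1(G,\ad^0 V_i)\to H^1(H_0,\ad^0 V_i)$ injective; and ``enlarging $a_0$'' cannot repair this, since for fixed $a_0$ the quotient $G/H_0$ ranges over infinitely many possibilities not controlled by $a_0$. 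The classification step is therefore not optional: it is what reduces the problem to the groups $\SL_2(\bF_{p^b})$, $b\geq a_0$, extended by prime-to-$p$ scalars, for which your transfer arguments do apply. Note that even condition (1) of \cite[Definition 2.3]{Tho12}, namely $H^1(\Gamma,k)=0$ for the image $\Gamma$ of $G$, already requires this structural control on $G$ rather than on $H_0$ alone.

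The second gap is the $H^1$-vanishing. The Cline--Parshall--Scott--van der Kallen theorem identifies $H^1(\SL_2(\bF_{p^a}),W)$ with generic cohomology for $a$ large; it does not assert that the generic cohomology vanishes, and for $\SL_2$ it does not always vanish (the simple module of highest weight $2p-2$ is a standard example with nonzero $H^1$). One must therefore actually list the Jordan--H\"older factors of $\ad^0\Sym^{r-1}$ and of $\ad^0({}^\varphi\mathrm{Std}\otimes\Sym^{r-1})$ and check that none of the exceptional weights occurs; that bookkeeping is precisely the content of the result in the appendix to \cite{Die15} which the paper cites, and it cannot be replaced by an appeal to stability alone. (A smaller point of the same flavour: the ``separating element'' condition of \cite[Definition 2.3]{Tho12} requires that every irreducible submodule $W$ of $\ad V$ satisfy $\tr(e_{\gamma,\alpha}W)\neq 0$ for some semisimple $\gamma$ and eigenvalue $\alpha$; a single element with distinct eigenvalues does not give this for free --- one should add, say, that each such $W$ has nonzero zero-weight space for the split torus, hence nonzero diagonal part in the eigenbasis of $\gamma$.)
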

\begin{proof}
    If $a \geq 3$ then the classification of finite subgroups of $\PGL_2(\overline{\bF}_p)$ shows that the projective image of $G$ is conjugate to $\PSL_2(\bF_{p^b})$ or $\PGL_2(\bF_{p^b})$ for some $b \geq a$, and therefore that there is (after replacing $G$ by a conjugate) a sandwich
    \[ \SL_2(\bF_{p^b}) \leq G \leq  k^\times \SL_2(\bF_{p^b}) \]
    for some finite extension $k / \bF_{p^b}$. The first two points therefore follow from \cite[Appendix, Theorem 2, Lemma 4]{Die15}. For the third, let us denote by $\overline{H}$, $\overline{G}$ the respective images of $H$, $G$ in $\PGL_2(\overline{\bF}_p)$. We have $[\overline{G} : \overline{H}] < 2p$, hence $[\PSL_2(\bF_{p^b}) : H \cap \PSL_2(\bF_{p^b})] < 2p$. The classification of finite subgroups of $\PSL_2(\bF_{p^b})$ shows that as $b \geq 3$, $\overline{H} \cap \PSL_2(\bF_{p^b})$ must equal $\PSL_2(\bF_{p^b})$ (because all the other subgroups of $\PGL_2(\bF_{p^b})$ have index greater than $2p$). In particular, $H$ must contain $\SL_2(\bF_{p^b})$.
\end{proof}
Here is an analogue of \cite[Theorem 4.2]{Clo14}.
\begin{proposition}\label{prop_SP_LR}
Let $F$ be an imaginary CM field and let $\pi$ be a RACSDC automorphic representation of $\GL_2(\bA_F)$ of weight 0. Let $p \geq 5$ be a prime and let $\iota : \overline{\bQ}_p \to \bC$ be an isomorphism. Fix an integer $0 < r <  p$, and suppose that the following conditions are satisfied:
\begin{enumerate}
    \item The extension $F / F^+$ is everywhere unramified and linearly disjoint from $F^+(\zeta_p) / F^+$. Moreover, $F$ contains an imaginary quadratic field $F_0$ in which $p$ splits.
    \item If $v$ is a place of $F$ such that $\pi_v$ is ramified, then $v$ is split over $F^+$. 
    \item $\pi$ is $\iota$-ordinary.
    \item There exists an integer $a \geq a_0(p)$ such that $\overline{r_\iota(\pi)}(G_{F})$ contains a conjugate of $\SL_2(\bF_{p^a})$.
    \item There exist RACSDC automorphic representations $\Pi_1, \Pi_2$ of $\GL_{r}(\bA_F)$, $\GL_{p-r}(\bA_F)$, respectively, such that $r_\iota(\Pi_1) \cong \Sym^{r-1} r_\iota(\pi)$ and $r_\iota(\Pi_2) \cong \Sym^{p-r-1} r_\iota(\pi)$. 
    \item There exists a place $w_0 \nmid p$ of $F$ such that $\pi_{w_0}$ is an unramified twist of the Steinberg representation and $q_{w_0} \equiv 1 \text{ mod }p$. The residue characteristic $q$ of $w_0$ splits in $F_0$, and $\pi_w$ is unramified for each $q$-adic place of $F$ such that $w \neq w_0, w_0^c$. Moreover $\overline{r_\iota(\pi)}|_{G_{F_{w_0}}}$ is trivial. 
\end{enumerate}
Suppose finally that Conjecture $\mathrm{TP}_r$ holds. Then there exists a RACSDC automorphic representation $\Pi$ of $\GL_{p+r}(\bA_F)$ with the the following properties:
\begin{enumerate}
    \item $\Pi$ is $\iota$-ordinary.
    \item There is an isomorphism $\overline{r_\iota(\Pi)} \cong (\Sym^{p+r-1} \overline{r_\iota(\pi)})^{ss}$.
    \item $\Pi_{w_0}$ is an unramified twist of the Steinberg representation.
\end{enumerate}
\end{proposition}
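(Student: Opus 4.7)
The plan is to follow the scheme of \cite[Proof of Theorem 3.4]{Clo14}: construct an isobaric sum of two cuspidal conjugate self-dual $\iota$-ordinary automorphic representations whose residual Galois representation matches $(\Sym^{p+r-1}\overline{r_\iota(\pi)})^{ss}$, and then invoke Theorem \ref{thm_level_raising_for_GL(n)} at $w_0$ to move to a cuspidal representation with Steinberg local component. The key mod-$p$ input is Steinberg's tensor product theorem for $\GL_2$ in characteristic $p$: for any two-dimensional $V/\overline{\bF}_p$ and $0 < r < p$, there is a short exact sequence
\[
0 \to \Sym^{p-r-1}(V) \otimes (\det V)^r \to \Sym^{p+r-1}(V) \to \Sym^{r-1}(V) \otimes V^{(p)} \to 0.
\]
Applied to $\rbar := \overline{r_\iota(\pi)}$, this gives the semisimplification $(\Sym^{p+r-1}\rbar)^{ss} \cong X \oplus Y$, where $X = \Sym^{r-1}(\rbar) \otimes \rbar^{(p)}$ has dimension $2r$ and $Y = \Sym^{p-r-1}(\rbar) \otimes (\overline{\det r_\iota(\pi)})^r$ has dimension $p-r$.

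First I would construct the $(p-r)$-dimensional piece: $\mathrm{SP}_{p-r}$ produces a RACSDC automorphic $\Pi_{p-r}$ with $r_\iota(\Pi_{p-r}) \cong \Sym^{p-r-1} r_\iota(\pi)$; twisting by an algebraic Hecke character whose associated Galois character is congruent to $(\det r_\iota(\pi))^r$ mod $p$ yields $\Sigma_2$ with $\overline{r_\iota(\Sigma_2)} \cong Y$. For the $2r$-dimensional piece I would find a RACSDC $\GL_2$ representation $\widetilde\pi$ with $\overline{r_\iota(\widetilde\pi)} \cong \rbar^{(p)}$, $\iota$-ordinary and with $\widetilde\pi_{w_0}$ still an unramified twist of Steinberg. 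Since $\rbar^{(p)}$ is Galois-conjugate to $\rbar$ (hence modular), and since the image contains $\SL_2(\F_{p^a})$, one can invoke Khare--Wintenberger-style residual modularity together with an ordinary modularity lifting theorem to produce such $\widetilde\pi$. Combining $\widetilde\pi$ with $\Sym^{r-1}(\pi)$ coming from $\mathrm{SP}_r$, the hypothesis of $\mathrm{TP}_r$ (irreducibility of $r_\iota(\widetilde\pi) \otimes \Sym^{r-1} r_\iota(\pi)$) holds by adequacy in Lemma \ref{lem_tensor_adequacy}, producing a RAECSDC representation $\Sigma_1$ of $\GL_{2r}(\bA_F)$ with $\overline{r_\iota(\Sigma_1)} \cong X$.

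The isobaric sum $\Sigma = \Sigma_1 \boxplus \Sigma_2$ is then regular algebraic, conjugate self-dual, $\iota$-ordinary, with $\overline{r_\iota(\Sigma)} \cong (\Sym^{p+r-1}\rbar)^{ss}$, and at $w_0$ each summand is an unramified twist of its respective Steinberg representation (since $\pi_{w_0}$, and hence $\widetilde\pi_{w_0}$, is Steinberg and symmetric powers and tensor products with unramified characters preserve this structure). The remaining hypotheses of Theorem \ref{thm_level_raising_for_GL(n)} can be checked directly: the CM/unramifiedness conditions are inherited from $\pi$; the parallel weight condition follows from $\pi$ having scalar Hodge--Tate weights; the crucial character ratio condition $\iota^{-1}(\psi_1\psi_2^{-1}|\cdot|^{-n/2}) \in 1 + \m_{\overline{\bZ}_p}$ at $w_0$ reduces, using $q_{w_0} \equiv 1 \bmod p$ and the triviality of $\rbar|_{G_{F_{w_0}}}$, to a congruence satisfied by the explicit Satake parameters of the Steinberg components of $\Sigma_1$ and $\Sigma_2$; and the residual genericity condition (\ref{ass_LR_genericity}) is transferred from the corresponding hypothesis on $\rbar$, using that the image contains $\SL_2(\F_{p^a})$ and the adequacy conclusions of Lemma \ref{lem_tensor_adequacy}. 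Theorem \ref{thm_level_raising_for_GL(n)} then delivers the sought-for $\Pi$.

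The hard part, I expect, is the construction of the auxiliary $\widetilde\pi$ with precisely the right local properties: one needs residual modularity of $\rbar^{(p)}$ as a modular Galois representation over $F$ (not just $F^+$), and then an ordinary modularity lifting theorem sharp enough to control simultaneously the weight, ordinarity at $p$, and Steinberg-at-$w_0$ structure. A subtler bookkeeping issue is ensuring that after forming $\Sigma_1$ via $\mathrm{TP}_r$ the precise local Langlands parameter at $w_0$ is what is needed to make the character-ratio hypothesis of Theorem \ref{thm_level_raising_for_GL(n)} literally hold; this is where the assumption $q_{w_0} \equiv 1 \bmod p$ together with triviality of $\rbar|_{G_{F_{w_0}}}$ does the heavy lifting, by trivializing all the $|\cdot|$-factors mod $p$ and reducing the condition to a statement about a single $p$-adic unit.
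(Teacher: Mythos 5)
Your overall strategy is the same as the paper's: decompose $(\Sym^{p+r-1}\overline{r_\iota(\pi)})^{ss}$ via Steinberg's tensor product theorem into a $2r$-dimensional piece $\Sym^{r-1}\rbar\otimes\rbar^{(p)}$ and a $(p-r)$-dimensional piece $\Sym^{p-r-1}\rbar\otimes(\det)^r$, realise the second as a twist of $\Pi_2$, realise the first via $\mathrm{TP}_r$ applied to an auxiliary $\GL_2$ representation times $\Pi_1$, and then raise the level at $w_0$ with Theorem \ref{thm_level_raising_for_GL(n)}. But there is a genuine gap in the construction of the $2r$-dimensional piece: you take $\widetilde\pi$ to be a RACSDC (hence weight-regular, but implicitly low-weight) lift of $\rbar^{(p)}$ and apply $\mathrm{TP}_r$ to $r_\iota(\widetilde\pi)\otimes\Sym^{r-1}r_\iota(\pi)$, checking only irreducibility. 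Conjecture $\mathrm{TP}_r$ also requires the tensor product to be \emph{Hodge--Tate regular}, and if $\widetilde\pi$ has Hodge--Tate weights $\{0,1\}$ (weight $0$) then the tensor product has weights $\{i+j : 0\le i\le r-1,\ j\in\{0,1\}\}$, which repeat as soon as $r\ge 2$; moreover even when $r=1$ the resulting weights $\{0,1\}$ collide with the weights $\{r,\dots,p-1\}$ of the $(p-r)$-dimensional constituent, so the isobaric sum $\Sigma_1\boxplus\Sigma_2$ is not regular algebraic and Theorem \ref{thm_level_raising_for_GL(n)} cannot be applied. The missing idea is that the auxiliary $\GL_2$ factor must have Hodge--Tate weights $\{0,p\}$: then the $2r$-dimensional piece has weights $\{0,\dots,r-1\}\cup\{p,\dots,p+r-1\}$, which are distinct (so $\mathrm{TP}_r$ applies) and exactly complementary to $\{r,\dots,p-1\}$ (so the isobaric sum is regular algebraic of weight $0$ after the twists).

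The paper arranges this by using the Khare--Wintenberger method to produce a RAECSDC $(\pi',|\cdot|^{1-p})$ with $\overline{r_\iota(\pi')}\cong\rbar$, Hodge--Tate weights $\{0,p\}$ at every embedding, local behaviour away from $p$ matching $\pi$ (so still Steinberg at $w_0$), and then feeds the coefficient-Frobenius conjugate ${}^{\varphi}r_\iota(\pi')$ into $\mathrm{TP}_r$; its reduction is $\rbar^{(p)}$, so one never needs to separately establish residual modularity of $\rbar^{(p)}$ as you propose (a second Khare--Wintenberger step is then used to make the output of $\mathrm{TP}_r$ Steinberg at $w_0$ while keeping it $\iota$-ordinary of the same weight, using the adequacy statements of Lemma \ref{lem_tensor_adequacy}). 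Your remaining verifications (the character-ratio condition at $w_0$ from $q_{w_0}\equiv 1\bmod p$ and triviality of $\rbar|_{G_{F_{w_0}}}$, and the genericity condition from the large image) are in the right spirit, though note the genericity hypothesis is not ``transferred'' from an assumption on $\rbar$ but must be proved directly by exhibiting $\gamma\in G_{F(\zeta_p)}$ whose image is conjugate to $\diag(t,t^{-1})$ with $t$ of large order.
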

\begin{proof}
    Using \cite[Theorem 9.1, Theorem 10.2]{Tho12} (i.e. the Khare--Wintenberger method), we can find an $\iota$-ordinary RAECSDC automorphic representation $(\pi', |\cdot|^{1-p})$ of $\GL_2(\bA_F)$ satisfying the following  conditions:
\begin{itemize}
    \item There is an isomorphism $\overline{r_\iota(\pi')} \cong \overline{r_\iota(\pi)}$.
    \item For each finite place $v \nmid p$ of $F$, $r_\iota(\pi)|_{G_{F_v}} \sim r_\iota(\pi')|_{G_{F_v}}$ (the relation $\sim$ as defined in \cite[\S 1.3]{Bar14}). This implies in particular that $\pi'_{w_0}$ is an unramified twist of the Steinberg representation.
    \item For each embedding $\tau : F \to \overline{\bQ}_p$, $\mathrm{HT}_\tau(r_\iota(\pi')) = \{ 0, p \}$.
\end{itemize}
By Conjecture $\mathrm{TP}_r$, there exists a RAECSDC automorphic representation $(\Pi'_1, | \cdot |^{r-p})$ such that $r_\iota(\Pi'_1) \cong {}^{\varphi} r_\iota(\pi') \otimes \Sym^{r-1} r_\iota(\pi)$. By \cite[Lemma 3.7]{Clo14}, $r_\iota(\Pi'_1)$ is $\iota$-ordinary. Using the Khare--Wintenberger method again, we can find another RAECSDC automorphic representation $(\Pi''_1, |\cdot|^{r-p})$ satisfying the following conditions:
\begin{itemize}
\item There is an isomorphism $\overline{r_\iota(\Pi''_1)} \cong \overline{r_{\iota}(\Pi'_1)}$. (In particular, the image of $G_{F(\zeta_p)}$ under both residual representations is adequate, by Lemma \ref{lem_tensor_adequacy} and our assumption $a \geq a_0(p)$.)
\item $\Pi''_1$ is $\iota$-ordinary and of the same weight as $\Pi'_1$.
 \item For each finite place $v \nmid p w_0 w_0^c$ of $F$, $r_\iota(\Pi''_1)|_{G_{F_v}} \sim r_\iota(\Pi'_1)|_{G_{F_v}}$.
 \item $\Pi''_1$ is an unramified twist of the Steinberg representation.
\end{itemize}
We let $\sigma_1 = \Pi''_1 | \cdot |^{(p-r)/2}$. Then $\sigma_1$ is cuspidal and conjugate self-dual.  Let $\chi = \det r_\iota(\pi)$, and let $\sigma_2 = \Pi_2 \otimes \iota (\chi \epsilon)^r$. Then $\sigma_2$ is cuspidal and conjugate self-dual, $\sigma_{2, w_0}$ is an unramified twist of the Steinberg representation, and $\Sigma = \sigma_1 \boxplus \sigma_2$ is regular algebraic and $\iota$-ordinary of weight 0. 
    
    The conclusion of the proposition now follows from Theorem \ref{thm_level_raising_for_GL(n)} applied to $\Sigma$, provided we can check hypothesis (\ref{ass_LR_genericity}) of Theorem \ref{thm_level_raising_for_GL(n)}, i.e.\ the existence of $\gamma \in G_F$ such that $\overline{r_\iota(\Sigma)}(\gamma)$, or equivalently $(\Sym^{p+r-1} \overline{r_\iota(\pi)})(\gamma)$, has $p+r$ distinct eigenvalues, no two of which are in ratio $\epsilon(\gamma)$. Since $\overline{r_\iota(\pi)}(G_{F(\zeta_p)})$ contains a conjugate of $\SL_2(\bF_{p^a})$, we can choose $\gamma \in G_{F(\zeta_p)}$ with image a conjugate of $\diag(t, t^{-1})$ for any $t \in \bF_{p^a}$ such that $1, t^2, \dots, t^{2(p+r-1)}$ are all distinct. Such elements exist because of our assumption that $a \geq 3$.
\end{proof}
We next want to state a slightly strengthened version of the main result of \cite{All20}:
\begin{theorem}\label{thm_modified_ALT}
Let $F$ be a CM number field, let $n \geq 1$, let $p \geq 5$ be a prime not dividing $n$, and let $\rho : G_F \to \GL_n(\overline{\bQ}_p)$ be a continuous representation satisfying the following conditions:
\begin{enumerate}
    \item $\rho^{c} \cong \rho^\vee \epsilon^{1-n}$.
    \item $\rho$ is ramified at only finitely many places.
    \item There exists $\lambda \in (\bZ^n_+)^{\Hom(F, \overline{\bQ}_p)}$ such that $\rho$ is ordinary of weight $\lambda$.
    \item There is an isomorphism $\overline{\rho} \cong \oplus_{i=1}^r \overline{\rho}_i$, where each representation $\overline{\rho}_i$ is absolutely irreducible and satisfies $\overline{\rho}_i^c \cong \overline{\rho}_i^{\vee} \epsilon^{1-n}$ and $\overline{\rho}_i \not\cong \overline{\rho}_j$ if $i \neq j$.
    \item There is a place $w_0 \nmid p$ of $F$ such that $\rho|_{G_{F_{w_0}}}^{ss}$ is an unramified twist of $\oplus_{i=1}^n \epsilon^{1-i}$.
    \item There exists an $\iota$-ordinary RACSDC  automorphic representation $\Pi$ of $\GL_n(\bA_F)$ such that $\overline{r_\iota(\Pi)} \cong \overline{\rho}$ and $\Pi_{w_0}$ is an unramified twist of the Steinberg representation.
    \item There exists a place $w \nmid p$ of $F$ such that $\rho|_{G_{F_w}}$ is unramified and $H^0(F_w, \ad \overline{\rho}(1)) = 0$. We have $F \not\subset F^+(\zeta_p)$. For each $1 \leq i, j \leq r$, $\overline{\rho}_i|_{G_{F(\zeta_p)}}$ is absolutely irreducible and $\overline{\rho}_i|_{G_{F(\zeta_p)}} \not\cong \overline{\rho}_j|_{G_{F(\zeta_p)}}$ if $i \neq j$. Moreover, $\overline{\rho}$ is primitive and $\overline{\rho}(G_F)$ has no quotient of order $p$. 
\end{enumerate}
Then $\rho$ is automorphic: there exists an $\iota$-ordinary RACSDC automorphic representation $\pi$ of $\GL_n(\bA_F)$ such that $\rho \cong r_\iota(\pi)$.
\end{theorem}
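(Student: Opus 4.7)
The plan is to apply the Taylor--Wiles--Kisin--Calegari--Geraghty patching method in the $\iota$-ordinary, residually reducible setting developed in \cite{All20}, adapting it to accommodate the Steinberg local condition at the auxiliary place $w_0$. This theorem is essentially the main result of that paper with this single additional local condition, so the proof will amount to checking that the Steinberg condition slots neatly into the existing framework.

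First I would use Theorem \ref{thm_descent_for_cusp_forms} to descend the RACSDC representation $\Pi$ to an automorphic form $\sigma$ on the definite unitary group $G$ of Section \ref{sec_definite_unitary_group}; its Hecke eigensystem determines a non-Eisenstein maximal ideal $\m$ of $\T^S$ in the support of the ordinary Hida module $H_\lambda(K^p)$ introduced earlier in the paper. Next I would set up a global deformation problem $\cS$ encoding: conjugate self-duality for the fixed similitude character; the $\iota$-ordinary condition of weight $\lambda$ at the $p$-adic places; membership in the irreducible component of the local deformation ring at $w_0$ corresponding to unramified twists of Steinberg; and minimal conditions at the remaining places of $S$. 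The hypotheses that $\rho$ is ordinary of weight $\lambda$ and that $\rho|_{G_{F_{w_0}}}^{ss}$ is an unramified twist of $\oplus_{i=1}^{n} \epsilon^{1-i}$ guarantee that $\rho$ itself defines a $\overline{\bQ}_p$-point of $R^\cS$.

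The patching argument would then proceed as in \cite{All20}: Taylor--Wiles primes are chosen using the adequacy of each residual constituent $\overline{\rho}_i|_{G_{F(\zeta_p)}}$ (provided by hypothesis (7)), the auxiliary place $w$ with $H^0(F_w, \ad \overline{\rho}(1)) = 0$ is used to control the Galois-cohomological defect, and the resulting patched module $M_\infty$ over the patched deformation ring $R_\infty$ is maximal Cohen--Macaulay of the expected dimension. At $w_0$ the Steinberg component of the local deformation ring is formally smooth of the correct relative dimension, and at $p$-adic places the ordinary deformation theory follows \cite{Ger19}; neither disturbs the numerical balance. The residually reducible nature of $\overline\rho$ is absorbed by the pseudo-deformation ring and Ihara-avoidance techniques of \cite{All20}, with $\Pi$ providing the essential automorphic anchor. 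The endgame then forces $R^\cS$ to act on $M_\infty$ through a quotient finite over $\T^S(H_\lambda(K^p))_\m$, so that every $\overline{\bQ}_p$-point of $R^\cS$ --- and in particular the one defined by $\rho$ --- is automorphic, yielding the desired $\iota$-ordinary RACSDC representation $\pi$ with $r_\iota(\pi) \cong \rho$.

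The main obstacle is ensuring that the Steinberg condition at $w_0$ is genuinely compatible with the delicate pseudo-deformation and Ihara-avoidance bookkeeping in \cite{All20}: in particular, one must check that the patched Steinberg local deformation ring can be placed in the formal framework of pseudo-deformations without destroying the Cohen--Macaulay patched module, and that the choice of Taylor--Wiles primes still works when the local condition at $w_0$ is added. Because the Steinberg local deformation ring is irreducible, formally smooth, and of the correct relative dimension, this should be formally straightforward, but verifying it demands a careful merging of the arguments of \cite{All20}, \cite{Ger19}, and classical Steinberg deformation theory.
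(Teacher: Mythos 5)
There is a genuine gap, and it starts with a misidentification of what actually needs to be proved. This theorem is not ``the main result of \cite{All20} with a Steinberg condition added'': the Steinberg condition at $w_0$ (hypotheses (5) and (6)) is already part of \cite[Theorem 6.1]{All20}, so there is nothing to check about its compatibility with the patching, pseudo-deformation, or Ihara-avoidance machinery --- that work is all done in \emph{loc.\ cit.} The only difference between the present statement and \cite[Theorem 6.1]{All20} is in hypothesis (7): the condition ``$F(\zeta_p)$ is not contained in $\overline{F}^{\ker \ad \overline{\rho}}$'' has been replaced by the weaker requirement that there exist a place $w \nmid p$ with $\rho|_{G_{F_w}}$ unramified and $H^0(F_w, \ad\overline{\rho}(1)) = 0$. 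Your proposal never isolates this as the point at issue, and consequently never supplies the one argument the proof actually requires.

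That argument is short but essential: in \cite{All20} the condition on $F(\zeta_p)$ is used only to produce, via Chebotarev, a place $w \nmid p$ at which $\rho$ is unramified, $q_w \not\equiv 1 \bmod p$, and $\overline{\rho}|_{G_{F_w}}$ is scalar --- which forces $H^0(F_w, \ad\overline{\rho}(1)) = 0$, so that every lifting of $\overline{\rho}|_{G_{F_w}}$ is unramified and, on the automorphic side, one may shrink the level at $w$ to make it sufficiently small without introducing new ramification. One must observe that any place $w$ satisfying the stated weaker hypothesis serves exactly the same purpose, and that the rest of the argument of \cite{All20} goes through unchanged. Without this observation your proof establishes (at best, and only in outline --- a three-paragraph sketch cannot substitute for the actual patching argument) the original theorem of \cite{All20} under its original hypotheses, not the modified statement. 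A secondary inaccuracy: you propose to descend $\Pi$ to the definite unitary group $G$ of this paper via Theorem \ref{thm_descent_for_cusp_forms}, but the proof here delegates entirely to the framework of \cite{All20}; no descent to $G$ or re-run of the deformation-theoretic argument is performed in this paper.
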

\begin{proof}
    This theorem is identical in statement to \cite[Theorem 6.1]{All20}, except that the  condition ``$F(\zeta_p)$ not contained in $\overline{F}^{\ker \ad \overline{\rho}}$'' has been replaced by the condition ``there exists a place $w \nmid p$ of $F$ such that $\rho|_{G_{F_w}}$ is unramified and $H^0(F_w, \ad \overline{\rho}(1)) = 0$''. The former condition is used only to show the existence, by the Chebotarev density theorem, of a place $w \nmid p$ of $F$ such that $\rho|_{G_{F_{w}}}$ is unramified, $q_w \not\equiv 1 \text{ mod }p$, and $\overline{\rho}|_{G_{F_w}}$ is scalar. This implies that $H^0(F_w, \ad \overline{\rho}(1)) = 0$ and therefore that any lifting of $\overline{\rho}|_{G_{F_w}}$ is unramified. On the automorphic side, this means that we can choose a sufficiently small level subgroup at the place $w$ without introducing any new ramification. However, we can in fact take $w$ to be any place such that $\rho|_{G_{F_w}}$ is unramified and $H^0(F_w, \ad \overline{\rho}(1)) = 0$ and the rest of the argument remains unchanged. 
\end{proof}
We can use Theorem \ref{thm_modified_ALT} to get the following consequence of Proposition \ref{prop_SP_LR}. In contrast to the results of \cite{Clo14, Clo17}, there are no conditions here on the presence (or otherwise) of roots of unity in the base field $F^+$.
\begin{theorem}\label{thm_implication_for_SP_n_with_conditions}
Let $p \geq 5$ be a prime, let $0 < r < p$ be an integer, and assume Conjectures $\mathrm{SP}_{p-r}$, $\mathrm{SP}_r$, and $\mathrm{TP}_r$. Let $F^+$ be a totally real field and let $(\pi, \chi)$ be a RAESDC automorphic representation of $\GL_2(\bA_{F^+})$ satisfying the following conditions:
\begin{enumerate}
    \item There exists an isomorphism $\iota : \overline{\bQ}_p \to \bC$ such that $\pi$ is $\iota$-ordinary and $\overline{r_\iota(\pi)}(G_F)$ contains a conjugate of $\SL_2(\bF_{p^a})$, for some $a > a_0(p)$.
    \item $\pi$ has weight 0. 
    \item There exists a place $v_0 \nmid p$ such that $\pi_{v_0}$ is an unramified twist of the Steinberg representation. 
\end{enumerate}
Then the $(p+r-1)^\text{th}$ symmetric power of $\pi$ exists: there is an $\iota$-ordinary cuspidal RAESDC automorphic representation $\Pi$ of $\GL_{p+r}(\bA_{F^+})$ such that $\Sym^{p+r-1} r_\iota(\pi) \cong r_\iota(\Pi)$.
\end{theorem}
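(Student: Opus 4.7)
The plan is to reduce the problem to a suitable imaginary CM field $F/F^+$ by Arthur--Clozel solvable base change, construct the symmetric power lift over $F$ using Proposition \ref{prop_SP_LR}, and then descend back to $F^+$.

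First I would build a solvable CM extension $F = F^+ F_0$, where $F_0$ is an imaginary quadratic field in which both $p$ and the residue characteristic $q$ of $v_0$ split. The extension $F/F^+$ should be everywhere unramified, linearly disjoint from $F^+(\zeta_p)$ over $F^+$, with $v_0$ split in $F/F^+$, and such that some place $w_0 \mid v_0$ of $F$ satisfies $q_{w_0} \equiv 1 \pmod{p}$ with $\overline{r_\iota(\pi)}|_{G_{F_{w_0}}}$ trivial. To achieve the last condition I would first replace $F^+$ by a suitable solvable totally real extension that trivializes the local residual representation at a chosen place above $v_0$; this is possible because $\pi_{v_0}$ is Steinberg, so the local image of $\overline{r_\iota(\pi)}$ at $v_0$ has manageable order. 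By taking the initial integer $a$ in the big-image hypothesis sufficiently large and applying Lemma \ref{lem_tensor_adequacy}(3) iteratively through the solvable tower (each step has index at most some fixed bound), the image of $\overline{r_\iota(\pi)}$ restricted to $G_F$, and further to $G_{F(\zeta_p)}$, still contains a conjugate of $\SL_2(\bF_{p^{a'}})$ for some $a' \geq a_0(p)$.

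Next I would use solvable base change to obtain a RACSDC $\iota$-ordinary automorphic representation $\pi_F$ of $\GL_2(\bA_F)$ of weight $0$, and similarly base change the symmetric power lifts guaranteed by $\mathrm{SP}_r$ and $\mathrm{SP}_{p-r}$ over $F^+$ to produce RACSDC $\iota$-ordinary representations $\Pi_{1,F}$, $\Pi_{2,F}$ realising $\Sym^{r-1} r_\iota(\pi_F)$ and $\Sym^{p-r-1} r_\iota(\pi_F)$. With these inputs, Proposition \ref{prop_SP_LR} (which invokes $\mathrm{TP}_r$) produces a RACSDC $\iota$-ordinary automorphic representation $\Pi'$ of $\GL_{p+r}(\bA_F)$ with $\overline{r_\iota(\Pi')} \cong (\Sym^{p+r-1}\overline{r_\iota(\pi_F)})^{ss}$ and $\Pi'_{w_0}$ an unramified twist of $\St_{p+r}$.

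The crucial final step is to upgrade $\Pi'$, which matches $\rho := \Sym^{p+r-1} r_\iota(\pi_F)$ only residually (and only up to semisimplification), to a genuine automorphic lift of $\rho$ via the ordinary automorphy lifting Theorem \ref{thm_modified_ALT}. The Steinberg tensor product theorem for $\GL_2$ in characteristic $p$ gives $(\Sym^{p+r-1}\overline{r_\iota(\pi_F)})^{ss} \cong \overline{\rho}_1 \oplus \overline{\rho}_2$, where $\overline{\rho}_1$ is a twist of $\Sym^{p-r-1}\overline{r_\iota(\pi_F)}$ of dimension $p-r$ and $\overline{\rho}_2$ is a twist of $\Sym^{r-1}\overline{r_\iota(\pi_F)} \otimes {}^{\varphi}\overline{r_\iota(\pi_F)}$ of dimension $2r$. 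For $p \geq 5$ prime and $0 < r < p$ these dimensions differ, since $p = 3r$ would force $p = 3$, so the two summands are non-isomorphic; each is absolutely irreducible with the required conjugate self-duality, by Lemma \ref{lem_tensor_adequacy}(1)-(2) applied to the image $\overline{r_\iota(\pi_F)}(G_{F(\zeta_p)})$. The remaining hypotheses of Theorem \ref{thm_modified_ALT} (primitivity of $\overline{\rho}$, absence of an order-$p$ quotient of $\overline{\rho}(G_F)$, existence of a generic place) follow from the big-image assumption and the choice of $F$, while the local Steinberg shape at $w_0$ is built into $\Pi'$ and the ordinarity is preserved throughout. Theorem \ref{thm_modified_ALT} then produces a RACSDC $\sigma_F$ on $\GL_{p+r}(\bA_F)$ with $r_\iota(\sigma_F) \cong \rho$; since $\rho$ descends to a representation of $G_{F^+}$, $\sigma_F$ is $\Gal(F/F^+)$-invariant, and Arthur--Clozel solvable descent yields the desired $\iota$-ordinary cuspidal RAESDC $\Pi$ on $\GL_{p+r}(\bA_{F^+})$ with $r_\iota(\Pi) \cong \Sym^{p+r-1} r_\iota(\pi)$.

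The main obstacle is the careful assembly of the CM extension $F$ in the first step, which must simultaneously satisfy the splitting of $p$ and $q$ in the imaginary quadratic subfield $F_0$, be everywhere unramified over $F^+$, contain a place $w_0 \mid v_0$ with $q_{w_0} \equiv 1 \pmod p$ and trivial restricted residual representation, and preserve the big image of $\overline{r_\iota(\pi)}$ through the base change. Each of these constraints is standard individually in the Khare--Wintenberger style, but combining them requires a somewhat delicate solvable tower construction.
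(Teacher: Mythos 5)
Your overall strategy coincides with the paper's: pass by soluble base change to a well-chosen CM field $F = F^+F_0$, feed the $\mathrm{SP}_r$- and $\mathrm{SP}_{p-r}$-lifts into Proposition \ref{prop_SP_LR} to produce a residually automorphic $\Pi'$ that is Steinberg at $w_0$, promote this to automorphy of $\Sym^{p+r-1}\rho$ via Theorem \ref{thm_modified_ALT}, and descend. There are, however, two genuine gaps in the execution.

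First, you never verify hypothesis (6) of Proposition \ref{prop_SP_LR} in full: besides $q_{w_0}\equiv 1 \bmod p$ and triviality of $\overline{r_\iota(\pi)}|_{G_{F_{w_0}}}$, that hypothesis demands that the \emph{input automorphic representation itself} be unramified at every $q$-adic place $w \neq w_0, w_0^c$. The given $\pi$ may be ramified at other places above $q$ (e.g.\ special there), and base change alone cannot remove ramification of Steinberg type. This is exactly why the paper inserts a Khare--Wintenberger step (\cite[Theorems 9.1, 10.2]{Tho12}): it replaces $\pi_F\otimes\psi^{-1}$ by a congruent, $\iota$-ordinary RACSDC $\pi'$ which is an unramified twist of Steinberg at $w_0$ and unramified at all other $q$-adic places, and then applies Proposition \ref{prop_SP_LR} to $\pi'$. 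Without this step your application of Proposition \ref{prop_SP_LR} is not justified. Relatedly, your device for preserving the big image through the soluble tower --- iterating Lemma \ref{lem_tensor_adequacy}(3) --- only controls subgroups of index $<2p$, whereas trivialising $\overline{r_\iota(\pi)}|_{G_{F_{w_0}}}$ may force cyclic steps of larger prime degree; the paper instead arranges $F/F^+$ linearly disjoint from the extension of $F^+(\zeta_p)$ cut out by $\overline{r_\iota(\pi)}$, so that $\overline{r_\iota(\pi)}(G_F) = \overline{r_\iota(\pi)}(G_{F^+})$ on the nose.

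Second, the two conditions of Theorem \ref{thm_modified_ALT} that you dismiss as "following from the big-image assumption" --- primitivity of $\overline{r}$ and the existence of a place $w\nmid p$ with $\overline{r}|_{G_{F_w}}$ unramified and $H^0(F_w,\ad\overline{r}(1))=0$ --- are the substantive verifications and occupy most of the paper's proof. Primitivity is not automatic from irreducibility of the constituents: one must rule out $\overline{r}\cong\Ind_{G_L}^{G_F}\overline{\sigma}$ by showing (via Lemma \ref{lem_tensor_adequacy}(3), using $[L:F]\leq p+r<2p$) that $\overline{\sigma}$ would have to be $\overline{r}_i|_{G_L}$, hence $\overline{r}\cong\overline{r}_i\otimes\Ind_{G_L}^{G_F}\overline{\bF}_p$, and then deriving a contradiction from the eigenvalues of a regular element of order $p^a-1$ (here the bound $2d(p+r-1)<8p^2<p^a-1$ is used). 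The generic-place condition likewise requires producing $\sigma\in G_{F^+}$ with $\epsilon(\sigma)=-1$ and prescribed eigenvalue ratios, which rests on an analysis of the image of $\ad\overline{r_\iota(\pi)}\times\epsilon$ using simplicity of $\PSL_2(\bF_{p^a})$. These arguments need to be supplied; they do not follow formally from adequacy of the image.
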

\begin{proof}
We are free, by soluble base change, to replace $F^+$ by any soluble totally real extension. We can therefore assume that $q_{v_0} \equiv 1 \text{ mod }p$ and that there exists a CM quadratic extension $F / F^+$ with the following properties:
\begin{itemize}
    \item The extension $F / F^+$ is everywhere unramified and linearly disjoint from the extension of $F^+(\zeta_p)$ cut out by $\overline{r_\iota(\pi)}|_{G_{F^+(\zeta_p)}}$. In particular, $F \not\subset F^+(\zeta_p)$ and $\overline{r_\iota(\pi)}(G_F) = \overline{r_\iota(\pi)}(G_{F^+})$.
    \item $F$ contains an imaginary quadratic field $F_0$ in which $p$ and the residue characteristic $q$ of the place $v_0$ split.
    \item For each $q$-adic place $w$ of $F$, $\overline{r_\iota(\pi)}|_{G_{F_w}}$ is trivial. 
    \item Each place $v$ of $F^+$ such that $\pi_v$ is ramified splits in $F$.
    \item There exists an everywhere unramified Hecke character $\psi : F^\times \backslash \bA_F^\times \to \bC^\times$ of type $A_0$ such that $\chi = \psi \circ \mathbf{N}_{F / F^+}$.
\end{itemize}
    Increasing $a$, we assume also that the projective image of $\overline{r_\iota(\pi)}$ is conjugate either to $\PSL_2(\bF_{p^a})$ or $\PGL_2(\bF_{p^a})$. Let $\pi_F$ denote the base change of $\pi$ to $F$, and let $\rho = r_\iota( \pi_F \otimes \psi^{-1})$, $r = \Sym^{p+r-1} \rho$.  The representation $\pi_F \otimes \psi^{-1}$ is RACSDC. Let $w_0$ be a place of $F$ lying above $v_0$. Using \cite[Theorem 9.1, Theorem 10.2]{Tho12} again we can find another RACSDC $\iota$-ordinary automorphic representation $\pi'$ of $\GL_2(\bA_F)$ such that $\overline{r_\iota(\pi')} \cong \overline{\rho}$, $\pi'_{w_0}$ is an unramified twist of the Steinberg representation, and such that for each $q$-adic place $w \neq w_0, w_0^c$ of $F$, $\pi'_w$ is unramified.
    
    We can then apply Theorem \ref{thm_modified_ALT} to deduce the automorphy of $\Sym^{p+r-1} \rho$, using Proposition \ref{prop_SP_LR} applied to $\pi'$ to verify the residual automorphy hypothesis, provided we can check the conditions on the residual representation $\overline{r}$. Before we carry out these checks, we note that this will conclude the proof: the automorphy of $\Sym^{p+r-1} r_\iota(\pi)$ follows by quadratic descent. 
    
    The representation $\overline{r}$ has two irreducible constituents $\overline{r}_1 = {}^\varphi \overline{\rho} \otimes \Sym^{r-1} \overline{\rho}, \overline{r}_2 = \det \overline{\rho}^r \otimes \Sym^{p-r-1} \overline{\rho}$ of distinct dimensions, which remain irreducible on restriction to $G_{F(\zeta_p)}$, so the conditions that require an argument to check are as follows:
    \begin{itemize}
        \item $\overline{r}$ is primitive.
        \item There exists a place $w \nmid p$ of $F$ such that $\overline{r}|_{G_{F_w}}$ is unramified and $H^0(F_w, \ad \overline{r}(1)) = 0$.
    \end{itemize}
    We treat these in turn. If $\overline{r}$ is not primitive, then there is an isomorphism $\overline{r} \cong \Ind_{G_L}^{G_F} \overline{\sigma}$ for some finite extension $L / F$ of degree $> 1$, hence an embedding $\overline{\sigma} \hookrightarrow \overline{r}|_{G_L}$. The extension of $F$ cut out by $\overline{r}$ has Galois group isomorphic to a subgroup of $\GL_2(\overline{\bF}_p)$ whose image in $\PGL_2(\overline{\bF}_p)$ is a conjugate of $\PSL_2(\bF_{p^a})$ or $\PGL_2(\bF_{p^a})$. Since we assume $a \geq a_0(p)$, and $[ L : F ] \leq \dim r < 2p$, it follows from Lemma \ref{lem_tensor_adequacy} that $\overline{r}(G_L)$ contains a conjugate of $\SL_2(\bF_{p^a})$, and therefore that the irreducible constituents of $\overline{r}$ remain irreducible on restriction to $G_L$. We find that $\overline{\sigma}$ is isomorphic to one of $\overline{r}_1|_{G_L}$ or $\overline{r}_2|_{G_L}$. In particular, $\overline{\sigma}$ extends to $G_F$ and we can take it outside of the induction to get an isomorphism
    \[ \overline{r} \cong \Ind_{G_L}^{G_F} \overline{\sigma} \cong \overline{r}_i \otimes \Ind_{G_L}^{G_F} \overline{\bF}_p \]
    for some $i = 1, 2$. Let $d = [L : F]$. Then for any $g \in G_F$, the eigenvalues of $g$ on $ \Ind_{G_L}^{G_F} \overline{\bF}_p$ are among the roots of unity of order at most $d$, and 1 is an eigenvalue. The above isomorphism implies that for any $g \in G_F$, $\overline{r}(g)$ has two eigenvalues whose ratio is a root of unity of order at most $d$. We will show that this leads to a contradiction. Indeed, since the image of $\overline{\rho}$ contains a conjugate of $\SL_2(\bF_{p^a})$, we can find an element in the image of $\overline{r}$ which has eigenvalues $t^{p+r-1}$, $t^{p+r-3}$, \dots, $t^{-(p+r-1)}$, there $t \in \bF_{p^a}^\times$ is an element of order $p^a-1$. The ratios of these eigenvalues are $t^2, \dots, t^{2(p+r-1)}$. We'll be done therefore it none of these ratios can have order at most $d$. This would imply that $t$ itself has order at most $2d(p+r-1) < 8 p^2$. As $p \geq 5$ and $a > 3$, we have $p^a - 1 > 8p^2$, so this is the desired contradiction. 
    
    Since $\Ind_{G_L}^{G_F} \overline{\bF}_p$ decomposes as a direct sum of characters and $\overline{r}_1, \overline{r}_2$ are irreducible and of distinct dimensions, this leads to a contradiction.
    
    We now show that there exists a place $w \nmid p$ of $F$ such that $\overline{r}|_{G_{F_w}}$ is unramified and $H^0(F_w, \ad \overline{r}(1)) = 0$. It suffices to find $\sigma \in G_F$ such that $\epsilon(\sigma) = -1$ and if $\alpha, \beta \in \overline{\bF}_p$ are the eigenvalues of $\overline{\rho}(\sigma)$, then $(\alpha / \beta)^i \neq -1$ for each $i = 1, \dots, p+r-1$. Since $F / F^+$ is linearly disjoint from the extension of $F^+(\zeta_p)$ cut out by $\overline{r_\iota(\pi)}|_{G_{F^+(\zeta_p)}}$, it is even enough to find $\sigma \in G_{F^+}$ such that $\epsilon(\sigma) = -1$ and $\overline{r_\iota(\pi)}$ satisfies the analogous condition on eigenvalues.
    
    Consider the homomorphism $\ad \overline{r_\iota(\pi)} \times \epsilon : G_{F^+} \to \PGL_2(\overline{\bF}_p) \times \bF_{p}^\times$. We can assume that the projective image of $\ad \overline{r_\iota(\pi)}$ is equal to one of $\PSL_2(\bF_{p^a})$ or $\PGL_2(\bF_{p^a})$. Since $\PSL_2(\bF_{p^a})$ is a simple group, the image of $\ad \overline{r_\iota(\pi)} \times \epsilon$ contains $\PSL_2(\bF_{p^a}) \times \{ 1 \}$. This image also contains $(g, -1)$, where $g \in \PGL_2(\bF_{p^a})$ has order 2 (the image of complex conjugation). The image of $\ad \overline{r_\iota(\pi)} \times \epsilon$ therefore contains all elements of the form $(gh, -1)$, where $h \in \PSL_2(\bF_{p^a})$. We see finally that the proof will be complete if we can show that for any $g \in \PGL_2(\bF_{p^a})$ of order 2, there exists $h \in \PSL_2(\bF_{p^a})$ such that the eigenvalues $\alpha, \beta \in \overline{\bF}_p$ of $gh$ (defined only up to scalar multiplication) satisfy $(\alpha / \beta)^i \neq -1$ for each $i = 1, \dots, p+r-1$. This is elementary (for example, consider elements $h$ which commute with $g$ and use the conditions $p \geq 5$, $a \geq 3$). 
    \end{proof}
Theorem \ref{thm_implication_of_SP_n} now follows on combining Theorem \ref{thm_implication_for_SP_n_with_conditions} and the following proposition.
\begin{proposition}
Let $F^+$ be a totally real number field and let $(\pi, \chi)$ be a RAESDC automorphic representation of $\GL_2(\bA_{F^+})$ without CM. Let $p \geq 5$ be a prime and let $0 < r < p$ be an integer. Then we can find a soluble totally real extension $E^+ / F^+$ and a RAESDC automorphic representation $(\pi', \chi')$ of $\GL_2(\bA_{E^+})$ without CM satisfying the following conditions:
\begin{enumerate}
    \item $\Sym^{p+r-1} r_\iota(\pi)$ is automorphic, associated to a RAESDC automorphic representation of $\GL_{p+r}(\bA_{F^+})$, if and only if $\Sym^{p+r-1} r_\iota(\pi')$ is automorphic.
    \item $\pi'$ is of weight 0. 
    \item For each isomorphism $\iota : \overline{\bQ}_p \to \bC$, $\pi'$ is $\iota$-ordinary and $\overline{r_\iota(\pi')}(G_{E^+})$ contains a conjugate of $\SL_2(\bF_{p^a})$ for some $a > a_0(p)$. 
    \item There exists a place $v_0 \nmid p$ of $E^+$ such that $\pi'_{v_0}$ is an unramified twist of the Steinberg representation.
\end{enumerate}
\end{proposition}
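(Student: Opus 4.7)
The plan is to construct $\pi'$ as the weight-$0$ specialization of a Hida family over a carefully chosen soluble totally real extension $E^+/F^+$, with a Steinberg condition at an auxiliary place $v_0$ introduced by a Ribet-style level-raising congruence. The equivalence of automorphy in condition (1) will be deduced from Arthur--Clozel soluble base change and descent for $\GL_{p+r}$, combined with a modularity lifting theorem in the spirit of Theorem \ref{thm_modified_ALT}.

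First I would choose $E^+/F^+$ soluble and totally real, satisfying three conditions simultaneously: (a) $E^+$ is linearly disjoint from the extension of $F^+(\zeta_p)$ cut out by $\overline{r_\iota(\pi)}|_{G_{F^+(\zeta_p)}}$, so that $\overline{r_\iota(\pi)}(G_{E^+}) = \overline{r_\iota(\pi)}(G_{F^+})$---which by the theorems of Ribet and Dimitrov on residual images of non-CM Hilbert modular forms contains a conjugate of $\SL_2(\bF_{p^a})$ for some $a > a_0(p)$, provided we allow $E^+$ to be large enough to handle the finitely many exceptional $\iota$; (b) the base change $\pi_{E^+}$ is $\iota$-ordinary at every $p$-adic place of $E^+$ for each $\iota$ (the ordinary Hecke eigenvalues of $\pi$ above $p$ are algebraic, and after a soluble base change that makes $\pi$ Iwahori-spherical above $p$, ordinariness for every $\iota$ becomes equivalent to integrality of the $U_p$-eigenvalues); and (c) there exists a place $v_0 \nmid p$ of $E^+$ with $q_{v_0} \equiv 1 \pmod p$ and $\overline{r_\iota(\pi)}|_{G_{E^+_{v_0}}}$ trivial, as demanded by the hypotheses of Proposition \ref{prop_SP_LR}. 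Within the Hida family through $\pi_{E^+}$ I would pick a classical weight-$0$ specialization $\pi^\circ$, then apply a Ribet-style level-raising congruence (available because the residual image is large) to produce $\pi'$ congruent to $\pi^\circ$ modulo $p$ with $\pi'_{v_0}$ an unramified twist of the Steinberg representation. The resulting $\pi'$ is automatically cuspidal without CM, of weight $0$, $\iota$-ordinary for every $\iota$, of large residual image, and Steinberg at $v_0$.

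The main obstacle is the equivalence of automorphy of $\Sym^{p+r-1} r_\iota(\pi)$ and $\Sym^{p+r-1} r_\iota(\pi')$. If $\Sym^{p+r-1} r_\iota(\pi)$ is automorphic from a RAESDC representation $\Pi$ of $\GL_{p+r}(\bA_{F^+})$, then the base change $\Pi_{E^+}$ realizes $\Sym^{p+r-1} r_\iota(\pi_{E^+})$; since $\overline{r_\iota(\pi')} \cong \overline{r_\iota(\pi_{E^+})}$ and the induced image $\Sym^{p+r-1} \overline{r_\iota(\pi')}(G_{E^+(\zeta_p)})$ is adequate by Lemma \ref{lem_tensor_adequacy}, a modularity lifting theorem modelled on Theorem \ref{thm_modified_ALT}, applied to $\Sym^{p+r-1} r_\iota(\pi')$ with residual automorphy inherited from $\Pi_{E^+}$, yields automorphy of $\Sym^{p+r-1} r_\iota(\pi')$. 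Conversely, automorphy of $\Sym^{p+r-1} r_\iota(\pi')$ gives, by the same lifting theorem, automorphy of $\Sym^{p+r-1} r_\iota(\pi_{E^+})$, and Arthur--Clozel soluble descent (using that this representation is $\Gal(E^+/F^+)$-invariant because it extends from $G_{F^+}$) produces an automorphic descent to $\GL_{p+r}(\bA_{F^+})$ realizing $\Sym^{p+r-1} r_\iota(\pi)$. The delicate point is to verify at each stage that the hypotheses of the modularity lifting theorem---adequate residual image, ordinarity at every $p$-adic place, and the existence of a place where $\rho$ is unramified with $H^0(\ad\overline{\rho}(1)) = 0$---survive the base change, the Hida specialization, and the level-raising congruence; all of these are arranged by the choice of $E^+$ in the first step.
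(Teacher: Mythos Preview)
Your proposal has a fundamental gap: you are trying to work entirely at the fixed prime $p$, but the hypotheses give you no control over $\pi$ at $p$. Specifically, two of your key moves fail.

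First, you invoke ``the theorems of Ribet and Dimitrov'' to conclude that $\overline{r_\iota(\pi)}(G_{F^+})$ contains a conjugate of $\SL_2(\bF_{p^a})$ with $a > a_0(p)$. Those theorems say that for a non-CM form the mod-$\ell$ image is large for all but finitely many primes $\ell$; they say nothing about the \emph{given} prime $p$. The residual representation $\overline{r_\iota(\pi)}$ could perfectly well be dihedral, exceptional, or have image contained in $\GL_2(\bF_p)$, and no soluble base change can enlarge the image (base change only restricts it). Second, you assert that after a suitable base change $\pi_{E^+}$ becomes $\iota$-ordinary, claiming that ``ordinariness for every $\iota$ becomes equivalent to integrality of the $U_p$-eigenvalues''. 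Ordinariness requires the $U_p$-eigenvalue to be a $p$-adic \emph{unit}, not merely integral; for a generic weight-$k$ form the slope is strictly positive, and this cannot be repaired by base change. Without ordinariness there is no Hida family to specialise, and without large residual image at $p$ you cannot apply Theorem \ref{thm_modified_ALT} or Lemma \ref{lem_tensor_adequacy} to transfer automorphy between $\pi$ and $\pi'$.

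The paper's proof (by reference to \cite[Propositions 5.2, 5.3]{Clo14}) proceeds via an \emph{auxiliary} prime $\ell \neq p$, chosen (using Dimitrov's theorem) so that $\overline{r_{\iota_\ell}(\pi)}$ has large image. One then uses the Khare--Wintenberger method and automorphy lifting mod $\ell$ to replace $\pi$ by a congruent $\pi'$ with prescribed local behaviour at $p$ (ordinary, weight $0$) and at an auxiliary Steinberg place, and with large residual image mod $p$; the equivalence in condition (1) is established by automorphy lifting at $\ell$ (not at $p$), where the hypotheses are available by construction. The essential idea you are missing is this two-prime switch.
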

\begin{proof}
This is proved by repeating verbatim the proofs of \cite[Proposition 5.2]{Clo14} and \cite[Proposition 5.3]{Clo14}.
\end{proof}

\section{The mixed parity case}

Let $F^+$ be a totally real number field. If $F^+ \neq \bQ$, then not every cuspidal Hilbert modular eigenform of regular weight over $F^+$ admits an associated Galois representation. This is related to the fact that not every cuspidal automorphic representation $\pi$ of $\GL_2(\bA_{F^+})$ such that $\pi_\infty$ is essentially square-integrable admits a twist which is algebraic. In order to deduce Theorems \ref{introthm_unconditional_symmetric_powers} and \ref{introthm_conditional_symmetric_powers} of the introduction from the results of the previous section, we therefore need an additional argument. As a warm-up, we first review the description of those $\pi$ which are indeed algebraic. The arguments that follow will extend those given in \cite[\S 7]{Clo17}. 

The local Langlands correspondence $\rec_\bR$ for $\GL_2(\bR)$ gives a bijection between the set of infinitesimal equivalence clases of essentially square-integrable irreducible admissible representations of $\GL_2(\bR)$ and the set of conjugacy classes of continuous irreducible representations $W_\bR \to \GL_2(\bC)$. Each such representation of $W_\bR$ is isomorphic to a unique one of the form 
\[ \rho_{s, a} = \Ind_{\bC^\times}^{W_\bR} \chi_{s, a}\]
for $(s, a) \in \bC \times \bZ_{\geq 1}$, and where $\chi_{s, a}(r e^{i \theta}) = r^s e^{i a \theta}$. The corresponding representation of $\GL_2(\bR)$ is the unique essentially square-integrable subquotient of the unitary induction of the character 
\[ (t_1, t_2) \mapsto |t_1|^{(s+a)/2} \operatorname{sgn}(t_1)^{a+1} |t_2|^{(s-a)/2} \]
of the maximal torus $T_2(\bR) = \bR^\times \times \bR^\times$ of $\GL_2(\bR)$.  

Suppose given integers $k_v \in \bZ_{\geq 0}$ for each place $v | \infty$ of $F^+$ and $w \in \bR$. Cuspidal Hilbert modular forms of weights $((k_v+2)_{v | \infty}, w)$ may be lifted to cuspidal automorphic forms. In particular, \cite[Proposition 5.2.3]{Oht83} explains how a cuspidal Hilbert modular eigenform of weights $((k_v+2)_{v | \infty}, w)$ determines a cuspidal automorphic representation $\pi$ of $\GL_2(\bA_{F^+})$ such that for each place $v | \infty$ of $F^+$, $\rec_{F_v}(\pi_v) = \rho_{-s, k_v+1}$ (and conversely). Repeating the computations of \cite[pp. 91--92]{Clo90} we see that $\pi$ is regular algebraic if and only if $w$ is an integer and $k_v \equiv w \text{ mod }2$ for each place $v | \infty$; and that $\pi$ admits a character twist which is regular algebraic if and only if the parity of $k_v$ is independent of $v | \infty$.

Here then is the formulation of symmetric power functoriality for a more general class of automorphic representations of $\GL_2(\bA_{F^+})$ than the RAESDC ones considered in the previous section.
\begin{conjecture}[$\mathrm{SP}'_n$]
Let $n \geq 1$. Let $F^+$ be a totally real field, and let $\pi$ be a cuspidal automorphic representation of $\GL_2(\bA_{F^+})$ without CM, such that $\pi_\infty$ is essentially square-integrable. Then there exists a cuspidal automorphic representation $\Pi$ of $\GL_{n}(\bA_{F^+})$ such that for each place $v$ of $F^+$, we have
\[ \rec_{F^+_v}(\Pi_v) \cong \Sym^{n-1} \circ \rec_{F^+_v}(\pi_v). \]
\end{conjecture}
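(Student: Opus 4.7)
The plan is to reduce $\mathrm{SP}'_n$ to the already-established Conjecture $\mathrm{SP}_n$, treating separately the case of uniform parity (where $\pi$ admits an algebraic character twist over $F^+$) and the case of mixed parity (where it does not). The serious work is in the second case, which proceeds by base change to a CM quadratic extension.

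In the uniform parity case, standard Hecke character theory over $F^+$ furnishes an algebraic Hecke character $\chi : (F^+)^\times \backslash \bA_{F^+}^\times \to \bC^\times$ such that $(\pi \otimes \chi, \chi^2 \omega_\pi)$ is RAESDC. Applying $\mathrm{SP}_n$ gives a cuspidal automorphic representation $\Pi' = \Sym^{n-1}(\pi \otimes \chi)$ of $\GL_n(\bA_{F^+})$, and we set $\Pi \mathrel{:=} \Pi' \otimes \chi^{-(n-1)}$. Local Langlands compatibility at every place follows from the twist-equivariance of the $\rec$ map.

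For the mixed parity case, I would choose a CM quadratic extension $L/F^+$, unramified outside a finite set of places of $F^+$ all split in $L$, together with an algebraic Hecke character $\omega$ of $L$ satisfying $\omega \omega^c = (\omega_\pi \circ \mathbf{N}_{L/F^+})^{-1}$ and with infinity type chosen so that $\pi_L \otimes \omega$ is RACSDC. The existence of $L$ and of such $\omega$ (with prescribed local conditions at the finite places and infinity type) follows from class field theory and the ``parity balancing'' at archimedean places that becomes possible once each real place of $F^+$ is complexified. The base change $\pi_L$ is cuspidal because the no-CM hypothesis on $\pi$ prevents it from being automorphically induced from $L$. Applying $\mathrm{SP}_n$ to $\pi_L \otimes \omega$ and untwisting by $\omega^{-(n-1)}$ produces a cuspidal automorphic representation $\Pi_L$ of $\GL_n(\bA_L)$ with $\rec_{L_w}(\Pi_{L,w}) \cong \Sym^{n-1} \rec_{L_w}(\pi_{L,w})$ at every place $w$ of $L$, and satisfying $\Pi_L^c \cong \Pi_L$. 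Arthur--Clozel cyclic descent then yields a cuspidal descent $\Pi$ of $\Pi_L$ to $\GL_n(\bA_{F^+})$, determined up to twist by the quadratic character $\eta_{L/F^+}$; among the two descents, one pins down the correct $\Pi$ by matching $\rec_{F^+_v}(\Pi_v) \cong \Sym^{n-1} \rec_{F^+_v}(\pi_v)$ at any finite place $v$ split in $L$.

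The main obstacle will be the verification of local Langlands compatibility at the places of $F^+$ inert in $L$ (and at places of bad reduction). At split places this is automatic by construction, but at an inert place $v$ one only knows a priori that the base change of $\rec_{F^+_v}(\Pi_v)$ to $W_{L_w}$ agrees with $\rec_{L_w}(\Pi_{L,w}) \cong \Sym^{n-1} \rec_{L_w}(\pi_{L,w})$, and one has to promote this to an identification over $W_{F^+_v}$. The argument should amount to a direct Weil--Deligne computation, using compatibility of base change with local Langlands for $\GL_n$ (Arthur--Clozel, Shahidi, Henniart) together with the observation that $\Sym^{n-1} \rec_{F^+_v}(\pi_v)$ is already a $\Gal(L_w / F^+_v)$-stable extension of $\rec_{L_w}(\pi_{L,w})$ to $W_{F^+_v}$, which together with the central character of $\Pi$ rules out the quadratic twist. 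Some care will be needed to choose $L$ (for example, to split at all ramification of $\pi$) so as to keep the ramification analysis under control and avoid introducing spurious quadratic twists.
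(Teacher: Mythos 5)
There is a genuine gap, and it sits exactly at the point you flag as "the main obstacle": resolving the quadratic-twist ambiguity of the Arthur--Clozel descent. The two descents $\Pi$ and $\Pi \otimes \eta_{L/F^+}$ of $\Pi_L$ have \emph{isomorphic} local components at every place of $F^+$ split in $L$ (since $\eta_{L/F^+, v}$ is trivial there), so "matching at any finite place $v$ split in $L$" cannot pin down the descent; it is vacuous. The real issue is at the inert places, where the two candidates for $\rec_{F^+_v}(\Pi_v)$ are $\Sym^{n-1}\rec_{F^+_v}(\pi_v)$ and its twist by the unramified quadratic character $\eta_v$; both are $\Gal(L_w/F^+_v)$-stable extensions of the same representation of $W_{L_w}$, so "stability" rules out nothing, and the central character only detects $\eta_v^{n}$, which is trivial whenever $n$ is even. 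No purely local Weil--Deligne computation at an inert place can break this symmetry, and no single auxiliary CM field $L$ suffices. (A secondary, smaller point: $\mathrm{SP}_n$ is stated for RAESDC representations over totally real fields, so applying it to $\pi_L \otimes \omega$ over the CM field $L$ already requires the reduction of \cite[\S 7]{Clo17}, which is what Lemma \ref{lem_sym_power_over_F} quotes.)

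The paper's proof is built precisely to avoid this impasse: following Blasius--Rogawski, it patches over an infinite family of CM quadratic extensions $F_0, F_1, F_2, \dots$ of $F^+$ chosen so that every finite place of $F^+$ splits in some $F_i$ and so that triple composita have degree $8$ over $F^+$. Each $F_i$ yields a descent $\Pi(F_i)$ up to twist by $\epsilon_i$; comparing base changes pairwise and using cuspidality of the base change to $F_0F_iF_j$ (which rests on irreducibility of $r_\iota(\Sigma_{0i})$ restricted to these composita, via Zariski density of a principal $\SL_2$) forces a relation $\epsilon_0^{n_0}\epsilon_i^{n_i}\epsilon_j^{n_j} = 1$, hence $n_0 = n_i = n_j = 0$, so all the normalized descents coincide. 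The common representation then has the correct local component at every finite place because every such place is split in some $F_i$. Finally, even after this, the archimedean components are only identified after restriction to $\bC^\times$, and for $n$ odd the sign on the one-dimensional summand $|\cdot|^{(n-1)s}\operatorname{sgn}^\epsilon$ must be fixed by a separate central-character computation (the last displayed identity of the section); your proposal does not address this step either, though there your central-character idea is the right one. To repair your argument you would need to replace the single field $L$ by such a patching family, or supply a genuinely new mechanism for eliminating the quadratic twist at inert places when $n$ is even.
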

Our goal will be to prove the following result.
\begin{theorem}\label{thm_alg_implies_W_alg}
Conjecture $\mathrm{SP}_n$ implies Conjecture $\mathrm{SP}'_n$.
\end{theorem}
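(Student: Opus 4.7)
The strategy is to reduce Conjecture $\mathrm{SP}'_n$ to Conjecture $\mathrm{SP}_n$ by twisting and base-change arguments, treating the uniform-parity and mixed-parity cases separately. Both cases rest on the trivial local identity $\Sym^{n-1} \circ \rec_{F^+_v}(\pi_v \otimes \chi_v) = (\Sym^{n-1} \circ \rec_{F^+_v}(\pi_v)) \otimes \rec_{F^+_v}(\chi_v)^{n-1}$, which shows that producing the symmetric power of a character twist of $\pi$ is equivalent to producing it for $\pi$ itself (since twisting an automorphic representation of $\GL_n$ by a Hecke character of $F^+$ produces another automorphic representation of $\GL_n$). First suppose $\pi$ has uniform parity, so the $k_v$ all have the same parity. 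Then I would find a Hecke character $\chi : F^{+,\times} \backslash \bA^\times_{F^+} \to \bC^\times$ whose infinity type makes $\pi \otimes \chi$ regular algebraic; such $\chi$ exists, possibly after replacing $F^+$ by a soluble totally real extension $L$ to gain additional freedom over global units (and descending at the end by soluble base change for $\GL_n$). Since $\pi^\vee \cong \pi \otimes \omega_\pi^{-1}$, the twist $\pi \otimes \chi$ is automatically essentially self-dual, giving a RAESDC pair without CM; applying $\mathrm{SP}_n$ yields a cuspidal $\Pi'$ of $\GL_n(\bA_{F^+})$ with $r_\iota(\Pi') \cong \Sym^{n-1} r_\iota(\pi) \otimes r_\iota(\chi)^{n-1}$, and setting $\Pi = \Pi' \otimes \chi^{-(n-1)}$ gives the desired lift of $\pi$.

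In the mixed-parity case, no Hecke character twist of $\pi$ over $F^+$ is algebraic, and the uniform-parity argument fails. My approach is to base change to a quadratic CM extension $F = F^+ F_0$ (with $F_0$ imaginary quadratic, $F/F^+$ everywhere unramified). The base change $\pi_F$ is cuspidal since $\pi$ is non-CM, and because every archimedean place of $F$ is complex there is no parity obstruction to choosing an algebraic Hecke character $\chi_F$ of $F$ such that $\Sigma = \pi_F \otimes \chi_F$ is RACSDC on $\GL_2(\bA_F)$. The plan is then to deduce the symmetric power of $\Sigma$ over $F$ by producing a companion RAESDC representation over a totally real field to which $\mathrm{SP}_n$ applies, then base change up to $F$ and untwist by $\chi_F^{-(n-1)}$ to recover $\Sym^{n-1}\pi_F$ on $\GL_n(\bA_F)$. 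Since $\pi_F$ is $\Gal(F/F^+)$-invariant (it is a base change), so is $\Sym^{n-1}\pi_F$, and cyclic descent (Arthur--Clozel) yields an automorphic representation $\Pi$ of $\GL_n(\bA_{F^+})$ whose base change is $\Sym^{n-1}\pi_F$; one then checks via strong multiplicity one and the local Langlands correspondence that $\rec_{F^+_v}(\Pi_v) = \Sym^{n-1} \rec_{F^+_v}(\pi_v)$ at every place $v$.

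The main obstacle is the middle step of the mixed-parity argument: the character $\chi_F$ making $\pi_F$ algebraic cannot be chosen $\Gal(F/F^+)$-invariant (otherwise it would descend to an algebraicity-inducing character on $F^+$, contradicting the mixed parity of $\pi$), so $\Sigma$ is \emph{not} a base change from $F^+$ and $\mathrm{SP}_n$ for $F^+$ does not apply to $\Sigma$ directly. Circumventing this requires a delicate auxiliary construction: either assemble a RAESDC $\sigma$ over a soluble totally real extension $L^+$ of $F^+$ whose various Galois twists combine (after further base change to $L^+ F_0$) to match $\Sigma$ in the appropriate sense, then apply $\mathrm{SP}_n$ to $\sigma$ and transport along the base-change/descent diagram; or produce directly an algebraic \emph{pair} $(\pi_1, \pi_2)$ of cuspidal RAESDC representations of $\GL_2(\bA_{L^+})$ whose isobaric sum after base change to $F$ contains enough information to reconstruct $\Sym^{n-1}\pi_F$. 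In either incarnation, juggling the parity constraints at infinity against the Galois-equivariance required for descent is the genuinely subtle point, and is the reason $\mathrm{SP}'_n$ is not formally equivalent to $\mathrm{SP}_n$ but requires a nontrivial reduction of the sort I have outlined.
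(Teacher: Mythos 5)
Your reduction has the right overall shape (base change to a CM quadratic extension, twist into the algebraic setting, apply $\mathrm{SP}_n$, descend), but the descent step contains a genuine gap. Cyclic descent from $F$ to $F^+$ determines $\Pi$ only up to twist by the quadratic Hecke character $\epsilon_{F/F^+}$: both $\Pi$ and $\Pi \otimes \epsilon_{F/F^+}$ have base change $\Sym^{n-1}\pi_F$, and they genuinely differ at the places of $F^+$ inert in $F$ — which include every archimedean place, since $F$ is totally imaginary. Strong multiplicity one cannot distinguish the two twists, and the base-change relation only gives $\rec_{F^+_v}(\Pi_v)|_{W_{F_w}} \cong \Sym^{n-1}\rec_{F^+_v}(\pi_v)|_{W_{F_w}}$ at such $v$, leaving a sign ambiguity; so your final assertion that one "checks via strong multiplicity one and the local Langlands correspondence" the identity at \emph{every} place does not go through from a single quadratic extension. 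The paper's actual proof is built precisely around this difficulty: it runs the construction over an infinite family $F_0, F_1, \dots$ of CM quadratic extensions chosen so that every finite place of $F^+$ splits in some $F_i$ and any three of the fields are independent ($[F_iF_jF_k:F^+]=8$), and then a patching argument (in the style of Blasius--Rogawski) compares the descents pairwise over the biquadratic extensions to pin down all the quadratic twists simultaneously, settling every finite place. The archimedean places, which split in no CM extension, need a further separate argument: restriction to $\bC^\times$ determines $\rec_{F^+_v}(\Pi_v)$ outright when $n$ is even, while for $n$ odd it leaves exactly one undetermined sign on the one-dimensional summand of $\Sym^{n-1}\rho_{s,a}$, which is then computed from the central character of $\Pi$. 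None of this appears in your proposal.

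Two smaller remarks. First, the step you single out as "the genuinely subtle point" — producing $\Sym^{n-1}$ of the RACSDC twist $\pi_F\otimes\psi^{-1}$ over the CM field $F$, given that $\psi$ cannot be Galois-invariant — is in the paper simply quoted from \cite[\S 7]{Clo17} (Lemma \ref{lem_sym_power_over_F}); your two sketched strategies for it are left unexecuted, so as written this is a second gap, though one the paper outsources rather than reproves. Second, your case division is unnecessary: the CM base-change argument treats uniform and mixed parity uniformly, and the uniform-parity twisting argument is subsumed by it.
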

Together with Corollary \ref{cor_unconditional_reg_alg_powers}, this theorem implies Theorems \ref{introthm_unconditional_symmetric_powers} and \ref{introthm_conditional_symmetric_powers} of the introduction. The rest of this section is devoted to its proof. Let $\pi$ be a cuspidal automorphic representation of $\GL_2(\bA_{F^+})$ without CM such that $\pi_\infty$ is essentially square-integrable, and fix $n \geq 1$. Assuming Conjecture $\mathrm{SP}_n$, we will show $\Sym^{n-1} \pi$ exists in the sense of Conjecture $\mathrm{SP}'_n$. 

We first quote what is proved in \cite[\S 7]{Clo17}, assuming $\mathrm{SP}_n$. 
\begin{lemma}\label{lem_sym_power_over_F}
Let $F / F^+$ be a quadratic CM extension.  Let $\pi_F$ denote the base change of $\pi$ to $F$.
\begin{enumerate}
    \item There exists a continuous character $\psi : F^\times \backslash \bA_F^\times \to \bC^\times$ such that $\pi_F \otimes \psi^{-1}$ is RACSDC.
    \item There exists a RACSDC automorphic representation $\Pi_1$ of $\GL_n(\bA_F)$ such that for any isomorphism $\iota : \overline{\bQ}_p \to \bC$, we have $r_\iota(\Pi_1) \cong \Sym^{n-1} r_\iota(\pi_F \otimes \psi^{-1})$. 
\end{enumerate}
\end{lemma}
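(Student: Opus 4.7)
The argument follows \cite[\S 7]{Clo17}, which treats an essentially identical situation.

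For part (1), since $\pi_F$ is the base change of $\pi$ from $F^+$, we have $\pi_F^c \cong \pi_F$, and hence $\pi_F^{c, \vee} \cong \pi_F \otimes (\omega_{\pi_F}^{-1} \circ \det)$ with $\omega_{\pi_F} = \omega_\pi \circ \mathbf{N}_{F/F^+}$. The condition that $\pi_F \otimes \psi^{-1}$ be conjugate self-dual then reduces to the functional equation
\[ \psi \cdot \psi^c = \omega_{\pi_F}. \]
Since $\omega_{\pi_F}$ is $c$-invariant, the existence of a continuous Hecke character $\psi$ satisfying this is standard class field theory: the map $\psi \mapsto \psi \psi^c$ on the group of Hecke characters of $F$ surjects onto the subgroup of $c$-invariant characters, and its fiber is a torsor under the group of characters $\chi$ with $\chi \chi^c = 1$. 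This torsor is flexible enough to allow prescription of the infinity type of $\psi$. To make $\pi_F \otimes \psi^{-1}$ regular algebraic, at each complex place $\tilde v$ of $F$ lying over a real place $v$ of $F^+$ one chooses $\psi_{\tilde v}(z) = z^{a_{\tilde v}} \bar z^{b_{\tilde v}}$ so as to shift the parameters of $\pi_{F, \tilde v}$ (which arise from the essentially square-integrable representation $\pi_v$) into the algebraic range; regularity then follows from essential square-integrability of $\pi_v$, and cuspidality of $\pi_F \otimes \psi^{-1}$ from the no-CM hypothesis on $\pi$.

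For part (2), write $\sigma := \pi_F \otimes \psi^{-1}$; we want $\Sym^{n-1} r_\iota(\sigma)$ to be associated to a RACSDC automorphic representation of $\GL_n(\bA_F)$. Since Conjecture $\mathrm{SP}_n$ is formulated over totally real base fields, the plan is to find a soluble totally real extension $L^+ / F^+$ and an RAESDC cuspidal automorphic representation $\pi^\flat$ of $\GL_2(\bA_{L^+})$ without CM such that the base change of $\pi^\flat$ to $L := L^+ \cdot F$ agrees with the base change of $\sigma$ from $F$ to $L$, up to twisting by a finite order Hecke character of $L$. Applying $\mathrm{SP}_n$ to $\pi^\flat$ produces a cuspidal RAESDC automorphic representation $\Pi^\flat$ of $\GL_n(\bA_{L^+})$; base-changing $\Pi^\flat$ to $\GL_n(\bA_L)$ and then descending from $L$ to $F$ by Arthur--Clozel soluble descent (after the appropriate character correction) yields the desired $\Pi_1$. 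Cuspidality of $\Pi_1$ is preserved because the no-CM hypothesis makes $\Sym^{n-1} r_\iota(\sigma)$ remain irreducible after restriction to $G_L$.

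The main obstacle is the construction of the auxiliary $\pi^\flat$ in part (2). Because $\pi$ is potentially of mixed parity over $F^+$, it admits no algebraic twist on $F^+$, and producing a \emph{parallel-weight} Hilbert modular form $\pi^\flat$ on a totally real overfield $L^+$ whose base change to $L$ matches $\sigma$ up to a finite-order twist requires a delicate compatibility of Hecke characters, central characters, and archimedean infinity types. This bookkeeping is the content of \cite[\S 7]{Clo17} and transports verbatim to our present framework.
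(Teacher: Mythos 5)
The paper offers no proof of this lemma at all: it is quoted from \cite[\S 7]{Clo17}, so your proposal has to be measured against the argument of \emph{loc.\ cit.} Your part (1) is essentially the standard argument and is fine (the condition is indeed $\psi\psi^c = \omega_{\pi_F}$, and the existence of a solution with prescribed algebraic infinity type is the usual class field theory lemma, cf.\ \cite[Lemma A.2.5]{Bar14}; your claim that $\psi \mapsto \psi\psi^c$ surjects onto all $c$-invariant characters is not quite the right formulation, but the statement actually needed is standard).

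Part (2), however, contains a genuine gap: the auxiliary representation $\pi^\flat$ you want to construct does not exist in precisely the case this section of the paper is written to handle. Suppose $\pi$ has mixed parity, i.e.\ the integers $k_v$ with $\rec_{F^+_v}(\pi_v) \cong \rho_{s, k_v+1}$ do not all have the same parity (if they do, $\pi$ already admits an algebraic twist over $F^+$ and one applies $\mathrm{SP}_n$ there directly, so the lemma only has content in the mixed case). If $L^+/F^+$ is totally real, $L = L^+F$, $\pi^\flat$ is RAESDC over $L^+$ and $\theta$ is a finite-order Hecke character of $L$ with $\pi^\flat_L \cong \sigma_L \otimes \theta$, then compare archimedean parameters restricted to $\bC^\times$: a continuous finite-order character of $\bC^\times$ is trivial, and twisting by an algebraic Hecke character shifts both exponents of each summand $z \mapsto z^a \bar{z}^b$ by the same amount, so the invariant $a - b$ of each summand is preserved. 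For $\sigma_L$ at a place of $L$ above $v \mid \infty$ this invariant is $\pm(k_v+1)$; for $\pi^\flat_L$ at a place above $u \mid \infty$ of $L^+$ it is $\pm(k'_u+1)$ with $k'_u \equiv w' \bmod 2$ independent of $u$, since $\pi^\flat$ is RAESDC. Hence $k_v \bmod 2$ would be independent of $v$, contradicting mixed parity. The parity obstruction is archimedean and survives every totally real base change; it is not ``bookkeeping'' that \cite[\S 7]{Clo17} resolves, but an impossibility.

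The argument of \cite[\S 7]{Clo17} must therefore (and does) avoid matching $\sigma$ with a base change in characteristic zero: the obstruction disappears modulo $p$, where there is no archimedean constraint. One produces a genuinely RAESDC representation $\pi'$ over $F^+$ (of parallel weight) whose residual representation satisfies $\overline{r_\iota(\pi')}|_{G_F} \cong \overline{r_\iota(\sigma)} \otimes \overline{\theta}$ for a suitable character, applies $\mathrm{SP}_n$ to $\pi'$ to get automorphy of $\Sym^{n-1} r_\iota(\pi')$ over $F^+$ and hence residual automorphy of $\Sym^{n-1}\overline{r_\iota(\sigma)}$ over $F$ after base change and twisting, and then invokes an automorphy lifting theorem for conjugate self-dual representations over the CM field $F$ to pass from $\Sym^{n-1}\overline{r_\iota(\sigma)}$ to $\Sym^{n-1} r_\iota(\sigma)$. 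In other words the mechanism is a congruence, not a descent, and any correct write-up of part (2) has to supply the hypotheses (ordinarity or smallness of weight, bigness of residual image, a suitable local condition) under which such a lifting theorem applies.
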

\begin{proposition}\label{prop_nearly_there}
There exists a cuspidal automorphic representation $\Pi$ of $\GL_n(\bA_{F^+})$ satisfying the following properties:
\begin{enumerate}
    \item For any finite place $v$ of $F^+$, $\rec_{F^+_v}(\Pi_v) \cong \Sym^{n-1} \rec_{F^+_v}(\pi_v)$.
    \item For each infinite place $v$ of $F^+$, $\rec_{F^+_v}(\Pi_v)|_{W_{F_v}} \cong \Sym^{n-1} \rec_{F^+_v}(\pi_v)|_{W_{F_v}}$.
\end{enumerate}
\end{proposition}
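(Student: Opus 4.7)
The plan is to produce $\Pi$ by quadratic cyclic descent from the representation $\Pi_1$ of $\GL_n(\bA_F)$ supplied by Lemma \ref{lem_sym_power_over_F}, and then to resolve the sign ambiguity at inert finite places by repeating the construction with a second auxiliary CM extension.

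First I would set $\Pi'_1 = \Pi_1 \otimes \psi^{n-1}$, so that $r_\iota(\Pi'_1) \cong \Sym^{n-1} r_\iota(\pi_F)$, where $r_\iota(\pi_F) := r_\iota(\pi_F \otimes \psi^{-1}) \otimes r_\iota(\psi)$ is a well-defined continuous $2$-dimensional representation of $G_F$ (even though $r_\iota(\pi)$ itself is not defined in the mixed parity case). Since $\pi_F = \pi_F \circ c$, this representation is invariant under $\Gal(F/F^+)$; hence, by local-global compatibility at split places and strong multiplicity one for $\GL_n$, $\Pi'_1 \circ c \cong \Pi'_1$, and $\Pi'_1$ is cuspidal as a character twist of the cuspidal representation $\Pi_1$. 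The cyclic base change theorem of Arthur--Clozel then produces a cuspidal automorphic representation $\Pi^{(F)}$ of $\GL_n(\bA_{F^+})$, unique up to twist by the quadratic character $\eta_F$ of $F^+$ cut out by $F/F^+$, whose base change to $F$ is $\Pi'_1$.

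Next I would verify the local parameters. At a finite place $v = ww^c$ of $F^+$ split in $F$, split base change forces $\rec(\Pi^{(F)}_v) \cong \rec(\Pi'_{1,w}) = \Sym^{n-1} \rec(\pi_v)$, giving condition (1) at such $v$; the analogous identity at an infinite $v$ gives $\rec(\Pi^{(F)}_v)|_{W_{F_v}} \cong \Sym^{n-1} \rec(\pi_v)|_{W_{F_v}}$, which is condition (2). At a finite $v$ inert in $F$ with extension $w$, base change gives only $\rec(\Pi^{(F)}_v)|_{W_{F_w}} \cong \Sym^{n-1} \rec(\pi_v)|_{W_{F_w}}$; since $\Sym^{n-1} \rec(\pi_v)$ is irreducible and $W_{F_w} \subset W_{F^+_v}$ has index $2$, it follows that $\rec(\Pi^{(F)}_v) \cong \Sym^{n-1}\rec(\pi_v) \otimes \eta_{F,v}^{\epsilon(v)}$ for some $\epsilon(v) \in \{0,1\}$. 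Condition (1) at $v$ thus reduces to the statement $\epsilon(v) = 0$.

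The main obstacle is to show that $\epsilon$ is the constant function, so that after possibly replacing $\Pi^{(F)}$ by $\Pi^{(F)} \otimes \eta_F$ we may arrange $\epsilon \equiv 0$ on all inert finite places simultaneously. For this I would apply the same construction to a second quadratic CM extension $F'/F^+$, linearly disjoint from $F$ and satisfying the hypotheses of Lemma \ref{lem_sym_power_over_F}, producing a cuspidal descent $\Pi^{(F')}$. The base changes of $\Pi^{(F)}$ and $\Pi^{(F')}$ to the biquadratic compositum $M = FF'$ both equal $\Sym^{n-1}\pi_M$ (up to an explicit character built from $\psi$ and its $F'$-analogue, which cancels in the base change computation), so a further application of Arthur--Clozel yields $\Pi^{(F)} \cong \Pi^{(F')} \otimes \chi$ for some character $\chi$ of $\Gal(M/F^+) \cong (\bZ/2\bZ)^2$. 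At any place $v$ split in $F'$ but inert in $F$, $\rec(\Pi^{(F')}_v) \cong \Sym^{n-1}\rec(\pi_v)$ by the split case, so comparison expresses $\epsilon(v)$ purely in terms of $\chi$ and independently of $v$. The most delicate part of the argument is handling places inert in both $F$ and $F'$, which must be treated either by introducing a third auxiliary CM extension or by a weak approximation argument ensuring that any given finite set of places is split in at least one of the chosen extensions; once this is accomplished, the final $\Pi$ is obtained by choosing the correct $\eta_F$-twist of $\Pi^{(F)}$.
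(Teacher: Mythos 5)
Your overall strategy (descend $\Pi_1\otimes\psi^{n-1}$ via Arthur--Clozel and fix the twist ambiguity by comparing descents from different CM extensions) is the right one, but there is a genuine gap exactly at the step you flag as delicate, and neither of your proposed fixes closes it. The proposition demands the local identity at \emph{every} finite place of $F^+$, and for any finite collection of quadratic CM extensions $F, F', F'', \dots$ of $F^+$ there is a positive-density set of places inert in all of them (Chebotarev). At such a place $v$ none of your descents has its local component pinned down by the split-place computation, and the pairwise comparisons $\Pi^{(F)} \cong \Pi^{(F')} \otimes \chi$ only yield relations among the unknown signs $\epsilon(v), \epsilon'(v), \dots$, not their values. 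So adding a third field does not help, and "any given finite set of places is split in at least one of the chosen extensions" is not the statement you need -- you need \emph{all} places covered, which forces an infinite family. This is precisely how the paper proceeds: it chooses an infinite sequence $F_0, F_1, F_2, \dots$ of CM quadratic extensions such that any three are linearly disjoint ($[F_iF_jF_k : F^+] = 8$) and every finite place of $F^+$ splits in some $F_i$ with $i>0$; it normalizes the descents pairwise so that $\Sigma_{0i}$ and $\Sigma_{0j}$ have the same base change to $F_0$, and then the triple-compositum trick (the relation $\Sigma_{0i} \cong \Sigma_{0i}\otimes(\epsilon_0^{n_0}\epsilon_i^{n_i}\epsilon_j^{n_j})$ plus cuspidality of the base change to $F_0F_iF_j$ forces $n_0=n_i=n_j=0$) shows all the $\Sigma_{0i}$ are literally isomorphic. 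The resulting single $\Pi$ then has the correct component at every finite place because every finite place splits in some $F_i$.

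A secondary issue: your analysis at an inert finite place asserts that $\Sym^{n-1}\rec_{F^+_v}(\pi_v)$ is irreducible, which is false at finite places (already $\rec_{F^+_v}(\pi_v)$ is typically a sum of two characters), and in general two Frobenius-semisimple parameters with isomorphic restrictions to an index-two subgroup need not differ by a character twist (compare $1\oplus 1$ and $1\oplus\eta$). What you actually need is the description of the fibres of local cyclic base change for $\GL_n$ as twist-orbits under $\eta_{F,v}$. The paper sidesteps this entirely by never arguing at inert finite places: in its patching argument every finite place is split in one of the auxiliary fields, so only the (unambiguous) split-place identity is ever invoked at finite places.
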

The existence of a $\Pi$ having the first property is part of the content of \cite[Theorem 7.1]{Clo17}; the proof of this in \emph{loc. cit.} appears incomplete, so we give a more detailed explanation here as well as a description of the lifting at the infinite place. 
\begin{proof}
We use an automorphic version of the patching argument of \cite[Proposition 4.3.1]{Bla93}. Let $F / F^+$ be a quadratic CM extension and let $\psi, \Pi_1$ be as in the statement of Lemma \ref{lem_sym_power_over_F}. Then we have 
\[ \rec_{F_w}( \Pi_{1, w}) \cong \Sym^{n-1} \rec_{F_w}( (\pi_F \otimes \psi^{-1})_w ) \]
for every place $w$ of $F$. Indeed, if $w$ is a finite place, then this follows from local-global compatibility for the representations $r_\iota(\Pi_1)$, $r_\iota(\pi_F \otimes \psi^{-1})$. If $w$ is an infinite place then $\rec_{F_w}(\Pi_{1, w})$ can be read off from the Hodge--Tate weights of $r_\iota(\Pi_1)$ (use the recipe of \cite[Theorem 2.2]{Clo14} -- here we are using the fact that $\Pi_{1, w}$ is essentially tempered), so the desired result holds also at the infinite places. Twisting by a character, we find that we also have
\[  \rec_{F_w}( (\Pi_{1} \otimes \psi^{n-1})_w) \cong \Sym^{n-1} \rec_{F_w}( \pi_{F, w} ) \]
for every place $w$ of $F$. In particular, $\Pi_{1} \otimes \psi^{n-1}$ is stable under the action of $\Gal(F / F^+)$ so by \cite[Ch. 3, Theorem 4.2]{Art89} there exists a cuspidal automorphic representation $\Pi(F)$ of $\GL_n(\bA_{F^+})$ with base change $\Pi_1 \otimes \psi^{n-1}$, determined up to twist by the quadratic Hecke character associated to the extension $F / F^+$. For any place $w$ of $F$ lying above a place $v$ of $F^+$, we have
\begin{equation}\label{eqn_Pi_F_w} \rec_{F_w}(\Pi(F)_{F, w} ) \cong \Sym^{n-1} \rec_{F^+_v}( \pi_{v} )|_{W_{F_w}}. \end{equation}
In particular, if $v$ splits in $F$ then we have
\begin{equation}\label{eqn_Pi_F_v} \rec_{F^+_v}(\Pi(F)_v) \cong \Sym^{n-1} \rec_{F^+_v}( \pi_{v} ). \end{equation}
We now choose an infinite sequence $F_0, F_1, F_2 \dots$ of CM quadratic extensions of $F^+$ with the following properties:
\begin{itemize}
    \item For each $0 \leq i < j < k$, $[F_i F_j F_k : F^+] = 8$.
    \item For each finite place $v$ of $F^+$, there exists $i > 0$ such that $v$ splits in $F_i$.
\end{itemize}
We can construct such a sequence inductively by enumerating the finite places $v_1, v_2, \dots$ of $F^+$, choosing $F_0$ arbitrarily, and in general choosing $F_n$ so that $v_n$ splits in $F_n$ and also so that for each quadratic subfield $M$ of $F_0 \dots F_{n-1} / F^+$, there exist a place $v$ of $F^+$ which is inert in $M$ and split in $F_n$. For any $i \geq 0$, we write $\epsilon_i$ for the quadratic Hecke character associated to the extension $F_i / F^+$.

For each $i \geq 0$ we are given a cuspidal automorphic representation $\Pi(F_i)$ of $\GL_n(\bA_{F^+})$. For any $i < j$, the base change representations $\Pi(F_i)_{F_i F_j}$ and $\Pi(F_j)_{F_i F_j}$ are cuspidal and isomorphic. Indeed, they are isomorphic by (\ref{eqn_Pi_F_w}). To show they are cuspidal, it's enough to show that $(\Pi(F_i)_{F_i} \otimes \psi^{1-n})_{F_i F_j}$ is cuspidal. However, $\Pi(F_i)_{F_i} \otimes \psi^{1-n}$ is RACSDC and $r_\iota(\Pi(F_i)_{F_i} \otimes \psi^{1-n})|_{G_{F_i F_j}}$ is irreducible (as the Zariski closure of its image contains a conjugate of the principal $\SL_2$, by \cite[Theorem 7.3]{Clo17} and \cite[Example 2.34]{New22}). It is therefore enough to know that if $\sigma$ is a RACSDC automorphic representation of $\GL_n(\bA_{F_i})$ and $r_\iota(\sigma)|_{G_{F_i F_j}}$ is irreducible, then $\sigma_{F_iF_j}$ is cuspidal. This is true, because otherwise \cite[Ch. 3, Theorem 4.2]{Art89} would imply that $\sigma$ is isomorphic to its twist by the quadratic Hecke character associated to the extension $F_i F_j / F_i$, and therefore that $r_\iota(\sigma)$ is induced from $G_{F_i F_j}$ and reducible on restriction to this subgroup, a contradiction. 

Applying \cite[Ch. 3, Theorem 4.2]{Art89} again, we see that $\Pi(F_i)_{F_j}$ is either isomorphic to $\Pi(F_j)_{F_j}$ or differs from it by quadratic twist by the Hecke character associated to the extension $F_i F_j / F_j$. We define $\Sigma_{ij} = \Pi(F_i)$ in the first instance and $\Sigma_{ij} = \Pi(F_i) \otimes \epsilon_i$ in the second. Then in either case we have $\Sigma_{ij, F_i} = \Pi(F_i)_{F_i}$ and $\Sigma_{ij, F_j} = \Pi(F_j)_{F_j}$. In particular, for any place $v$ of $F^+$ which splits in $F_j$, we have
\begin{equation}\label{eqn_local_component_at_split_place} \rec_{F^+_v}( \Sigma_{ij, v} ) \cong \Sym^{n-1} \rec_{F^+_v}(\pi_v). \end{equation}
Now consider an arbitrary $0 < i < j$. We have $\Sigma_{0i, F_0} = \Sigma_{0j, F_0}$, so there is $n_0 \in \{ 0, 1 \}$ such that $\Sigma_{0i} \cong \Sigma_{0j} \otimes \epsilon_0^{n_0}$. We have $\Sigma_{0i, F_i} = \Sigma_{ij, F_i}$, so there is $n_i \in \{ 0, 1 \}$ such that $\Sigma_{0i} \cong \Sigma_{ij} \otimes \epsilon_i^{n_i}$. We have $\Sigma_{0j, F_j} = \Sigma_{ij, F_j}$, so there is $n_j \in \{0, 1 \}$ such that $\Sigma_{0j} \cong \Sigma_{ij} \otimes \epsilon_j^{n_j}$. Putting these relations together, we have 
\[ \Sigma_{0i} \cong \Sigma_{0i} \otimes (\epsilon_0^{n_0} \epsilon_i^{n_i} \epsilon_j^{n_j}). \]
The base change of $\Sigma_{0i}$ to $F_i F_j F_k$ is cuspidal (by the same argument as before), so by \cite[Ch. 3, Theorem 4.2]{Art89} the character $\epsilon_0^{n_0} \epsilon_i^{n_i} \epsilon_j^{n_j}$ must be trivial. By hypothesis the map 
\[ \Gal(F_0F_iF_j / F^+) \to \Gal(F_0 / F^+) \times \Gal(F_i / F^+) \times \Gal(F_j / F^+) \]
is an isomorphism, so this is possible only if $n_0 = n_i = n_j = 0$ and the representations $\Sigma_{0i}$ (for any $i > 0$) are all isomorphic. We set $\Pi = \Sigma_{01} = \Sigma_{0i}$.

To complete the proof, we need to explain why $\Pi$ has the required properties. If $v$ is a finite place, we choose $F_i$ such that $v$ splits in $F_i$. Then (\ref{eqn_local_component_at_split_place}) shows that $\Pi_v$ has the expected form. If $v$ is an infinite place, then the desired property follows already from (\ref{eqn_Pi_F_w}). 
\end{proof}
Let $\Pi$ be the representation of Proposition \ref{prop_nearly_there}. To complete the proof of Theorem \ref{thm_alg_implies_W_alg}, we just need to check that $\rec_{F^+_v}(\Pi_v)$ has the expected form when $v$ is an infinite place of $F^+$. Let $v$ be an infinite place and identify $F_v = \bC$. Let $\rec_{F^+_v}(\pi_v) = \rho_{s, a}$ for some $s \in \bC, a \geq 1$. We have
\begin{equation}\label{eqn_pars_equal_restriction_to_C} \rec_{F^+_v}(\Pi_v)|_{\bC^\times} \cong \Sym^{n-1} \rec_{F^+_v}(\pi_v)|_{\bC^\times}. \end{equation}
Suppose first that $n = 2m$ is even. Then 
\[ \Sym^{n-1} \rec_{F^+_v}(\pi_v) = \oplus_{i=1}^m \rho_{(n-1)s, (2i-1)a}, \]
and (\ref{eqn_pars_equal_restriction_to_C}) implies that we must have $\rec_{F^+_v}(\Pi_v)\cong \Sym^{n-1} \rec_{F^+_v}(\pi_v)$ (because Frobenius reciprocity implies that any continuous representation of $W_\bR$ whose restriction to $\bC^\times$ is isomorphic to $\Sym^{n-1} \rec_{F^+_v}(\pi_v)|_{\bC^\times}$ is already isomorphic to $\Sym^{n-1} \rec_{F^+_v}(\pi_v)$ on the whole of $W_\bR$). 

Next suppose that $n = 2m + 1$ is odd. Then we compute
\[ \Sym^{n-1} \rec_{F^+_v}(\pi_v)= | \cdot |^{(n-1)s} \operatorname{sgn}^{am} \oplus \left( \oplus_{i=1}^m \rho_{(n-1)s, 2 i a} \right) \]
and (\ref{eqn_pars_equal_restriction_to_C}) implies that we have
\[ \rec_{F^+_v}(\Pi_v) = | \cdot |^{(n-1)s} \operatorname{sgn}^{\epsilon} \oplus \left( \oplus_{i=1}^m \rho_{(n-1)s, 2 i a} \right) \]
for some $\epsilon \in \{0, 1 \}$. We need to show that $(-1)^\epsilon = (-1)^{am}$. Let $\omega_\pi$, $\omega_{\Pi}$ denote the central characters of $\pi, \Pi$, respectively. Then $\omega_{\Pi} = \omega_\pi^{m(2m+1)}$, as can be checked at finite unramified places. We finally compute
\[ (-1)^{\epsilon+m} = \omega_{\Pi, v}(-1) = \omega_{\pi, v}(-1)^{m(2m+1)} = (-1)^{m(a+1)}. \]
This completes the proof of Theorem \ref{thm_alg_implies_W_alg}. \qed

	\bibliographystyle{alpha}
	\bibliography{LR}

	\end{document}
	
\section{Symmetric power functoriality for residually dihedral forms}

In this section we give another application of the main results of this paper, which will be used in \cite{Clo22} to study the existence of base change liftings of symmetric powers of holomorphic newforms.
\begin{theorem}\label{thm_residually_dihedral_symmetric_power}
	Let $F_0^+$ be a totally real number field, let $(\pi, \psi)$ be a RAESDC automorphic representation of $\GL_2(\bA_{F_0^+})$, let $p \geq 5$ be a prime, and let $\iota : \overline{\bQ}_p \to \bC$ be an isomorphism. Let $n \geq 2$ be an integer, and let $F^+ / F_0^+$ be a finite totally real extension. Suppose that the following conditions are satisfied:
	\begin{enumerate}
		\item $\pi$ is $\iota$-ordinary.		
		\item There is a CM quadratic extension $F / F^+$ and an isomorphism $\overline{r_\iota(\pi)}|_{G_{F^+}} \cong \Ind_{G_{F}}^{G_{F^+}} \overline{\chi}$, for a character $\overline{\chi} : G_F \to \overline{\bF}_p^\times$ such that $\overline{\chi} / \overline{\chi}^c|_{G_{F(\zeta_p)}}$ has order greater than $n(n-1)$. Moreover, $[F(\zeta_p) : F] = (p-1)$ and $p > n$.
				\item There is a place $v_0 \nmid p$ of $F_0^+$ such that $\pi_{v_0}$ is a twist of the Steinberg representation.
	\end{enumerate}
	Then $\mathrm{BC}_{F^+ / F_0^+} \Sym^n \pi$ exists: there is a RAESDC automorphic representation $(\Pi, \Psi)$ of $\GL_{n}(\bA_{F^+})$ such that $\Sym^{n-1} r_\iota(\pi)|_{G_{F^+}} \cong r_\iota(\Pi)$. 
\end{theorem}
\begin{proof}
Let $q$ denote the residue characteristic of the place $v_0$. Our local hypothesis at $v_0$ implies that the Zariski closure of the image of $r_\iota(\pi)$ contains a conjugate of $\SL_2$, and therefore that $\Sym^{n-1} r_\iota(\pi)$ is irreducible, even after restriction to any closed, finite index subgroup of $G_{F^+_0}$. We have $\det r_\iota(\pi) = \epsilon^{-1} r_\iota(\psi)$. We are free to replace $F^+$ by a soluble, totally real extension, and can therefore assume that the following additional conditions are satisfied:
\begin{itemize}
\item The extension $F / F^+$ is everywhere unramified.
\item $F$ contains an imaginary quadratic field $F_0$ in which $p$ and $q$ split. We fix a choice of embedding $\tau_0 : F_0 \to \overline{\bQ}_p$ and a place $v_1$ of $F^+$ lying above $v_0$. We assume that $q_{v_1} \equiv 1 \text{ mod }p$ and $\overline{r_\iota(\pi)}(\Frob_{v_1})$ is scalar. 
\item There exists an everywhere unramified Hecke character $\Omega : \bA_F^\times \to \bC^\times$ of type $A_0$ such that $\Omega\Omega^c = \psi \circ \mathbf{N}_{F / F^+}$. Let $\omega = r_\iota(\Omega)$. 
\item There exists an everywhere unramified Hecke character $X : \bA_F^\times \to \bC^\times$ of type $A_0$ such that $\chi = r_\iota(X)$ lifts $\overline{\chi}$ and $\chi \chi^c = \epsilon^{-1} r_\iota(\psi)|_{G_F}$.
\item There exists a CM subfield $F' \subset F$ such that $[F : F'] = 2$ and the infinity types of $\Omega, X$ descend to $F'$.
\end{itemize} 
(The existence of such characters, perhaps with ramification, can be deduced from \cite[Lemma A.2.5]{Bar14}; we can then choose the soluble extension that replaces $F^+$ so that they become everywhere unramified and so that $F$ has a subfield $F'$ of the given type.) To prove the theorem it suffices, by Theorem \ref{thm_modified_ALT}, to find an $\iota$-ordinary RACSDC automorphic representation $\Pi_{n}$ of $\GL_{n}(\bA_F)$ and a place $w_1 | v_1$ of $F$ such that $\Pi_{n, w_1}$ is a twist of the Steinberg representation and there is an isomorphism 
\[ \overline{r_\iota(\Pi_n)} \cong \overline{\omega}^{1-n} \otimes \Sym^{n-1} \overline{r_\iota(\pi)}|_{G_F} = \overline{\omega}^{1-n} \otimes \oplus_{i=0}^{n-1} \overline{\chi}^{n-1-i} \overline{\chi}^{c, i}. \]
Indeed, we could then conclude the automorphy of $\omega^{1-n} \otimes \Sym^{n-1} r_\iota(\pi)|_{G_F}$ and hence, by soluble descent, that of $\Sym^{n-1} r_\iota(\pi)|_{G_{F^+}}$. We justify the hypotheses  of Theorem \ref{thm_modified_ALT} on the residual representation. Let $\overline{\mu}_i = \omega^{1-n} \overline{\chi}^{n-1-i} \overline{\chi}^{c, i}$. Then $\overline{\mu}_i \overline{\mu}_i^c = \epsilon^{1-n}$. If $0 \leq i < j \leq n-1$ then the ratio $\overline{\mu}_i / \overline{\mu}_j = (\overline{\chi} / \overline{\chi}^c)^{j-i}$ has order greater than $n$, so $\overline{\rho} = \oplus_{i=0}^{n-1} \overline{\mu}_i$ is primitive, by \cite[Lemma 5.1]{New21a}, and the characters $\overline{\mu}_i$ are pairwise distinct. If $\tau_0 \in G_F$ is an element such that $\epsilon(\tau_0)^2 \neq 1$, and $w$ is a place of $F$ unramified in $\overline{\rho}$ and split over $F^+$ such that $(\overline{\rho} \oplus \epsilon)(\Frob_w) = (\overline{\rho} \oplus \epsilon)(\tau_0 \tau_0^c)$, then $H^2(F_w, \ad \overline{\rho}(1)) = 0$; such an element $\tau_0$ exists since $p-1 > 2$. 

We will in fact show by induction on $k \geq 2$ that for each $k = 2, \dots, n$, there is an $\iota$-ordinary RACSDC automorphic representation $\Pi_k$ of $\GL_{k}(\bA_F)$ satisfying the following conditions:
\begin{itemize}
\item There is an isomorphism $\overline{r_\iota(\Pi_k)} \cong \overline{\omega}^{1-k} \otimes \Sym^{k-1} \overline{r_\iota(\pi)}|_{G_F}$.
\item There is a place $w_1 | v_1$ of $F$ such that $\Pi_{k, w_1}$ is an unramified twist of the Steinberg representation.
\item $\Pi_{k}$ is unramified at each $q$-adic place of $F$ not lying above $v_1$, and at each place of $F$ which is inert over $F^+$. 
\item For each embedding $\tau : F \to \overline{\bQ}_p$, we have 
\[ \mathrm{HT}_\tau(r_\iota(\Pi_k)) = \cup_{i = 0}^{k-1} \mathrm{HT}_\tau(\omega^{1-k} \chi^{k-1-i} \chi^{c, i}). \]
\end{itemize} 
 For the base case $k = 1$, we apply Theorem \ref{thm_level_raising_for_GL(n)} with $\pi_1 = \Omega^{-1} X | \cdot|^{1/2}$ and $\pi_2 = \Omega^{-1} X^c | \cdot |^{1/2}$. For the induction step, we apply Theorem \ref{thm_level_raising_for_GL(n)} with $\pi_1 = \Pi_k \otimes \Omega^{-1} X | \cdot |^{1/2}$ and $\pi_2 = \Omega^{-k} X^{c, k} | \cdot |^{(k-1)/2}$. The level-raising congruence and genericity hypotheses of Theorem \ref{thm_level_raising_for_GL(n)} are satisfied at each stage because there is an isomorphism
\[ \overline{r_\iota(\pi_1 \boxplus \pi_2)} \cong \overline{\omega}^{-(k+1)} \otimes \oplus_{i=0}^{k+1} \overline{\chi}^{k+1-i} \overline{\chi}^{c, i}. \]
Condition (\ref{ass_even}) of Theorem \ref{thm_endoscopic_forms_exist} is satisfied because of our assumption on the existence of the subfield $F' \subset F$. This concludes the proof.
\end{proof}